\def\sep{\discretionary{}{}{}}
\providecommand{\customgenericname}{}
\newcommand{\newcustomtheorem}[2]{\newenvironment{#1}[1]
  {\renewcommand\customgenericname{#2}\renewcommand\theinnercustomgeneric{##1}\innercustomgeneric
  }
  {\endinnercustomgeneric}
}
\def\newrepeatedtheorem#1[#2]#3{\newenvironment{#1}[1]
  {\expandafter\global\expandafter\advance\csname c@#2\endcsname\@ne
   \protected\def\customreference{\ref{##1}}\renewcommand\customgenericname{#3}\renewcommand\theinnercustomgeneric{\customreference}\innercustomgeneric
  }
  {\endinnercustomgeneric}
}
\def\DeclareStructure#1#2{\expandafter\def\csname Declare#1\endcsname##1{\expandafter\def\csname structure:#1:\detokenize{##1}\endcsname
	}\protected\def#2##1{\ifcsname structure:#1:\detokenize{##1}\endcsname
\csname structure:#1:\detokenize{##1}\expandafter\endcsname
		\else
\def\next{\csname structure-generic:#1\endcsname{##1}}\expandafter\next
		\fi
	}\@ifnextchar:{\expandafter\def\csname structure-generic:#1\expandafter\endcsname\@gobble}{\expandafter\let\csname structure-generic:#1\expandafter\endcsname\@firstofone}}
\newtoks\cat@name
\newtoks\cat@upper
\def\cat@generate#1{\begingroup
\countdef\parser@state=0
	\countdef\parser@newstate=1
	\parser@state=0		\cat@name={}\cat@upper={}\cat@parse#1\relax
\edef\next{\unexpanded{\DeclareCategory{#1}}{\the\cat@name}}\next
	\expandafter
\endgroup\the\cat@name}
\def\cat@parse#1{\ifx\relax#1\relax
		\parser@newstate=4
	\else\ifcat\noexpand#1\relax\relax
		\parser@newstate=1
	\else\ifnum`#1>`Z\relax
		\parser@newstate=3
	\else\ifnum`#1<`A\relax
		\parser@newstate=3
	\else
		\parser@newstate=2
	\fi\fi\fi\fi
\ifnum\parser@state=\parser@newstate\else
		\ifcase\parser@state
\or
\ifnum\parser@newstate<4
				\edef\next{\cat@name={\the\cat@name\/}}\next
			\fi
		\or
\edef\next{\cat@name={\the\cat@name\noexpand\mathcal{\the\cat@upper}}}\next
			\cat@upper{}\fi
		\parser@state\parser@newstate
	\fi
\ifnum\parser@state=2
		\edef\next{\cat@upper={\the\cat@upper#1}}\next
	\else
		\edef\next{\cat@name={\the\cat@name\noexpand#1}}\next
	\fi
	\ifnum\parser@state<4	\expandafter\cat@parse\fi
}
\newcommand{\colormembrane}{violet}
\newcommand{\colorgamma}{orange!70!black}
\def\symm{\mathfrak S}
\def\Sym{\mathit{Sym}}
\def\hTr{\mathrm{hTr}}
\def\qhTr{\mathrm{hTr}_q}
\def\qTwist{\@ifnextchar[\qTwist@{\qTwist@[\qnt]}}
\def\qTwist@[#1]#2{{}_{#1}#2}
\def\HH@symbol#1{\@ifnextchar_{\HH@symbolSub{\mathit{#1}}}{\mathit{#1}}}
\def\HH@symbolSub#1_#2{#1_{\!#2}}
\def\HH{\HH@symbol{HH}}
\def\HHH{\HH@symbol{HHH}}
\def\qHH{\HH@symbol{qHH}}
\def\CFK@hatOrMinus#1{\@ifnextchar-{\mathit{#1}^}{\widehat{\mathit{#1}}}}
\def\CFK{\CFK@hatOrMinus{CFK}}
\def\HFK{\CFK@hatOrMinus{HFK}}
\def\tCFK{\CFK@hatOrMinus{\underline{CFK}}}
\def\tHFK{\CFK@hatOrMinus{\underline{HFK}}}
\def\CFS{\CFK@hatOrMinus{CFS}}
\def\tCFS{\CFK@hatOrMinus{\underline{CFS}}}
\def\OSTwist{P^{\mathrm{OS}}}
\def\trTwist{P^{\mathit{tr}}}
\def\scalars{\Bbbk}
\def\web{\omega}
\def\foam{F}
\def\qnt{q}
\def\LieGL{\mathfrak{gl}}
\let\gll\LieGL
\let\sll\LieSL
\def\qvar{\mathsf{q}}
\def\avar{\mathsf{a}}
\def\tvar{\mathsf{t}}
\def\gr{\mathrm{gr}}
\newcommand\quotient[2]{\raisebox{0.3\baselineskip}{$#1$}\!\Big/\!\raisebox{-0.3\baselineskip}{$#2$}}
\def\from{\@ifnextchar^\from@\longleftarrow}
\def\from@^#1{\xleftarrow{\ #1\ }}
\def\bracket#1{\langle #1 \rangle}
\def\Bracket#1{\llbracket #1 \rrbracket}
\def\nth{\@ifnextchar[\nth@{\nth@[th]}}
\def\nth@[#1]#2{$#2$--#1}
\def\torus{\mathbb T}\def\colalpha{\mathbf\alpha}\def\colbeta{\mathbf\beta}\def\markX{\mathbb X}\def\markXX{\mathbb{X\!X}}\def\markO{\mathbb O}\def\xgen{\mathbf x}\def\ygen{\mathbf y}\def\zgen{\mathbf z}
\def\qdeg{\mathrm{qdeg}\mskip 2mu}
\def\hdeg{\mathrm{hdeg}\mskip 2mu}
\def\CFbasis{\mathfrak G}
\def\moduli{\mathcal M}
\def\moduliR{\widehat{\mathcal M}}
\def\nmoduli{\mathcal N}
\def\nmoduliR{\widehat{\mathcal N}}
\DeclareMathOperator{\supp}{supp}
\newcommand{\imagesfolder}{../images}
\newcommand{\NB}[1]{\ensuremath{\vcenter{\hbox{#1}}}}
\def\co{\colon\thinspace}
\tikzset{->-/.style={decoration={markings, mark=at position .5 with {\arrow{>}}},postaction={decorate}}}
\tikzset{-<-/.style={decoration={markings, mark=at position .5 with {\arrow{<}}},postaction={decorate}}}
\newcommand{\circles}{\ensuremath{\mathbb{S}}}
\newcommand{\laurent}{\ensuremath{\mathbb{L}}}
\newcommand{\basepoint}{\ensuremath{{\textcolor{red}\star}}}
\newcommand{\braid}{\ensuremath{\beta}}
\newcommand{\bindex}{\ensuremath{k}}
\newcommand{\nlrel}{\ensuremath{\mathit{NL}}}
\newcommand{\bdiagram}{{\ensuremath{\widehat{\beta}}}}
\newcommand{\setcrossings}{\ensuremath{\mathfrak{X}}}
\newcommand{\resolution}{\ensuremath{I}}
\newcommand{\rdiagram}[1][\resolution]{\bdiagram_{#1}}
\newcommand{\gilmoreA}{\ensuremath{\mathcal{A}}}
\newcommand{\gilmoreAext}{\widetilde{\ensuremath{\mathcal{A}}}}
\newcommand{\nonlocalrelations}{\ensuremath{\mathcal{N}}}
\newcommand{\weight}{\ensuremath{w}}
\newcommand{\ZZ}{\ensuremath{\mathbb{Z}}}
\newcommand{\QQ}{\ensuremath{\mathbb{Q}}}
\newcommand{\NN}{\ensuremath{\mathbb{N}}}
\newcommand{\Ztwo}{\ensuremath{\mathbb{F}}}
\newcommand{\aring}{\ensuremath{\scalars}}
\newcommand{\complexgilmore}{\ensuremath{C^{A\!G}}}
\newcommand{\complexglone}{\ensuremath{C^{\gll_1}}}
\newcommand{\homologyglone}{\ensuremath{H^{\gll_1}}}
\newcommand{\glone}{\ensuremath{\mathcal{S}_1}}
\newcommand{\glinfty}{\ensuremath{\mathcal{S}_\infty}}
\newcommand{\complexglzero}{\ensuremath{C^{\gll_0}}}
\newcommand{\homologyglzero}{\ensuremath{H^{\gll_0}}}
\newcommand{\HHHred}{\ensuremath{\HHH^{\mathrm{red}}}}
\newcommand{\image}{\operatorname{im}}
\newcommand{\glzero}{\ensuremath{\mathcal{S}_0}}
\newcommand{\ourQ}{\ensuremath{\mathit{qAG}}}
\newcommand{\ourQH}{\ensuremath{\mathit{qAGH}}}
\newcommand{\ourQone}{\ensuremath{\mathit{AG}}}
\newcommand{\ourQHone}{\ensuremath{\mathit{AGH}}}
\newcommand{\lra}{\ensuremath{\longrightarrow}}
\newcommand{\bigon}{\ensuremath{\mathrm{cup}}}\newcommand{\nogib}{\ensuremath{\mathrm{cap}}}\newcommand{\zip}{\ensuremath{\mathrm{zip}}}
\newcommand{\piz}{\ensuremath{\mathrm{unzip}}}
\newcommand{\assoc}{\ensuremath{\mathrm{as}}}
\newcommand{\coassoc}{\ensuremath{\mathrm{coas}}}
\newcommand{\shiftvertex}[1]{\ensuremath{\mathrm{gr}(#1)}}
\newcommand{\myunderliney}{\ensuremath{Y}}
\newcommand{\myunderlinez}{\ensuremath{Z}}
\newcommand{\mycrossing}[1][]{\!\!
  \NB{  \begin{tikzpicture}[#1]
      \draw[densely dotted] (0,0) circle (0.5);
    \draw[->] (-135:0.5) --  (45:0.5); 
    \fill[white] (0,0) circle (0.05);
    \draw[->] ( 135:0.5) -- (-45:0.5); 
  \end{tikzpicture}}
}
\newcommand{\mydumbellintro}[1][]{\!\!
  \NB{  \begin{tikzpicture}[#1]
      \draw[densely dotted] (0,0) circle (0.5);
    \draw[>-] (-135:0.5) .. controls (-135:0.3) and +(0,0) ..( 180:0.1);
    \draw[>-] ( 135:0.5) .. controls ( 135:0.3) and +(0,0) ..( 180:0.1);
    \draw[<-] (  45:0.5) .. controls (  45:0.3) and +(0,0) ..(   0:0.1);
    \draw[<-] ( -45:0.5) .. controls ( -45:0.3) and +(0,0) ..(   0:0.1);
    \draw[thick] (180:0.1) -- (0:0.1);
  \end{tikzpicture}}
}
\newcommand{\mysmoothing}[1][]{\!\!
  \NB{
    \begin{tikzpicture}[#1]
      \draw[densely dotted] (0,0) circle (0.5);
    \draw[->] (-135:0.5) .. controls +(0.2,0.2) and +(-0.2, 0.2) ..  (-45:0.5); 
    \draw[->] ( 135:0.5) .. controls +(0.2, -0.2) and +(-0.2, -0.2) ..  (45:0.5); 
  \end{tikzpicture}}
}
\newcommand{\nicedumbell}[4][<-]{
  \begin{scope}[xshift=#3cm, yshift=#4 cm]
    \coordinate(#2LT) at (0,0);
    \coordinate(#2LB) at (0, -0.5);
    \coordinate(#2RT) at (1,0);
    \coordinate(#2RB) at (1,-0.5);
  \draw (#2LT) .. controls +(0.2, 0) and +(0,0) .. ( 0.3, -0.25)
  coordinate[pos=0.5] (#2lt);
  \draw (#2LB) .. controls +(0.2, 0) and +(0,0) .. (0.3, -0.25)  coordinate[pos=0.5] (#2lb);
  \draw (0.3, -0.25) -- (0.7, -0.25) node [pos=0.5, above, font=\tiny] {$2$};
  \draw[#1] (#2RT) .. controls +(-0.2, 0) and +(0,0) .. ( 0.7, -0.25) coordinate[pos=0.5] (#2rt);
  \draw[#1] (#2RB) .. controls +(-0.2, 0) and +(0,0) .. (0.7, -0.25) coordinate[pos=0.5] (#2rb);
\end{scope}}
\newcommand{\nicesmoothing}[4][->-]{
  \begin{scope}[xshift= #3 cm, yshift=#4 cm]
    \coordinate(#2LT) at (0,0);
    \coordinate(#2LB) at (0, -0.5);
    \coordinate(#2RT) at (1,0);
    \coordinate(#2RB) at (1,-0.5);
    \draw[#1] (#2LT) -- (#2RT);
    \draw[#1] (#2LB) -- (#2RB);
\end{scope}}
\def\YDbox{\mathrm{box}}
\def\qshift{\mathbf{q}}	\def\tshift{\mathbf{t}}  
\def\inp{\mathrm{in}}\def\outp{\mathrm{out}}
\def\R{\mathbb R}\def\IxR{[0,1]\times\mathbb R}\def\SxR{\mathbb S^1\times\mathbb R}\makeatletter
\def\S{\@ifnextchar^{\mathbb{S}}{\mathbb{S}^1}}\makeatother
\def\sfce{\Sigma}\def\facet{f}\def\foam{W}
\def\qbinom#1#2{\begin{bmatrix}#1\\#2\end{bmatrix}}
\def\topesym_#1{e_{\mathrm{top}}(X_{#1})}
\def\Res{\mathrm{Res}}
\def\Aut{\mathrm{Aut}}
\def\id{\mathit{id}}
\def\HHSymb{\mathit{HH}}
\newcommand*{\HoHom}{\HHSymb}
\newcommand*{\CHoHom}{\mathit{CH}}
\def\qHHSymb{\mathit{qHH}}
\newcommand*{\qHoHom}{\qHHSymb}
\newcommand*{\qCHoHom}{\mathit{qCH}}
\def\diagMarkO(#1){\draw[line width=1pt] (#1) circle[radius=0.25]}
\def\diagMarkX(#1){\draw[line width=1pt] (#1) ++(0.2,0.25) -- ++(-0.4,-0.5) ++(0.4,0) -- ++(-0.4,0.5)}
\def\diagMarkXX(#1){\draw[line width=1pt]
	(#1) ++(-1pt,0.25) -- ++(-0.4,-0.5) ++( 0.4,0) -- ++(-0.4,0.5)
	(#1) ++( 1pt,0.25) -- ++( 0.4,-0.5) ++(-0.4,0) -- ++( 0.4,0.5)
}
\let\diagmarkinglevel=0
\def\diagMarkings[#1]#2{\ifnum\diagmarkinglevel=#1\relax
		\fill[color=gray] foreach \x/\y in {#2} { (\x, \y) circle[radius=2pt] };
	\fi
}
\def\tikzOmark(#1){\@ifnextchar[{\tikzOmarkLabel(#1)}{\tikzOmarkNoLabel(#1)}}
\def\tikzOmarkNoLabel(#1){\draw[line width=1pt] (#1) circle[radius=6pt]}
\def\tikzOmarkLabel(#1)[#2]{\draw[line width=1pt] (#1) circle[radius=6pt] node {$\scriptstyle #2$}}
\def\tikzXmark(#1){\draw[line width=1pt] (#1)
	++(-4pt,-5pt) -- ++(8pt,10pt)
	++(-8pt,0pt) -- ++(8pt,-10pt)}
\def\tikzXXmark(#1){\draw[line width=1pt] (#1)
	++(-9pt,-5pt) -- ++(8pt,10pt) ++(2pt, 0pt) -- ++(8pt,-10pt)
	++(-18pt,10pt) -- ++(8pt,-10pt) ++(2pt, 0pt) -- ++(8pt,10pt)}
\def\HClabel#1@#2;{\path (#2:1.75em) ++(0,1pt) -- ++(0,-2pt) node[midway,above,sloped,rotate=#2] {$\scriptstyle #1$};
}
\def\HochschildChain{\@ifnextchar[{\HochschildChainDraw}{\HochschildChainDraw[]}}\makeatother
\def\HochschildChainDraw[#1]#2#3{\vcenter{\hbox{\begin{tikzpicture}
		\useasboundingbox (-3.5em,-3em) rectangle (3.5em,3em);
		\draw (0,0) circle[radius=1.75em];
\foreach \angle in {#2} do
			\draw (\angle:1.4em) -- (\angle:2.1em);
\ifx\relax#1\relax\else
			\draw[color=\colormembrane,line width=4pt] (#1:1.3em) -- (#1:2.2em);
		\fi
\foreach \label/\angle in {#3} do
			\path (\angle:1.75em) ++(0,1pt) -- ++(0,-2pt)
				node[midway,above,sloped,rotate=\angle] {$\scriptstyle\label$};
	\end{tikzpicture}}}}
\newcommand*\ric{\sb{}\kern-\scriptspace }
\newtheorem{introtheorem}{Theorem}
\newtheorem{introproposition}[introtheorem]{Proposition}
\newtheorem{introcorollary}[introtheorem]{Corollary}
\newtheorem{introconjecture}{Conjecture}
\newtheorem{theorem}{Theorem}
\newtheorem*{thm}{Theorem}
\numberwithin{theorem}{section}
\newtheorem{lemma}[theorem]{Lemma}
\newtheorem{corollary}[theorem]{Corollary}
\newtheorem{proposition}[theorem]{Proposition}
\theoremstyle{definition}
\newtheorem{definition}[theorem]{Definition}
\newtheorem{example}[theorem]{Example}
\newtheorem{notation}[theorem]{Notation}
\theoremstyle{remark}
\newtheorem{remark}[theorem]{Remark}
\definecolor{internalLink}{rgb}{0,0,0.5}
\definecolor{citeLink}{rgb}{0,0.5,0}
\definecolor{urlLink}{rgb}{0,0.5,0.5}
\tikzset{thickweb/.style={
		line width=1pt
	},
	hd/alpha/.style={
		line width=0.5pt,
		dashed,
		blue!50!black
	},
	hd/alpha/back/.style={
		style=hd/alpha,
		dotted
	},
	hd/beta/.style={
		line width=0.5pt,
		solid,
		red!70!black
	},
	hd/surface/.style={
		line width=0.3pt,
		black!60
	}
} 
\renewcommand{\imagesfolder}{images}
\begin{document}

\title[A proof of Dunfield--Gukov--Rasmussen Conjecture]{On the Dunfield--Gukov--Rasmussen Conjecture}

\author{Anna Beliakova}
\address{Universit\"at Z\"urich,
	Institute of Mathematics,
	Winterthurerstrasse 190,
	CH-8057 Z\"urich,
	Switzerland}
\email{\href{mailto:anna@math.uzh.ch}{anna@math.uzh.ch}}

\author{Krzysztof K.\ Putyra}
\address{Universit\"at Z\"urich,
	Institute of Mathematics,
	Winterthurerstrasse 190,
	CH-8057 Z\"urich,
	Switzerland}
\email{\href{mailto:krzysztof.putyra@math.uzh.ch}{krzysztof.putyra@math.uzh.ch}}

\author{Louis-Hadrien Robert}
\address{Universit\'e Clermont Auvergne, LMBP, Campus des C\'ezeaux,
  3 place Vasarely, CS 60026, 63178 Aubi\`ere cedex, France}
\email{\href{mailto:louis-hadrien.robert@uca.fr}{louis-hadrien.robert@uca.fr}}

\author{Emmanuel Wagner}
\address{Universit\'e de Paris,
	IMJ-PRG, UMR 7586 CNRS,
	8 Place Aur\'elie Nemours,
	F-75013, Paris,
	France}
\email{\href{mailto:emmanuel.wagner@imj-prg.fr}{emmanuel.wagner@imj-prg.fr}}

\date\today

\begin{abstract}

In 2005 Dunfield, Gukov and Rasmussen conjectured an existence of a differential from the reduced triply graded Khovanov--Rozansky homology of a
knot to its knot Floer homology defined by Ozsv\'ath and Szab\'o. The main
result of this paper is a proof of a suitably updated version of their conjecture: we show that the reduced triply graded homology is related to knot
Floer homology by two spectral sequences, going through the intermediate
~$\mathfrak{gl}_0$ homology constructed by the last two authors. The~$\mathfrak{gl}_0$ homology comes
equipped with a spectral sequence from the reduced triply graded homology,
and here we construct the other spectral sequence, from the~$\mathfrak{gl}_0$ homology to
knot Floer homology. The new spectral sequence is of Bockstein type
 and arises from a~subtle manipulation
	of coefficients.
	The~main tools are quantum traces of foams and of singular Soergel bimodules
	and a~$\ZZ$-valued cube of resolutions model for knot Floer homology,
	originally constructed by Ozsv\'ath--Szab\'o over the~field of two elements.
	As an~application we deduce that both the~$\mathfrak{gl}_0$ homology
	and the~reduced triply graded Khovanov--Rozansky homology detect the~unknot,
	the~two trefoils, the~figure eight knot and the~cinquefoil.
\end{abstract}

\maketitle
 
\tableofcontents

\section{Introduction}
\label{sec:intro}
The~discovery of the~Alexander polynomial $\Delta_K(\qvar)$ in 1929
marked the~birth of knot theory,
mani\-fes\-ted in the~transition from conjectures to proofs.
In the~1970s Conway found a~first diagrammatic algorithm to compute
$\Delta_K(\qvar)$  using the~so-called skein relation:
\begin{equation}\label{eq:skein-alex}
	\mycrossing \quad - \quad \mycrossing[yscale=-1]
		\quad=\quad
	(\qvar - \qvar^{-1})\,\mysmoothing\,,
\end{equation}
where the~three pictures represent link diagrams that coincide
outside of the~small regions depicted above.

In the~1980s the~second big player in knot theory was introduced by
Jones and later extended to the~two variable HOMFLY-PT polynomial
$P_K(\avar, \qvar)$ with the~skein relation
\begin{equation}\label{eq:skein-HOMFLYPT}
  \avar\, \mycrossing
  	\quad-\quad
  \avar^{-1}\, \mycrossing[yscale=-1]
  	\quad=\quad
  (\qvar - \qvar^{-1})\, \mysmoothing\,.
\end{equation}
It specializes to the~Alexander polynomial for $\avar=1$
and to the~Jones polynomial for $\avar=\qvar^2$.
Setting $\avar=\qvar^N$ recovers the~$\sll_N$ polynomial of the~knot $K$.
Introducing \emph{webs}, oriented  planar trivalent graphs,
we can rewrite \eqref{eq:skein-HOMFLYPT} as

\begin{equation}\label{SSR}
  \avar\,  \mycrossing = \mydumbellintro - \qvar^{-1}\, \mysmoothing\qquad \quad
    \text{and} \qquad \quad
  \avar^{-1}\,  \mycrossing[yscale=-1] = \mydumbellintro - \qvar \,\mysmoothing \,,
\end{equation}

\noindent
where the~second diagram in both equations represents a~\emph{singular} crossing.
 
At the~beginning of this century Jones and HOMFLY--PT polynomials were
moved one categorical level higher by Khovanov and Khovanov--Rozansky
\cite{MR1740682, MR2391017, MR2421131}. These new theories associate with a~link
diagram graded chain complexes, the~homologies of which yield new powerful
link invariants. The~polynomials can be reconstructed by taking the~graded
Euler characteristics of these chain complexes. One important feature of the categorified invariants is their functoriality with respect to 
  link cobordisms, i.e. surfaces bounded by links induce maps on homology.
This does not hold for the~Euler characteristics.

After presenting a~knot $K$ as a~closure of a~braid $\braid$ with $n$ crossings,
the~Khovanov--Rozansky chain complex is defined by resolving each crossing of
$\braid$ in two ways as suggested by \eqref{SSR} and then by assigning to each
such resolution, which is a~web, a~Soergel bimodule.
Webs are then organized as vertices of an~$n$-dimensional cube.
With the~edges of the~cube we associate differentials  given by bimodule maps
induced by singular 2-dimensional cobordisms called \emph{foams}.
This construction is  based on a~functor of bicategories
\begin{equation}\label{eq:bifunctor}
  \set{Soergel}\co\ccat{Foam} \to \ccat{sSBim}
\end{equation}
discussed in Section~\ref{sec:ap}.
Closing up the~braid is achieved by taking the~horizontal trace of $\set{Soergel}$,
which  assigns to a~closed web the~Hochschild homology of the~associated
Soergel bimodule.
The~homology $\HHH$\/ of the~resulting cube complex is a~triply graded link
invariant that categorifies $P_K(\avar,\qvar)$.
By putting a~basepoint on the~diagram and killing the~corresponding
variable in the~Soergel bimodule, we obtain the~so-called
\emph{reduced} homology $\HHH^{\mathrm{red}}$,
which in case of knots does not depend on the~position of the~basepoint.
The~\emph{quantum} horizontal trace of \eqref{eq:bifunctor},
the~\emph{quantum} Hochschild homology as well as
the~\emph{quantum} annular link homology were constructed in \cite{BPW}.
Note that the~quantum horizontal trace of $\ccat{Foam}$ is the~category of
\emph{quantum annular foams} constituted by annular foams together with a~membrane,
subject to additional relations involving the~membrane and a~quantum parameter $q$.
All mentioned constructions admit algorithmic computations.

Parallel to these developments, the~Alexander polynomial was
categorified by Ozsv\'{a}th and Szab\'{o} using completely different,
geometric techniques.
Here chain complexes are generated by Lagrangian intersections
in a~symmetric product of (pointed) Heegaard diagrams
and the~differential counts holomorphic discs.
The resulting homology, known as \emph{Heegaard Floer knot} homology
or simply knot Floer homology, is denoted by $\HFK$.
This homology has important topological applications:
it detects  the~genus and fiberedness of a~knot \cite{YiNi}.
The list of knots detected by $\HFK$ is constantly growing.
However, this theory is essentially non-local and hard to compute in general. 

During many years the relationship between the algebraic
Khovanov--Rozansky and the geometric Heegaard Floer knot homologies remained mysterious.
In 2005
Dunfield, Gukov and Rasmussen made a series of influential conjectures \cite{DGR}.
They predicted an existence of the differentials $\partial_N$ and $\partial_0$ on $\HHH$,  such that
$H_{\!\bullet} (\HHH, \partial_N)$ is the $\mathfrak{sl}_N$
 homology
and $H_{\!\bullet} (\HHH, \partial_0)$ is $\HFK$, lifting the specializations of
$P_K(\avar, \qvar)$ at $\avar=q^N$ and $\avar=1$, respectively. 
A version of this conjecture was
established by Rasmussen in \cite{MR3447099}
for $N\geq 1$,
who
 constructed  a 
differential $d_N$ on $\HHH$ that leads to a
spectral sequence to $\mathfrak{sl}_N$ homology. It is  an open problem whether this spectral sequence collapses immediately.

In this paper we prove a suitable version of the second part of
Dunfield--Gukov--Rasmussen conjecture by connecting algebraic $\HHH^{\mathrm{red}}$ and geometric $\HFK$ knot homologies
with two spectral sequences. 
We will  refer to this result as DGR Conjecture.
An~important ingredient of our proof is
a~new knot homology theory 
 $\homologyglzero$, that was constructed by
the~last two authors of the~present paper in \cite{RW3}.
The $\gll_0$ homology categorifies the~Alexander polynomial and 
comes equipped with a~spectral sequence from
$\HHH^{\mathrm{red}}$ over $\mathbb Q$.
In this paper we construct the second spectral sequence
starting with $\homologyglzero$ and converging to   $\HFK$.
Let us stress that even if $\homologyglzero$ and $\HFK$
categorify the~same polynomial invariant,
there is a~priori no reason to assume that these homologies are isomorphic.
In fact, we will prove here that $\homologyglzero$ and $\HFK$ are different.

Our  construction of a~spectral sequence from $\homologyglzero$ to $\HFK$ 
uses the~cube of resolutions model for $\HFK$ developed
by Ozsv\'{a}th and Szab\'{o}
in \cite{OSCube} and 
reformulated in algebraic terms by Gilmore \cite{Gilmore}.
This model requires twisted 
 coefficients  $\Ztwo[q^{-1},q]]$, which is
a~ring of power series in $q$ over  the~field of two elements $\Ztwo$.
 This ring of coefficients is needed to kill  higher
differentials in the cube.
The problem is however that $q-1$ is invertible in $\Ztwo[q^{-1},q]]$, and hence,
this coefficient ring does not admit a~specialization at $q=1$
that is needed to connect with Soergel bimodules and
 $\homologyglzero$.  This motivated Manolescu 
in  \cite{ManolescuCube} to untwist the~cube of resolutions model
for $\HFK$.
As a result, he was able
  to reduce DGR conjecture to a computation of certain Tor-groups.

 In this paper we choose a different approach and work with twisted coefficients.
In  \cite{Gilmore}
the state spaces associated with vertices of the cube were 
identified with quotients of a twisted
version of Soergel bimodules.
This motivated us to search for a more conceptual understanding
of these spaces
in terms of the {\it quantum} Hochschild homology of Soergel bimodules.  

Let us finally 
mention that the cube of resolutions model was
 extensively studied by Dowlin in  two unpublished papers
 \cite{DowlinHHH, DowlinKh},
where   the~cube construction
was assumed to work with integral coefficients.

\subsection{Main results}
We start by promoting the coefficients from $\Ztwo$ to $\ZZ$, meaning that we
 provide an algebraic model
that computes $\HFK$ over $\ZZ[q^{-1},q]]$ via the~cube of resolutions.
For this we represent a~knot $K$ as a~braid closure $\bdiagram$ and
associate with it a~complex $\complexgilmore(\bdiagram)$ of $\ZZ[q,q^{-1}]$-modules,
such that
\begin{equation}\label{OSG}
	H_\bullet (\complexgilmore(\bdiagram)\otimes \ZZ[q^{-1},q]])
		\cong
	\HFK(K)\otimes \ZZ[q^{-1},q]].
\end{equation}
The first complex satisfying \eqref{OSG} with $\Ztwo$ instead of $\ZZ$
was constructed by {\bf A}lison {\bf G}ilmore in \cite{Gilmore}.
With vertices of the~cube she associated quotients of polynomial
rings by local and non-local relations.

On the geometric side, the proof of \eqref{OSG} requires a~choice
of a~coherent system of orientations for moduli spaces of holomorphic disks 
associated with  a~multipointed Heegaard diagram and to track signs in all proofs of \cite{OSCube}.
Let us mention that even though the~results of \cite{OSSSingular} and \cite{OSCube}
were stated over $\Ztwo$, these papers contain the~expected signs.
This makes the~comparison easier.
The~main ingredients of our proof are the result of \cite{AliEft}
about the~existence and properties of coherent systems of orientations,
Theorem \ref{thm:CFK-cube} establishing the skein exact triangle over $\ZZ$,
as well as Proposition \ref{prop:hfk-for-connected-planar} 
and Theorem \ref{thm:A-vs-HFK-for-planar} 
that allow to identify the~algebras assigned to vertices of the~cube.
In addition, we improve and streamline many arguments in \cite{OSCube}. For example, we consistently work with admissible diagrams, however
link diagrams used in \cite{OSCube} are not admissible, and we conceptualize the construction of twisted complexes.

On the algebraic side, we extend Gilmore's construction
to all annular webs and interpret it in terms of Soergel bimodules.
Here we work over an~arbitrary commutative ring $\scalars$ with a~fixed
inver-tible element $q$.
The~space $\gilmoreA(\web)$ that we assign to an~annular web $\web$
is a~quotient of the~quantum Hochschild homology \cite{BPW}
of the~Soergel bimodule associated with the~web
by (renormalized) non-local relations.
In the~case of a~resolution of a~braid 
this quotient is identified with Gilmore's algebra 
after renormalizing her variables:
we check that Gilmore's local relations
coincide with the~Soergel relations,
whereas non-local relations with those defined for webs. 

Combining previous results with the~general theory of quantum traces
\cite{BPW} we obtain a~new conceptual interpretation of
Gilmore's construction, opening the~floor for its further generalizations.
Recall that similarly to the~non-quantized setting,
the~quantum horizontal trace induces a~functor from quantum annular foams
to the~quantum Hochschild homology of Soergel bimodules.
In Proposition \ref{prop:gilmore-functor} we prove that
the~non-local relations are preserved by this functor.
Hence, we obtain a~new functorial evaluation of quantum annular foams
by using the \emph{quotient} of the~quantum Hochschild homology
by the~non-local relations.
This quotient can be used to generalize Gilmore's construction
and to define new homology theories. 

In particular, we modify $\gilmoreA(\web)$ by killing $q$-torsion.
Namely, given a~web $\web$ we consider the~map
\begin{equation}
	Q_{\web}\colon \gilmoreA(\web, \ZZ[q^{-1},q])
		\longrightarrow
	\gilmoreA(\web; \ZZ[q^{-1}, q]])
\end{equation}
induced by the~inclusion of coefficient rings.
In general the~map $Q_\web$ is not injective.
Dividing the~previous construction by the~kernel of $Q_\web$
and tensoring it with $\scalars$ over $\ZZ[q^{-1},q]$
produces a~new functorial assignment of a~$\scalars$-algebra $\ourQ(w)$
to a~quantum annular web $\web$.
By inserting these algebras into a~cube of resolutions for a~knot $K=\widehat{\beta}$
we obtain our main player---a~new chain complex\footnote{
	The~name of the~new complex is motivated by the~fact that it interpolates
	the~\textbf{A}lgebraic categorification of the~last two authors
	and the~\textbf{G}eometric categorification of Ozsv\'ath and Szab\'o.
}
$\ourQ(\bdiagram)$, the~homology of which we denote by $\ourQH(\bdiagram)$.
Since $\ourQ$ is defined over $\ZZ[q^{-1},q]$, it can be specialized at $q=1$,
 resolving our main problem.
We denote these specializations at $q=1$ by
$\ourQone(\bdiagram)$ and $\ourQHone(\bdiagram)$ respectively.
As we shall see, this new chain complex interpolates between the~algebraic
and geometric settings previously discussed in the~following way.

\begin{introproposition}\label{prop:ourQ-vs-gl0}
	The~homology theories $\ourQHone$ and $\homologyglzero$ coincide.
	Hence, $\ourQHone$ is a~knot invariant if\/ $\scalars$ is a~field.
\end{introproposition}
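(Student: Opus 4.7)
The plan is to exhibit a chain isomorphism between $\ourQone(\bdiagram)$ and $\complexglzero(\bdiagram)$ compatible with the cube-of-resolutions structure on both sides, and then pass to homology.

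\textbf{Step 1: Vertex identification.} Let $\web$ be an annular web arising as a resolution of the braid $\bdiagram$. At $q=1$ the quantum Hochschild homology $\qHoHom$ degenerates to the ordinary Hochschild homology $\HoHom$, and therefore $\gilmoreA(\web)$ specializes to a quotient of $\HoHom$ of the Soergel bimodule attached to $\web$ by the $q=1$ limit of the non-local relations. Taking the quotient by $\ker Q_\web$ before the specialization precisely removes the torsion that would otherwise obstruct this limit, so the $q=1$ specialization of $\ourQ(\web)$ is a genuine $\scalars$-algebra, namely $\ourQone(\web)$.

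\textbf{Step 2: Matching with the $\gll_0$ state space.} By Proposition~\ref{prop:gilmore-functor} the assignment $\web\mapsto\ourQone(\web)$ is functorial with respect to annular foams, and the $\gll_0$ construction of Robert--Wagner is a functor of the same type by design. Both functors are determined by their values on a generating collection of webs and on the elementary foam morphisms (cup, cap, zip, unzip), and in turn by the induced evaluations of closed foams. The plan is to verify that at $q=1$ the evaluation produced by the quotient of the Hochschild homology by the non-local relations reproduces the Robert--Wagner $\gll_0$ foam evaluation: the local Soergel relations are expected to match the local foam relations, while the renormalized non-local relations should specialize to the trace-type relations characterizing $\gll_0$.

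\textbf{Step 3: Matching the differentials.} The edges of the cube of resolutions in both complexes are decorated by the same zip and unzip cobordisms between adjacent webs, and the corresponding vertex maps on $\ourQone$ and on the $\gll_0$ state spaces are obtained by applying the respective functors. The natural transformation from Step~2 therefore upgrades to a chain isomorphism $\ourQone(\bdiagram)\cong\complexglzero(\bdiagram)$, whose homology gives $\ourQHone\cong\homologyglzero$. Knot invariance of $\ourQHone$ over a field $\scalars$ is then inherited from the known invariance of $\homologyglzero$.

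The main obstacle is the closed-foam comparison in Step~2: the two theories are formulated in different languages, Hochschild homology of Soergel bimodules on one side and combinatorial foam evaluations on the other, and one must show that the non-local relations specialize at $q=1$ to precisely the relations satisfied by the $\gll_0$ state sum on a closed annular foam. Concretely, this amounts to identifying the $q=1$ limit of the Hochschild homology of a closed annular Soergel bimodule, together with its non-local quotient, with the explicit evaluation formula of the last two authors.
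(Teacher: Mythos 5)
Your proposal takes essentially the same route as the paper (functoriality plus reduction to a small generating collection of webs), but Step~2 — which you correctly flag as the crux — is left unfilled, and the framing of that step is slightly off. You phrase it as a comparison of ``closed foam evaluations,'' i.e.\ as a match between the $q=1$ non-local quotient of Hochschild homology and the Robert--Wagner $\gll_0$ state sum in the language of closed foams. That is a genuinely harder comparison than what is actually needed. The efficient observation is that both $\ourQone(\web;\scalars)$ and $\glzero(\web;\scalars)$ are \emph{quotients of the same Soergel space} $\set{Soergel}(\web)$, and that the candidate isomorphism is simply the identity on $\set{Soergel}(\web)$. Once one knows that both assignments are functorial on $\cat{qAFoamB}$ (which you do have, via Proposition~\ref{prop:gilmore-functor} on the one side and the Robert--Wagner construction on the other), Proposition~\ref{prop:elem-annular-reduction} reduces the verification to basic elementary webs: concentric circles and chains of dumbbells. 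There the computation is finite and explicit — for disconnected webs both sides vanish (for $\ourQone$ this uses Example~\ref{exa:GilmoreA-disconnected-web}: the whole space is $q$-torsion, hence killed by the quotient by $\ker Q_\web$; for $\glzero$ it is in Theorem~\ref{thm:RW3}), and for a single circle or a chain of dumbbells both are free of rank one generated by the empty decoration (Examples~\ref{exa:GilmoreA-elementary-web}, \ref{exa:GilmoreA-chain-dumbells}, and Theorem~\ref{thm:RW3}(\ref{it:rank-gl0})). Without this finite check the natural transformation in your Step~3 is not established, so the argument as written does not close.

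A second, minor point in Step~1: the issue is not that specializing $\gilmoreA$ at $q=1$ is ill-defined, but that $\gilmoreA(\web)|_{q=1}$ is generally \emph{wrong} — it fails to vanish for disconnected webs, unlike $\glzero$. Killing $\ker Q_\web$ is what forces the disconnected pieces to zero, making the $q=1$ specialization match $\glzero$. Worth stating explicitly, since it is the entire reason $\ourQ$ is introduced rather than working with $\gilmoreA$ directly.
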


We expect the following to be true.

\begin{introconjecture}\label{conj:AGH-is-invariant}
	If\/ $\scalars$ is a~field of characteristic $0$,
	then $\ourQH$ is a~knot invariant for any $\qnt$.
\end{introconjecture}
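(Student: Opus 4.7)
The plan is to establish invariance of the complex $\ourQ(\bdiagram)$ up to chain homotopy equivalence under the Markov moves on braids---conjugation $\braid \sim \gamma\braid\gamma^{-1}$ and positive/negative stabilization $\braid \sim \braid\sigma_n^{\pm 1}$---which, combined with Alexander's theorem, would yield invariance of $\ourQH$ under all Reidemeister moves. The construction of $\ourQ$ factors through three functorial steps: the quantum horizontal trace of Soergel bimodules, the quotient by renormalized non-local relations (preserved by foams via the Proposition on $\GilmoreFunctor$), and the further quotient by $\ker Q_\web$ followed by base change to $\scalars$. So first I would verify that the last step is natural with respect to foam-induced maps, i.e.\ that for any foam $\foam\co\web\to\web'$ the induced map sends $\ker Q_\web$ into $\ker Q_{\web'}$; this is automatic from the commutative square comparing coefficients in $\ZZ[q^{-1},q]$ and $\ZZ[q^{-1},q]]$, and promotes $\ourQ$ to a bona fide functor on the cube of resolutions.

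Next I would construct explicit chain equivalences realizing the Markov moves. Conjugation invariance follows from cyclicity of quantum Hochschild homology (as developed in \cite{BPW}), which descends to $\ourQ$ by the functoriality established above. Stabilization invariance should be handled as in the triply graded setting: the Markov move is realized by the Reidemeister-II foam cobordism between the annular webs corresponding to $\bdiagram$ and $\bdiagram\sigma_n^{\pm 1}$, and I would transport this chain equivalence through the three quotient steps defining $\ourQ$.

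To conclude that these chain maps are quasi-isomorphisms I would use Proposition~\ref{prop:ourQ-vs-gl0}: after specializing at $q=1$ the complex becomes $\ourQone(\bdiagram)$, whose homology is the known invariant $\homologyglzero$, so the Markov chain maps are quasi-isomorphisms modulo $(q-1)$. To upgrade this to arbitrary $q$, I would work over the local ring $\scalars[q,q^{-1}]_{(q-1)}$ and apply a Nakayama-type argument: provided the terms of $\ourQ(\bdiagram)$ are free (or at least flat) modules over $\scalars[q,q^{-1}]$, a chain map whose $q=1$ specialization is a quasi-isomorphism must itself be one. The hypothesis of characteristic zero enters precisely to rule out the torsion obstructions that would derail this deformation argument.

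The main obstacle is the last ingredient---establishing the requisite flatness of $\ourQ(\web)$ over $\scalars[q,q^{-1}]$ for the annular webs appearing in a cube of resolutions. The quotient by $\ker Q_\web$ was designed to remove exactly the $q$-torsion obstructing the $q=1$ specialization, but this was defined web-by-web, and it is not obvious that the family of modules arising along a Markov move is flat as a whole. A satisfactory proof would likely require a structural description of $\ourQ(\web)$ on annular webs---for instance, an explicit basis compatible with foam-induced maps, or a presentation in terms of singular Soergel bimodules whose flatness over $\scalars[q,q^{-1}]$ is manifest. Producing such a description, and checking that the quotient by non-local relations preserves it, is where the essential difficulty of the conjecture lies.
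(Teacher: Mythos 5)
This statement is a \emph{conjecture}, not a theorem: the paper explicitly leaves it open, remarking in Section~\ref{sec:pseudo-completion} that ``while it can be shown that $\ourQH(\bdiagram;\scalars)$ is a braid invariant that is preserved under stabilization, checking the first Markov move (conjugacy) is challenging.'' So there is no proof in the paper to compare against, and the question is whether your outline identifies the actual open difficulty.

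Your overall architecture---verify naturality of the $\ker Q_\web$ quotient, realize Markov moves by foam cobordisms, then deform a $q=1$ quasi-isomorphism to general $q$ via a Nakayama argument over the localization at $(q-1)$---is reasonable, and the freeness required for Nakayama is already supplied by the paper: Lemma~\ref{lem:Q-fin-gen} shows that $\ourQ(\web;\scalars)$ is a finite-rank free $\scalars$-module whenever $\scalars$ is a PID, which applies to $\scalars=\mathbb K[q,q^{-1}]$. So the ingredient you flag as ``the main obstacle'' (flatness of $\ourQ(\web)$ over $\scalars[q,q^{-1}]$) is in fact already resolved in the paper; identifying that as the essential difficulty misses where the problem really sits.

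The genuine gap in your argument is the sentence ``conjugation invariance follows from cyclicity of quantum Hochschild homology, which descends to $\ourQ$.'' The paper warns against exactly this step. The functor $\ourQ$ is built from the \emph{reduced} quantum Soergel space $\set{qSoergelRed}$ on the category $\cat{qAFoamB}$ of \emph{pointed} annular webs, and in Section~\ref{sec:annulus} the paper states explicitly that ``because of the restricted trace relation in $\cat{qAFoamB}$, the cyclicity property \eqref{eq:qSoergel-cyclicity} does not hold for $\set{qSoergelRed}$ unless in one of the webs\ldots the topmost endpoints are connected by an interval disjoint from the rest of the web.'' In other words, the isomorphism $\set{qSoergel}(\widehat{\web_1\circ\web_2})\cong\set{qSoergel}(\widehat{\web_2\circ\web_1})$ you want to invoke is precisely the statement that fails once the marking $\basepoint$ is pinned to the trace section and the quantum trace relation is not applied across it. Conjugation past $\basepoint$ is therefore not covered by the existing functoriality, and no chain map between the two complexes is produced by the categorical framework as it stands; there is thus nothing for your Nakayama argument to be applied to in that case. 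The analogous obstruction at $q=1$ is overcome in \cite{RW3} by a torsion-freeness argument specific to $\homologyglzero$ over a field, which does not obviously deform to a statement about $q$-dependent torsion. Locating or constructing a substitute for cyclicity in the pointed quantum setting is where the actual open problem lives, and your proposal does not address it.
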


In the~next step we analyse the~Bockstein spectral sequence
associated with the~specialization of $\ourQH$ at $q=1$.
Note that this spectral sequence preserves the~Alexander grading.
To be more precise, we fix an arbitrary field $\mathbb K$
and work over the principal ideal domain $\mathbb K[q,q^{-1}]$.
Thanks to Proposition~\ref{prop:ourQ-vs-gl0} we can identify
the~first page of our spectral sequence with $\homologyglzero$
and by \eqref{OSG} the~last page,
which is the~quotient of $\ourQH\otimes \mathbb K$ by its torsion submodule,
is isomorphic to $\HFK$
(compare this with Proposition \ref{prop:BocksteinConvergesq1}).

\begin{introtheorem}\label{thm:main}
  Assume that $\mathbb K$ is a~field and $K$ is a~knot represented
  by a~braid closure $\bdiagram$.
  Then the~$(q\mapsto 1)$ Bockstein spectral sequence applied to
  $\ourQ(\bdiagram; \mathbb{K}[q, q^{-1}])$ has
  $\homologyglzero(K;\mathbb K)$ as its first page
  and converges after finitely many steps.
  The last page is (non-canonically) isomorphic to $\HFK(K; \mathbb K)$.
\end{introtheorem}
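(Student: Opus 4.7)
The plan is to deploy the Bockstein spectral sequence associated with the short exact sequence of $\mathbb{K}[q,q^{-1}]$-modules
\[
0 \longrightarrow \mathbb{K}[q,q^{-1}] \xrightarrow{\,\cdot(q-1)\,} \mathbb{K}[q,q^{-1}] \longrightarrow \mathbb{K} \longrightarrow 0,
\]
applied to $\ourQ(\bdiagram;\mathbb{K}[q,q^{-1}])$. After, if necessary, replacing the complex by a termwise free resolution so that tensoring with this sequence remains exact, the standard exact-couple construction yields pages $(E_r,d_r)$ in which $d_r$ is the $(q-1)^r$-Bockstein. Since $q-1$ has Alexander weight zero, the whole spectral sequence splits along the Alexander grading, giving the last statement about grading preservation.

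The identification of the first page is then tautological: by construction
\[
E_1 \;=\; H_\bullet\bigl(\ourQ(\bdiagram;\mathbb{K}[q,q^{-1}])\otimes_{\mathbb{K}[q,q^{-1}]}\mathbb{K}\bigr) \;=\; \ourQHone(\bdiagram;\mathbb{K}),
\]
and by Proposition~\ref{prop:ourQ-vs-gl0} the right-hand side equals $\homologyglzero(K;\mathbb{K})$. For finite convergence I would invoke the structure theorem over the PID $\mathbb{K}[q,q^{-1}]$: the chain groups of $\ourQ(\bdiagram)$ are finitely generated over $\mathbb{K}[q,q^{-1}]$, being quotients of the quantum Hochschild homologies of Soergel bimodules of a braid on finitely many strands. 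Consequently $\ourQH(\bdiagram;\mathbb{K}[q,q^{-1}])$ decomposes as a free part plus finitely many cyclic torsion summands of the form $\mathbb{K}[q,q^{-1}]/(f_i)^{k_i}$, only those with $f_i = q-1$ contribute to the Bockstein tower, and the sequence stabilizes on page $r>\max_i k_i$.

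The decisive step is identifying $E_\infty$ with $\HFK(K;\mathbb{K})$. The abutment of the spectral sequence is the free part of $\ourQH(\bdiagram;\mathbb{K}[q,q^{-1}])$ tensored with $\mathbb{K}$, whose total $\mathbb{K}$-dimension in each Alexander degree equals the rank over any flat field-like extension. I would compute this rank by tensoring with $\mathbb{K}[q^{-1},q]]$, which is flat over $\mathbb{K}[q,q^{-1}]$: using the definition of $\ourQ$ as the quotient of $\gilmoreA$ by $\ker Q_\web$, one establishes that the natural map
\[
\ourQ(\bdiagram)\otimes_{\mathbb{K}[q,q^{-1}]}\mathbb{K}[q^{-1},q]] \;\longrightarrow\; \gilmoreA(\bdiagram;\mathbb{K}[q^{-1},q]])
\]
is a quasi-isomorphism. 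Invoking \eqref{OSG}, now proved integrally earlier in the paper via coherent orientations and sign-tracking, one identifies the completed homology with $\HFK(K;\mathbb{K})\otimes\mathbb{K}[q^{-1},q]]$, whose $\mathbb{K}[q^{-1},q]]$-rank in each Alexander degree is exactly $\dim_\mathbb{K}\HFK(K;\mathbb{K})$. Matching this with the free $\mathbb{K}[q,q^{-1}]$-rank of $\ourQH(\bdiagram)$ yields the desired non-canonical isomorphism $E_\infty \cong \HFK(K;\mathbb{K})$ after specializing at $q=1$.

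The principal obstacle is precisely the quasi-isomorphism claim of the previous paragraph: the map is injective at the chain level (its kernel $\ker Q_\web$ having been quotiented out by design), but surjectivity on homology after $q$-adic completion requires a density-type statement showing that every element of $\gilmoreA(\bdiagram;\mathbb{K}[q^{-1},q]])$ is $q$-adically approximable by images of elements of $\ourQ(\bdiagram;\mathbb{K}[q,q^{-1}])$. This rests on the finiteness and polynomial nature of the local and non-local relations assembled in the cube of resolutions, together with the fact that $\ourQ$ is obtained by the \emph{minimal} quotient needed to make $Q_\web$ injective. Granting this comparison together with the integral lift \eqref{OSG}, the remaining steps of the Bockstein formalism are formal.
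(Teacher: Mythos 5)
Your overall strategy matches the paper's: run the $(q\mapsto 1)$ Bockstein exact-couple formalism (Proposition~\ref{prop:BocksteinConvergesq1}), use Proposition~\ref{prop:ourQ-vs-gl0} to identify the first page, and then determine the limit by comparing ranks after base change to $\mathbb K[q^{-1},q]]$, where Theorem~\ref{thm:gilmore-vs-hfk} identifies things with $\HFK$. But two of your steps contain real gaps that the paper closes by specific results.

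First, your justification for finite generation is not correct as stated: the chain groups of $\ourQ(\bdiagram)$ are \emph{not} finitely generated over $\mathbb K[q,q^{-1}]$ merely because they are quotients of quantum Hochschild homologies of Soergel bimodules---those Hochschild homologies are polynomial algebras in the edge variables, hence of infinite rank over $\mathbb K[q,q^{-1}]$. Finite rank, and more importantly \emph{freeness}, is what Lemma~\ref{lem:Q-fin-gen} establishes, using the annular reduction (Proposition~\ref{prop:elem-annular-reduction}), vanishing for disconnected webs, rank one for chains of dumbbells, and the PID structure theorem. Freeness is indispensable: your hedge of ``replacing the complex by a termwise free resolution if necessary'' would, if it were actually necessary, change the first page of the spectral sequence (you would compute $\mathrm{Tor}$ groups rather than $H_*(C\otimes\mathbb K)$), so it is not an option; one must know the chain groups are free, which they are by Lemma~\ref{lem:Q-fin-gen}.

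Second, the step you yourself flag as the ``principal obstacle''---showing that $\ourQ(\bdiagram)\otimes_{\mathbb K[q,q^{-1}]}\mathbb K[q^{-1},q]] \to \complexgilmore(\bdiagram;\mathbb K[q^{-1},q]])$ is a quasi-isomorphism---is handled in the paper by a stronger and cleaner statement. Proposition~\ref{prop:ourQ-to-Gilmore} asserts that this map is a genuine \emph{chain-level isomorphism}, not merely a quasi-isomorphism. Injectivity is the defining property of $\ourQ$ (one has quotiented by precisely $\ker Q_\web$), and surjectivity after tensoring with $\mathbb K[q^{-1},q]]$ holds because $\gilmoreA(\web;\mathbb K[q^{-1},q]])$ is obtained from $\gilmoreA(\web;\mathbb K[q,q^{-1}])$ by base change of the same polynomial presentation, so its elements are already in the image of $Q_\web$ up to invertible scalars. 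No $q$-adic density argument is needed. With this chain-level isomorphism the rank comparison is a direct application of the universal coefficient theorem over the PID $\mathbb K[q,q^{-1}]$, since $\mathbb K[q^{-1},q]]$ contains its field of fractions---exactly the paper's argument.
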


Recall that in \cite{RW3} a~spectral sequence from
$\HHH^{\mathrm{red}}$ to $\homologyglzero$ was constructed over $\QQ$.

\begin{thm}[\cite{RW3}]
  There exists a differential $d_ 0$ of $(\avar,\qvar,\tvar)$-degree $(-2,0,1)$
  on the~Hochschild homo\-logy of reduced Soergel bimodules over any field 
  of characteristic $0$
  that induces a spectral sequence from $\HHH^{\mathrm{red}}$ to $\homologyglzero$.
\end{thm}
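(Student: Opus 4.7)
The plan is to adapt the Rasmussen-type strategy that produces differentials on the triply graded homology whose associated spectral sequences converge to the various $\sll_N$ homologies. Concretely, I would look for an endomorphism $d_0$ on the chain complex underlying $\HHH^{\mathrm{red}}$ of tridegree $(-2, 0, 1)$, verify that $d + d_0$ squares to zero, and identify the homology of $d + d_0$ with $\homologyglzero$. The spectral sequence would then arise from the natural $\avar$-filtration on the deformed complex.

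First I would fix a braid representative $\braid$ for $K$ and work at the cube of resolutions level. To each vertex of the cube one attaches a reduced Soergel bimodule and takes its Hochschild homology; the cube differential $d$ is built from zip and unzip foam maps via the functor $\set{Soergel}$ of \eqref{eq:bifunctor}, and the total complex computes $\HHH^{\mathrm{red}}(K)$.

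Next I would construct $d_0$ from the Robert--Wagner foam evaluation for $\mathfrak{gl}_0$. This evaluation differs from the generic HOMFLY-PT foam calculus by local relations that, translated through the $\set{Soergel}$ functor, induce chain-level bimodule maps of tridegree $(-2, 0, 1)$. Assembling these contributions vertex by vertex of the cube produces a candidate $d_0$; the identity $(d + d_0)^2 = 0$ then reduces to local foam relations that hold over any characteristic zero field.

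Finally, the $\avar$-filtration on the deformed complex is preserved by $d$ and strictly lowered by $d_0$, yielding a bounded, convergent spectral sequence whose $E_1$-page is $\HHH^{\mathrm{red}}(K)$. Identifying the $E_\infty$-page with $\homologyglzero(K)$ relies on the fact that, in characteristic zero, the deformed complex is quasi-isomorphic to the Robert--Wagner complex defining $\mathfrak{gl}_0$ link homology. The hard part is the explicit and coherent definition of $d_0$ across all cube vertices together with the $(d + d_0)^2 = 0$ verification; the characteristic zero hypothesis enters because the $\mathfrak{gl}_0$ foam evaluation involves denominators that become singular in positive characteristic.
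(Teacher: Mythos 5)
The critical gap is that you never actually define $d_0$; you only say where you would look for it, and that place is not where it lives. The $\gll_0$ foam evaluation is what \emph{defines} the target $\glzero$ of the spectral sequence, not the deformation $d_0$. In fact $d_0$ has nothing to do with foams or the cube differential at all: it is a deformation of the Koszul model for Hochschild homology, acting ``vertically'' alongside the Koszul differential $d_{HH}$. Concretely, on the reduced Koszul complex
\[
	\set{SoergelRed}(\web) \otimes_{R^e} \bigotimes_{i\geq 2}\Bigl(\qshift^2 R^e \xrightarrow{\ x_i\otimes 1 - 1\otimes x_i\ } R^e\Bigr)
\]
the operator $d_0$ is assembled from copies of the identity parallel to the Koszul generators, so that the total differential on each factor becomes $x_i\otimes 1 - 1\otimes x_i + 1\otimes 1$ as displayed in~\eqref{eq:Koszul-total}. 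This makes the tridegree $(-2,0,1)$ immediate: $d_0$ has the same $(\avar,\tvar)$-degree as $d_{HH}$ and a $\qvar$-degree lower by $2$, because a constant replaces a degree-$2$ polynomial. That $d_0$ anticommutes with $d_{HH}$ is a one-line check, and compatibility with the cube differential $d_{top}$ is automatic since the whole construction is functorial in $\web$; no appeal to ``local foam relations'' is needed to get $(d+d_0)^2 = 0$.

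The substantive remaining step, which your proposal labels as a quasi-isomorphism with the Robert--Wagner complex but does not supply, is Proposition~\ref{prop:d0-vs-S0}: the functor $H^{d_0}$ agrees with $\glzero$. This is proved by checking on basic elementary webs --- vanishing for disconnected webs and a regular-sequence computation for chains of dumbbells --- and then invoking Proposition~\ref{prop:elem-annular-reduction}. Finally, your explanation of the characteristic-zero hypothesis misses the actual reason: it is not about denominators in the $\gll_0$ foam evaluation, but about inverting integers such as the number of strands $k$ when passing to the reduced Koszul complex, and about the invariance arguments underlying $\homologyglone$.
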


The~above theorem uses the~following convention for gradings:
the~Koszul differential $d_{K}$ is of $(\avar,\qvar,\tvar)$-degree
$(-2,2,1)$, whereas the~degree of the~hypercube differential
$d_{top}$ is $(0,0,1)$.

Combining this spectral sequence with the one from
Theorem \ref{thm:main}
  we get the main result of this paper.

\begin{introtheorem}[DGR Conjecture]\label{thm:HHHHFK}
	For any knot $K$ and any field of characteristic zero, the bigraded dimension of  
	$\HHH^{\mathrm{red}}$ (after forgetting the $\avar$-grading) is greater or equal to the bigraded  dimension of $\HFK$.
\end{introtheorem}

The conjectural degree of the differential $\partial_0$
 of Dunfield--Gukov--Rasmussen
was expected to be  $(\avar,\qvar,\tvar)=(2,0,1)$ and $(2k,0,1)$ 
for $k>1$
in general if one expects a full spectral sequence.
This degree is precisely $2\mathrm{deg} (d_{top})-\mathrm{deg} (d_0)$, which is
the~degree of the~differential on the~complex of
the~filtered spectral sequence formed by computing $d_0$ and then $d_{top}$
(after homology with respect to $d_K$  and ascending filtration associated to the $\avar$-grading are taken).
Notice that the combination of the two~spectral sequences guarantees that
higher differentials are always of degree $1$ with respect to the $\tvar$-grading, hence are compatible with the conjectural degrees and establish the bigraded rank inequality.

To investigate the~question whether our spectral sequence 
collapses at the first step we compute the~homology
$\homologyglzero$.
Over $\QQ$ this question can be handled using the known
computations for $\HHH^{\mathrm{red}}$ and the~spectral sequence
between $\HHH^{\mathrm{red}}$ and $\homologyglzero$.

Consider the~first case of interest, namely the~$T(3,4)$-torus knot.
The Poincar\'e polynomial of the~reduced triply graded link homology
of this knot is
\[\begin{split}
	P(\tvar, \avar,\qvar) = 
		\avar^{-6}\qvar^{-6}\tvar^{6} +
		+  \avar^{-8}\qvar^{-4}\tvar^{5} + \avar^{-6}\qvar^{-2} \tvar^{4} + (\avar^{-8}\qvar^{-2} + \avar^{-8}\qvar^{0})\tvar^{3}\\
		  + 
		(\avar^{-6}\qvar^{0} + \avar^{-6}\qvar^{2}+ \avar^{-10}\qvar^{0})\tvar^{2} 
		 + (\avar^{-8}\qvar^{2} + \avar^{-8}\qvar^{4})\tvar^{1}+ \avar^{-6}\qvar^{6}\tvar^{0}.\end{split}
\]
On one hand, a~direct investigation using the~degree of
the~differential $d_0$ shows that the~total dimension of
the~$\homologyglzero$ \cite{RW3} is at least $9$:
the only terms that can cancel out are
$\avar^{-8}\qvar^{0}\tvar^{3}$ and $\avar^{-6}\qvar^{0}\tvar^{2}$.
On the other hand, the~total dimension of $\HFK$ for the same knot is $5$,
with three pairs that should cancel out:
\[	\avar^{-10}\qvar^{0}\tvar^{2} \leftrightarrow \avar^{-8}\qvar^{0}\tvar^{3},
		\qquad
	\avar^{-8}\qvar^{2}\tvar^{1} \leftrightarrow \avar^{-6}\qvar^{2}\tvar^{2},
		\quad\text{and}\quad
	\avar^{-8}\qvar^{-2}\tvar^{3} \leftrightarrow \avar^{-6}\qvar^{-2}\tvar^{4}.
\]
A~direct consequence is that $\homologyglzero$ and $\HFK$
do not coincide over $\QQ$.
Hence, the~combination of our two spectral sequences does not always degenerate.

In the~example of $T(3,4)$, the~term $\avar^{-8}\qvar^{0}\tvar^{3}$ cancels out
with either $\avar^{-10}\qvar^{0}\tvar^{2}$ or $\avar^{-6}\qvar^{0}\tvar^{2}$.
It is a priori unclear, though, with which one and in which of the~two spectral
sequences this cancellation happens. However recent computations by Laura Marino (adapting her program \cite{marino2023computing}) show that $\HHH^{\mathrm{red}}(T(3,4))$ and $\homologyglzero(T(3,4))$  have the same rank, hence the cancellation occurs in the Bockstein type spectral sequence.

\subsection{Applications}
As it was already noticed by Manolescu~\cite{MR3392529},
the~existence of a~spectral sequence from the~reduced
triply graded homology $\HHHred$ to knot Floer homology $\HFK$
yields a~couple of detection results for $\HHHred$.
In this section we establish these detection results
for both $\HHHred$ and $\homologyglzero$ with $\QQ$-coefficients.

Recall that in \cite{MR2023281} it was proven that $\HFK$ detects
the~Seifert genus of the~knot and hence the~unknot,
which is the~only knot of genus $0$.
The~statement was originally formulated for $\scalars = \ZZ$.
However, as noticed by many authors (see for instance
\cite{MR3868231, https://doi.org/10.48550/arxiv.2011.02005}),
it remains true over any field.
Theorems \ref{thm:HHHHFK} and \ref{thm:main} allow us to extend this
result to $\HHH^{\rm red}$ and $\homologyglzero$.

\begin{introcorollary}\label{cor:unknot-detect}
  The~$\gll_0$ homology and the~reduced triply graded homology
  (both with $\QQ$-coefficients) detect the~unknot.
\end{introcorollary}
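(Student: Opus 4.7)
The plan is to derive both detection results from the spectral sequences of Theorems~\ref{thm:HHHHFK} and~\ref{thm:main}, combined with the unknot detection property of $\HFK$ over any field, which has just been recalled above.

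The first ingredient is the rank-one base case: for the unknot $U$, both $\HHHred(U;\QQ)$ and $\homologyglzero(U;\QQ)$ have total $\QQ$-dimension equal to one. Since each categorifies a polynomial invariant---the reduced HOMFLY-PT polynomial in the first case, the Alexander polynomial in the second---and since both polynomials take the value~$1$ on $U$, the total $\QQ$-rank is at least one; a direct computation on the trivial one-strand braid closure, using the constructions already set up in the paper, pins down the rank to be exactly one.

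With this in hand, suppose $K$ is a knot for which $\HHHred(K;\QQ)$ has total $\QQ$-dimension one. Theorem~\ref{thm:HHHHFK} supplies a spectral sequence with $E_1$-page $\HHHred(K;\QQ)$ converging to $\HFK(K;\QQ)$, and since the dimension of successive pages can only decrease we obtain $\dim_\QQ \HFK(K;\QQ) \leq 1$. On the other hand $\HFK(K;\QQ)$ is always nonzero---its graded Euler characteristic is the Alexander polynomial $\Delta_K$, which satisfies $\Delta_K(1) = 1$---so its dimension is exactly one and therefore $\HFK(K;\QQ) \cong \HFK(U;\QQ)$. The unknot detection property of $\HFK$ over $\QQ$ then forces $K = U$. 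The same argument, with Theorem~\ref{thm:main} in place of Theorem~\ref{thm:HHHHFK}, handles the case of $\homologyglzero$: if $\dim_\QQ \homologyglzero(K;\QQ) = 1$, the Bockstein spectral sequence forces $\dim_\QQ \HFK(K;\QQ) \leq 1$, and we conclude as before.

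No serious obstacle is expected: the content lies entirely in the two spectral sequences, which are already in place by this point in the paper, and the rank comparison is elementary. The only subtlety is to verify that $\HHHred(U;\QQ)$ and $\homologyglzero(U;\QQ)$ really are one-dimensional, but in both cases this follows from standard computations on the trivial braid closure, using the lower bound coming from the Euler characteristic to match the upper bound coming from the construction.
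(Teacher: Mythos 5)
Your argument matches the paper's in all essentials: you invoke the two spectral sequences of Theorems~\ref{thm:HHHHFK} and~\ref{thm:main} to transfer the unknot-detection property of $\HFK$ over a field (which rests on genus detection) to $\HHHred$ and $\homologyglzero$. The paper takes the hypothesis in the form ``$\homologyglzero(K;\QQ)\cong\QQ$ concentrated in bidegree $(0,0)$'' so that the spectral sequence is forced to be trivial on the nose and the bidegree of the limit is determined, whereas you work only with the total rank; that is fine, but the inference ``$\dim_\QQ\HFK(K;\QQ)=1$ therefore $\HFK(K;\QQ)\cong\HFK(U;\QQ)$'' deserves one more word: either observe that the spectral sequence preserves the quantum grading so the surviving generator sits in Alexander grading $0$, or appeal directly to genus detection (a rank-one $\HFK$ forces Seifert genus $0$). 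With that sentence supplied, your proof is correct and follows the same route as the paper.
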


If Conjecture 1 holds, then the~same is true for $\ourQH$ at any $\qnt$.

\begin{proof}
  Suppose $K$ has the~same $\gll_0$ homology as the~unknot,
  that is $\homologyglzero(K) = \QQ$ concentrated in bidegree $(0,0)$.
  Nothing can happen in the~spectral sequence from $\homologyglzero$ to $\HFK$,
  so that $\HFK(K, \QQ) = \QQ$ in bidegree $(0,0)$ and $K$ is therefore the~unknot.
  The~argument for $\HHHred$ is the same.
\end{proof}

By the~results of Ghiggini \cite{MR2450204} and  Ni \cite{YiNi},
$\HFK$ detects fiberedness of the knot and
the~only fibered knots of genus $1$ are the~two trefoils and the~figure-eight knot.
The~knot Floer homologies of these knots appear to be different.
Hence, $\HFK$ detects these knots too.
In addition, it was proven more recently that knot Floer homology detects
the~cinquefoil \cite{https://doi.org/10.48550/arxiv.2203.01402}. 
Applying Theorems \ref{thm:HHHHFK} and \ref{thm:main}
we deduce that all these knots  are also detected by $\HHHred$ and $\homologyglzero$.
Note that $\gll_0$ and reduced triply graded homology groups for these knots
are given in Appendix~\ref{sec:computations-gl0}.

\begin{introcorollary}\label{corE}
  The~$\gll_0$ homology and the~reduced triply graded homology (both
  with $\QQ$-coefficients) detect the~two trefoils, the~figure-eight knot
  and the~cinquefoil.
\end{introcorollary}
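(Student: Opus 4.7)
The plan is to adapt the argument of Corollary \ref{cor:unknot-detect}, running it separately for each target knot $K_0 \in \{T_{2,3},\, T_{2,-3},\, 4_1,\, T_{2,5}\}$. Suppose $K$ is a knot with $\homologyglzero(K;\QQ)$ (respectively $\HHHred(K;\QQ)$) isomorphic as a graded $\QQ$-vector space to that of $K_0$. We aim to prove that the spectral sequence of Theorem \ref{thm:main} (respectively Theorem \ref{thm:HHHHFK}) applied to $K$ collapses at $E_1$, yielding $\HFK(K;\QQ) \cong \HFK(K_0;\QQ)$ as bigraded $\QQ$-vector spaces, and then invoke the $\HFK$-detection results of Ghiggini \cite{MR2450204}, Ni \cite{YiNi}, and \cite{https://doi.org/10.48550/arxiv.2203.01402} to conclude $K \cong K_0$.

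The key computational input is the explicit form of $\homologyglzero(K_0;\QQ)$ and $\HHHred(K_0;\QQ)$ recorded in Appendix \ref{sec:computations-gl0}. From those Poincaré polynomials one reads that the total $\QQ$-dimensions both equal $\dim_\QQ \HFK(K_0;\QQ) = |\Delta_{K_0}|_1$, and, more importantly, that in every Alexander ($\qvar$-)grading the generators are placed at bidegrees between which no spectral sequence differential of strictly positive $\tvar$-degree can act. For the three torus knots $T_{2,\pm 3}$ and $T_{2,5}$ this is immediate because each Alexander grading contains a single generator. For the figure-eight the three generators in Alexander grading $0$ must all share a common $\tvar$-parity, as forced by the Euler-characteristic identity matching the coefficient $+3$ of $\qvar^0$ in $\Delta_{4_1}$; hence a differential of odd $\tvar$-degree cannot pair any two of them.

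Both spectral sequences preserve the Alexander grading: the Bockstein of Theorem \ref{thm:main} by construction, and the combined DGR spectral sequence of Theorem \ref{thm:HHHHFK} by inheriting this property from its two constituents, namely the $(\avar,\qvar,\tvar)$-degree $(-2,0,1)$ differential $d_0$ of \cite{RW3} and the Bockstein. Moreover the text preceding Theorem \ref{thm:HHHHFK} records that all higher differentials of the DGR spectral sequence carry $\tvar$-degree exactly $1$. Under the isomorphism assumption, the positional obstruction drawn from the Poincaré polynomial of $K_0$ transfers verbatim to the $E_1$-page for $K$ and forces every higher differential to vanish; both spectral sequences therefore collapse and deliver the bigraded isomorphism $\HFK(K;\QQ) \cong \HFK(K_0;\QQ)$.

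The main obstacle in this plan is the content of Appendix \ref{sec:computations-gl0}. The reduced triply graded homology of these four small knots is widely documented, so $\HHHred(K_0;\QQ)$ amounts to a literature lookup. To produce $\homologyglzero(K_0;\QQ)$ we run the spectral sequence of \cite{RW3} from $\HHHred$ to $\homologyglzero$ and verify its degeneration on each of the four knots; this is again a positional analysis, comparing the bidegree profile of $\HHHred(K_0;\QQ)$ against the $(\avar,\qvar,\tvar)$-degree $(-2,0,1)$ of $d_0$. Once these explicit Poincaré polynomials are in hand, the spectral-sequence collapse argument above is formal.
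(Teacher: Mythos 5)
Your proposal is correct and follows the same strategy as the paper's own proof: check by degree reasons (the spectral sequences preserve the $\qvar$-grading and have $\tvar$-degree $1$ differentials) that no cancellation is possible, and then invoke the $\HFK$-detection results. The one small stylistic divergence is your treatment of the figure-eight: you deduce from the Euler-characteristic constraint $\Delta_{4_1}(\qvar^0)=+3$ that the three $\qvar$-degree-$0$ generators share $\tvar$-parity, whereas the paper just reads off from the Poincar\'e polynomial in Appendix~\ref{sec:computations-gl0} that all three sit at $(\qvar,\tvar)=(0,0)$; your argument is valid but weaker than what the hypothesis already supplies, since assuming $\homologyglzero(K;\QQ)\cong\homologyglzero(4_1;\QQ)$ as bigraded spaces pins down the exact $\tvar$-degrees, not merely their parity. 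Likewise, your suggestion to obtain $\homologyglzero(K_0;\QQ)$ by running the spectral sequence of~\cite{RW3} off the known $\HHHred$ and verifying degeneration is a legitimate alternative to the paper's direct hypercube computation in Appendix~\ref{sec:computations-gl0}, but it is a route, not a gap.
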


 Besides of telling knots apart, Corollary \ref{corE}
demonstrates strength of 
algebraic knot homology theories.
\begin{proof}
  The~argument is exactly the~same for both homology theories and all knots:
  by degree reasons there are no cancellations in the~spectral sequences.
  Let us provide a~detailed proof for $\gll_0$ homology.
  Suppose that the~Poincar\'e polynomial of the~$\gll_0$ homology of a~knot
  is equal to the~one of the~cinquefoil,
  $\qvar^4 + \qvar^2\tvar^{1} + \tvar^{2} + \qvar^{-2}\tvar^{3} + \qvar^{-4}\tvar^{4}$.
  Because the~$(q\mapsto 1)$ Bockstein spectral sequence preserves the~$\qvar$-degree,
  this knot has the~same knot Floer homology as the~cinquefoil.
  Hence, by \cite{https://doi.org/10.48550/arxiv.2203.01402}, it is the cinquefoil.
  Exactly the~same argument works for both trefoils.
  In case of the~figure-eight knot, the~homology
  in the~$(\qvar, \tvar)$-degree $(0,0)$ has rank 3.
  However, since the~differential in the~$(q\mapsto 1)$
  Bockstein spectral sequence has $(\qvar, \tvar)$-degree $(0,1)$,
  there is again no cancellation. 
  To conclude the~result for $\HHH^{\rm red}$ we check that
  the~Poincar\'e polynomials listed in Appendix~\ref{sec:computations-gl0}
  have no cancelling pairs with respect to differentials of
  $(\avar,\qvar,\tvar)$-degree $(2k,0,1)$.
\end{proof}

Let us mention that similar detection results are also valid for Khovanov homology 
\cite{MR2805599, MR4393789, https://doi.org/10.48550/arxiv.2105.12102, MR4275096},
where the last two papers are based on the~Dowlin spectral sequence \cite{DowlinKh}
that uses an~integral version of \cite{OSCube}. 
Combined with our $\ZZ$-valued cube of resolutions model for $\HFK$,
these results are fully justified.

The above mentioned detection results for $\HHH^{\mathrm{red}}$
 can also be obtained by combining  Dowlin's and  Rasmussen's \cite{MR3447099} spectral sequences. Notice that in this case, one should work with the single $\delta$-graded (rather than bigraded)
 versions of Khovanov and knot Floer homologies.
Recently, in \cite{MR4393789} Baldwin and Sivek again used the~Dowlin spectral
sequence to prove that $\HHH^{\rm red}$ detects an~infinite family of pretzel
knots $P(-3,3,2n+1)$, $n\in \ZZ$, that are all not fibered.
We would expect that our methods also allow to reprove these results.

 \subsection{Outline}

Besides Introduction, this~paper is divided in four sections.
The~first section is devoted to algebraic preliminaries:
we recall classical facts and introduce notations concerning
symmetric polynomials, Soergel bimodules and quantum Hochschild homology.
Then we discuss webs and foams and finally we apply the~technology
of quantum traces \cite{BPW} to webs and foams. We define the functor $B$
from \eqref{eq:bifunctor} and  discuss its quantum horizontal trace.
Furthermore, we show that the higher quantum Hochschild homology of a singular
Soergel bimodule for generic quantum parameters vanishes (Theorem~\ref{thm:qHH(Soergel)=0}).
In Section~\ref{sec:lh} we review 
the general cube of resolutions construction and
three different combinatorial  link homologies based on it:
\begin{enumerate}
	\item the~triply graded Khovanov--Rozansky link homology,
	\item the~symmetric $\gll_1$ link homology based on the~Robert--Wagner foam evaluation formula,
	\item the $\gll_0$ homology introduced by the~two last authors in \cite{RW3}.
\end{enumerate}
In Section~\ref{sec:hfk} we review the~construction of the~twisted Heegaard Floer
homology for singular links and systematically upgrade coefficients to $\ZZ$.
Then we construct skein exact triangles and compute homology over $\ZZ$
for planar singular knots.
Section \ref{sec:main} is the~heart of the~paper:
here we finish the~construction of the~cube of resolutions over $\ZZ$,
define the~algebra $\gilmoreA(\web)$ for any annular web,
prove \eqref{OSG} and finally introduce the~complex $\ourQH$,
the~homology of which interpolates between $\gll_0$ homology and knot Floer homology.
By applying the~Bockstein spectral sequence we prove main results of this paper
---   Theorems~\ref{thm:main} and \ref{thm:HHHHFK}.
Finally, Appendix~\ref{sec:bockstein} provides a~self-contained account
on spectral sequences,
Appendix~\ref{sec:qHH-is-cyclic} contains a~technical lemma about
quantum Hochschild homology
and Appendix \ref{sec:computations-gl0} includes computation
of $\homologyglzero$ needed for the~detection results.
 \subsection{Acknowledgements}
The authors would like to thank John Baldwin, Paolo Ghiggini and Ciprian Manolescu
for helpful conversations, as well as Peter Ozsv\'{a}th and Zoltan Szab\'{o}
for the~clarification of details in their cube construction. They are
also indebted to the anonymous referees who helped us to improve the
exposition with their careful readings and valuable comments.
AB and EW are grateful to the organizers of the Budapest and Annular
meetings of the Simons Collaboration ``New Structures in
Low-Dimensional Topology'' in 2023/24 for creating a stimulating working environments.
AB is also supported by the above mentioned Simons Collaboration and
the Swiss National Science Foundation grant 200020\textunderscore{}207374.
AB and KP are supported by NCCR SwissMAP of the~Swiss National Science Foundation.
LHR was supported by the Luxembourg National Research Fund
PRIDE17/\sep 1224660/\sep GPS. EW is partially supported by the ANR projects AlMaRe
(ANR-19-CE40-0001-01), AHA (JCJC ANR-18-CE40-0001) and CHARMES (ANR-19-CE40-0017).

\section{Preliminaries}
\label{sec:ap}
\subsection{Conventions}

In this paper we work over a~fixed commutative unital ring $\scalars$
with no further restrictions and we pick an~invertible $q\in\scalars$.
An~unadorned tensor product means a~tensor product over $\scalars$.

The~bold letter $\qshift$ is used for a~shift functor in a~graded category.
In particular, $\qshift^d M$ is a~graded module $M$ shifted upwards by $d$,
so that $(\qshift^d M)_i = M_{i-d}$.
More generally, if $p(q)= \sum_{i\in Z} a_i q^i$ is a~Laurent
polynomial in $q$ with positive integral coefficients, then
\[
	p(\qshift) M := \bigoplus_{i} \qshift^i M^{\oplus a_i}
\]
We often use \emph{quantum integers}, \emph{quantum factorials},
and \emph{quantum binomials}, defines respectively as
\[
	[k] = \frac{q^{k} - q^{-k}}{q-q^{-1}},
\qquad
	[k]! = \prod_{i=1}^{k}[i]
\quad\text{and}\quad
	\qbinom{n}{k} = \dfrac{[n]!}{[k]![n-k]!}
\]
for any integers $0 \leqslant k \leqslant n$.

Complexes have differentials of degree $+1$,
with the~only exception of the~Hochschild homology (Section~\ref{sec:hochschild}).
We use $\tshift$ for the~standard homological shift,
so that $(\tshift^a C)_i = C_{i+a}$ and the~\emph{mapping cone complex}
$C(f)$ of a~chain map $f\colon C\to D$ is modeled on $\tshift C \oplus D$.

Finally, braids and webs are drawn and read from left to right,
whereas foams are drawn and read from bottom to top.
Other notation used through the~paper:
\begin{itemize}
\item $\beta$ is a braid (diagram) and	$\bdiagram$ is its braid closure;
	\item 
	$\setcrossings$ denotes the set of crossings in the diagram;
	\item $n_+, n_-, n_\times$ are the numbers of positive, negative and singular crossings, respectively.
\end{itemize}
 \subsection{Symmetric polynomials and Soergel bimodules}
In this section we summarize some useful facts about symmetric polynomials
and Soergel bimodules. We refer to \cite{MR3443860} and
\cite{EliasMakisumiThielWilliamson} for a~detailed account.

\begin{notation}
	The number of boxes of a~given Young diagram $\lambda$ is denoted by $|\lambda|$. 
Given two non-negative integers $a$ and $b$, we write $T(a,b)$ for the~set of Young diagrams with at most $a$ columns
	and at most $b$ rows.
	The~maximal diagram, a~rectangle of width $a$ and height $b$,
	is hereafter denoted by $\YDbox(a,b)$.
	Given a~Young diagram $\lambda \in T(a,b)$ we construct its
	\begin{itemize}
		\item \emph{complement} $\lambda^c \in T(a,b)$ by rotating by 180 degrees
		the~set of boxes from $\YDbox(a,b)$ that are not in $\lambda$,
		\item \emph{transpose} $\lambda^t \in T(b,a)$ by exchanging rows with columns in $\lambda$,
		\item \emph{dual} $\widehat\lambda \in T(b,a)$ as the~diagram
		$(\lambda^t)^c = (\lambda^c)^t$.
    \end{itemize}
	\begin{figure}[ht]
		\centering
		\NB{\tikz[scale= 0.7]{\input{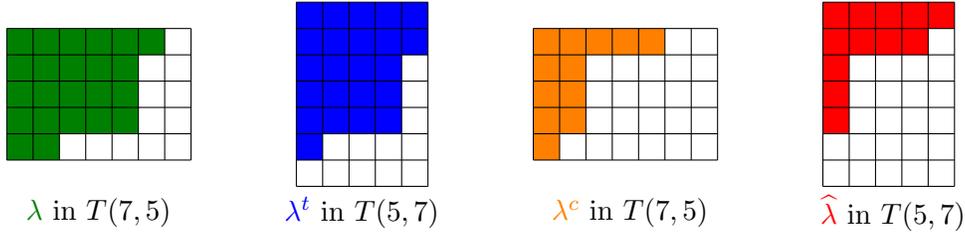}}}
		\caption{Pictorial definition of $\lambda^c$, $\lambda^t$ and $\widehat{\lambda}$.}
		\label{fig:yd}
	\end{figure}
\end{notation}

Fix a~positive number $N > 0$ and recall that $\scalars$ is a~fixed commutative
unital ring. Consider the~polynomial ring $R := \scalars[x_1,\dots,x_N]$
with an~action of the~symmetric group $\symm_N$ that permutes
the~variables. Endow $R$ with a grading by declaring that all $x_i$
are homogeneous of degree $2$.
It is a~standard fact that the~ring of invariant polynomials
\[
	\mathit{Sym}_N := R^{\symm_N}
\]
is freely generated by elementary symmetric functions
\[
	e_k(x_1,\dots,x_N) = \sum_{i_1 < \dots < i_k} x_{i_1} \cdots x_{i_k}
\]
for $k=1,\dots,N\ric$.
A~linear basis of $\mathit{Sym}_N$ is given by \emph{Schur polynomials}
$s_\lambda$ parametrized by Young diagrams $\lambda$ with at most $N$ rows.
They satisfy
\[
	s_\lambda s_\mu = \sum_{\nu} c^\nu_{\lambda\mu} s_\nu
\]
where $c^\nu_{\lambda,\mu} \in \mathbb N$, the~{Littlewood--Richardson} coefficients,
are independent of $N$.
Because $c^\nu_{\lambda\mu}=0$ unless $|\lambda| +|\mu|  = |\nu|$, the~above sum is finite.

\begin{proposition}
	Let $X\ric$, $Y\ric$ and $Z$ be pairwise disjoint finite sets of variables.
	Then the~following equations hold for any Young diagram $\lambda$:
	\begin{align}
	\label{eq:s(XuZ)}
		s_\lambda(X\sqcup Z) &= \sum_{\alpha,\beta}
			c_{\alpha\beta}^\lambda \, s_\alpha(X)\, s_\beta(Z),
	\\
	\label{eq:s(X) from s(XuZ)s(Z)}
		s_\lambda(X) & = \sum_{\alpha,\beta}
			c_{\alpha\beta}^\lambda \, (-1)^{|\beta|}\, s_\alpha(X\sqcup Z)\, s_{\beta^t}(Z),
	\text{ and}
	\\
	\label{eq:s(X)s(Y) vs s(XuZ)s(YuZ)}
		\sum_{\alpha,\beta} (-1)^{|\beta|}
			c^\lambda_{\alpha\beta}\, s_\alpha(X)\, s_{\beta^t}(Y) &=
		\sum_{\alpha,\beta} (-1)^{|\beta|}
			c^\lambda_{\alpha\beta}\, s_\alpha(X\sqcup Z)\, s_{\beta^t}(Y\sqcup Z).
	\end{align}
\end{proposition}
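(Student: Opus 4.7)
The~plan is to derive all three identities from the~Hopf algebra structure on the~ring~$\Lambda$ of symmetric functions, whose structure maps satisfy
\[
	\Delta(s_\lambda) = \sum_{\alpha,\beta} c_{\alpha\beta}^\lambda\, s_\alpha \otimes s_\beta,
	\qquad
	S(s_\lambda) = (-1)^{|\lambda|}\, s_{\lambda^t},
	\qquad
	\varepsilon(s_\lambda) = \delta_{\lambda,\emptyset},
\]
together with the~antipode axiom $\mu \circ (\mathrm{id} \otimes S) \circ \Delta = u \circ \varepsilon$.  Each finite alphabet~$X$ yields an~algebra homomorphism $\mathrm{ev}_X \colon \Lambda \to \scalars[X]$ satisfying $\mathrm{ev}_{X \sqcup Z} = \mu \circ (\mathrm{ev}_X \otimes \mathrm{ev}_Z) \circ \Delta$.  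Applied to~$s_\lambda$ this yields~\eqref{eq:s(XuZ)} at once.

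For~\eqref{eq:s(X) from s(XuZ)s(Z)} I introduce the~virtual-alphabet notation $f(U - V) := \mu \circ (\mathrm{ev}_U \otimes \mathrm{ev}_V) \circ (\mathrm{id} \otimes S) \circ \Delta(f)$ for disjoint alphabets~$U, V$ and~$f \in \Lambda$.  Taking~$U = X \sqcup Z$ and~$V = Z$, a~direct expansion of~$\Delta(s_\lambda)$ and~$S$ on the~Schur basis shows that $s_\lambda\bigl((X \sqcup Z) - Z\bigr)$ equals the~RHS of~\eqref{eq:s(X) from s(XuZ)s(Z)}.  On the~other hand, coassociativity allows me to expand~$s_\lambda$ via~$\Delta$ first, placing~$X$ on the~first factor and both~$Z$-evaluations together on the~second factor:
\begin{align*}
	s_\lambda\bigl((X \sqcup Z) - Z\bigr)
	& = \sum_{\alpha,\beta} c_{\alpha\beta}^\lambda\, s_\alpha(X)\, s_\beta(Z - Z) \\
	& = \sum_{\alpha,\beta} c_{\alpha\beta}^\lambda\, s_\alpha(X)\, \varepsilon(s_\beta) = s_\lambda(X),
\end{align*}
where $s_\beta(Z - Z) = \varepsilon(s_\beta)$ is the~antipode axiom applied to~$s_\beta$ on the~alphabet~$Z$.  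Comparing these two evaluations proves~\eqref{eq:s(X) from s(XuZ)s(Z)}.

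Identity~\eqref{eq:s(X)s(Y) vs s(XuZ)s(YuZ)} is handled by the~same strategy applied to $s_\lambda\bigl((X \sqcup Z) - (Y \sqcup Z)\bigr)$.  Iterating~$\Delta$ into four tensorands and routing them to $\mathrm{ev}_X, \mathrm{ev}_Z, \mathrm{ev}_Y \circ S, \mathrm{ev}_Z \circ S$, one recovers the~RHS of~\eqref{eq:s(X)s(Y) vs s(XuZ)s(YuZ)} by grouping each of~$X, Y$ with its adjacent~$Z$; alternatively, using coassociativity to group the~two~$Z$-evaluations together and collapsing them via $s_?(Z - Z) = \varepsilon(s_?)$ as above produces~$s_\lambda(X - Y)$, i.e., the~LHS of~\eqref{eq:s(X)s(Y) vs s(XuZ)s(YuZ)}.

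The~main obstacle is the~Hopf-algebra bookkeeping: one must verify that the~repeated applications of coassociativity (together with the~commutativity of~$\Lambda$) do justify the~intended regroupings of the~$Z$-factors, so that the~antipode axiom can be invoked precisely where the~$Z - Z$ pair appears.  Once the~virtual-alphabet calculus is in place, each of the~three identities collapses to a~single instance of $\mu \circ (\mathrm{id} \otimes S) \circ \Delta = u \circ \varepsilon$.
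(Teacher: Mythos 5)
Your proof is correct, and it takes a genuinely different route from the paper. The paper gives no argument at all — it simply cites \cite[eq.~(5.9)]{MR3443860} for the coproduct identity \eqref{eq:s(XuZ)}, observes that \eqref{eq:s(X) from s(XuZ)s(Z)} is the $Y=\emptyset$ specialization of \eqref{eq:s(X)s(Y) vs s(XuZ)s(YuZ)}, and cites \cite[Lemma A.7]{RW1} for the latter. Your approach replaces this by a self-contained derivation from the Hopf structure of $\Lambda$, where all three identities are repackaged as instances of the antipode axiom $\mu\circ(\mathrm{id}\otimes S)\circ\Delta=u\circ\varepsilon$ applied through the ``virtual alphabet'' calculus $f(U-V)$. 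This is both more conceptual and more uniform than the chain of citations; it also makes the paper's reduction of \eqref{eq:s(X) from s(XuZ)s(Z)} to \eqref{eq:s(X)s(Y) vs s(XuZ)s(YuZ)} transparent. One small inaccuracy in your closing paragraph: what licenses the regrouping of the two $Z$-legs in \eqref{eq:s(X)s(Y) vs s(XuZ)s(YuZ)} is not commutativity of $\Lambda$ but \emph{cocommutativity} of $\Lambda$ (to permute the Sweedler legs so that the two $Z$-evaluations sit on adjacent tensorands), together with commutativity of the evaluation codomain $\scalars[X\sqcup Y\sqcup Z]$. Both hold for $\Lambda$, so the argument is sound; you should simply name cocommutativity explicitly, since it is the genuinely load-bearing hypothesis.
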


\begin{proof}
	The~derivation of \eqref{eq:s(XuZ)} can be found in \cite[eq.(5.9)]{MR3443860}
	and the~formula \eqref{eq:s(X) from s(XuZ)s(Z)} is the~special case of
	\eqref{eq:s(X)s(Y) vs s(XuZ)s(YuZ)} for $Y=\emptyset$.
	The~last equality is proven in \cite[Lemma A.7]{RW1}.
\end{proof}

\begin{corollary}\label{cor:sym-pol-add-variables}
	Let  $a\ric$, $b$ be two non-negative integers and $X\ric$, $Y\ric$, $Z$
	pairwise disjoint finite sets of variables.
	Then
	\[
		\sum_{\alpha \in T(a,b)} (-1)^{|\widehat{\alpha}|} 
			s_\alpha(X) s_{\widehat\alpha}(Y) =
		\sum_{\alpha \in T(a,b)} (-1)^{|\widehat{\alpha}|} 
			s_\alpha(X\sqcup Z) s_{\widehat\alpha}(Y\sqcup Z).
	\]      
\end{corollary}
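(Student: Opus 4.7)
The plan is to derive Corollary~\ref{cor:sym-pol-add-variables} as the specialization of equation~\eqref{eq:s(X)s(Y) vs s(XuZ)s(YuZ)} at the rectangular partition $\lambda = \YDbox(a,b)$. The crucial combinatorial input is the classical identification of Littlewood--Richardson coefficients for a~rectangle,
\[
	c^{\YDbox(a,b)}_{\alpha\beta} = \begin{cases} 1 & \text{if } \alpha \in T(a,b) \text{ and } \beta = \alpha^c, \\ 0 & \text{otherwise,} \end{cases}
\]
which I would cite from the standard literature (e.g.\ Macdonald's book on symmetric functions) rather than reprove. It follows directly from the Littlewood--Richardson rule on noting that, for $\alpha \subseteq \YDbox(a,b)$, the skew shape $\YDbox(a,b)/\alpha$ is a~$180$-degree rotation of $\alpha^c$ and therefore admits a~unique LR filling, of content exactly $\alpha^c$.

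With this rectangular identity in hand, both sums in~\eqref{eq:s(X)s(Y) vs s(XuZ)s(YuZ)} collapse to sums indexed by $\alpha \in T(a,b)$. The constraint $\beta = \alpha^c$ translates into $\beta^t = (\alpha^c)^t = \widehat\alpha$, and the sign becomes $(-1)^{|\beta|} = (-1)^{ab - |\alpha|} = (-1)^{|\widehat\alpha|}$, which matches the sign appearing in the corollary. Consequently, the two sides of~\eqref{eq:s(X)s(Y) vs s(XuZ)s(YuZ)} specialize term by term to the two sides of the desired identity.

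The only conceptual step is spotting that $\lambda = \YDbox(a,b)$ is the right choice; the range $\alpha \in T(a,b)$ appearing in the corollary is itself the hint, since $c^\lambda_{\alpha\beta}$ vanishes unless $\alpha \subseteq \lambda$, and a~rectangle is precisely what forces $\widehat\alpha$ to appear as the partition indexing the Schur polynomial in $Y$ in place of the~$\beta^t$ of~\eqref{eq:s(X)s(Y) vs s(XuZ)s(YuZ)}. After that observation there is no genuine obstacle left, and the rest of the argument is a~routine bookkeeping of signs and transposes.
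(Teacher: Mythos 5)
Your proof is correct and follows the same route as the paper, which simply says ``Set $\lambda = \YDbox(a,b)$ in \eqref{eq:s(X)s(Y) vs s(XuZ)s(YuZ)}.'' You have spelled out the routine facts about rectangular Littlewood--Richardson coefficients and the sign and transpose bookkeeping that the paper leaves implicit, but the underlying argument is identical.
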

\begin{proof}
	Set $\lambda = \YDbox(a,b)$ in \eqref{eq:s(X)s(Y) vs s(XuZ)s(YuZ)}.
\end{proof}

A~sequence of positive numbers $\underline k = (k_1,\dots,k_r)$
with $k_1+\dots + k_r = N$ is called a~\emph{composition of $N\ric$}.
It determines a~parabolic subgroup
\(
	\symm_{\underline k} := \symm_{k_1} \times \dots \times \symm_{k_r}
\)
of $\symm_N$ and a~ring $R^{\underline k} := R^{\symm_{\underline k}}$
of polynomials invariant under the~action of the~subgroup.
In particular, $R^{(1,\dots, 1)} = R$ and $R^{(N)} = \mathit{Sym}_N$.
Clearly,
\(
	R^{\underline k} \cong \mathit{Sym}_{k_1} \otimes \dots \otimes \mathit{Sym}_{k_r}
\).

We say that a~composition $\underline\ell$ is a~\emph{refinement}
of $\underline k$ if it is obtained by replacing each $k_i$ with
a composition of $k_i$, possibly of length 1. In such case
$\symm_{\underline\ell} \subseteq \symm_{\underline k}$ and
$R^{\underline k}$ is a~subring of $R^{\underline\ell}$.
The~following is a~standard fact from representation theory.

\begin{theorem}[{\cite[Theorem 24.40]{EliasMakisumiThielWilliamson}}]\label{thm:frob-ext}
	Let $\underline{k}$ be a composition of $N$ and $\underline\ell$ a~refinement of $\underline k$.
	Then $R^{\underline k} \subseteq R^{\underline\ell}$ is a~graded Frobenius extension.\footnote{
		An~extension $A\subseteq B$ is \emph{Frobenius} if there
		is a~nondegenerate $A$--linear trace $\epsilon\colon B\to A$.
		It is a~\emph{graded extension of degree $d$} if $A$ and $B$
		are graded and $\epsilon$ is homogeneous of degree $-2d$.
	}
	In particular, $R^{\underline\ell}$ is a~free module over $R^{\underline k}$.
\end{theorem}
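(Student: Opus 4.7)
My plan is to reduce the general statement to a single elementary splitting of one block, and then verify that base case directly. The reductions rely on two formal principles: a $\scalars$-tensor product of graded Frobenius extensions is a graded Frobenius extension (with tensored traces and summed degrees), and a composition $A\subseteq B\subseteq C$ of graded Frobenius extensions is again one (with trace $\epsilon_{A\subseteq B}\circ\epsilon_{B\subseteq C}$ and summed degrees).

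First I would factor the extension: since $\underline\ell$ refines $\underline k$ block by block, the inclusion $\symm_{\underline\ell}\subseteq\symm_{\underline k}$ is a product of inclusions $\symm_{\underline\ell_i}\subseteq\symm_{k_i}$ acting on disjoint sets of variables, so $R^{\underline k}\subseteq R^{\underline\ell}$ is the $\scalars$-tensor product of the block extensions $\Sym_{k_i}\subseteq R_i^{\underline\ell_i}$. This reduces the theorem to the single-block case $\underline k=(n)$. Using transitivity I then decompose the refinement $(n)\to\underline\ell$ as a chain of elementary splits $(m)\to(a,b)$ with $a+b=m$, and each such step, after another tensor factorization inside the newly split block, reduces to the atomic extension $\Sym_{a+b}\subseteq\Sym_a\otimes\Sym_b$.

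For this atomic case I would exhibit the Frobenius structure explicitly. The Schur polynomials $\{s_\lambda(x_1,\dots,x_a)\}_{\lambda\in T(b,a)}$ form a free $\Sym_{a+b}$-basis of $\Sym_a\otimes\Sym_b$; this is classical (Schubert decomposition of $\mathrm{Gr}(a,a+b)$) and, via the integrality of the Jacobi--Trudi determinantal formula, persists over any commutative ring $\scalars$ by base change from $\ZZ$. I would then define the trace $\epsilon\colon\Sym_a\otimes\Sym_b\to\Sym_{a+b}$ by $\epsilon(s_\lambda)=\delta_{\lambda,\YDbox(b,a)}$ on this basis, extended $\Sym_{a+b}$-linearly; it is homogeneous of degree $-2ab$, presenting the extension as Frobenius of degree $ab$. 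Non-degeneracy reduces to the Littlewood--Richardson identity $c^{\YDbox(b,a)}_{\lambda,\mu}=\delta_{\mu,\lambda^c}$ (Poincaré duality on the cohomology of the Grassmannian), which gives $\epsilon(s_\lambda\cdot s_{\lambda^c})=1$ and realizes $\{s_{\lambda^c}\}$ as the dual basis of $\{s_\lambda\}$.

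The main obstacle is running this verification over an arbitrary commutative ring $\scalars$, since the cleanest references for the Schubert basis and Grassmannian Poincaré duality assume a ground field or $\ZZ$. My approach would be to establish the basis statement and the pairing identity first over $\ZZ$, where both are classical, and then deduce them over $\scalars$ by $-\otimes_\ZZ\scalars$, using that Schur polynomials give $\ZZ$-module bases for $\Sym_a$, $\Sym_b$ and $\Sym_{a+b}$ that are stable under base change. A more conceptual alternative would be to invoke Chevalley--Shephard--Todd together with the general fact that the coinvariant algebra of a finite reflection group is a Frobenius algebra, but this requires developing that machinery in the graded setting over a general base ring.
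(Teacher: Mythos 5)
Your proposal is correct in its essentials and follows the standard proof of this result. Note, however, that the paper does not actually prove Theorem~\ref{thm:frob-ext}: it imports it wholesale from \cite[Theorem 24.40]{EliasMakisumiThielWilliamson}. What the paper \emph{does} record, in Example~\ref{ex:elementary-extension}, is precisely the atomic Frobenius structure you build: the free $\Sym_{k_i}$-basis $\{s_\lambda : \lambda \in T(\ell_{i+1},\ell_i)\}$ of $\Sym_{\ell_i}\otimes\Sym_{\ell_{i+1}}$, the trace $\epsilon(s_\lambda)=\delta_{\lambda,\YDbox(\ell_{i+1},\ell_i)}$, and the degree $\ell_i\ell_{i+1}$ of the extension. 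Your two formal reductions (tensor products and compositions of graded Frobenius extensions are Frobenius, with degrees adding) and the Poincaré-duality identity $c^{\YDbox(b,a)}_{\lambda\mu}=\delta_{\mu,\lambda^c}$ complete the argument in exactly the way the cited reference does.

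One point worth spelling out more carefully in a written-up version: the Gram matrix $\bigl(\epsilon(s_\lambda s_\mu)\bigr)_{\lambda,\mu\in T(b,a)}$ over $\Sym_{a+b}$ is not literally the permutation matrix $\delta_{\mu,\lambda^c}$, because products $s_\lambda s_\mu$ with $|\lambda|+|\mu|>ab$ can contribute a positive-degree coefficient on $s_{\YDbox(b,a)}$. Non-degeneracy instead follows from the fact that, ordered by $|\lambda|$, the matrix is block-triangular with the degree-zero ``diagonal'' block given by $\delta_{\mu,\lambda^c}$, hence has unit determinant over $\Sym_{a+b}$ for any base ring $\scalars$. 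This also makes your base-change argument cleaner: since all the basis and structure-constant statements hold over $\ZZ$ with integer coefficients and unit determinant, they specialize to any commutative $\scalars$ without needing Chevalley--Shephard--Todd.
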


\begin{example}[{cf.\ \cite[Theorem 2.12]{KLMS}}]\label{ex:elementary-extension}
	Assume that $\underline\ell= (\ell_1,\dots,\ell_{r+1})$ is an~\emph{elementary}
	refinement of $\underline k$, i.e.\ there exists an~index $i$, such that
	\[
		k_j = \begin{cases}
			\ell_j, & j < i,\\
			\ell_i+\ell_{i+1}, & j = i, \\
			\ell_{j+1}, & j > i.
		\end{cases}
              \]
	Then the~extension $R^{\underline k} \subset R^{\underline\ell}$ has degree
	$\ell_i \ell_{i+1}$ and the~basis of $R^{\underline \ell}$ is given by elements
	\[
		b_\lambda := 1^{\otimes i} \otimes s_\lambda \otimes 1^{\otimes r-i}
	\]
	with $\lambda \in T(\ell_{i+1}, \ell_i)$. The~trace map $\epsilon\colon R^{\underline \ell} \to R^{\underline k}$
	takes $b_\lambda$ to $1$ if $\lambda = \YDbox(\ell_{i+1},\ell_i)$
	and to $0$ otherwise.
\end{example}

\begin{example}\label{ex:parabolic-basis}
	The~ring $R^{\underline k}$ is a~free module over
	$R^{(N)} \cong \mathit{Sym}_N$.
	It has a  basis  given by pure tensors of Schur polynomials
	\[
		1 \otimes s_{\lambda_2} \otimes \cdots \otimes s_{\lambda_r},
	\]
	where $\lambda_i$ is a~Young diagram with at most
	 $k_1 + \ldots + k_{i-1}$ columns and $k_i$ rows.
\end{example}

Let $\ccat{Bim}$ be the~bicategory of rings, bimodules, and bimodule maps,
with the~horizontal composition given by the~tensor product of bimodules.
Consider the~induction and restriction bimodules
\[
	\mathrm{Ind}^{\underline\ell}_{\underline k} \cong
		{}_{R^{\underline\ell}}(R^{\underline\ell})_{R^{\underline k}}
\qquad
	\Res^{\underline\ell}_{\underline k} \cong
		{}_{R^{\underline k}}(\qshift^d R^{\underline\ell})_{R^{\underline\ell}}
\]
for all Frobenius extensions $R^{\underline k} \subset R^{\underline\ell}$,
where $d$ is the~degree of the~extension. Their finite compositions, i.e.\ tensor products
over the~polynomial rings, are called \emph{singular Bott--Samelson bimodules}.

\begin{definition}
	The~\emph{bicategory of singular Soergel bimodules} $\ccat{sSBim}$ is the~full graded additive
	and idempotent complete subbicategory of $\ccat{Bim}$ with rings $R^{\underline k}$
	as objects and 1-morphisms generated by singular Bott--Samelson bimodules.
	In other words, every 1-morphism in $\ccat{sSBim}(R^{\underline k}, R^{\underline\ell})$
	is a~direct summand of a~bimodule of the~form $\bigoplus_{i=1}^r \qshift^{d_i} B_i$, where each
	$B_i \in \ccat{Bim}(R^{\underline k}, R^{\underline\ell})$ is a~singular Bott--Samelson bimodule.
\end{definition}

\begin{remark}
	It follows directly from the~definition that a~singular Soergel bimodule
	is projective when seen as a~left or as a~right module. Moreover, it
	is free when it is a~direct sum of singular Bott--Samelson bimodules.
\end{remark}

\begin{remark}
	The~morphism category $\ccat{sSBim}(R,R)$ is the~category
	of classical (i.e.\ non-singular) Soergel bimodules.
\end{remark}

 \subsection{Hochschild homology}
\label{sec:hochschild}
Let $A$ be a~$\scalars$-algebra and $M$ an~$(A,A)$--bimodule.
The~\emph{Hochschild homology of $M$} is the~homology of
the~chain complex $\CHoHom_{\!\bullet}(A,M)$
with chain groups $\CHoHom_{\!n}(A, M) := M\otimes A^{\otimes n}$
and the differential given by the~alternating sum
\begin{equation}
	\begin{split}
	\partial(m\otimes a_1\otimes\dots\otimes a_n) = 
		ma_1\otimes a_2\otimes&\dots\otimes a_n \\
		+ \sum_{i=1}^{n-1} (-1)^i m\otimes a_1\otimes &\dots
					\otimes a_ia_{i+1} \otimes \dots \otimes a_n \\
		+ (-1)^n a_nm\otimes a_1\otimes&\dots\otimes a_{n-1}.
	\end{split}
\end{equation}
The~quotient $\HoHom_{\!0}(A,M) \cong M / [A,M]$ is known as
the~\emph{space~of coinvariants of $M\ric$}, where
$[A,M] := \{ am-ma \;|\; a\in A, m\in M \}$
is the~\emph{commutator} of $A$ and $M\ric$.

Given an algebra automorphism  $\varphi\in\Aut(A)$, we can replace the last
term of the differential with
\begin{equation}\label{eq:twisted-diff}
	(-1)^n \varphi(a_n)m\otimes a_1\otimes\dots\otimes a_{n-1}.
\end{equation}
The~resulting complex $\CHoHom_{\!\bullet}^\varphi(A, M)$
is the~\emph{$\varphi$-twisted Hochschild complex}.
When both $A$ and $M$ are graded, then the~complex admits a~natural automorphism,
which leads to \emph{quantum Hochschild homology} introduced in \cite{BPW}.
Fix an~invertible element $q\in\scalars$ and define $\varphi(a) = q^{-|a|}$,
where $|a|$ is the~degree of a~homogeneous element $a\in A$.
Then the~last term of the~twisted Hochschild differential
\eqref{eq:twisted-diff} takes the~form
\begin{equation}
	(-1)^n q^{-|a_n|}a_nm\otimes a_1\otimes\dots\otimes a_{n-1}.
\end{equation}
The~\emph{quantum Hochschild homology of $M\ric$}, denoted by
$\qHoHom_{\!\bullet}(A, M)$, is the~homology of this complex.  This
construction was also reviewed in \cite{Lipshitz}.  Following
the~usual conventions we write $\qCHoHom(A)$ and $\qHoHom(A)$ when
$M=A$. Additionally, when $A$ is clear from the context, we write
$\qHoHom(M)$.

\begin{remark}
	Hochschild chains can be visualized by circles divided into segments,
	one labeled with $m\in M$ and the~others with $a_0,\dots,a_n$. Each of the~terms
	of the~differential merges two segments multiplying their labels.
	\[
			\HochschildChain{30,150,270}{m/90,a_0/-30,a_1/210}
		\longmapsto
			\HochschildChain{150,270}{ma_0/30,a_1/210}
			-
			\HochschildChain{30,150}{m/90,a_0a_1/270}
			+
			\HochschildChain{30,270}{a_1m/150,a_0/-30}
	\]
	In the~twisted case add a~mark on the~circle between segments
        labeled $m$ and $a_n$. To merge these two segments, one has to
        move $a_n$ over the~mark, acting upon it with $\varphi$ as
        depicted below.
	\[
			\HochschildChain[150]{30,150,270}{m/90,a_0/-30,a_1/210}
		\rightsquigarrow\quad
			\HochschildChain[270]{30,150,270}{m/90,a_0/-30,\varphi(a_1)/210}
		\rightsquigarrow\quad
			\HochschildChain[270]{30,270}{\varphi(a_1)m/150,a_0/-30}
	\]
\end{remark}

The~quantum Hochschild homology can be seen as arising from twisting bimodules
by algebra automorphisms.
Namely, given $\varphi\in\Aut(A)$ and a~left $A$-module $M\ric$, denote by ${}_\varphi M$
its~\emph{$\varphi$-twist}, defined as the~module $M$ with the~action twisted
by $\varphi$, i.e.\ $a \cdot m:=\varphi(a) m$.
If $M$ is an~$(A,A)$-bimodule, then it follows directly from the~definition that
\begin{equation}\label{eq:twisted-HH-as-HH}
	\CHoHom_{\!\bullet}^\varphi(A, M) \cong \CHoHom_{\!\bullet}(A, {}_\varphi M).
\end{equation}
The~following property is proven in \cite{BPW}.

\begin{proposition}\label{prop:concat-twisted-modules}
	Choose graded\/ $\scalars$-algebras $A$, $B\ric$, $C\ric$
	and graded $(A,B)$- and $(B,C)$-bimodules $M\ric$ and $N\ric$.
	Then for any invertible scalars
	$\qnt_1,\qnt_2\in\scalars$ there is a~bimodule isomorphism
	\[
		\qTwist[\qnt_1]{M} \otimes_B \qTwist[\qnt_2]{N}
			\xrightarrow{\ \cong\ }
		\qTwist[\qnt_1\qnt_2]{(M\otimes_B N)}
	\]
	defined as $m\otimes n \mapsto \qnt_2^{|m|}m\otimes n$ for homogeneous $m\in M$ and $n\in N$.
\end{proposition}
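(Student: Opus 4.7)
The plan is to verify directly that the formula $\phi(m\otimes n)=\qnt_2^{|m|}m\otimes n$ defines a bimodule isomorphism, by three short checks performed in order: well-definedness over $\otimes_B$, compatibility with the outer $(A,C)$-bimodule structures, and invertibility.

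First I would make precise the meaning of $\qTwist[\qnt]{-}$ for bimodules between different graded algebras, extending the $(A,A)$-bimodule construction used in \eqref{eq:twisted-HH-as-HH}: the twisted module $\qTwist[\qnt]{M}$ coincides with $M$ as a graded $\scalars$-module, with the appropriate one-sided action rescaled by the graded algebra automorphism $x\mapsto \qnt^{-|x|}x$. With this convention, the tensor product $\qTwist[\qnt_1]{M}\otimes_B\qTwist[\qnt_2]{N}$ is formed using the $\qnt_2$-rescaled $B$-action on the second factor, so its defining relation identifies $mb\otimes n$ with a power of $\qnt_2$ times $m\otimes bn$. To show $\phi$ descends to this quotient, I would evaluate both sides of this relation on homogeneous elements and check that the grading shift $|m|\mapsto|m|+|b|$ inside the prefactor $\qnt_2^{|m|}$ is exactly the factor needed to absorb the $\qnt_2$-twist of the middle $B$-action.

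Next I would verify that $\phi$ respects the outer bimodule structure. The right $C$-linearity is essentially formal, since the twist does not touch the $C$-action and the prefactor $\qnt_2^{|m|}$ is independent of $n$. For the left $A$-linearity, the LHS carries only the $\qnt_1$-twist on the $A$-action coming from $\qTwist[\qnt_1]{M}$, whereas the RHS carries the composite $\qnt_1\qnt_2$-twist; applying $\phi$ to $a\cdot(m\otimes n)$ and comparing with $a\cdot\phi(m\otimes n)$, the identity $\qnt_2^{|am|}=\qnt_2^{|a|}\qnt_2^{|m|}$ supplies exactly the missing $\qnt_2^{-|a|}$ factor needed to upgrade the $\qnt_1$-twist to the $\qnt_1\qnt_2$-twist. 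Invertibility is then witnessed by the symmetric formula $\psi(m\otimes n)=\qnt_2^{-|m|}m\otimes n$, for which $\phi\psi=\psi\phi=\mathrm{id}$ is a one-line verification on homogeneous elements.

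The main (small) obstacle is really the first step: pinning down the correct convention for $\qTwist[\qnt]{-}$ on a bimodule between distinct algebras, so that the twist of the middle $B$-action and the prefactor $\qnt_2^{|m|}$ fit together with the chosen sign in the $(A,A)$-case. Once this bookkeeping is settled, each of the three checks reduces to a short scalar manipulation based only on the additivity of degree, namely $|mb|=|m|+|b|$ and $|am|=|a|+|m|$.
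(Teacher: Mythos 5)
The paper delegates this proposition to the reference \cite{BPW} rather than proving it locally, so a direct verification is indeed the right thing to do, and your three-step plan (well-definedness over $\otimes_B$, bimodule equivariance, explicit inverse) is the correct skeleton.

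However, what you call the ``small obstacle'' of fixing the convention for $\qTwist[\qnt]{-}$ is actually load-bearing, and you should not assume the signs will reconcile without checking. With the paper's stated automorphism $\varphi(a)=q^{-|a|}a$ and the left action twisted (so that in $\qTwist[\qnt_2]{N}$ one has $b\cdot n = \qnt_2^{-|b|}bn$), the defining relation of the tensor product reads $mb\otimes n = \qnt_2^{-|b|}(m\otimes bn)$; applying $\phi(m\otimes n)=\qnt_2^{|m|}m\otimes n$ to the two sides gives $\qnt_2^{|m|+|b|}(m\otimes bn)$ on one and $\qnt_2^{|m|-|b|}(m\otimes bn)$ on the other, a discrepancy of $\qnt_2^{2|b|}$. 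So the stated formula is well-defined only if one either replaces $\qnt_2^{|m|}$ by $\qnt_2^{-|m|}$, or instead uses the opposite twist $\varphi(a)=q^{+|a|}a$. The identity you invoke in the left-$A$-equivariance check, $\qnt_2^{|am|}=\qnt_2^{|a|}\qnt_2^{|m|}$, then produces the factor $\qnt_2^{|a|}$ (not $\qnt_2^{-|a|}$ as you wrote) under the positive-exponent convention. None of this changes the structure of your argument, but you should pick one sign convention, verify the well-definedness computation explicitly in it, and carry the same convention through the equivariance and invertibility steps; as written, your proposal asserts the cancellation happens without actually confirming it, and with the paper's literal convention it does not.
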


This implies together with \eqref{eq:twisted-HH-as-HH} that the~quantum Hochschild
homology is invariant under cyclic permutation of tensor factors.

\begin{proposition}\label{prop:qHH-is-cyclic}
	Pick graded\/ $\scalars$-algebras $A$ and $B$ and graded $(A,B)$- and $(B,A)$-bimodules
	$M\ric$ and $N$ that are projective as left modules. Then there is an~isomorphism
	\[
		\qHH_{\!\bullet} (A, M \otimes_B N) \cong \qHH_{\!\bullet} (B, N \otimes_A M)
	\]
	for any invertible parameter $\qnt\in\scalars$.
\end{proposition}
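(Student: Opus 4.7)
The plan is to reduce to the classical cyclicity of ordinary Hochschild homology via the twist-translation machinery of Proposition~\ref{prop:concat-twisted-modules}. Using~\eqref{eq:twisted-HH-as-HH}, both sides rewrite as untwisted Hochschild homologies of twisted bimodules:
\[
\qHH_\bullet(A, M \otimes_B N) \cong \HH_\bullet(A, {}_\varphi(M \otimes_B N)),
\qquad
\qHH_\bullet(B, N \otimes_A M) \cong \HH_\bullet(B, {}_\varphi(N \otimes_A M)),
\]
where $\varphi$ denotes the grading automorphism $x \mapsto q^{-|x|}x$ of the relevant algebra. Applying Proposition~\ref{prop:concat-twisted-modules} with $q_1 = q$ and $q_2 = 1$ yields a canonical identification ${}_\varphi(M \otimes_B N) \cong {}_\varphi M \otimes_B N$ of $(A,A)$-bimodules, and an analogous direct calculation (or a dual application of the same proposition) produces $N \otimes_A {}_\varphi M \cong {}_\varphi(N \otimes_A M)$ of $(B,B)$-bimodules via $n \otimes m \mapsto q^{-|n|}\, n \otimes m$.

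After these reductions the task becomes the untwisted statement
\[
\HH_\bullet(A,\; {}_\varphi M \otimes_B N) \;\cong\; \HH_\bullet(B,\; N \otimes_A {}_\varphi M),
\]
where ${}_\varphi M$ is viewed as an $(A,B)$-bimodule and $N$ as a $(B,A)$-bimodule. This is the classical cyclic-trace property of Hochschild homology for a composable pair of bimodules: using the two-sided bar resolutions of $A$ and $B$ one compares both complexes with a common bisimplicial object built from $M$, $N$, and tensor powers of $A$ and $B$. The left-projectivity hypothesis on $M$ and $N$---which is preserved by $\varphi$, since the twist only rescales the left action by a grading-dependent scalar---ensures that the underived tensor products $\otimes_B$ and $\otimes_A$ compute the correct answer and no additional resolutions are required on the inner factors.

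The main obstacle is the careful bookkeeping of twists: one must verify that the grading automorphism $\varphi$, viewed as an automorphism of $A$ on one side and of $B$ on the other, produces mutually compatible twists on the composite bimodule, so that the cyclic exchange on the untwisted side indeed matches the quantum Hochschild differential on the twisted side. Proposition~\ref{prop:concat-twisted-modules} encodes precisely this coherence, and it explains why left-projectivity---rather than a more delicate flatness hypothesis---is the natural assumption of the lemma. I would expect to carry out the detailed verification in an appendix, as the paper already advertises.
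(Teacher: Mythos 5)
Your proposal matches the paper's approach exactly: the statement is derived from Proposition~\ref{prop:concat-twisted-modules} together with \eqref{eq:twisted-HH-as-HH} to move the grading twist onto one tensor factor, and then reduced to the classical trace property of ordinary Hochschild homology, with the left-projectivity hypothesis playing the role you describe. One small correction on the paper's layout: Appendix~\ref{sec:qHH-is-cyclic}, despite its title, contains the homotopy $T\simeq\id$ and the proof of Proposition~\ref{prop:qHH-vanishes}, not of this cyclicity statement, which the paper treats as immediate from the two ingredients you cite (and your $q^{-|n|}$ versus the paper's $q_2^{|m|}$ in Proposition~\ref{prop:concat-twisted-modules} is only a convention mismatch between $\varphi$ and the $\qTwist$ notation).
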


We end this section with a~statement about the~quantum Hochschild homology
for the~algebra $R^{\underline k}$. The~proof, which is rather technical, is
postponed to Appendix~\ref{sec:qHH-is-cyclic}.

\begin{proposition}\label{prop:qHH-vanishes}
	Suppose that $1-q^d$ is invertible for $d\neq 0$.
	Then the~inclusion\/ $\scalars \subset R^{\underline k}$ induces
	a~homotopy equivalence of chain complexes
	\[
		\qCHoHom_{\!\bullet}( R^{\underline k})
			\simeq \qCHoHom_{\!\bullet}(\scalars)
			\simeq \scalars,
	\]
	where\/ $\scalars$ lives in homological degree 0.
	In particular, higher quantum Hochschild homology vanishes.
\end{proposition}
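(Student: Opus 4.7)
The plan is to compute $\qCHoHom_\bullet(R^{\underline k})$ via a Koszul resolution. I would start by observing that the inclusion $\scalars \hookrightarrow R^{\underline k}$ induces a chain map of bounded-below complexes of free $\scalars$-modules (the target has terms $R^{\underline k} \otimes (R^{\underline k})^{\otimes n}$, free over $\scalars$ because $R^{\underline k}$ is itself free over $\scalars$). By the standard result that a quasi-isomorphism between bounded-below complexes of projective modules is a chain homotopy equivalence, it suffices to check that the induced map on homology is an isomorphism, i.e.\ that $\qHoHom_\bullet(R^{\underline k}) \cong \scalars$ concentrated in degree $0$ with the unit mapping to the unit.

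Set $A := R^{\underline k}$; by the fundamental theorem of symmetric polynomials applied block by block, $A$ is a polynomial algebra $\scalars[y_1, \dots, y_n]$ in homogeneous generators of strictly positive even degrees $d_1, \dots, d_n$. Using the identification $\qCHoHom_\bullet(A) \cong \CHoHom_\bullet(A, {}_\varphi A)$, the homology can be computed via the graded Koszul resolution
\[
	K_\bullet = A \otimes_\scalars \Lambda^\bullet V \otimes_\scalars A,
	\qquad V = \bigoplus\nolimits_{j=1}^n \scalars\cdot\xi_j, \quad |\xi_j|=d_j,
\]
of $A$ as an $A^e$-bimodule, with Koszul differential $\xi_j \mapsto y_j \otimes 1 - 1 \otimes y_j$. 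Tensoring with ${}_\varphi A$ over $A^e$ yields the complex $\Lambda^\bullet V \otimes_\scalars A$ with graded derivation determined by
\[
	d_\varphi(\xi_j) = \varphi(y_j) - y_j = (q^{-d_j} - 1)\, y_j,
\]
since the twisted left action contributes $\varphi(y_j)$ while the right action contributes $y_j$ unchanged. By hypothesis each $1 - q^{-d_j}$ is invertible, so rescaling the $\xi_j$ identifies this with the standard Koszul complex of the regular sequence $(y_1, \dots, y_n)$, whose homology is $A/(y_1, \dots, y_n) \cong \scalars$ concentrated in degree $0$, and the isomorphism in degree $0$ sends $1 \mapsto 1$. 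This completes the verification and hence the proof.

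The main technical subtlety is to keep careful track of left versus right bimodule actions and grading shifts when inducing the differential through $K_\bullet \otimes_{A^e} {}_\varphi A$: the twist $\varphi$ acts only via the outer (left) $y_j$, while the inner (right) factor contributes the untwisted $y_j$, producing the asymmetric coefficient $q^{-d_j}-1$. An alternative approach, perhaps more natural for the appendix, would avoid the Koszul resolution and instead construct an explicit contracting homotopy on the standard quantum Hochschild complex, proceeding by induction on the number of generators $n$ and using Proposition~\ref{prop:qHH-is-cyclic} to peel off one polynomial variable at a time.
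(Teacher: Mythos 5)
Your proof is correct, but it takes a genuinely different route from the paper's. The paper works in greater generality and at the chain level: it observes that the quantum Hochschild complex is a \emph{semi-cyclic} module (a simplicial module with extra operators $t_n$ satisfying relations~\eqref{rels:semicyclic}), and uses this structure to write down an explicit chain homotopy $h_n = \sum_{j=0}^n (-1)^{jn}\,\sigma_n t_n^j$ (with $\sigma_n = t_{n+1}s_n$) between the endomorphism $T$ that scales a homogeneous chain of internal degree $d$ by $q^d$ and the identity. Once $T\simeq\id$ is established, the subcomplex of positive-degree chains is contractible whenever each $1-q^d$ is a unit, and the degree-zero subcomplex is precisely $\qCHoHom_\bullet(\scalars)$; this gives the homotopy equivalence directly, for \emph{any} non-negatively graded $\scalars$-algebra, with no use of resolutions or of the polynomial structure of $R^{\underline k}$. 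Your approach instead computes the homology via the Koszul resolution of the polynomial algebra $A = R^{\underline k} = \scalars[y_1,\dots,y_n]$ over $A^e$; the induced twisted Koszul differential $\xi_j \mapsto (q^{-d_j}-1)\,y_j$ is, up to unit rescaling, the Koszul complex of the augmentation ideal, whence $\qHoHom_\bullet(A) \cong \scalars$. You then upgrade the resulting quasi-isomorphism to a homotopy equivalence by invoking the standard fact that a quasi-isomorphism between bounded-below complexes of projective $\scalars$-modules (here, all chain groups are free because $R^{\underline k}$ is free over $\scalars$) is a homotopy equivalence. This is cleaner in some ways---it avoids combinatorics with simplicial identities---but it is tied to the polynomial presentation and produces the homotopy equivalence only indirectly, whereas the paper's argument hands you an explicit contraction and proves the stronger statement (for arbitrary graded algebras supported in non-negative degrees) that Proposition~\ref{prop:qHH-vanishes} is deduced from. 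One small sign slip: the coefficient from the twisted action should be $(1-q^{-d_j})\,y_j$ rather than $(q^{-d_j}-1)\,y_j$ with the differential $\xi_j \mapsto y_j\otimes 1 - 1\otimes y_j$, but as both are units this does not affect the conclusion.
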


 \subsection{Webs and foams}

This section provides the~basics of webs and foams and results that are
fundamental for this paper. More details can be found in
\cite{RW1, RW3} and \cite{queffelec2014mathfrak, QRS}.
We consider only webs and foams embedded in smooth manifolds and for a~technical
reason we assume that they have \emph{collared boundary}. This means that for a~smooth
manifold $M$ we fix a~smooth embedding $\partial M\times[0,1] \to M$ that takes
$(x,0)$ to $x$. This technical condition implies a~canonical smooth structure
on the~gluing of two such manifolds along a~boundary component.

\begin{definition}\label{def:web}
	Let $\sfce$ be an~oriented smooth surface with a~collared boundary.
	A~\emph{web} $\web \subset \sfce$ is an~oriented trivalent graph,
	possibly with endpoints, smoothly embedded\footnote{Meaning that each edge is smoothly embedded when seen as a 1-dimensional manifold with boundary.} in $\sfce$ in a~way,
	such that it coincides with $\partial\web\times [0,1)$ on the~collar of $\partial\sfce$.
	The edges of the web are labeled with positive integers such that
	at each trivalent vertex the~\emph{flow condition} holds:
	the~sum of labels of incoming edges is equal to the~sum
	of labels of outgoing edges.
	We write $E(\web)$ and $V(\web)$ respectively for the sets of edges
	and vertices of a~web $\web$ and $\ell(e)$ for the~label of an~edge $e$.
	We call $\ell(e)$ the~\emph{thickness of $e$}.
\end{definition}

The~flow condition implies that each vertex of a~web is either a~\emph{split}
or a~\emph{merge}, illustrated respectively on the~left and the~right hand side
of Figure~\ref{fig:flow-condition}.
\begin{figure}[ht]
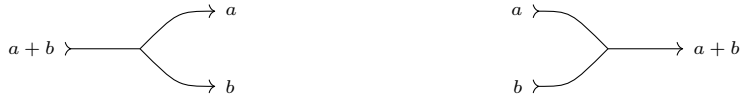

	\hfill\NB{\tikz[]{\input{\imagesfolder/hfgl0_split}}} \hfill \NB{\tikz[]{\input{\imagesfolder/hfgl0_merge}}}\hfill\ 
	\caption{A~split and a~merge vertex in a~web.}
	\label{fig:flow-condition}
\end{figure}

In this paper we are mostly interested in webs in a~strip $\IxR$ (\emph{planar webs})
or an~annulus $\SxR$ (\emph{annular webs}). We say that such a~web $\web$ is
\emph{directed} if the~projection on $[0,1]$ or $\S^1$ respectively has
no critical points when restricted to $\web$ and that projection of
orientations agree with that of $[0,1]$ or $\S^1$ respectively.
Such a~web can be visualized as a~result of a~tangential gluing of parallel
intervals oriented from left to right (or circles oriented anticlockwise in the~annular case), see Figure~\ref{fig:lamination}.
The~reverse operation is called a~\emph{lamination} \cite{MR4233203}.
\begin{figure}[ht]
  \centering
  \NB{\tikz[]{\input{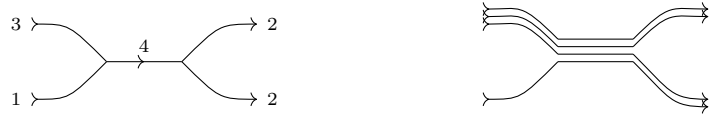}}}
  \caption{A~directed planar web of index 4 (on the~left) and its lamination (on the~right).}
  \label{fig:lamination}
\end{figure}
In particular, a~directed web $\web$ can be decomposed into a~sequence
of merges and splits. Hence, the~sum of thicknesses at a~generic section
$\web_t := \web \cap ( \{t\}\times\R )$ is constant.
We call it the~\emph{index} of $\web$. In case of webs in a~strip,
the~section $\web_0$ and $\web_1$ are called respectively
the~\emph{input} and the~\emph{output} of $\web$.

\begin{remark}
	Directed annular webs are called \emph{vinyl graphs} in \cite{RW2}.
\end{remark}

\begin{definition}\label{def:foam}
	Let $M$ be an~oriented smooth 3-manifold with a~collared boundary.
	A~\emph{foam} $\foam \subset M$ is a~collection of
        \emph{facets}, that are compact oriented surfaces
	labeled with positive integers and glued together along their boundary points in a~way,
	such that every point $p$ of $\foam$ has a~closed neighborhood homeomorphic to one of the~following:
	\begin{itemize}
		\item a~disk, when $p$ belongs to a~unique facet,
		\item $Y \times [0,1]$, where $Y$ is a~merge or a~split web, when $p$ belongs to three facets, or
		\item the~cone over the~1-skeleton of a~tetrahedron with $p$ as the~vertex of the~cone (so that it
		belongs to six facets).
	\end{itemize}
	See Figure~\ref{fig:foam-local-model} for a pictorial
        representation of these three cases. The~set of points of the~second type is a~collection
	of curves called \emph{bindings} and the~points of the~third type are called \emph{singular vertices}.
	The~\emph{boundary} $\partial\foam\ric$ of $\foam$ is the~closure of the~set of boundary points
	of facets that do not belong to a~binding. It is understood that $\foam$ coincides with
	$\partial\foam\times[0,1]$ on the~collar of $\partial M\ric$.
	We write $F(\foam)$ for the~collection of facets of $\foam$ and $\ell(\facet)$
	for the~thickness of a~facet $\facet\ric$. A~foam $\foam$ is \emph{decorated} if each
	facet $f\in F(\foam)$ is assigned a~symmetric polynomial $P_f \in \Sym_{\ell(\facet)}$.
\end{definition}

\begin{figure}[ht]
  \NB{\tikz[font=\small]{\input{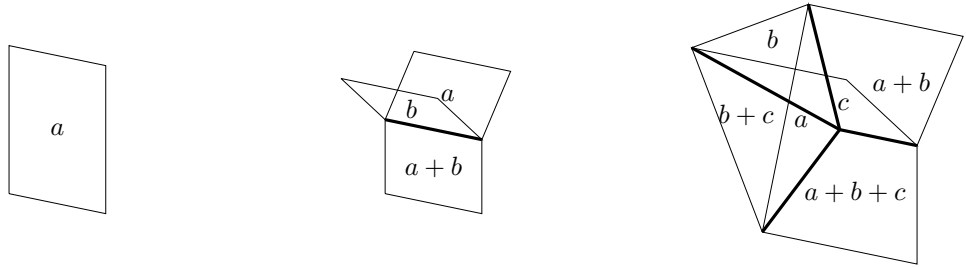}}}
	\caption{The~three local models for a~foam.}
	\label{fig:foam-local-model}
\end{figure}

\begin{remark}
	A~foam satisfies a~2-dimensional version of the~flow condition: three facets meet at each binding
	in a~way, such that the~thickness of one of them is equal to the~sum of thicknesses of the~other two.
	The~binding induces orientation from the~two thinner facets; it is opposite to the~one induced
	from the~thickest facet.
\end{remark}

The~boundary of a~foam $\foam\subset M$ is a~web in $\partial M\ric$. In case $M = \sfce\times[0,1]$
is a~thickened surface, we require that $\partial\foam\cap (\partial\sfce\times[0,1])$ is a~collection
of vertical lines, that are  lines of the form $\{x\}\times [0,1]$ for $x\in \foam$. 
A~generic section $\foam_t := \foam \cap (\sfce\times\{t\})$ is a~web, each
with the~same boundary. The~bottom and top webs $\foam_0$ and $\foam_1$ are called respectively
the~\emph{input} and \emph{output} of $\foam\ric$.

Let $\set{Foam}(M)$ be the~$\scalars$-module generated by decorated foams in $M$ modulo
\emph{local relations}, defined as follows. Consider the~collection of Robert--Wagner
evaluations
\[
	\langle-,-\rangle_N \colon \set{Foam}(\mathbb D^3)\otimes\set{Foam}(\mathbb D^3) \to \Sym_N
\]
from \cite{RW1}.
We impose the~relation $a_1\foam_1 + \dots + a_r\foam_r = 0$
whenever there is a~3-ball $B\subset M\ric$,
such that all sets $\foam_i \setminus B$ coincide and the~linear combination
$\sum_i a_i(\foam_i \cap B)$ is in the~radical of $\langle-,-\rangle_N$ for all $N>0$.
The~set $\set{Foam}(M)$ is graded by $\ZZ\oplus\ZZ$, see
\cite{FunctorialitySLN} for details.\footnote{
	This $\ZZ\oplus\ZZ$-grading is related to the~$\ZZ$-grading of $\LieGL_n$ foams by collapsing
	$(a,b)$ into $a+Nb$.
}

\subsubsection{The~bicategory of directed foams}

Let us now consider foams between planar directed webs (so that $\Sigma = \IxR$).
In this situations we impose the~additional condition that a~foam $\foam$ is
``directed'' itself, i.e.\ that the~projection onto the~side square $[0,1]\times[0,1]$
has no critical points when restricted to $\foam\ric$. This immediately implies that
a~generic section of $\foam$ is a~directed web as defined above.
A~foam of this type can be decomposed into seven basic homogeneous pieces:
traces of isotopies and six singular blocks shown in Figure~\ref{fig:basic-foams}.
For all of them the~second component of the~$(\ZZ\oplus\ZZ)$--grading vanishes,
so that the~space of directed foams is $\ZZ$--graded.

\begin{figure}[ht]
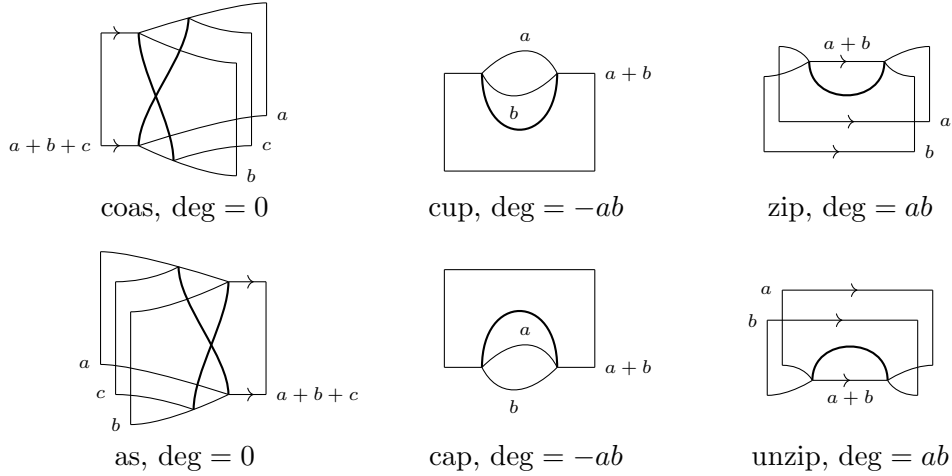

	\begin{tabular}{cp{1cm}cp{1cm}c}
		\hskip-2.5em \NB{\tikz[font=\tiny]{\begin{scope}
  \begin{scope}
    \coordinate (LL) at (0,0);
    \coordinate (L) at (0.5,0);
    \coordinate (R1) at (2.2,0.4);
    \coordinate (R2) at (2,0);
    \coordinate (R3) at (1.8, -0.4);
    \draw[->-] (LL) -- (L);
    \draw (L) .. controls +(0,0) and +(-0.5, 0) .. (R1)
    coordinate[pos =0.5] (M);
    \draw (L) .. controls +(0,0) and +(-0.5, 0) .. (R3) ;
    \draw (M) .. controls +(0,0) and +(-0.5, 0) .. (R2) ;
  \end{scope}  
 \begin{scope}[yshift = -1.5cm]
    \coordinate (LLB) at (0,0);
    \coordinate (LB) at (0.5,0);
    \coordinate (R1B) at (2.2,0.4);
    \coordinate (R2B) at (2,0);
    \coordinate (R3B) at (1.8, -0.4);
    \draw[->-] (LLB) -- (LB)  node[left, pos =0] {$a+b+c$};
    \draw (LB) .. controls +(0,0) and +(-0.5, 0) .. (R1B)    node[right] {$a$};
    \draw (LB) .. controls +(0,0) and +(-0.5, 0) .. (R3B)   node[right] {$b$}
    coordinate[pos =0.5] (MB);
    \draw (MB) .. controls +(0,0) and +(-0.5, 0) .. (R2B)    node[right] {$c$};
  \end{scope}  
  \draw (LL) -- (LLB);
  \draw (R1) -- (R1B);
  \draw (R2) -- (R2B);
  \draw (R3) -- (R3B);
  \draw[name path=path1, thick] (M) .. controls +(0,-0.5) and +(0,
  0.5) .. (LB);
  \draw[name path=path2, thick] (L) .. controls +(0,-0.5) and +(0,
  0.5) .. (MB);
  \path [name intersections={of=path1 and path2,by=O}];  
\end{scope}}} &&
		\NB{\tikz[font=\tiny]{\input{\imagesfolder/hfgl0_foam-digon-cup}}}\hskip-2em\, &&
		\NB{\tikz[font=\tiny]{\begin{scope}
  \begin{scope}
    \coordinate (L1) at (0.2,0.4);
    \coordinate (L2) at (0,0);
    \coordinate (R1) at (2.2,0.4);
    \coordinate (R2) at (2,0);
    \coordinate (ML) at (0.6, 0.2);
    \coordinate (MR) at (1.6, 0.2);
    \draw[->-] (ML) -- (MR) node[above, midway] {$a+b$};
    \draw (MR) .. controls +(0, 0) and +(-0.3,0) .. (R1) ;
    \draw (MR) .. controls +(0, 0) and +(-0.3,0) .. (R2);
    \draw (L1) .. controls +( 0.3, 0) and +(0,0) .. (ML);
    \draw (L2) .. controls +( 0.3, 0) and +(0,0) .. (ML);
  \end{scope}  
 \begin{scope}[yshift = -1cm]
    \coordinate (L1B) at (0.2,0.4);
    \coordinate (L2B) at (0,0);
    \coordinate (R1B) at (2.2,0.4);
    \coordinate (R2B) at (2,0);
    \draw[->-] (L1B) .. controls +( 0, 0) and +(0,0) .. (R1B) node [right, pos
    = 1] {$a$};
    \draw[->-] (L2B) .. controls +( 0, 0) and +(0,0) .. (R2B) node [right, pos
    = 1] {$b$};
 \end{scope}  
  \draw (R1) -- (R1B);
  \draw (R2) -- (R2B);
  \draw (L1) -- (L1B);
  \draw (L2) -- (L2B);
  \draw[thick] (ML) .. controls +(0, -0.6) and +(0, -0.6) .. (MR);
\end{scope}

		coas, $\deg = 0$ &&
		cup, $\deg = -ab$ &&
		zip, $\deg = ab$ \\[2ex]
		\NB{\tikz[font=\tiny]{\begin{scope}[xscale = -1 ]
  \begin{scope}
    \coordinate (LL) at (0,0);
    \coordinate (L) at (0.5,0);
    \coordinate (R1) at (2.2,0.4);
    \coordinate (R2) at (2,0);
    \coordinate (R3) at (1.8, -0.4);
    \draw[-<-] (LL) -- (L);
    \draw (L) .. controls +(0,0) and +(-0.5, 0) .. (R1)
    coordinate[pos =0.5] (M);
    \draw (L) .. controls +(0,0) and +(-0.5, 0) .. (R3) ;
    \draw (M) .. controls +(0,0) and +(-0.5, 0) .. (R2) ;
  \end{scope}  
 \begin{scope}[yshift = -1.5cm]
    \coordinate (LLB) at (0,0);
    \coordinate (LB) at (0.5,0);
    \coordinate (R1B) at (2.2,0.4);
    \coordinate (R2B) at (2,0);
    \coordinate (R3B) at (1.8, -0.4);
    \draw[-<-] (LLB) -- (LB)  node[right, pos =0] {$a+b+c$};
    \draw (LB) .. controls +(0,0) and +(-0.5, 0) .. (R1B)    node[left] {$a$};
    \draw (LB) .. controls +(0,0) and +(-0.5, 0) .. (R3B)   node[left] {$b$}
    coordinate[pos =0.5] (MB);
    \draw (MB) .. controls +(0,0) and +(-0.5, 0) .. (R2B)    node[left] {$c$};
  \end{scope}  
  \draw (LL) -- (LLB);
  \draw (R1) -- (R1B);
  \draw (R2) -- (R2B);
  \draw (R3) -- (R3B);
  \draw[name path=path1, thick] (M) .. controls +(0,-0.5) and +(0,
  0.5) .. (LB);
  \draw[name path=path2, thick] (L) .. controls +(0,-0.5) and +(0,
  0.5) .. (MB);
  \path [name intersections={of=path1 and path2,by=O}];
  
\end{scope}}}\hskip-2.5em\, &&
		\NB{\tikz[font=\tiny]{\input{\imagesfolder/hfgl0_foam-digon-cap}}}\hskip-2em\, &&
		\hskip-1em\NB{\tikz[font=\tiny]{\begin{scope}
  \begin{scope}
    \coordinate (L1) at (0.2,0.4);
    \coordinate (L2) at (0,0);
    \coordinate (R1) at (2.2,0.4);
    \coordinate (R2) at (2,0);
    \coordinate (ML) at (0.6, 0.2);
    \coordinate (MR) at (1.6, 0.2);
    \draw[->-] (ML) -- (MR) node[below, midway] {$a+b$};
    \draw (MR) .. controls +(0, 0) and +(-0.3,0) .. (R1) ;
    \draw (MR) .. controls +(0, 0) and +(-0.3,0) .. (R2);
    \draw (L1) .. controls +( 0.3, 0) and +(0,0) .. (ML);
    \draw (L2) .. controls +( 0.3, 0) and +(0,0) .. (ML);
  \end{scope}  
 \begin{scope}[yshift = 1cm]
    \coordinate (L1B) at (0.2,0.4);
    \coordinate (L2B) at (0,0);
    \coordinate (R1B) at (2.2,0.4);
    \coordinate (R2B) at (2,0);
    \draw[->-] (L1B) .. controls +( 0, 0) and +(0,0) .. (R1B) node [left, pos
    = 0] {$a$};
    \draw[->-] (L2B) .. controls +( 0, 0) and +(0,0) .. (R2B) node [left, pos
    = 0] {$b$};
 \end{scope}  
  \draw (R1) -- (R1B);
  \draw (R2) -- (R2B);
  \draw (L1) -- (L1B);
  \draw (L2) -- (L2B);
  \draw[thick] (ML) .. controls +(0, 0.6) and +(0, 0.6) .. (MR);
\end{scope}}} \\
		as, $\deg = 0$ &&
		cap, $\deg = -ab$ &&
		unzip, $\deg = ab$
	\end{tabular}
	\caption{Local models for all singularities of directed foams, together with their degrees.}
	\label{fig:basic-foams}
\end{figure}

\begin{definition}
	Let $\ccat{Foam}$ be the~bicategory of \emph{$\infty$-foams}, in which
	\begin{itemize}
		\item objects are finite sequences of points on a~line, labeled with positive integers,
		\item 1-morphisms from $\underline a$ to $\underline b$ are formal finite direct sums
		$\bigoplus_i \qshift^{d_i}\web_i$, where each $\web_i$ is a~directed web $\web\subset\IxR$ 
		with input $\underline a$ and output $\underline b$, and  $d_i \in \ZZ$.
		\item 2-morphisms from $\bigoplus_i \qshift^{d_i}\web_i$ to $\bigoplus_j \qshift^{d'_j}\web'_j$
		are matrices $(m_{ij})$, where $m_{ij}$ is a~linear combination of decorated directed foams
		in a~thickened strip with input $\web_i$, output $\web_j$, and degree $d'_j - d_i$.
	\end{itemize}
\end{definition}

\begin{remark}
	The~approach to $\ccat{Foam}$ is slightly different in \cite{queffelec2014mathfrak}.
	There one first constructs a~bicategory $n\ccat{Foam}$ of (directed)
	$\LieGL_n$ foams using technics from higher representation theory
	and writes down it presentation in terms of generators and relations.
	Then it is shown that these categories admit a~limit when $N$ goes
	to infinity. It can be shown that the~limit category coincides
	with $\ccat{Foam}$ as defined above.
\end{remark}

\begin{proposition}[{\cite[Proposition 5.10]{RW2}}, \cite{queffelec2014mathfrak}]
  \label{prop:local-isoms}
	There are graded isomorphisms of webs in $\ccat{Foam}$
	\begin{gather*}
	\begin{aligned}
		\NB{\tikz[font=\tiny]{\begin{scope}
  \coordinate (a) at (-1, 0.5);
  \coordinate (b) at (-1, 0 );
  \coordinate (c) at (-1,-0.5);
  \coordinate (abc) at (0.4, 0);
  \coordinate (r) at (1, 0);
  \coordinate (ab) at (-0.3, 0.25);
  \draw[>-] (c) -- (abc) node[pos =0, left] {$c$};
  \draw[>-] (a) -- (ab) node[pos =0, left] {$a$};
  \draw[>-] (b) -- (ab) node[pos =0, left] {$b$};
  \draw[->-] (ab) --(abc) node[pos=0.5, above] {$a+b$};
  \draw[->] (abc) -- (r) node[pos =1, right] {$a+b+c$};
\end{scope}}}
			&\cong
		\NB{\tikz[font=\tiny]{\begin{scope}
  \coordinate (a) at (-1, 0.5);
  \coordinate (b) at (-1, 0 );
  \coordinate (c) at (-1,-0.5);
  \coordinate (abc) at (0.4, 0);
  \coordinate (r) at (1, 0);
  \coordinate (bc) at (-0.3, -0.25);
  \draw[>-] (c) -- (bc) node[pos =0, left] {$c$};
  \draw[>-] (a) -- (abc) node[pos =0, left] {$a$};
  \draw[>-] (b) -- (bc) node[pos =0, left] {$b$};
  \draw[->-] (bc) --(abc) node[pos=0.5, below] {$b+c$};
  \draw[->] (abc) -- (r) node[pos =1, right] {$a+b+c$};
\end{scope}}}
	\\
		\NB{\tikz[font=\tiny]{\begin{scope}
  \coordinate (a) at (1, 0.5);
  \coordinate (b) at (1, 0 );
  \coordinate (c) at (1,-0.5);
  \coordinate (abc) at (-0.4, 0);
  \coordinate (r) at (-1, 0);
  \coordinate (ab) at (0.3, 0.25);
  \draw[<-] (c) -- (abc) node[pos =0, right] {$c$};
  \draw[<-] (a) -- (ab) node[pos =0, right] {$a$};
  \draw[<-] (b) -- (ab) node[pos =0, right] {$b$};
  \draw[-<-] (ab) --(abc) node[pos=0.5, above] {$a+b$};
  \draw[-<] (abc) -- (r) node[pos =1, left] {$a+b+c$};
\end{scope}}}
			&\cong
		\NB{\tikz[font=\tiny]{\begin{scope}
  \coordinate (a) at (1, 0.5);
  \coordinate (b) at (1, 0 );
  \coordinate (c) at (1,-0.5);
  \coordinate (abc) at (-0.4, 0);
  \coordinate (r) at (-1, 0);
  \coordinate (bc) at (0.3, -0.25);
  \draw[<-] (c) -- (bc) node[pos =0, right] {$c$};
  \draw[<-] (a) -- (abc) node[pos =0, right] {$a$};
  \draw[<-] (b) -- (bc) node[pos =0, right] {$b$};
  \draw[-<-] (bc) --(abc) node[pos=0.5, below] {$b+c$};
  \draw[-<] (abc) -- (r) node[pos =1, left] {$a+b+c$};
\end{scope}}}
	\\
		\NB{\tikz[font=\tiny,scale=1.5]{\begin{scope}
\draw[>-] (0,0) -- +(0.2,0) node[pos=0, left]
      {$a+b$};
      \draw[->] (0.8,0) -- +(0.2, 0) node[pos=1, right]
      {$a+b$};
      \draw[->-] (0.2, 0) .. controls +(0.3,0.3) and +(-0.3, 0.3)
      .. (0.8,0) node[pos =0.5, above] {$a$};
      \draw[->-] (0.2, 0) .. controls +(0.3,-0.3) and +(-0.3, -0.3)
      .. (0.8,0) node[pos =0.5, below] {$b$};
\end{scope}}}
			&\cong
		\qbinom{a+b}{a} \NB{\tikz[font=\tiny,scale=2]{\begin{scope}
  \draw[->-] (0,0) -- +(1,0) node[pos= 0.5, above] {$a+b$} node[pos=
  0.5, below, white] {$a+b$};
\end{scope}}}
	\end{aligned}\\
		\NB{\tikz[font=\tiny,xscale=1.5]{\begin{scope}
  \coordinate (t1) at (-1, 0.5);
  \coordinate (t2) at (-0.3, 0.5);
  \coordinate (t3) at ( 0.3, 0.5);
  \coordinate (t4) at ( 1, 0.5);
  \coordinate (b1) at (-1,   -0.5);
  \coordinate (b2) at (-0.5, -0.5);
  \coordinate (b3) at ( 0.5, -0.5);
  \coordinate (b4) at ( 1,   -0.5);
  \draw[>-]  (t1) -- (t2) node[pos =0, left] {$a$};
  \draw[->-] (t2) -- (t3) node[pos =0.5, above] {$a+d$};
  \draw[->]  (t3) -- (t4) node[pos =1, right] {$b$};
  \draw[>-]  (b1) -- (b2) node[pos =0, left] {$b+c$};
  \draw[->-] (b2) -- (b3) node[pos =0.5, below] {$b+c-d$};
  \draw[->]  (b3) -- (b4) node[pos =1, right] {$a+c$};
  \draw[->-] (b2) -- (t2) node[pos = 0.5, left] {$d$};
  \draw[->-] (t3) -- (b3) node[pos = 0.5, right] {$a+d-c$};
\end{scope}}}
			\cong
		\bigoplus_{j = \max(0, b-a)}^b
			\qbinom{c}{d-j} \NB{\tikz[font=\tiny,xscale=1.5]{\begin{scope}
  \coordinate (t1) at (-1, 0.5);
  \coordinate (t2) at (-0.5, 0.5);
  \coordinate (t3) at ( 0.5, 0.5);
  \coordinate (t4) at ( 1, 0.5);
  \coordinate (b1) at (-1,   -0.5);
  \coordinate (b2) at (-0.3, -0.5);
  \coordinate (b3) at ( 0.3, -0.5);
  \coordinate (b4) at ( 1,   -0.5);
  \draw[>-]  (t1) -- (t2) node[pos =0, left] {$a$};
  \draw[->-] (t2) -- (t3) node[pos =0.5, above] {$b-j$};
  \draw[->]  (t3) -- (t4) node[pos =1, right] {$b$};
  \draw[>-]  (b1) -- (b2) node[pos =0, left] {$b+c$};
  \draw[->-] (b2) -- (b3) node[pos =0.5, below] {$a+c+j$};
  \draw[->]  (b3) -- (b4) node[pos =1, right] {$a+c$};
  \draw[-<-] (b2) -- (t2) node[pos = 0.5, left] {$a+j-b$};
  \draw[-<-] (t3) -- (b3) node[pos = 0.5, right] {$j$};
\end{scope}}}
	\end{gather*}
\end{proposition}

Of particular interest to us are webs and foams with labels at most 2,
the~former having all endpoints labeled one.
They arise naturally as resolutions of uncolored link diagrams.
Following \cite{RW3} we call them \emph{elementary}. In what follows
we write $\ccat{FoamEl}$ for the~linear subbicategory of $\ccat{Foam}$
generated by elementary foams and webs. More precisely, this is the smallest
subbicategory containing elementary foams and webs that is closed under direct sums of webs and linear combinations of foams.

\begin{proposition}\label{prop:local-elem-isoms}
	There are isomorphisms of elementary webs in $\ccat{FoamEl}$:
   \begin{align}
		\NB{\tikz[font=\tiny,scale=1.5]{\begin{scope}
\draw[>-] (0,0) -- +(0.2,0) node[pos=0, left]
      {$2$};
      \draw[->] (0.8,0) -- +(0.2, 0) node[pos=1, right]
      {$2$};
      \draw[->-] (0.2, 0) .. controls +(0.3,0.3) and +(-0.3, 0.3)
      .. (0.8,0) node[pos =0.5, above] {$1$};
      \draw[->-] (0.2, 0) .. controls +(0.3,-0.3) and +(-0.3, -0.3)
      .. (0.8,0) node[pos =0.5, below] {$1$};
\end{scope}}}
			&\cong
		[2] \NB{\tikz[font=\tiny,scale=1.5]{\begin{scope}
  \draw[->-] (0,0) -- +(1,0) node[pos= 0.5, above] {$2$} node[pos=
  0.5, below, white] {$2$};
\end{scope}}}
	\\
		\NB{\tikz[yscale=0.6,xscale=0.5,font=\tiny]{\begin{scope}
  \coordinate (lb) at (-3, 0);
  \coordinate (lm) at (-3, 1);
  \coordinate (lt) at (-3, 2);
  \coordinate (rb) at (3, 0);
  \coordinate (rm) at (3, 1);
  \coordinate (rt) at (3, 2);
  \coordinate (ll) at (-2, 1.5);
  \coordinate (lr) at (-1, 1.5);
  \coordinate (rl) at ( 1, 1.5);
  \coordinate (rr) at ( 2, 1.5);
  \coordinate (ml) at (-0.5,0.5);
  \coordinate (mr) at ( 0.5,0.5);

  \draw[>-] (lt) .. controls +(0.5, 0) and +(-0.2, 0.2) .. (ll) node[pos =0, left] {$1$};
  \draw[>-] (lm) .. controls +(0.5, 0) and +(-0.2,-0.2) .. (ll) node[pos =0, left] {$1$};
  \draw[>-] (lb) .. controls +(0.5, 0) and +(-0.2,-0.2) .. (ml) node[pos =0, left] {$1$};

  \draw[<-] (rt) .. controls +(-0.5, 0) and +( 0.2, 0.2) .. (rr) node[pos =0, right] {$1$};
  \draw[<-] (rm) .. controls +(-0.5, 0) and +( 0.2,-0.2) .. (rr) node[pos =0, right] {$1$};
  \draw[<-] (rb) .. controls +(-0.5, 0) and +( 0.2,-0.2) .. (mr) node[pos =0, right] {$1$};
  \draw[->-] (lr) --(ml) node[pos= 0.5, left] {$1$};
  \draw[->-] (mr) --(rl) node[pos= 0.5, right] {$1$};  
  \draw[->-] (ll) -- (lr) node[pos =0.5, above] {$2$};
  \draw[->-] (rl) -- (rr) node[pos =0.5, above] {$2$};
  \draw[->-] (ml) -- (mr) node[pos =0.5, below] {$2$};
  \draw[->-] (lr) .. controls +(0.2, 0.2) and +(-0.2, 0.2) .. (rl) node[pos =0.5, above] {$1$};
\end{scope}}}
			\oplus
		\NB{\tikz[yscale=0.6,xscale=0.3,font=\tiny]{\begin{scope}[yscale=-1]
  \coordinate (lb) at (-3, 0);
  \coordinate (lm) at (-3, 1);
  \coordinate (lt) at (-3, 2);
  \coordinate (rb) at (3, 0);
  \coordinate (rm) at (3, 1);
  \coordinate (rt) at (3, 2);
  \coordinate (ll) at (-2, 1.5);
  \coordinate (lr) at (-1, 1.5);
  \coordinate (rl) at ( 1, 1.5);
  \coordinate (rr) at ( 2, 1.5);
  \coordinate (ml) at (-0.5,0.5);
  \coordinate (mr) at ( 0.5,0.5);

  \draw[>-] (lt) .. controls +(0.5, 0) and +(-0.2, 0.2) .. (ll) node[pos =0, left] {$1$};
  \draw[>-] (lm) .. controls +(0.5, 0) and +(-0.2,-0.2) .. (ll) node[pos =0, left] {$1$};
\draw [>->] (lb) -- (rb) node [pos =0, left] {$1$} node [pos =1,
right] {$1$};
\draw [->-] (ll) -- (rr) node[pos =0.5, above] {$2$};
  \draw[<-] (rt) .. controls +(-0.5, 0) and +( 0.2, 0.2) .. (rr) node[pos =0, right] {$1$};
  \draw[<-] (rm) .. controls +(-0.5, 0) and +( 0.2,-0.2) .. (rr) node[pos =0, right] {$1$};
\end{scope}}}
			&\cong
		\NB{\tikz[yscale=0.6,xscale=0.5,font=\tiny]{\begin{scope}[yscale = -1]
  \coordinate (lb) at (-3, 0);
  \coordinate (lm) at (-3, 1);
  \coordinate (lt) at (-3, 2);
  \coordinate (rb) at (3, 0);
  \coordinate (rm) at (3, 1);
  \coordinate (rt) at (3, 2);
  \coordinate (ll) at (-2, 1.5);
  \coordinate (lr) at (-1, 1.5);
  \coordinate (rl) at ( 1, 1.5);
  \coordinate (rr) at ( 2, 1.5);
  \coordinate (ml) at (-0.5,0.5);
  \coordinate (mr) at ( 0.5,0.5);

  \draw[>-] (lt) .. controls +(0.5, 0) and +(-0.2, 0.2) .. (ll) node[pos =0, left] {$1$};
  \draw[>-] (lm) .. controls +(0.5, 0) and +(-0.2,-0.2) .. (ll) node[pos =0, left] {$1$};
  \draw[>-] (lb) .. controls +(0.5, 0) and +(-0.2,-0.2) .. (ml) node[pos =0, left] {$1$};

  \draw[<-] (rt) .. controls +(-0.5, 0) and +( 0.2, 0.2) .. (rr) node[pos =0, right] {$1$};
  \draw[<-] (rm) .. controls +(-0.5, 0) and +( 0.2,-0.2) .. (rr) node[pos =0, right] {$1$};
  \draw[<-] (rb) .. controls +(-0.5, 0) and +( 0.2,-0.2) .. (mr) node[pos =0, right] {$1$};
  \draw[->-] (lr) --(ml) node[pos= 0.5, left] {$1$};
  \draw[->-] (mr) --(rl) node[pos= 0.5, right] {$1$};  
  \draw[->-] (ll) -- (lr) node[pos =0.5, below] {$2$};
  \draw[->-] (rl) -- (rr) node[pos =0.5, below] {$2$};
  \draw[->-] (ml) -- (mr) node[pos =0.5, above] {$2$};
  \draw[->-] (lr) .. controls +(0.2, 0.2) and +(-0.2, 0.2) .. (rl) node[pos =0.5, below] {$1$};
\end{scope}}}
			\oplus
		\NB{\tikz[yscale=0.6,xscale=0.3,font=\tiny]{\input{\imagesfolder/hfgl0_hecke3}}}
   \end{align}
\end{proposition}

\subsubsection{Directed annular webs and foams}

Consider now directed annular webs, so that $\Sigma = \SxR$.
Again, we consider only \emph{directed foams} between them,
on which the~projection onto $\S^1\times[0,1]$ has no critical points.
These foams have the~same six types of singularities from Figure~\ref{fig:basic-foams}
as directed foams in a~thickened strip.

Annular webs and foams consitute a~category $\cat{AFoam}$ constructed
in the~same fashion as $\ccat{Foam}$, keeping in mind that annular webs
have no endpoints. The~objects of $\cat{AFoam}$ are formal finite direct
sums $\bigoplus_i \qshift^{d_i} \web_i$, where each $\web_i$ is a~directed
annular web, and morphisms from $\bigoplus_i \qshift^{d_i} \web_i$ to
$\bigoplus_j \qshift^{d_j'} \web_j$ are matrices $(m_{ij})$, where each $m_{ij}$ is
a~linear combination of decorated directed annular foams with input $\web_i$,
output $\web_j$, and degree $d'_j - d_i$. We impose the~same local relations
as discussed above.
It contains a~subcategory $\cat{AFoamEl}$ of \emph{elementary
annular webs and foams}, where we consider only webs and foams
with edges and facets of thickness at most 2.

\begin{figure}[ht]
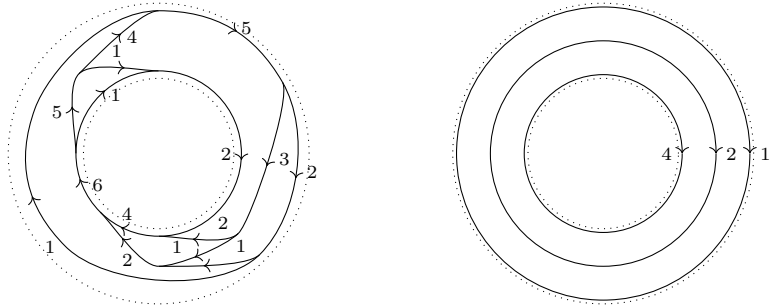

  \centering
  \NB{\tikz[]{\begin{scope}[font = \tiny, scale= 0.5]
  \coordinate (O) at (0, 0);
  \draw[dotted] (O) circle (2cm);
  \draw[dotted] (O) circle (4cm);
  \coordinate (F1) at (30:3.8);
  \coordinate (F2) at (90:3.8);
  \coordinate (F3) at (-45:3.8);
  \coordinate (M1) at (135:3);
  \coordinate (M2) at (-90:3);
  \coordinate (M3) at (-45:3);
  \coordinate (I1) at (90:2.2);
  \coordinate (I2) at (180:2.2);
  \coordinate (I3) at (225:2.2);
  \coordinate (I4) at (-90:2.2);
  \draw[-<-] (I1) arc (90:180:2.2)  node[midway, right] {$1$};
  \draw[-<-] (I2) arc (180:225:2.2)  node[midway, right] {$6$};
  \draw[-<-] (I3) arc (-135:-90:2.2)  node[midway, above] {$4$};;
  \draw[-<-] (I4) arc (-90:90:2.2) node[midway, left] {$2$};
  \draw[-<-] (M3) .. controls +(45:0.5) and +(-60:0.5) .. (F1) node[midway, right] {$3$};
  \draw[-<-] (F2) .. controls +(180:0.5) and +(45:0.5) .. (M1)  node[midway, right] {$4$};
  \draw[-<-] (I1) .. controls +(180:0.5) and +(45:0.5) .. (M1) node[midway, above] {$1$};
  \draw[-<-] (M1) .. controls +(225:0.5) and +(90:0.5) .. (I2)  node[midway, left] {$5$};
  \draw[-<-] (I3) .. controls +(-45:0.5) and +(180:0.5) .. (M2)  node[midway, below] {$2$};
  \draw[-<-] (I4) .. controls +(0:0.5) and +(225:0.5) .. (M3)  node[near end, above] {$2$};
  \draw[-<-] (M2) .. controls +(0:0.5) and +(225:0.5) .. (M3) node[near start, above] {$1$};
  \draw[-<-] (M2) .. controls +(0:0.5) and +(225:0.5) .. (F3)  node[near end, above] {$1$};
  \draw[-<-] (F1) .. controls +(120:1.5) and +(0:1.5) .. (F2)  node[midway, right] {$5$};
  \draw[-<-] (F2) .. controls +(180:1.5) and  +(70:1) .. (160:3.5) .. controls +(250:1.5) and +(135:1.5) .. (225:3.5) node[left] {$1$} .. controls +(-45:1.5) and +(225:1.5) .. (F3);
  \draw[-<-] (F3) .. controls +(45:1.5) and +(-60:1.5) .. (F1)  node[midway, right] {$2$};
\end{scope}}} \qquad\qquad
  \NB{\tikz[]{\begin{scope}[font = \tiny, scale= 0.5]
  \draw[dotted] (0,0) circle (2cm);
  \draw[dotted] (0,0) circle (4cm);
  \draw[->] (0:2.1cm) arc (0:-360:2.1cm) node[left] {$4$};
  \draw[->] (0:3cm) arc (0:-360:3cm) node[right] {$2$};
  \draw[->] (0:3.9cm) arc (0:-360:3.9cm) node[right] {$1$};
\end{scope}}}
  \caption{Examples of directed annular webs of index 7.
		The~one to the~right is $\circles_{(4,2,1)}$.}
  \label{fig:exa-annular-web}
\end{figure}
\begin{example}\label{exa:Slambda}
	Given a~finite sequence $\underline k = (k_1,\dots,k_r)$ one can consider
	a~disjoint union of $r$ concentric clockwise oriented circles with thicknesses
	$k_1, \dots, k_r$, read from the~most nested circle towards the~unnested one.
	We called it a~\emph{circular web} and denote by $\circles_{\underline{k}}$.
\end{example}

The~next proposition follows from the~Queffelec--Rose--Sartori reduction
algorithm for annular webs.

\begin{proposition}[cp.\ {\cite[Theorem 3.2]{QRS}}]
\label{prop:annular-reduction}
	Given an~annular directed web $\web$, there are graded direct sums
	of circular webs $S_L$ and $S_R$, such that $\web \oplus S_L \cong S_R$
	in $\cat{AFoam}$.
\end{proposition}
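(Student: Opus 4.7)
The plan is to proceed by induction on a suitable complexity measure of $\web$, using the local isomorphisms of Proposition~\ref{prop:local-isoms} as the reduction tool. A natural choice is the lexicographic pair (number of trivalent vertices, sum of edge thicknesses), though any measure that strictly decreases under the digon and square isomorphisms would do. The base case is clean: if $\web$ has no trivalent vertices, then the directedness condition together with the embedding in $\SxR$ forces $\web$ to be a disjoint union of coherently oriented concentric circles, hence a circular web $\circles_{\underline k}$, and we may set $S_L = S_R = 0$.

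For the inductive step, I would exploit the planar graph structure of $\web\subset\SxR$. If $\web$ has at least one trivalent vertex, then $(\SxR)\setminus\web$ contains at least one bounded face in the inner region (the region not containing the point at infinity). Pick an \emph{innermost} such face $F$, meaning one whose closure does not contain any smaller bounded face. Combining the flow condition at each vertex with the directedness assumption, $F$ is bounded by a cyclic sequence of edges that is coherently oriented, and hence $F$ has at most four vertices on its boundary. A short case analysis then shows that, after finitely many applications of the associativity isomorphisms of Proposition~\ref{prop:local-isoms} to merge or split edges incident to $F$, the face $F$ becomes either a \emph{digon} (two boundary edges) or a \emph{square} (four boundary edges, with compatible thicknesses).

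At this point I would apply the corresponding local isomorphism from Proposition~\ref{prop:local-isoms}. The digon relation replaces $\web$ by $\qbinom{a+b}{a}\web'$, where $\web'$ has strictly fewer trivalent vertices than $\web$. The square switch instead yields an isomorphism
\[
	\web \oplus \bigoplus_{j} \qbinom{c}{d-j}\web_j^{\,\prime}
		\;\cong\;
	\web^{\,\prime\prime} \oplus \bigoplus_{j} \qbinom{c}{d-j}\web_j^{\,\prime\prime\prime},
\]
where each of $\web^{\,\prime\prime}$, $\web_j^{\,\prime}$ and $\web_j^{\,\prime\prime\prime}$ is strictly simpler than $\web$ with respect to the chosen complexity measure. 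In either case the induction hypothesis applies to every web appearing on the right-hand side, expressing each of them (modulo a graded direct sum of circular webs) as a graded direct sum of circular webs. Reassembling these decompositions and collecting all circular summands on the two sides of the isomorphism yields the desired $S_L$ and $S_R$ with $\web \oplus S_L \cong S_R$.

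The main obstacle is the combinatorial claim that some innermost face can always be brought, by associativity moves alone, into digon or square form compatible with the hypotheses of Proposition~\ref{prop:local-isoms}. This is precisely the algorithmic heart of the Queffelec--Rose--Sartori reduction \cite[Theorem~3.2]{QRS}, and the cleanest route is to cite their argument verbatim: once their algorithm is translated into the bicategory $\cat{AFoam}$ via the local isomorphisms listed in Proposition~\ref{prop:local-isoms}, the statement follows directly. The only additional bookkeeping is tracking the grading shifts induced by the quantum binomial coefficients, which is automatic since all isomorphisms in Proposition~\ref{prop:local-isoms} are declared to be graded.
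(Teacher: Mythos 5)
The paper does not actually prove this proposition: it states only that ``the next proposition follows from the Queffelec--Rose--Sartori reduction algorithm for annular webs'' and labels the proposition ``cp.\ \cite[Theorem 3.2]{QRS}.'' In that sense your proposal takes the same route as the paper, since you too ultimately defer to QRS for what you rightly call the ``algorithmic heart.'' That deference is what makes your overall argument acceptable.

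However, the intermediate reconstruction you offer en route is not sound, and would mislead a reader. No bounded disk face of a directed annular web is bounded by a \emph{coherently oriented} cycle of edges: directedness means every edge is oriented consistently with the $\S^1$ direction, so when you traverse $\partial F$ one arc of the boundary runs \emph{with} the edge orientations and the opposite arc runs \emph{against} them. Moreover, an innermost face (in your sense of ``closure contains no smaller bounded face'') can have arbitrarily many boundary vertices: take a thin circle $c_1$ spliced to $c_2$ at its two ends, with $c_2$ zipping and unzipping to $c_3$ a total of $m$ times in between. The face between $c_1$ and $c_2$ then has $2+2m$ boundary vertices, and applying the associativity isomorphisms of Proposition~\ref{prop:local-isoms} to the vertices incident to it does not reduce it to a digon or square, since those moves operate on the \emph{other} side of those vertices. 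The actual QRS algorithm does not proceed by finding an innermost face of bounded size. Since you ultimately cite QRS anyway, the cleanest statement---matching the paper---is simply to say that $\cite[\text{Theorem 3.2}]{QRS}$ gives the reduction once Proposition~\ref{prop:local-isoms} is used to implement their local relations in $\cat{AFoam}$, rather than layering a new (and flawed) inductive scheme on top of it.
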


There is a~similar result for elementary annular webs, with circular
webs replaced by another class of webs.

\begin{definition}\label{def:chain-of-dumbbells}
	A~\emph{chain of dumbbells of index $\bindex$} is an~annular web $D_\bindex$
	obtained from $k$ concentric circles by glueing each pair of neighboring
	circles along an~arc, such that \nth{i} circle is glued with \nth{(i+1)}
	immediately after it is glued with \nth{(i-1)}, see Figure~\ref{fig:chain-of-dumbles0}.
\end{definition}

Note that a~chain of dumbbells of index $\bindex\geq 3$ consists of $\bindex - 1$
thick edges and $2\bindex-1$ thin edges.
\begin{figure}[ht]
    \centering
	\[
		\NB{\tikz[font= \tiny ,scale=0.5]{\input{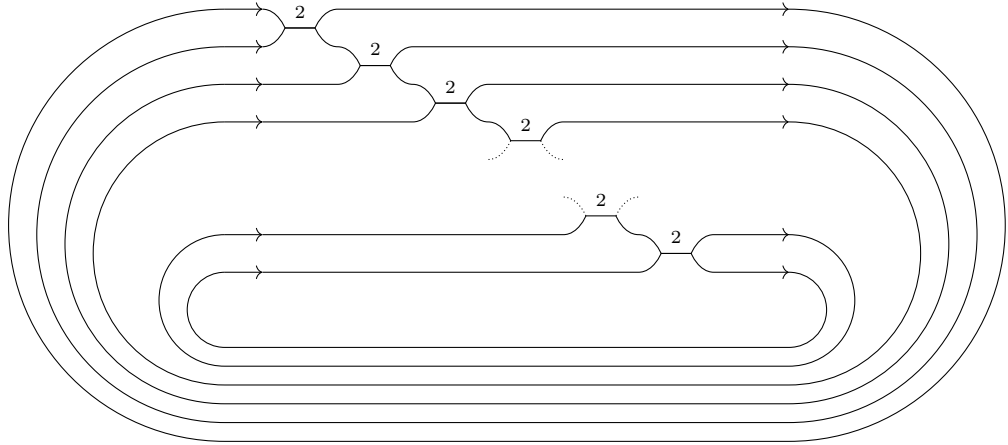}}}
	\]    
    \caption{A~chain of dumbbells.}
    \label{fig:chain-of-dumbles0}
\end{figure}
We say that an~elementary web is \emph{basic} if it is a~concetric collections
of circles and chains of dumbbells. They play the~role of circular webs in
$\cat{AFoamEl}$.

\begin{proposition}[{\cite[Corollary 2.5]{RW3}}]\label{prop:elem-annular-reduction}
	Given an~elementary annular directed web $\web$, there are graded
	direct sums of basic elementary webs $X_L$ and $X_R$, such that
	$\web \oplus X_L \cong X_R$ in $\cat{AFoamEl}$.
\end{proposition}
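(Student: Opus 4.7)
The plan is to mimic, inside the elementary subbicategory $\cat{AFoamEl}$, the Queffelec--Rose--Sartori reduction used in the proof of Proposition~\ref{prop:annular-reduction}. We induct on the lexicographically ordered pair $(v(\web), e_1(\web))$, where $v(\web)$ counts the trivalent vertices of $\web$ and $e_1(\web)$ counts its thin edges. In the base case $v(\web) = 0$, the web $\web$ is a disjoint union of oriented circles (each of thickness $1$ or $2$); up to annular isotopy it becomes a concentric family of circles, hence is itself a basic elementary web, and we may take $X_L = 0$ and $X_R = \web$.

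For the inductive step we search for a local reducible configuration in $\web$ and apply the corresponding isomorphism from Proposition~\ref{prop:local-elem-isoms}. If $\web$ contains a digon---two thin edges bounding a thick edge---the digon isomorphism rewrites it as $[2]$ copies of a configuration with two fewer vertices, and the induction hypothesis applied to these summands yields basic elementary decompositions that we reassemble into the desired $X_L$, $X_R$ for $\web$. If instead $\web$ contains a ``square'' pattern matching the left-hand side of the Hecke-like isomorphism in Proposition~\ref{prop:local-elem-isoms}, that relation takes the form $\web \oplus \web' \cong \web'' \oplus \web'''$; by induction each of $\web',\web'',\web'''$ admits a basic elementary resolution, and we combine them to produce $X_L, X_R$ for $\web$. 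If no local pattern is available, we claim that $\web$ is already a basic elementary web.

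The heart of the argument is this last claim. Let $T(\web)$ denote the subgraph formed by thick edges; by the flow condition together with the absence of digons, $T(\web)$ is a disjoint union of isolated thick arcs, each contributing exactly two trivalent vertices to $\web$. Surgery along these arcs cuts $\web$ into a disjoint union of oriented thin simple closed curves in the annulus. Tracking how the arcs and curves are nested, and using the absence of Hecke-reducible squares to forbid two thick arcs meeting their endpoints on a common pair of thin cycles outside the chain structure, one concludes that $\web$ is a concentric arrangement of (thin or thick) circles and chains of dumbbells.

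The main obstacle is twofold. First, one must guarantee termination: the Hecke-like move does not decrease $v(\web)$, so the bare pair $(v(\web), e_1(\web))$ is not strict enough and one has to refine the complexity---for instance, by adding a tertiary term such as the minimal number of Hecke rearrangements required to bring $\web$ into chain-of-dumbbells form, or some lexicographic notion of ``distance to a basic web'' tracked through successive annular sections. Second, one must carry out the combinatorial classification sketched in the previous paragraph with sufficient care to cover every elementary annular web with no digon and no Hecke-reducible square. Both difficulties are local cousins of those handled by Queffelec--Rose--Sartori in the non-elementary setting, so one expects the bulk of the proof to consist of a direct adaptation of their reduction to $\cat{AFoamEl}$.
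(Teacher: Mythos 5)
The proposal identifies the correct reduction strategy (a QRS-style induction powered by the digon and Hecke isomorphisms of Proposition~\ref{prop:local-elem-isoms}), but it leaves two substantive gaps, which you flag yet do not close, and which are in fact the entire content of the proof. Note first that the paper cites the result to \cite[Corollary 2.5]{RW3} and gives no in-text argument of its own, so the proposal is being assessed on its own terms.

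The termination argument is circular. You suggest refining the complexity by a term such as ``the minimal number of Hecke rearrangements required to bring $\web$ into chain-of-dumbbells form'' or a ``distance to a basic web,'' but this presupposes exactly what the induction is supposed to establish, namely that every elementary annular web reaches a basic configuration in finitely many steps. A usable complexity functional must be intrinsic to the web --- read off, say, from the pattern of thick edges and their intersection with a fixed radial section --- and one must then verify that it strictly decreases under both moves. The Hecke relation is the delicate point: in $A \oplus B \cong C \oplus D$ the vertex count is preserved, so the measure has to be arranged so that $C$ and $D$ are each strictly simpler than $A$. This is the crux of the argument and is not carried out.

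The classification claim --- that an elementary annular web with no digon and no Hecke-reducible square is already basic --- is asserted but not verified. Absence of a local pattern in one diagram of $\web$ does not preclude producing one after an annular isotopy, and the surgery-and-nesting sketch does not rule out thick edges joining non-adjacent thin cycles, or several dumbbell chains interleaved around the annulus. Pinning down the precise set of irreducible configurations and checking they are exactly the basic elementary webs is a genuine combinatorial lemma that the proposal replaces with a heuristic.
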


\subsubsection{Pointed annular webs}
\label{sec:pointed-annular}
The~last category of webs we consider is the~category $\cat{AFoamB}$
of \emph{pointed annular webs},
the~objects of which are directed annular webs,
each with a~\emph{marking} $\basepoint$ placed on an~edge of thickness 1 on the outer side of the web i.e.\ it can be connected
to the~infinity by a~curve disjoint from the~web. Moreover the~markings of all webs are located at a~fixed
point of the~ambient annulus. In particular, not all webs appear in this category.
Morphisms between two such webs are generated by annular foams with the~property
that the~markings of the~top and bottom boundary webs lie at the~boundary
of the~same facet and are connected by a vertical~interval embedded in this facet.
This interval splits the~facet into two parts that are treated as separate
facets.\footnote{
	Essentially, one can consider the~marking as a~bivalent vertex.
}
In particular, they can be decorated with different polynomials.
Forgetting the~markings of webs and lines connecting them in foams
gives a~forgetful functor $\cat{AFoamB} \to \cat{AFoam}$.

Note that there is no direct analogue of Proposition~\ref{prop:annular-reduction}.
However,  a version of  Proposition~\ref{prop:elem-annular-reduction} still holds in the pointed setting:

\begin{proposition}[{\cite[Proof of Lemma 4.29]{RW3}}]\label{prop:elem-annular-reduction-pointed}
	Given an~elementary annular directed web $\web$ with a marking on an~outer thin edge, there are graded
	direct sums of basic elementary webs $X_L$ and $X_R$ with markings on the outer thin edge, such that
	$\web \oplus X_L \cong X_R$ in $\cat{AFoamElB}$.
\end{proposition}
 \subsection{Foams and webs as Soergel bimodules}
\label{sec:foam-functor}

Directed webs and foams can be seen as a~graphical representation
of Soergel bimodules and bimodule maps. Indeed, there is a~fully faithful
functor from foams to Soergel bimodules, the~construction of which
we recall in what follows. We refer to \cite{WedrichExp, RW2} for more details.

Pick a~web $\web$ and associate with each edge $u \in E(\web)$ of thickness $r$
the~graded $\scalars$-algebra of symmetric polynomials
\( R_u := \scalars[x_{u,1},\dots, x_{u,r}]^{\symm_r}\!\),
where $\deg x_{u,i} = 2$.
For simplicity we will often write $X_u$ for the~set of variables corresponding
to the~edge $u$.
The~tensor product over $\scalars$
\[
	D(\web) := \bigotimes_{u\in E(\web)} R_u,
\]
is called the~\emph{space of decorations} of $\web$. It is the~algebra of polynomials
in edge variables that are symmetric with respect to permutions that preserve each set $X_u$.
A~pure tensor from $D(\web)$ corresponds to assigning a~symmetric polynomial $P_u \in R_u$
to each edge $u \in E(\web)$.
Therefore, we represent such elements with collections of dots on edges of $\web$,
each labeled with the~corresponding polynomial, see Figure~\ref{fig:decorated-web}.
As special cases we consider
\begin{itemize}
	\item a~dot labeled by a~Young diagram $\lambda$ representing
	the~Schur polynomial $s_\lambda$, and
	\item a~dot labeled by an~integer $i > 0$ on an~edge $u$ of thickness 1
	to represent the~monomial $x_u^i$.
\end{itemize}
Dots on the~same edge follow the~multiplicative convention:
two dots labeled $P_1$ and $P_2$ on the~same edge are equal to a~dot labeled $P_1P_2$
and an edge with no dot is decorated by $1$.

\begin{figure}[ht]
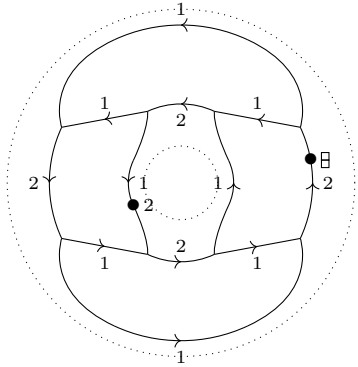

  \centering
  \NB{\tikz[scale = 0.7]{\begin{scope}[font = \tiny]
  \draw[dotted] (0,0) circle (0.7cm);
  \draw[dotted] (0,0) circle (3.3cm);
  
  \coordinate (aA1) at (65:1.5);
  \coordinate (aA2) at (65:3);
  \coordinate (bA1) at (115:1.5);
  \coordinate (bA2) at (115:3);
  \coordinate (aC1) at (245:1.5);
  \coordinate (aC2) at (245:3);
  \coordinate (bC1) at (295:1.5);
  \coordinate (bC2) at (295:3);
  \coordinate (aB1) at (155:1);
  \coordinate (aB2) at (155:2.5);
  \coordinate (bB1) at (205:1);
  \coordinate (bB2) at (205:2.5);
  \coordinate (aD1) at (335:1);
  \coordinate (aD2) at (335:2.5);
  \coordinate (bD1) at (25:1);
  \coordinate (bD2) at (25:2.5);

  \draw [->-] (aA1) arc (65:115:1.5) node[pos=0.5, below] {$2$};
  \draw [->-] (aA2) arc (65:115:3) node[pos=0.5, above] {$1$};
  \draw [->-] (aC1) arc (245:295:1.5) node[pos=0.5, above] {$2$};
  \draw [->-] (aC2) arc (245:295:3) node[pos=0.5, below] {$1$};

  \draw [->-] (aB1) arc (155:205:1) node[pos=0.5, right] {$1$} node
  [pos = 1, font=\normalsize] {$\bullet$} node
  [pos = 1, right] {$2$};
  \draw [->-] (aB2) arc (155:205:2.5) node[pos=0.5, left] {$2$};
  \draw [->-] (aD1) arc (-25:25:1) node[pos=0.5, left] {$1$};
  \draw [->-] (aD2) arc (-25:25:2.5) node[pos=0.5, right] {$2$}node
  [pos = 0.7, font=\normalsize] {$\bullet$} node
  [pos = 0.7, right, font=\small] {${\NB{\tikz[scale=0.1]{\draw (0,0) rectangle (1,1);
      \draw (1,1) rectangle (0,2);}}}$};

  \draw (bA2) .. controls +(205:1.5) and +(0,0).. (aB2);
  \draw (bC2) .. controls +(25:1.5) and +(0,0) .. (aD2);
  \draw (bB1) .. controls +(295:0.75) and +(0,0) ..(aC1);
  \draw (bD1) .. controls +(115:0.75) and +(0,0) ..(aA1);
  \draw[->-] (bA1) -- (aB2) node[pos =0.5, above] {$1$};
  \draw[->-] (bB2) -- (aC1) node[pos =0.5, below] {$1$}; 
  \draw[->-] (bC1) -- (aD2) node[pos =0.5, below] {$1$};
  \draw[->-] (bD2) -- (aA1) node[pos =0.5, above] {$1$}; 

  \draw (aA2) .. controls +(335:1.5) and +(0,0).. (bD2);
  \draw (aC2) .. controls +(155:1.5) and +(0,0) .. (bB2);
  \draw (aB1) .. controls +(65:0.75) and +(0,0) ..(bA1);
  \draw (aD1) .. controls +(245:0.75) and +(0,0) ..(bC1);

\end{scope}}}
  \caption{An~annular web with a~decoration.}
  \label{fig:decorated-web}
\end{figure}

Consider now the~ideal of \emph{local relations} $I(\web) \subset D(\web)$
generated by all differences
\begin{equation}\label{rel:Soergel}
	P(X_u) - P(X_{u'} \sqcup X_{u''}),
\end{equation}
where $u$ is an~edge of thickness $a+b$ that splits into or is a~merge of $u'$
of thickness $a$ and $u''$ of thickness $b$, and $P$ is a~symmetric polynomial
in $a+b$ variables. Diagrammatically,
\begin{equation}\label{rel:dot-migration}
\begin{aligned}
	\NB{\tikz[]{\begin{scope}[font = \tiny]
  \coordinate (l) at (0,0);
  \coordinate (o) at (1,0);
  \coordinate (rt) at (2,0.5);
  \coordinate (rb) at (2,-0.5);
  \draw[>-] (l) -- (o) node[pos = 0, left] {$a+b$} coordinate[midway] (p);
  \draw[->] (o) .. controls +(0.5, 0.5) and +(-0.5, 0) .. (rt)
  node[pos = 1, right] {$a$} coordinate[midway] (q);
  \draw[->] (o) .. controls +(0.5,-0.5) and +(-0.5, 0) .. (rb)
  node[pos = 1, right] {$b$} coordinate[midway] (r);
  \fill (p) circle (0.5mm) node[above] {$P$};
\end{scope}}}\quad &=
    		\quad\sum_{i} \NB{\tikz[]{\begin{scope}[font = \tiny]
  \coordinate (l) at (0,0);
  \coordinate (o) at (1,0);
  \coordinate (rt) at (2,0.5);
  \coordinate (rb) at (2,-0.5);
  \draw[>-] (l) -- (o) node[pos = 0, left] {$a+b$} coordinate[midway] (p);
  \draw[->] (o) .. controls +(0.5, 0.5) and +(-0.5, 0) .. (rt)
  node[pos = 1, right] {$a$} coordinate[midway] (q);
  \draw[->] (o) .. controls +(0.5,-0.5) and +(-0.5, 0) .. (rb)
  node[pos = 1, right] {$b$} coordinate[midway] (r);

  \fill (r) circle (0.5mm) node[above] {$R_{(i)}$};
  \fill (q) circle (0.5mm) node[above] {$Q_{(i)}$};
  \draw (r) node[below] {\vphantom{$Q_{(i)}$}};
\end{scope}}}
    	\qquad \text{and} \\
	\NB{\tikz[]{\begin{scope}[font = \tiny]
  \coordinate (l) at (0,0);
  \coordinate (o) at (-1,0);
  \coordinate (rt) at (-2,0.5);
  \coordinate (rb) at (-2,-0.5);
  \draw[<-] (l) -- (o) node[pos = 0, right] {$a+b$} coordinate[midway] (p);
  \draw[-<] (o) .. controls +(-0.5, 0.5) and +(0.5, 0) .. (rt)
  node[pos = 1, left] {$a$} coordinate[midway] (q);
  \draw[-<] (o) .. controls +(-0.5,-0.5) and +(0.5, 0) .. (rb)
  node[pos = 1, left] {$b$} coordinate[midway] (r);
  \fill (p) circle (0.5mm) node[above] {$P$};
\end{scope}}}\quad &=
		\quad\sum_{i} \NB{\tikz[]{\begin{scope}[font = \tiny]
  \coordinate (l) at (0,0);
  \coordinate (o) at (-1,0);
  \coordinate (rt) at (-2,0.5);
  \coordinate (rb) at (-2,-0.5);
  \draw[<-] (l) -- (o) node[pos = 0, right] {$a+b$} coordinate[midway] (p);
  \draw[-<] (o) .. controls +(-0.5, 0.5) and +(0.5, 0) .. (rt)
  node[pos = 1, left] {$a$} coordinate[midway] (q);
  \draw[-<] (o) .. controls +(-0.5,-0.5) and +(0.5, 0) .. (rb)
  node[pos = 1, left] {$b$} coordinate[midway] (r);
  \fill (r) circle (0.5mm) node[above] {$R_{(i)}\quad$};
  \fill (q) circle (0.5mm) node[above] {$Q_{(i)}$};
  \draw (r) node[below] {\vphantom{$Q_{(i)}$}};
\end{scope}}}, 
\end{aligned}
\end{equation}
where the~symmetric polynomials $Q_{(i)}$ and $R_{(i)}$ satisfy
\[
	P(X_{u'} \sqcup X_{u''}) = \sum_{i} Q_{(i)}(X_{u'}) R_{(i)}(X_{u''}).
\]
Note that the~generators of $I(\web)$ are homogeneous, so that the~ideal is graded.
Finally, given a~vertex $v\in V(\web)$ denote by $\shiftvertex{v}$ the~product of
thicknesses of the~thin edges adjacent to $v$.
The~\emph{Soergel space} associated with $\web$ is  the~graded $\scalars$-module
\[
	\set{Soergel}(\web) :=
		\qshift^{-\frac{1}{2}\sum_{v\in V(\web)} \shiftvertex{v}}
		D(\web) / I(\web).
\]
In particular, the~space is shifted downwards by the~number of thick edges
when $\web$ is an~annular elementary web. The grading shift is not necessarily integral unless the web is closed.
 Indeed, for closed webs  $\sum_{v\in V(\web)}\shiftvertex{v} = 2\sum_{v\in V_s(\web)}\shiftvertex{v} = 2\sum_{v\in V_m(\web)}\shiftvertex{v}$ where $V_s(\web)$ (resp. {}$V_m(\web)$) stands for the set of split (resp.{} merge) vertices.

Suppose now that $\web \subset \IxR$ is a~planar directed web of index $k$.
Its input and output determine compositions $\underline a$ and $\underline b$ of $k$
and $\set{Soergel}(\web)$ admits a~left and a~right action by the~algebras
$R^{\underline a}$ and $R^{\underline b}$ respectively.
Furthermore, when $\web$ consists of a~single vertex that is a~merge (resp.\ a~split),
then $\set{Soergel}(\web)$ coincides up to a~grading shift with
the~induction $\mathrm{Ind}^{\underline a}_{\underline b}$
(resp.\ restriction $\Res^{\underline a}_{\underline b}$) bimodule. 
The~results below follow immediately from the~above and the~definition
of the~Soergel space for a~web.

\begin{proposition}
	Let $\web_1$ and $\web_2$ be planar directed webs with
	$\outp(\web_1) = \underline a = \inp(\web_2)$.
	Then
	\[
		\set{Soergel}(\web_1 \circ \web_2)
			\cong
		\set{Soergel}(\web_1) \otimes_{R^{\underline a}} \set{Soergel}(\web_2).
	\]
	In particular, $\set{Soergel}(\web)$ is a~singular Soergel bimodule
	for any planar directed web $\web$.
\end{proposition}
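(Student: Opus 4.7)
The plan is to verify the multiplicativity of $\set{Soergel}$ under horizontal composition by tracking separately the decoration spaces, the local ideals, and the grading shifts, and then to derive the second assertion by decomposing an~arbitrary directed web into merges and splits.

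First I~would establish a~bookkeeping isomorphism for the~decoration spaces. The~edges of $\web_1 \circ \web_2$ split as $E(\web_1)\sqcup_{\underline a} E(\web_2)$, where the~disjoint union is amalgamated over the~set of edges on the~gluing seam (the~outputs of $\web_1$, identified with the~inputs of $\web_2$). Since each edge~$u$ on the~seam has a~thickness $a_i$ and contributes the~symmetric polynomial ring~$R_u$, the~subalgebra of $D(\web_1)$ generated by these boundary edges is exactly $R^{\underline a}$, and similarly for $D(\web_2)$. Tensoring these presentations over $R^{\underline a}$ identifies the~two copies of each boundary edge ring, producing a~canonical isomorphism
\[
	D(\web_1 \circ \web_2) \;\cong\; D(\web_1) \otimes_{R^{\underline a}} D(\web_2).
\]

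The~next step is to match the~local relations. The~vertex set $V(\web_1 \circ \web_2)$ is the~disjoint union $V(\web_1)\sqcup V(\web_2)$, because gluing takes place along edges and introduces no new trivalent vertex. Hence the~generators of $I(\web_1\circ\web_2)$ given in~\eqref{rel:Soergel} split cleanly into those coming from vertices of $\web_1$ and those from vertices of $\web_2$, so the~image of $I(\web_1\circ\web_2)$ inside $D(\web_1)\otimes_{R^{\underline a}} D(\web_2)$ agrees with the~sum of the~images of $I(\web_1)\otimes_{R^{\underline a}} D(\web_2)$ and $D(\web_1)\otimes_{R^{\underline a}} I(\web_2)$. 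By the~standard right-exactness description of a~tensor product of quotient modules, this gives an~$R^{\underline a}$-bimodule isomorphism between the~unshifted quotients. Since the~grading shift is determined by the~sum $\sum_{v} \shiftvertex{v}$ and this sum is additive on the~disjoint union of vertex sets, the~shifts on the~two sides agree, which proves the~displayed isomorphism.

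Finally, for the~``in particular'' clause I~would use the~laminated picture recalled before Figure~\ref{fig:lamination}: every planar directed web is built as a~horizontal composition of elementary pieces, each of which is either an~identity strand or a~single trivalent vertex (merge or split). For an~identity strand the~corresponding Soergel space is the~regular bimodule~${}_{R^{\underline a}}R^{\underline a}_{R^{\underline a}}$; for a~single merge or split the~computation in the~paragraph preceding the~proposition identifies $\set{Soergel}$ with $\mathrm{Ind}^{\underline a}_{\underline b}$ or $\Res^{\underline a}_{\underline b}$ up to a~grading shift coming from $\shiftvertex{v}$. Iterating the~first part of the~proposition then exhibits $\set{Soergel}(\web)$ as a~tensor product of such induction and restriction bimodules, i.e.\ as a~singular Bott--Samelson bimodule, which in particular lives in $\ccat{sSBim}$.

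The~step requiring the~most care is the~second one: one has to check that the~relations imposed at the~seam from the~$\web_1$-side and from the~$\web_2$-side match under the~identification of the~boundary variables. This works because on each seam edge of thickness $a_i$ both $\web_1$ and $\web_2$ see the~same symmetric polynomial ring $R_u$, so the~tensor product over $R^{\underline a}$ makes the~two viewpoints on the~variables $X_u$ agree as symmetric polynomials, which is exactly the~level at which the~generators \eqref{rel:Soergel} are formulated. Everything else is a~direct application of the~definitions.
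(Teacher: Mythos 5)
Your proposal is correct and follows exactly the line of reasoning the paper intends: the paper states the proposition without proof, remarking only that it "follows immediately from the above and the definition of the Soergel space for a web," and your write-up is precisely the natural elaboration of that remark — identifying $D(\web_1\circ\web_2)$ with $D(\web_1)\otimes_{R^{\underline a}}D(\web_2)$, checking that the local ideals and grading shifts are additive because the vertex sets split, and then decomposing an arbitrary directed web into merges and splits for the second assertion. No discrepancy with the paper's approach.
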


\begin{proposition}
	Let $\widehat\web$ be the~annular closure of a~directed web $\web$. Then
	\(
		\set{Soergel}(\widehat\web)
			\cong
		\HoHom_0(\set{Soergel}(\web)).
	\)
\end{proposition}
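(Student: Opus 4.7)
The plan is to exhibit both sides as the same explicit quotient of the decoration space $D(\web)$. The key geometric observation is that the annular closure introduces no new vertices, so $V(\widehat\web) = V(\web)$; the only change on the edge set is that, for each $i = 1, \ldots, r$, the $i$-th input and $i$-th output edges of $\web$ (of common thickness $a_i$) are concatenated into a single edge $e_i$ of $\widehat\web$. (A through-going strand is already a single edge of $\web$ and causes no issue below.)

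First I would translate this observation algebraically. Let $R_{\mathrm{in},i}$ and $R_{\mathrm{out},i}$ denote the symmetric polynomial rings assigned to the $i$-th input and $i$-th output boundary edges of $\web$, with variable sets $X_{\mathrm{in},i}$ and $X_{\mathrm{out},i}$, and let $J \subset D(\web)$ be the ideal generated by the differences $P(X_{\mathrm{in},i}) - P(X_{\mathrm{out},i})$ for all $P \in R^{(a_i)}$ and all $i$. Modulo $J$ each factor $R_{\mathrm{in},i} \otimes R_{\mathrm{out},i}$ of $D(\web)$ collapses to a single copy of $\Sym_{a_i}$ (because each factor is a polynomial ring on its elementary symmetric functions $e_j$ and the relations set these pairwise equal), yielding a canonical isomorphism $D(\widehat\web) \cong D(\web)/J$. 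Since $V(\widehat\web) = V(\web)$ and the local Soergel relation \eqref{rel:Soergel} at each vertex depends only on the adjacent edges, the generators of $I(\widehat\web)$ are precisely the images of those of $I(\web)$ under this quotient; hence
\[
    \set{Soergel}(\widehat\web) = D(\web)/(I(\web) + J).
\]
The grading shifts match automatically because they depend only on $V(\web)$.

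To finish I would compute $\HoHom_0(\set{Soergel}(\web)) = \set{Soergel}(\web)/[R^{\underline a}, \set{Soergel}(\web)]$ in the same terms. The left action of $R^{\underline a}$ on $\set{Soergel}(\web)$ is multiplication by the input boundary variables $X_{\mathrm{in},i}$, while the right action uses the output boundary variables $X_{\mathrm{out},i}$. The commutator submodule is therefore precisely the image of $J$ in $\set{Soergel}(\web) = D(\web)/I(\web)$, giving
\[
    \HoHom_0(\set{Soergel}(\web)) = D(\web)/(I(\web) + J),
\]
which coincides with the description of $\set{Soergel}(\widehat\web)$ obtained above.

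The main point to justify carefully is the algebraic identification $D(\widehat\web) \cong D(\web)/J$, which reduces to the elementary claim that the quotient of $\Sym_{a_i} \otimes \Sym_{a_i}$ (in two disjoint variable sets) by the differences of symmetric polynomials in those sets is a single copy of $\Sym_{a_i}$. The rest is routine bookkeeping on generators of ideals.
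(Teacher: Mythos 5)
Your proof is correct and matches what the paper has in mind (the paper leaves this as an immediate consequence of the definitions, which is exactly what you verify). The key computation — that the commutator $[R^{\underline a}, \set{Soergel}(\web)]$ is generated by $P(X_{\mathrm{in},i}) - P(X_{\mathrm{out},i})$ acting on $\set{Soergel}(\web)$, so that $\HoHom_0(\set{Soergel}(\web))$ and $\set{Soergel}(\widehat\web)$ are both presented as $D(\web)/(I(\web)+J)$ — is the intended argument, and your treatment of through-going strands and the grading shift closes off the small bookkeeping points.
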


\begin{example}
	The~Soergel bimodule associated with the~directed web $\web$
	in Figure~\ref{fig:lamination} is a~quotient of the~tensor product
	\[
		R(\web) = R^{(3,1)} \otimes R^{(4)} \otimes R^{(2,2)}
	\]
	by relations that identify any generator of $R^{(4)}$ with
	its image in either of the~two other factors. Hence,
	taking into account the~overall shift,
	\[
		\set{Soergel}(\web) = \qshift^{-\frac{7}{2}} R^{(3,1)} \otimes_{R^{(4)}} R^{(2,2)}.
	\]
\end{example}

Let us now introduce maps between Soergel spaces that correspond to
the~basic building blocks of foams depicted in Figure~\ref{fig:basic-foams}
(compare \cite{WedrichExp, RW2}). The~first four arise
as the~units and traces of associated graded Frobenius extensions
\cite{EliasMakisumiThielWilliamson}.

\begingroup
\def\arrowto{\quad\longrightarrow\quad}\def\arrowmap{\quad\longmapsto\quad}\def\colonsp{\colon\quad}\makeatletter
\def\insertpict{\@ifnextchar[{\insertpict@}{\insertpict@@[1,1,]}}\def\insertpict@[#1]{\insertpict@@[#1,#1,]}\def\insertpict@@[#1,#2,#3]{\NB{\tikz[xscale=#1,yscale=,font=\tiny]{\input{\imagesfolder/}}}}\makeatother
\begin{align*}
	\intertext{The~cup foam is assigned the~inclusion}
		\bigon\colonsp
			\set{Soergel}\left( \insertpict[1.5]{hfgl0_digon0} \right)
		&\arrowto
			\qshift^{ab} \set{Soergel}\left( \insertpict[1.5]{hfgl0_digon} \right)
	\\
		\insertpict[1.5]{hfgl0_digon0} &\arrowmap \insertpict[1.5]{hfgl0_digon},
\intertext{whereas with the~cap foam we associate the~projection}
		\nogib\colonsp
			\set{Soergel}\left( \insertpict[1.5]{hfgl0_digon} \right)
		&\arrowto
         \qshift^{ab} \set{Soergel}\left( \insertpict[1.5]{hfgl0_digon0} \right)
	\\
		\insertpict[1.5]{hfgl0_digon3} &\arrowmap \insertpict[1.5]{hfgl0_digon4}\quad,
\intertext{where
		$\displaystyle{P\star Q = \sum_{\substack{
				I\sqcup J =  \{1, \dots, a+b\}\\
				\#I = a, \#J =b
			}} \frac{P(x_I)Q(x_J)}{\nabla(x_I, x_J)}}$
		and $\displaystyle{
			\nabla(x_I, x_J) = \prod_{\substack{i \in I \\ j \in J}} (x_j - x_i)
		}$.
	}
\intertext{This map is surjective since the target bimodule is generated by $1$ and $1$ is reached since $e_a^b\star1 =1$.}
\intertext{An~\emph{unzip} is associated with the~projection}
		\piz\colonsp
			\set{Soergel}\left( \insertpict[1.5,1.2]{hfgl0_zip2} \right)
		& \arrowto
			\qshift^{-ab}\set{Soergel}\left( \insertpict[1.5,1.2]{hfgl0_zip1} \right)
	\\[3ex]
			\insertpict[1.5,1.2]{hfgl0_zip2} &\arrowmap \insertpict[1.5,1.2]{hfgl0_zip1}.
\intertext{The~\emph{unzip} is surjective since the target bimodule is generated by $1$ and $1$ is reached.}
	\intertext{A~\emph{zip} is associated with the~inclusion}
		\zip\colonsp
			\set{Soergel}\left( \insertpict[1.5,1.2]{hfgl0_zip1} \right)
		&\arrowto
			\qshift^{-ab}\set{Soergel}\left( \insertpict[1.5,1.2]{hfgl0_zip2} \right)
	\\[3ex]
		\insertpict[1.5,1.2]{hfgl0_zip1} &\arrowmap
			\sum_{\alpha \in T(a,b)} (-1)^{|\widehat\alpha|}\insertpict[1.5,1.2]{hfgl0_zip3},
\intertext{Since the composition of zip followed by the unzip is a multiplication with a given polynomial,
the zip map is injective.}
\intertext{The~\emph{multiplication} by a~homogeneous symmetric polynomial $P$ is the~map}
		m_P\colonsp
			\set{Soergel}\left(\insertpict{hfgl0_strand} \right)
		&\arrowto
			\qshift^{-\deg P}\set{Soergel}\left(\insertpict{hfgl0_strand}\right)
	\\[2ex]
		\insertpict{hfgl0_strand} &\arrowmap \insertpict{hfgl0_strand2}.
\intertext{Finally, the~\emph{associativity} and \emph{coassociativity} foams are assigned the~maps}
     \assoc\colonsp
			\set{Soergel}\left(\insertpict{hfgl0_assoc} \right)
		&\arrowto
			\set{Soergel}\left(\insertpict{hfgl0_assoc2}\right)
		\\
         \insertpict{hfgl0_assoc} &\arrowmap \insertpict{hfgl0_assoc2}
		\\[3ex]
		\coassoc\colonsp
			\set{Soergel}\left(\insertpict{hfgl0_assoc3} \right)
		&\arrowto
			\set{Soergel}\left(\insertpict{hfgl0_assoc3}\right)
		\\
			\insertpict{hfgl0_assoc3} &\arrowmap \insertpict{hfgl0_assoc4}.
\end{align*}\endgroup

Because of the~local nature of the~above definitions, they can be interpreted as maps assigned
to foams between either planar or annular directed webs. It is known that this assignment
preserves local relations.

\begin{proposition}\label{prop:Soergel-tqfts}
	When applied to planar directed webs, the~above describe a~functor of bicategories
	\[
		\set{Soergel}\colon \ccat{Foam} \to \ccat{sSBim}
	\]
	and in case of annular directed webs, a~functor
	\[
		\set{Soergel}\colon \cat{AFoam} \to \cat{grMod}.
	\]
	Finally, there is a~functor
	\[
		\set{SoergelRed}\colon \cat{AFoam}^\basepoint \to \cat{grMod}
	\]
	that assigns the~quotient $\set{SoergelRed}(\web) := \set{Soergel}(\web)/(x_\basepoint)$
	to a~pointed web $\web$, where the~variable $x_\basepoint$ is associated with
	the~marked edge.
\end{proposition}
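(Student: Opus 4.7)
The plan is to verify the three statements one after another, leveraging the fact that the six basic foam pieces from Figure~\ref{fig:basic-foams} together with the multiplication by a symmetric polynomial generate all directed foams, so that the assignment extends to a well-defined functor once we check that it respects local relations and horizontal/vertical composition.

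First, for the planar case, I would verify that the candidate assignment is a~well-defined 2-functor $\ccat{Foam}\to\ccat{Bim}$ by checking the image lies in $\ccat{sSBim}$. On objects, a~sequence $\underline{a}$ of labeled points goes to $R^{\underline{a}}$, which is one of the~distinguished objects of $\ccat{sSBim}$. On 1-morphisms, Proposition~\ref{prop:Soergel-tqfts}'s web assignment factors through singular Bott--Samelson bimodules: every merge and split is sent to the~induction, respectively restriction bimodule of Theorem~\ref{thm:frob-ext}, and horizontal composition of webs corresponds to tensor product of bimodules over the~shared edge algebra (this is the~compatibility $\set{Soergel}(\web_1\circ\web_2)\cong\set{Soergel}(\web_1)\otimes\set{Soergel}(\web_2)$ of the~first proposition above). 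Hence singular Bott--Samelson bimodules are hit, and the~image lies in $\ccat{sSBim}$ by its very definition as the~smallest bicategory generated by these.

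Second, and this is the~main technical obstacle, I need to check that the~assignment on 2-morphisms respects the~local foam relations. Since the~local relations are defined as the~kernel of all Robert--Wagner evaluations $\langle -,-\rangle_N$, it suffices to show that each $\langle -,-\rangle_N$ factors through the~Soergel functor, namely that for a~foam $\foam$ in a~ball, $\set{Soergel}(\foam)=0$ whenever $\foam$ is in the~kernel of $\langle-,-\rangle_N$ for all $N$. Equivalently, one shows directly that the~relations among zips, unzips, cups, caps, associators, coassociators and dot multiplication implied by the~Robert--Wagner evaluation (dot migration \eqref{rel:dot-migration}, bubble removal, digon reduction, the~square/Hecke-type isomorphisms of Proposition~\ref{prop:local-isoms} and Proposition~\ref{prop:local-elem-isoms}, as well as the~Frobenius compatibilities between $\bigon,\nogib,\zip,\piz,\assoc,\coassoc$) hold in $\ccat{sSBim}$. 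Each such relation is a~local calculation in induction/restriction bimodules of a~parabolic pair $R^{\underline k}\subset R^{\underline\ell}$: dot migration is immediate from the~defining relations of $\set{Soergel}(\web)$; bubble/digon/square reductions reduce via Example~\ref{ex:elementary-extension} to identities among Schur polynomials (these are precisely the~identities of Corollary~\ref{cor:sym-pol-add-variables}, which explains why the~signs in the~zip formula involve $(-1)^{|\widehat\alpha|}$); and the~Frobenius compatibilities are standard. This step is essentially the~content of \cite{RW2, WedrichExp}, but one has to carefully match conventions and grading shifts (in particular the~overall $-\frac12\sum_v\shiftvertex{v}$ shift on $\set{Soergel}(\web)$ is chosen precisely so that the~degrees of the~six basic maps match Figure~\ref{fig:basic-foams}).

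Third, I treat the~annular case as a~horizontal trace. Concretely, for an~annular directed web $\widehat\web$ obtained by closing up a~planar directed web $\web$ with $\inp(\web)=\outp(\web)=\underline a$, the~second proposition above gives $\set{Soergel}(\widehat\web)\cong\HoHom_0(\set{Soergel}(\web))=\set{Soergel}(\web)/[R^{\underline{a}},\set{Soergel}(\web)]$, and a~foam $\foam\colon\widehat\web_1\to\widehat\web_2$ between two such closures either comes from a~planar foam (in which case its image is already defined) or involves a~nontrivial move around the~annulus; the~latter is generated by cyclic rotation, which corresponds on the~coinvariant quotient to the~identity. Well-definedness then follows from the~planar case together with the~fact that the~commutator subspace is preserved. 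Proposition~\ref{prop:annular-reduction} shows this is consistent since any annular web reduces up to direct summands to a~circular web, on which the~computation is explicit.

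Finally, for the~pointed category $\cat{AFoam}^\basepoint$, the~marking singles out a~thin edge and hence a~generator $x_\basepoint\in\set{Soergel}(\web)$. Foams in $\cat{AFoam}^\basepoint$ are foams equipped with a~chosen interval connecting the~two markings on a~single facet, which is the~datum needed to track $x_\basepoint$ coherently through the~foam. Because every generator map commutes with multiplication by the~variables of the~thin facet traversed by this interval, the~ideal $(x_\basepoint)$ is preserved by the~image of every generating 2-morphism, so the~functor descends to the~quotient $\set{SoergelRed}(\web):=\set{Soergel}(\web)/(x_\basepoint)$. Well-definedness with respect to isotopy of the~marking interval within its facet follows from the~same verification. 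The~hardest step remains the~second one, the~exhaustive check of the~local foam relations; once it is done, the~annular and reduced versions are formal consequences.
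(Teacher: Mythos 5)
Your proposal is correct and follows essentially the same route the paper takes, which is to delegate the preservation of local foam relations to the prior works \cite{WedrichExp, RW2, queffelec2014mathfrak} (the paper simply asserts "It is known that this assignment preserves local relations") and then derive the annular and pointed variants formally via $\HoHom_0$ and the preservation of the ideal $(x_\basepoint)$. One small wording slip: what you need is that $\set{Soergel}$ vanishes on $\bigcap_N\ker\langle-,-\rangle_N$, which is the \emph{opposite} implication to ``each $\langle-,-\rangle_N$ factors through $\set{Soergel}$''; your ``namely'' clause states the correct inclusion, so the substance of the argument is unaffected.
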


 \subsection{A~quantum trace deformation of annular foams}
\label{sec:annulus}

Following \cite{BPW} one can show that $\cat{AFoam}$ is equivalent to
the~so-called \emph{horizontal trace} $\hTr(\ccat{Foam})$ of the~bicategory
$\ccat{Foam}$. What it roughly means is that
\begin{itemize}
	\item every annular web is isomorphic to a~web with vertices
	away from a~fixed \emph{section} $\mu := \{*\}\times\R$ of the~annulus $\SxR$,
	to which we refer as the~\emph{trace section},
	\item morphisms are generated by foams that intersect the~\emph{membrane}
	$M := \mu\times[0,1]$ in a~directed web modulo local relations away
	from the~membrane and the~\emph{horizontal trace relation} that
	allows to isotope a~piece of a~foam through $M\ric$.
\end{itemize}
The~horizontal trace can be defined on any bicategory and is functorial \cite{BPW}.
Having such a~description of $\cat{AFoam}$ we can now deform it by replacing
the~horizontal trace relation with its quantum version, which we will now
state more precisely.
Notice first that an~orientation of the~circle $\S\times\{0\}\times\{0\}$
induces a~coorientation of the~trace section $\mu$ and membrane $M\ric$.
Let $\foam\ric$ be an~annular foam $\foam\ric$ that intersects $M$ in a~web $\web$
and consider a~generic admissible ambient isotopy $\phi$ that pushes $M$
according to its coorientation, so that
\begin{itemize}
	\item $\phi(\foam)$ intersects $M$ in a~web $\web'$, and
	\item $M' := \phi(M)$ intersects $M$ only at the~collar,
	where both $M$ and $M'$ coincide.
\end{itemize}
Then $M$ and $M'$ bound a~3-ball $B$ with a~foam $\foam\cap B$ from $\web'$ to $\web$ inside.
The~\emph{quantum horizontal trace relation} states that in this setting
\[
	W = q^{-\deg(\foam \cap B)} \phi(W),
\]
see Figure~\ref{fig:membrane-relation} for an~example.

\begin{figure}[ht]
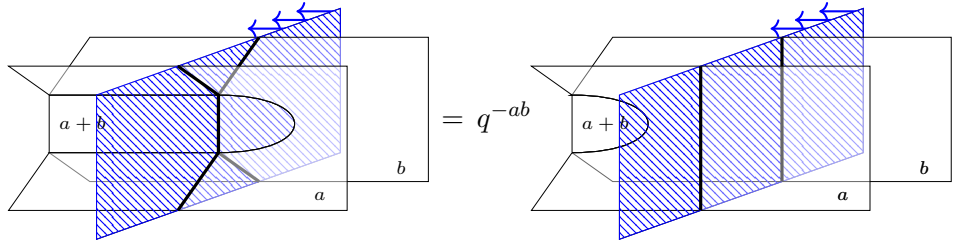

  \centering
  \[
    \NB{\tikz[scale = 1.2]{\input{\imagesfolder/hfgl0_example-membrane}}}
      \, = \,
    q^{-ab} \NB{\tikz[scale=1.2]{\input{\imagesfolder/hfgl0_example-membrane2}}}
  \]
  \caption{The~effect of moving a~foam through the~membrane.
  	The~membrane is depicted in hashed purple with its
  	coorientation indicated by purple arrows.}
  \label{fig:membrane-relation}
\end{figure}

\begin{definition}\label{def:AFoamq}
	The~category $\cat{qAFoam}$ is a~deformation of $\cat{AFoam}$,
	where we consider only annular directed webs that intersect $\mu$ generically,
	whereas on foams we impose the~quantum horizontal trace relations and only local
	relations away from the~membrane $M\ric$.
	We write $\cat{qAFoamEl}$ for its subcategory generated by elementary webs and foams.
\end{definition}

\begin{remark}
	The~quantum trace relation simply identifies a~foam $\foam$ with $\phi(\foam)$
	when $q=1$. Hence, in this case $\cat{qAFoam}$ coincides with $\cat{AFoam}$.
\end{remark}

Propositions~\ref{prop:local-isoms} and \ref{prop:local-elem-isoms} are proven locally,
so that they still hold in the~deformed setting. Likewise, the~quantum trace relation
is enough for Propositions~\ref{prop:annular-reduction}
and \ref{prop:elem-annular-reduction}.

\begin{proposition}
	There is a~functor of categories
	\begin{equation}
		\set{qSoergel}\colon \cat{qAFoam} \to \cat{grMod}
	\end{equation}
	that assigns with an~annular closure $\widehat{\web}$ of a~web $\web$
	the~graded\/ $\scalars$-module $\qHoHom_0(\set{Soergel}(\web))$.
	In particular, there is an~isomorphism
	\begin{equation}\label{eq:qSoergel-cyclicity}
		\set{qSoergel}(\widehat{\web_1\circ\web_2})
			\cong
		\set{qSoergel}(\widehat{\web_2\circ\web_1})
	\end{equation}
	for any webs $\web_1\colon \underline k\to \underline\ell$
	and $\web_2\colon\underline\ell \to \underline k$.
\end{proposition}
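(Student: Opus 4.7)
The plan is to realize $\set{qSoergel}$ as the quantum horizontal trace of the functor $\set{Soergel}\colon\ccat{Foam}\to\ccat{sSBim}$ from Proposition~\ref{prop:Soergel-tqfts}, and then to invoke the functoriality of the quantum horizontal trace proved in~\cite{BPW}. To match both sides, I would first identify $\cat{qAFoam}$ with $\hTr_q(\ccat{Foam})$: the objects are annular closures of planar directed webs $\web\colon\underline k\to\underline k$ meeting the trace section $\mu$ generically, and morphisms are foams subject to precisely the quantum membrane relation that defines the quantum horizontal trace. On the bimodule side, $\hTr_q(\ccat{sSBim})$ assigns to a Soergel endobimodule $M$ of $R^{\underline k}$ the coinvariants $\qHoHom_0(M) = M/\{ma - q^{-|a|}am\}$, since crossing an element of degree $|a|$ through the trace cut twists it by $\varphi(a)=q^{-|a|}$ exactly as in the definition of $\qHoHom_0$.

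Concretely, given $\widehat{\web}\in\cat{qAFoam}$ I would cut along $\mu$ to obtain a planar directed web $\web\colon\underline k\to\underline k$ and set $\set{qSoergel}(\widehat{\web}):=\qHoHom_0(\set{Soergel}(\web))$. Independence of the cut is exactly the cyclicity isomorphism~\eqref{eq:qSoergel-cyclicity}: if $\web_1\colon\underline k\to\underline\ell$ and $\web_2\colon\underline\ell\to\underline k$ correspond to two different cuts of the same annular web, then $\set{Soergel}(\web_1\circ\web_2)\cong\set{Soergel}(\web_1)\otimes_{R^{\underline\ell}}\set{Soergel}(\web_2)$ and similarly for the other composition. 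Since singular Soergel bimodules are free (hence projective) on each side, Proposition~\ref{prop:qHH-is-cyclic} supplies a canonical isomorphism
\[
	\qHoHom_0\bigl(\set{Soergel}(\web_1)\otimes_{R^{\underline\ell}}\set{Soergel}(\web_2)\bigr)
		\cong
	\qHoHom_0\bigl(\set{Soergel}(\web_2)\otimes_{R^{\underline k}}\set{Soergel}(\web_1)\bigr),
\]
which is exactly \eqref{eq:qSoergel-cyclicity}.

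The assignment on morphisms is defined for foams transverse to $M$ by cutting along the membrane to obtain a foam in $\ccat{Foam}$ and applying $\set{Soergel}$. Local relations away from $M$ are preserved by Proposition~\ref{prop:Soergel-tqfts}, so the only thing to verify, and the main obstacle, is that the quantum membrane relation $\foam = q^{-\deg(\foam\cap B)}\phi(\foam)$ is matched on the bimodule side by the relation $ma \equiv q^{-|a|}am$ defining $\qHoHom_0$. I would reduce a generic membrane-crossing isotopy to a sequence of moves involving the basic foam generators of Figure~\ref{fig:basic-foams}. For each generator, its bimodule map from Section~\ref{sec:foam-functor} has a well-defined polynomial bidegree agreeing with the combinatorial foam degree listed in Figure~\ref{fig:basic-foams}, and the identification $\qTwist[\qnt_1]{M}\otimes_B\qTwist[\qnt_2]{N}\cong\qTwist[\qnt_1\qnt_2]{(M\otimes_B N)}$ of Proposition~\ref{prop:concat-twisted-modules} lets one track how these twists accumulate across horizontal and vertical compositions. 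Once this generator-by-generator matching of prefactors is verified, functoriality of $\set{qSoergel}$ follows from the corresponding property of $\set{Soergel}$, and the cyclic isomorphism \eqref{eq:qSoergel-cyclicity} is immediate from Proposition~\ref{prop:qHH-is-cyclic}.
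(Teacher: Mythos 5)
Your proposal is correct and follows the same route as the paper's own (brief) sketch: realize $\set{qSoergel}$ as a composition of the quantum horizontal trace $\qhTr(\set{Soergel})\colon\qhTr(\ccat{Foam})\simeq\cat{qAFoam}\to\qhTr(\ccat{sSBim})$ with the quantum coinvariants functor on $\qhTr(\ccat{sSBim})$, using Proposition~\ref{prop:qHH-is-cyclic} for the cyclicity isomorphism~\eqref{eq:qSoergel-cyclicity}. The paper cites \cite{BPW} directly for functoriality of $\qhTr$ and for the coinvariants functor (via the existence of duals in $\ccat{sSBim}$), whereas you unpack the same content explicitly, in particular verifying the compatibility of the quantum membrane relation with $\qHoHom_0$ generator by generator and invoking Proposition~\ref{prop:concat-twisted-modules} to track the accumulating twists.
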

\begin{proof}[Sketch of proof]
	The~functoriality of $\qhTr$ provides a~functor
	\[
		\qhTr(\set{Soergel})\colon \cat{qAFoam} \to \qhTr(\ccat{sSBim}).
	\]
	Because $\ccat{sSBim}$ has duals in the sense of 
	\cite[Section 3.8.2]{BPW}, there is a~functor on $\qhTr(\ccat{sSBim})$
	that assigns with a~$(R^{\underline k},R^{\underline k})$-bimodule $M$
	its quantum space of coinvariants.
Combining the~two functors proves the~thesis.
\end{proof}

Let us now unroll the~definition of $\set{qSoergel}$ from the~above proposition.
Pick a~web $\widehat{\web}$ in the~annulus $\SxR$ that intersects generically
the~line $\mu = \{*\}\times\R$. Cutting it along $\mu$ results in a~directed
web $\web$ with $\inp(\web) = \outp(\web) = \underline k$ for some sequence
$\underline k$. To compute $\set{qSoergel}(\widehat\web)$, take the~singular Soergel
bimodule associated with $\web$ and divide it by the~quantum trace relation.
Explicitly, $\set{qSoergel}(\widehat\web)$ is the~$\scalars$-tensor product
\[
	D(\web) = \bigotimes_{e\in E(\web)} \Sym_{\ell(e)}
\]
subjected to the~Soergel relations \eqref{rel:dot-migration}
and the~quantum trace relation
\begin{align*}
	\NB{\tikz[font=\tiny]{\begin{scope}
  \draw[->] (-1,0) -- (1,0) coordinate[pos= 0.25] (p) node[pos = 0,
  left] {$a$}; 
\fill (p) circle (0.5mm) node[above] {$P$};
  \draw[\colormembrane, densely dashed] (0,-0.5) -- (0, 0.5)node[pos = 0,
  below, \colormembrane] {$q$};
\end{scope}}}
		\ =\ 
	q^{-d}\ \NB{\tikz[font=\tiny]{\begin{scope}
  \draw[->] (-1,0) -- (1,0) coordinate[pos= 0.75] (p) node[pos = 1,
  right] {$a$}; 
\fill (p) circle (0.5mm) node[above] {$P$};
  \draw[\colormembrane, densely dashed] (0,-0.5) -- (0, 0.5) node[pos = 0,
  below, \colormembrane] {$q$}; 
\end{scope}
}}
\end{align*}
where $P$ is a~homogeneous symmetric polynomial of degree $d$.

\begin{remark}\label{rmk:bivalent-vertices-in-webs}
	It is worth to think of the~intersection points of $\web$
	with the~trace section $\mu$ as bivalent vertices; we call them
	\emph{trace vertices}.
	The~quantum trace relation is then another type of local relations
	in the~Soergel space. Sometimes we shall also consider bivalent
	vertices at other places, in which case the~local relation simply
	identifies the~variables associated with the edges on both sides
	of such a~vertex.
\end{remark}

In Section~\ref{sec:pointed-annular}, we defined the category $\cat{AFoamB}$ of pointed annular webs. Recall that objects are annular webs with a marking located an~edge of thickness 1 on the outer side. We can repeat the deformation process in this setting.

\begin{definition}\label{def:AFoamq}
	The~category $\cat{qAFoamB}$ is a~deformation of $\cat{AFoamB}$,
	where we consider only pointed annular directed webs that intersect $\mu$ generically and where the marking is on $\mu$.
	On foams we impose the~quantum horizontal trace relations away from the marking and local
	relations only away from the~membrane $M\ric$.
\end{definition}

	Formally speaking, $\cat{qAFoamB}$ is a~quotient of
	a~\emph{partial horizontal trace} of $\ccat{Foam}$.
Notice that since the quantum trace relation is not imposed at the marking, 
it can be thought of as a  pair of endpoints of thickness $1$ of $\web$.

There is a~forgetful functor $\cat{qAFoamB} \to \cat{qAFoam}$,
which allows us to construct a~functor
\[
	\set{qSoergelRed}\colon \cat{qAFoamB} \to \cat{grMod}
\]
that takes a~marked web $\widehat\web$, represented as a~closure of
$\web$, to the~quotient
\[
	\set{qSoergelRed}(\widehat\web) = \qHoHom_0(\set{Soergel}(\web)) / (x_\basepoint),
\]
where $x_\basepoint$ is the~variable associated with the~output edge
of $\web$ chosen by the~marking $\basepoint$.
However, because of the~restricted trace relation in $\cat{qAFoamB}$,
the~cyclicity property \eqref{eq:qSoergel-cyclicity} does not hold
for $\set{qSoergelRed}$ unless in one of the~webs, $\web_1$ or $\web_2$,
the~top most endpoints are connected by an~interval disjoint from the~rest of the~web.

We end this section with a~result about singular Soergel bimodules,
which explains why we take only the~quantum trace to define $\set{qSoergel}$
instead of the~full quantum Hochschild homology.

\begin{theorem}\label{thm:qHH(Soergel)=0}
	Assume that $1-q^d$ is invertible for all $d\neq 0$.
	Then for any sequence $\underline k$ and
	a~bimodule $B\in \ccat{sSBim}(R^{\underline k}, R^{\underline k})$ one has
	\[
		\qHH_i(R^{\underline k}, B) = 0\qquad\text{for } i > 0.
	\]
\end{theorem}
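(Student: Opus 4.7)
The plan is to reduce to Proposition~\ref{prop:qHH-vanishes} (the case $B=R^{\underline k}$) via additivity and an induction on the length of a Bott--Samelson presentation.

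First I would use additivity: since $\qHH_i(R^{\underline k},-)$ is an additive functor that commutes with grading shifts, and every singular Soergel bimodule is, by definition, a direct summand of a finite direct sum of shifted singular Bott--Samelson bimodules, it suffices to prove the vanishing when $B$ is a singular Bott--Samelson bimodule. Such a $B$ has a presentation
$$B=M_1\otimes_{R^{\underline m_1}}\cdots\otimes_{R^{\underline m_{n-1}}}M_n,$$
where each $M_i$ is an elementary induction or restriction bimodule between parabolic invariant rings.

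Next I would induct on the length $n$. The base case $n=0$ (empty composition) is $B=R^{\underline k}$, which is exactly Proposition~\ref{prop:qHH-vanishes} under the standing assumption that $1-q^d$ is invertible for all $d\neq 0$. For the inductive step, split $B=M\otimes_{R^{\underline m}}M'$ with $M$ a single induction or restriction bimodule and $M'$ the remaining Bott--Samelson of length $n-1$. Both $M$ and $M'$ are free as one-sided modules over the appropriate parabolic invariant rings by Theorem~\ref{thm:frob-ext}, so Proposition~\ref{prop:qHH-is-cyclic} provides a cyclic rotation
$$\qHH_\bullet(R^{\underline k},\,M\otimes_{R^{\underline m}}M')\cong\qHH_\bullet(R^{\underline m},\,M'\otimes_{R^{\underline k}}M).$$
Iterating this rotation allows us to place any pair of adjacent factors that form an induction/restriction pair for a common elementary Frobenius extension at an ``outermost'' position, where the Frobenius trace of Example~\ref{ex:elementary-extension} contracts them. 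This contraction strictly decreases the length, completing the induction.

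The main obstacle I anticipate is making the contraction step precise: cyclic rotation alone does not shorten the length, so one must check that after suitable rotations a cancelling adjacent pair is always available in the Bott--Samelson, and that the contraction is compatible with the $q$-twist in the Hochschild differential. The invertibility of $1-q^d$ is used not only at the base of the induction but also at every contraction, because a contracted pair effectively reintroduces a copy of $\qCHoHom_\bullet(R^{\underline m})$ that must be collapsed to $\scalars$ via Proposition~\ref{prop:qHH-vanishes}.
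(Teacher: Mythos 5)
Your first reduction (to singular Bott--Samelson bimodules via additivity and direct summands) matches the paper exactly, and the endpoint of your induction (Proposition~\ref{prop:qHH-vanishes}) is the same key computation. However, the contraction step you flag as your ``main obstacle'' is a genuine gap, not a technicality to be smoothed over: it is simply false that after cyclic rotation a Bott--Samelson word of length $\geqslant 1$ always contains an adjacent $\Ind/\Res$ pair over the \emph{same} elementary extension. For instance $\Res^{\underline\ell_1}_{\underline k}\circ\Ind^{\underline\ell_2}_{\underline k}$ with $\underline\ell_1\neq\underline\ell_2$ has no cancelling pair, and the remedy requires the square isomorphism of Proposition~\ref{prop:local-isoms}, which trades one Bott--Samelson for a direct sum of others without a manifest decrease in length; termination then becomes delicate. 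This is precisely the content of the Queffelec--Rose--Sartori reduction algorithm, which the paper cites as Proposition~\ref{prop:annular-reduction}. The paper's proof identifies $B=\set{Soergel}(\web)$ with the Soergel bimodule of a directed web, invokes Proposition~\ref{prop:annular-reduction} to obtain $\web\oplus S_L\cong S_R$ with $S_L$, $S_R$ direct sums of circular webs, notes that each step of this reduction is either a bimodule isomorphism from a local planar move (clearly compatible with $\qHH_\bullet$) or a cyclic rotation (compatible by Proposition~\ref{prop:qHH-is-cyclic}), and concludes by Proposition~\ref{prop:qHH-vanishes} since circular webs give identity bimodules $R^{\underline m}$. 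In short, your sketch is an attempt to re-derive Proposition~\ref{prop:annular-reduction} from scratch; citing it instead of the hand-rolled induction would close the gap and recover the paper's argument.
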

\begin{proof}
	Because singular Sorgel bimodules are direct summands of singular Bott--Samelson bimodules,
	it is enough to prove the~formula only for the~latter. For that notice that every singular
	Bott--Samelson bimodule is of the~form $\set{Soergel}(\web)$ for some directed web $\web$.
	The~thesis follows from Propositions~\ref{prop:annular-reduction} and
	\ref{prop:qHH-vanishes}.
\end{proof}

\section{Combinatorial link homologies}
\label{sec:lh}
In this section we recall the~combinatorial framework for $\HHH\!$, $\HHH^{red}$,
as well as the~$\gll_1$ and $\gll_0$ homologies constructed by the last two authors. 

Throughout this section we fix
a link $L$  represented by an~annular closure $\bdiagram$
of a~braid diagram $\braid$, drawn horizontally from left to right
and closed below the~braid, see Figure~\ref{fig:example-resolution}.
We write $\bindex$ for the~number of strands of $\bdiagram$ or its index and $\setcrossings$ 
for the~set of crossings of $\braid$. The~latter consists of
of and $n_+$ positive $n_-$ negative crossings, and we write
$n = n_+ + n_-$ for the~total.

\subsection{The~general cube construction}
\label{sec:cube-of-resolutions}

A~map $\resolution \co \setcrossings \to \{0,1\}$ determines a~directed annular
web $\rdiagram$, called the~\emph{$I$-resolution} of $\bdiagram$, constructed
by replacing locally each crossing by either its smoothing or singularization
as indicated in Figure~\ref{fig:resolutions}.
An~example is given in Figure~\ref{fig:example-resolution}.
\begin{figure}[ht]
  \begin{tikzpicture}[font=\small]
  	\node (S0) at ( 3,0) {\NB{\tikz[]{%
%
\begin{scope}[xscale=1.25]
	\draw[->] (-0.5, 0.5) .. controls (0,0) .. (0.5, 0.5);
	\draw[->] (-0.5, -0.5) .. controls (0,0) .. (0.5, -0.5);
\end{scope}}}};
  	\node (Sx) at (-3,0) {\NB{\tikz[]{\input{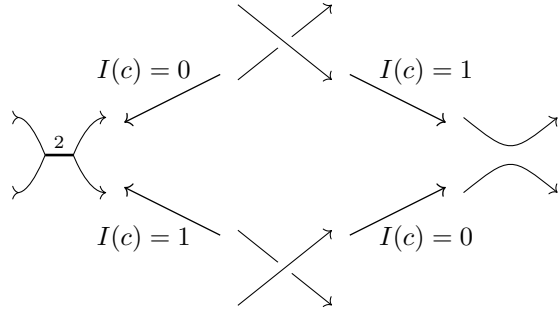}}}};
  	\node (S+) at (0, 1.5) {\NB{\tikz[]{%
%
\begin{scope}[xscale=1.25]
	\draw[->] (-0.5, -0.5) -- (0.5,0.5);
	\fill[white] (0,0) circle[radius=3pt];
	\draw[->] (-0.5, 0.5) -- (0.5,-0.5);
\end{scope}}}};
  	\node (S-) at (0,-1.5) {\NB{\tikz[]{%
%
\begin{scope}[xscale=1.25]
	\draw[->] (-0.5, 0.5) -- (0.5,-0.5);
	\fill[white] (0,0) circle[radius=3pt];
	\draw[->] (-0.5, -0.5) -- (0.5,0.5);
\end{scope}}}};
  	\begin{scope}[line width=0.5pt,shorten <=3pt, shorten >=3pt, inner sep=1pt]
  		\path
	  		(S+) edge[->] node[pos=0.3, above right] {$I(c)=1$} (S0)
	  	         edge[->] node[pos=0.3, above left]  {$I(c)=0$} (Sx)
	  	    (S-) edge[->] node[pos=0.3, below right] {$I(c)=0$} (S0)
	  	         edge[->] node[pos=0.3, below left]  {$I(c)=1$} (Sx);
	\end{scope}
  \end{tikzpicture}
  \caption{The~two resolutions of a~crossing $c$: its \emph{singularization}
	  (to the~left) and \emph{smoothing} (to the~right).
  }	 
  \label{fig:resolutions}
\end{figure}\begin{figure}[ht]
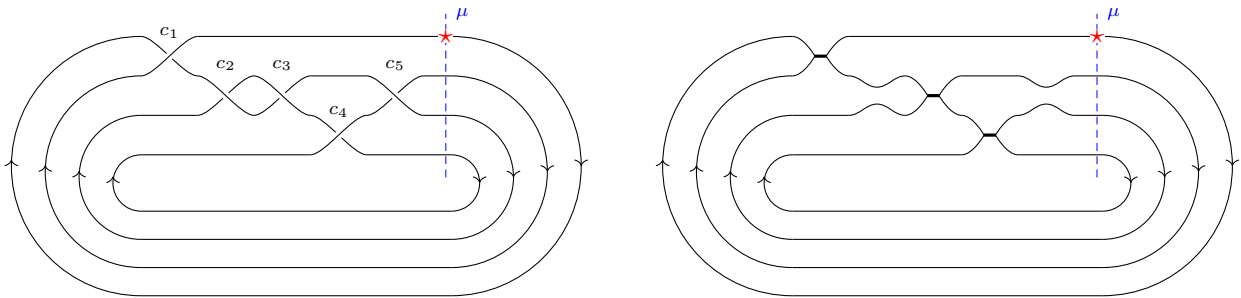

  \centering
  \NB{\tikz[scale=0.75]{\input{\imagesfolder/hfgl0_example-braid-no-vertices}}}
  \qquad
  \NB{\tikz[scale=0.75]{\input{\imagesfolder/hfgl0_example-resolution-diag-no-bivalent}}}
  \caption{A~braid diagram of the~closure of
  	$\braid = \sigma_1^{-1} \sigma_2^2 \sigma_3^{-1} \sigma_2$
  	and its resolution associated with
  	$(\resolution(c_i))_{1\leq i \leq 5} = (0,0,1,0,0)$.
	The~dashed line $\mu$ visualizes the~trace section
	and the~red star is the~position of the~marking.
  }
  \label{fig:example-resolution}
\end{figure}Two resolutions $\resolution$ and $\resolution'$ are \emph{neighboring}
if they agree on all but one crossing $c$, in which case we write
$\resolution \stackrel{c}{\longrightarrow} \resolution'$
if $\resolution(c) = 0$ and $\resolution'(c) = 1$.
With such two neighboring resolutions we associate a~foam
$\foam_{\resolution,c}\co \bdiagram_\resolution \to \bdiagram_{\resolution'}$,
which is an~unzip in case the~crossing $c$ is positive and a~zip otherwise.
These data can be arranged into a~commuting diagram in $\cat{AFoam}$ as follows.
Given a~square of neighboring resolutions
\begin{equation}\label{eq:cube-facet}
	\begin{tikzpicture}[x=15mm,y=1cm,baseline=(00.base)]
		\node (00) at (0, 0) {$\resolution_{00}$};
		\node (01) at (1, 1) {$\resolution_{01}$};
		\node (10) at (1,-1) {$\resolution_{10}$};
		\node (11) at (2, 0) {$\resolution_{11}$};
		\path[inner sep=2pt]
			(00) edge[->] node[above left] {$\scriptstyle c$} (01)
				 edge[->] node[below left] {$\scriptstyle c'$} (10)
			(11) edge[<-] node[above right] {$\scriptstyle c'$} (01)
				 edge[<-] node[below right] {$\scriptstyle c$} (10);
	\end{tikzpicture}
\end{equation}
the~foams $\foam_{\resolution_{00},c} \cup \foam_{\resolution_{01},c'}$ and
$\foam_{\resolution_{00},c'} \cup \foam_{\resolution_{10},c}$
coincide upto an~ambient isotopy.
Choose a~sign $\epsilon(\resolution,c) = \pm1$ whenever $\resolution(c)=0$,
so that
\[
	\epsilon(\resolution_{00},c) \epsilon(\resolution_{01},c') +
	\epsilon(\resolution_{00},c') \epsilon(\resolution_{10},c) = 0
\]
for each situation as in \eqref{eq:cube-facet}.
For instance, one can take $\epsilon(\resolution,c) := (-1)^{I_{\prec c}}$,
where $\resolution_{\prec c} := \sum_{c'\prec c}\resolution(c')$ for
a~fixed total ordering $\prec$ on $\setcrossings$.
Let $\Bracket{\bdiagram}$ be a~formal chain complex in $\cat{AFoam}$
supported in homological degrees $[-n_-,n_+]$, given by objects
\begin{align*}
	\Bracket{\bdiagram}_i &:= 
		\bigoplus_{|\resolution|=i+n_-} \qshift^{-i} \bdiagram_\resolution
\intertext{and the~differential}
	\partial_i &:= \sum_{\substack{
		|\resolution|=i+n_-\\
		c: \resolution(c)=0
	}}
		\epsilon(\resolution, c) W_{\resolution, c}.
\end{align*}
A~standard argument ensures that the isomorphism type of $\Bracket{\bdiagram}$
does not depend on the~choice of signs, see \cite{OddKhovanov, MR3363817}.
Clearly, $\cat{AFoam}$ can be replaced with its quantization $\cat{qAFoam}$.

\begin{theorem} 
	The~homotopy type of the formal complex $\Bracket{\bdiagram}$,
	computed	 either over $\cat{AFoam}$ or $\cat{qAFoam}$,
	is an~invariant of the annular link $L=\bdiagram$.
\end{theorem}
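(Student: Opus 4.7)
The plan is to reduce annular link invariance to three local checks and then leverage the bicategorical framework assembled in Section~\ref{sec:annulus}. By the annular Markov theorem, two braid closures $\bdiagram_1,\bdiagram_2$ represent isotopic annular links if and only if $\braid_1$ and $\braid_2$ are related by a finite sequence of braid-like Reidemeister II moves, braid-like Reidemeister III moves, and conjugations $\braid\leftrightarrow \alpha\braid\alpha^{-1}$. I would produce chain homotopy equivalences of $\Bracket{\bdiagram}$ under each type of move, working simultaneously in $\cat{AFoam}$ and $\cat{qAFoam}$.

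The first step is to lift the construction to the planar bicategory. Given a braid diagram $\braid$, the same cube of resolutions defines a formal complex $\Bracket{\braid}\in\catKom{\ccat{Foam}}$. By the very construction of $\cat{AFoam}$ (respectively $\cat{qAFoam}$) as the horizontal trace (respectively quantum horizontal trace) of $\ccat{Foam}$ recalled in Section~\ref{sec:annulus}, applying this trace vertex-by-vertex and edge-by-edge to $\Bracket{\braid}$ recovers $\Bracket{\bdiagram}$. Because the (quantum) horizontal trace is a functor, any chain homotopy equivalence in $\catKom{\ccat{Foam}}$ descends to a chain homotopy equivalence of the associated annular complexes, so both cases are handled uniformly from here on.

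The heart of the argument is therefore to prove Reidemeister II and III invariance in $\catKom{\ccat{Foam}}$. For braid-like RII one expands the cube of resolutions of $\sigma_i\sigma_i^{-1}$ into a four-term total complex and uses the digon isomorphism of Proposition~\ref{prop:local-isoms} to identify the middle vertex as a grading-shifted copy of the identity web plus a contractible complement; Gaussian elimination then retracts the complex onto the identity strand. For RIII the square isomorphism of Proposition~\ref{prop:local-isoms} matches, up to acyclic summands, the subcubes supported on either side of the Yang--Baxter move, and further Gaussian elimination produces $\Bracket{\sigma_i\sigma_{i+1}\sigma_i}\simeq\Bracket{\sigma_{i+1}\sigma_i\sigma_{i+1}}$. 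Both reductions are local to a small planar strip, so they are insensitive to the position of the trace section $\mu$ once we descend to the annular complex.

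Conjugation invariance I would handle directly using cyclicity. Writing a conjugated braid as $YX$ with $X=\alpha^{-1}$ and $Y=\alpha\braid$, the isomorphism \eqref{eq:qSoergel-cyclicity} matches vertex-by-vertex the objects of the cubes for $\widehat{XY}=\widehat{\braid}$ and $\widehat{YX}=\widehat{\alpha\braid\alpha^{-1}}$; naturality of this isomorphism with respect to the zip and unzip foams decorating the edges of the cube extends it to an isomorphism of complexes. I expect the main obstacle to be the bookkeeping in the RIII step: each Yang--Baxter cube has eight vertices, one must choose sign conventions $\epsilon(\resolution,c)$ on both sides and match the required chain homotopy across all of them. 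This is standard in the Khovanov--Rozansky literature but lengthy, and it is precisely here that one verifies compatibility between the local foam equivalences and the quantum trace relation, which is automatic because the moves are performed away from $\mu$.
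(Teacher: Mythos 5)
The paper states this theorem without a proof of its own — it appears as a bare statement, with the preceding sentence citing \cite{OddKhovanov, MR3363817} only for independence of the sign choice, and the subsequent remark alludes to "braid moves and the second Markov move (conjugacy)" as the moves one would check. Your proposal fills in the standard argument that the authors are implicitly invoking, and the overall skeleton — reduce to braid-like RII, RIII, and conjugation, handle the first two locally in $\catKom{\ccat{Foam}}$ via Gaussian elimination against the isomorphisms of Propositions~\ref{prop:local-isoms}/\ref{prop:local-elem-isoms}, then pass to the annulus by functoriality of the (quantum) horizontal trace — is the right one and matches the paper's implicit reasoning.

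One point deserves tightening. For conjugation you cite the isomorphism \eqref{eq:qSoergel-cyclicity}, but that statement lives one functor downstream: it asserts that the $\scalars$-modules $\set{qSoergel}(\widehat{\web_1\circ\web_2})$ and $\set{qSoergel}(\widehat{\web_2\circ\web_1})$ agree, whereas the theorem is about the formal complex $\Bracket{\bdiagram}$ inside $\cat{qAFoam}$ itself, before any module-valued functor is applied. What you actually need is the cyclicity isomorphism at the level of $\cat{qAFoam}$, i.e.\ the isomorphism $\widehat{\web_1\circ\web_2} \cong \widehat{\web_2\circ\web_1}$ furnished by the membrane-sliding foam of the quantum horizontal trace construction (Section~\ref{sec:annulus}); since the sliding foam is an identity foam of degree $0$, the quantum trace relation introduces no power of $q$ and one gets a genuine degree-preserving isomorphism. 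Equation \eqref{eq:qSoergel-cyclicity} is a consequence of this, not its source. With that correction the rest goes through: the sliding isomorphism is natural for zips and unzips supported away from the membrane, so it assembles into a chain isomorphism of the two cubes. You might also prefer to cite the elementary-web isomorphisms of Proposition~\ref{prop:local-elem-isoms} rather than the colored ones, since braid resolutions stay in $\ccat{FoamEl}$.
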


\begin{remark}
	When $\bdiagram$ is considered as a~link with a~marking,
	placed at the~top trace vertex, then $\Bracket{\bdiagram}$
	is a complex over $\cat{AFoam}^\basepoint$ or
	$\cat{qAFoam}^\basepoint$.
	However, one can only prove its invariance under braid moves
	and the~second Markov move (conjugacy) away from $\basepoint$.
	This is not enough for the~bracket to be
	an~invariant of links with marked components.
\end{remark}

Occasionally it will be worth to consider partial resolutions of a~link.
These are a~special type of diagrams for knotted elementary webs, in which
only thin edges cross themselves.
Following \cite{OSSSingular, OSCube} we refer to such webs as
\emph{singular links} and to the~thick edges---\emph{singular crossings}.
Although we are not using this in that paper, let us mention that every singular link admits a~diagram in a~braid position \cite{MR1191478}.

Following Gilmore \cite[Section 2]{Gilmore}, we shall also extend webs to allow \emph{bivalent vertices},
depicted  as short tags,
with the~obvious local relation that identifies
variables associated with incident edges. Notice that a trace vertex is a particular kind of a bivalent vertex where the identification of variables is composed with a multiplication by some power of $q$ 
(compare Remark~\ref{rmk:bivalent-vertices-in-webs}).
Subdividing an~edge with a~bivalent vertex is called an~\emph{insertion}
\cite{ManolescuCube}. Notice that insertion does not change the~isomorphism type
of the~associated Soergel space.
We say that a~singular link diagram $S$ is \emph{layered}
if it is in a~braid position and there are vertical lines
$\ell_0, \dots, \ell_n$ called \emph{levels},
with $\ell_n = \mu$ the~trace section,
that carry all singular crossings and bivalent vertices of $S$
and which intersect $S$ only at those points,
see Figure~\ref{fig:example-layered-diagram}.
In order to keep all resolutions of $S$ layered,
we require that real crossings are at the~lines $\ell_i$,
which results in a~bivalent vertex at the~over- and at the~underpass.
Any diagram in a~braid position can be modified to a~layered one
by a~sequence of insertions.

\begin{figure}[ht]
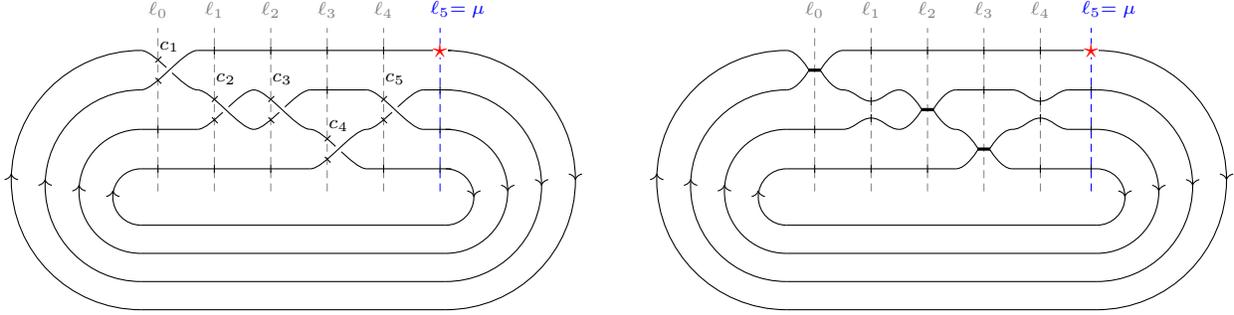

  \centering
  \NB{\tikz[scale=0.75]{\input{\imagesfolder/hfgl0_example-braid-diag}}}
  \qquad
  \NB{\tikz[scale=0.75]{\input{\imagesfolder/hfgl0_example-resolution-diag}}}
  \caption{Layered diagrams of the~braid closure and it resolution
  	from Figure~\ref{fig:example-resolution}.
  	In order to increase the~visibility of bivalent vertices,
  	the~real crossings in the~left picture are moved slightly
  	to the~right with respect to the~vertical lines $\ell_i$.
  }
  \label{fig:example-layered-diagram}
\end{figure}

 \subsection{Triply graded Khovanov--Rozansky homology}
\label{sec:HHH}
Consider the~functor $\set{Soergel}\colon \ccat{Foam} \to \ccat{sSBim}$
from Proposition~\ref{prop:Soergel-tqfts}. Given a~resolution
$\braid_\resolution$ we assign $\HoHom(\set{Soergel}(\braid_\resolution))$
to the~annular closure $\bdiagram_\resolution$.
This results in a~functor on $\cat{AFoam}$ that,
when applied to the~formal complex $\Bracket{\bdiagram}$,
produces a~complex of bigraded modules that computes
the~triple graded homology $\HHH\!$, see \cite{MR2339573}.
More precisely, writing $k$  
for the~number of strands  of $\beta$ with $\bdiagram=L$
we define $\HHH(L)$ as the~homology of the~complex of bigraded modules
\begin{equation}\label{eq:HHH-unreduced}
	C_i = \bigoplus_j \qshift^{k-2j}\HoHom_{\!j}(
		\set{Soergel}_{i+j}(\braid)
	),
\qquad\text{where}\quad
	\set{Soergel}_i(\braid) = \bigoplus_{|\resolution| = i + n_-}
		\qshift^{-i} (\set{Soergel}(\braid_\resolution))
\end{equation}
and the~extra grading is defined by putting $j$th Hochschild homology
in degree $2j - w - k$, where  $w = n_+ - n_-$ is the writhe of $\beta$.
The~differential is induced by the~zip and unzip foams,
which can be written diagrammatically as follows:
\begin{equation}\label{eq:zip-unzip-in-complex}
\begin{aligned}
  \piz \left(\NB{\tikz[]{\begin{scope}[font = \tiny]
  \begin{scope}[xscale=1.5]
    \draw[>->] (-0.5, -0.5) .. controls +(0.2, 0) and +(0,0) .. (-0.2, 0)
      node[pos=0, left] {$1$} -- (0.2, 0)
      node [midway, above] {$2$} .. controls +(0,0) and +(-0.2, 0) .. (0.5,-0.5)
      node[pos=1, right] {$1$};
    \draw[>->] (-0.5, 0.5) .. controls +(0.2, 0) and +(0,0) .. (-0.2, 0)
      node[pos=0, left] {$1$} -- (0.2, 0)
      node [midway, above] {$2$} .. controls +(0,0) and +(-0.2, 0) .. (0.5,0.5)
      node[pos=1, right] {$1$};
  \end{scope}  
\end{scope}

}} \right)
	&\quad=\quad \NB{\tikz[]{\begin{scope}[font = \tiny] 
  \begin{scope}[xscale=1.5]
    \draw[->] (-0.5, -0.5) -- (0.5,-0.5)
      node[pos=0, left] {$1$}
      node[pos=1, right] {$1$};
    \draw[->] (-0.5, 0.5) -- (0.5, 0.5)
      node[pos=0, left] {$1$}
      node[pos=1, right] {$1$};
  \end{scope}
\end{scope}

}}
\\
  \zip \left(\NB{\tikz[]{}} \right)
    &\quad=\quad \NB{\tikz[]{\begin{scope}[font = \tiny]
  \begin{scope}[xscale=1.5]
    \draw[>->] (-0.5, -0.5) .. controls +(0.2, 0) and +(0,0) .. (-0.2,
    0) node[pos=0, left] {$1$} -- (0.2, 0) node [midway, above] {$2$} .. controls +(0,0) and +(-0.2, 0) .. (0.5,-0.5) node[pos=1, right] {$1$};
    \draw[>->] (-0.5, 0.5) .. controls +(0.2, 0) and +(0,0) .. (-0.2,
    0)  node[pos=0.3, font =\normalsize] {$\bullet$} node[pos=0, left] {$1$} -- (0.2, 0) node [midway, above] {$2$} .. controls +(0,0) and +(-0.2, 0) .. (0.5,0.5) 
    node[pos=1, right] {$1$};
  \end{scope}  
\end{scope}

}} - \NB{\tikz[]{\begin{scope}[font = \tiny]
  \begin{scope}[xscale=1.5]
    \draw[>->] (-0.5, -0.5) .. controls +(0.2, 0) and +(0,0) .. (-0.2,
    0) node[pos=0, left] {$1$} -- (0.2, 0) node [midway, above] {$2$} .. controls +(0,0) and +(-0.2, 0) .. (0.5,-0.5) node[pos=0.7, font =\normalsize] {$\bullet$} node[pos=1, right] {$1$};
    \draw[>->] (-0.5, 0.5) .. controls +(0.2, 0) and +(0,0) .. (-0.2,
    0)  node[pos=0, left] {$1$} -- (0.2, 0) node [midway, above] {$2$} .. controls +(0,0) and +(-0.2, 0) .. (0.5,0.5) 
    node[pos=1, right] {$1$};
  \end{scope}  
\end{scope}

}}
\end{aligned}
\end{equation}
and it preserves both the~quantum and (modified) Hochschild grading.
The~degree shifts are chosen so that the~homology is invariant under stabilizations.

\subsubsection*{Middle homology}
It is known that the~triple graded homology splits into two copies
of \emph{middle homology} $\HHH^{\textrm{mid}}$, shifted in homological
and Hochschild degrees.
This splitting follows easily when the~Hochschild homology of Soergel
bimodules $\set{Soergel}(\braid_\resolution)$ is computed using the~Koszul
complex
\begin{equation}\label{eq:Koszul-resolution}
	K(\braid_\resolution) := \set{Soergel}(\braid_\resolution)
		\otimes_{R^e}
	\bigotimes_{i=1}^k (\qvar^2 R^e \xrightarrow{\ x_i\otimes 1 - 1\otimes x_i\ } R^e)
\end{equation}
where $R = \scalars[x_1,\dots,x_k]$ and $R^e = R\otimes R$ is the~enveloping algebra
with the~left (resp.\ right) factor acting from the~left (resp.\ right)
on $\set{Soergel}(\braid_\resolution)$.
After changing the~basis of the~algebra by trading $x_1$ for $e_1$,
the~first elementary symmetric polynomial,
the~differential in the~factor associated with $e_1$ vanishes,
because the~polynomial acts symmetrically. Hence,
$K(\braid_\resolution) = K'(\braid_\resolution) \oplus \avar^2 \qvar^2 K'(\braid_\resolution)$,
where $K'(\braid_\resolution)$ is defined as in \eqref{eq:Koszul-resolution}
except that the~big tensor product is taken for $i>1$.
The~middle homology $\HHH^{\textrm{mid}}$ arises from a~complex defined
as in \eqref{eq:HHH-unreduced}, except that the~homology of $K'$ is taken
instead of $\HoHom\!$.

\subsubsection*{Reduced homology}
\label{sec:reduced}
The~above constructions can be repeated with the~reduced bimodule $\set{SoergelRed}$.
Taking the~full Hochschild homology as in \eqref{eq:HHH-unreduced} produces
an~invariant link homology $\HHH'(L)$ that categorifies $(\avar - \avar^{-1})P_L(\avar, \qvar)$.
The~complex again splits into two copies of a~smaller complex
$C^{\textrm{red}}(L)$ that computes the~\emph{reduced homology}
$\HHH^{\textrm{red}}(L)$.
In order to see this, observe that $\set{SoergelRed}$ is isomorphic
to the~submodule $\set{Soergel}' \subset \set{Soergel}$ generated by differences of variables;
the~variable $x_i \in R$ acts on $\set{Soergel}'$ by multiplication with $x_i - x_\basepoint$.
This submodule is isomorphic to the~reduced Soergel bimodule as defined
in \cite{MR3611725} if 2 is invertible.\footnote{
	In the~original definition, the~reduced Soergel bimodule is
	generated  by the differences of variables associated
	with edges at the~same level, which is only enough to generate
	twice the~difference of any two variables.
}
Furthermore, when $k$ is invertible in $\scalars$, then
\[
	\scalars[x_1,x_2,\dots,x_k]
\cong
	\scalars[e_1, x_2-x_1, x_3-x_2, \dots, x_k - x_{k-1}].
	\]
	For instance, one has $ k x_k= e_1+\sum_{i=1}^{k-1}i(x_{i+1}-x_{i})$.
Hence, reducing to $K'$ coincides with taking Hochschild homology
with respect to the~subalgebra $R'\subset R$ generated by differences
of variables. Notice that the~identification $\set{SoergelRed} \cong \set{Soergel}'$
commutes with the~natural action of $R'$ on both bimodules.
This is how the~reduced homology was originally defined
\cite{MR3611725, MR2339573, MR2421131}.
Its Poincar\'e polynomial
\[P_L(\tvar, \avar, \qvar):=\sum_{i,j,n} \tvar^i \avar^j \qvar^n \dim {\HHH^{\textrm{red}}}^{i,j,n}(L)\]
is a new link invariant, where ${\HHH^{\textrm{red}}}^{i,j,n}(L)$ is the~homology in $i$th homological,
$j$th (modified) Hochschild and $n$th quantum degree.
This construction categorifies the~HOMFLY--PT polynomial
$P_L(\avar,\qvar)$ in the~sense that for any link $L$
there is an~equality of power series in $\qvar$
\begin{equation}\label{eq:HHH-to-HOMFLYPT}
P_L(\avar,\qvar)
		= P_L(-1, \avar, \qvar).
\end{equation}

\begin{remark}
	In order to recover Rasmussen's grading $(i',j',n')$,
	the~(modified) Hochschild grading $j$ must be negated
	and the~homological degree replaced with $i' = 2i + j$;
	the~quantum grading $n$ is not changed.
	This makes the~Hochschild differential homogeneous of $(i',j',n)$-degree
	$(0,2,2)$ and the~topological (induced by foams) differential of degree
	$(2,0,0)$. We also have
	\[
		P_L(\tvar, \avar, \qvar) = P_L^{Ras}(\tvar^{1/2}, \tvar^{1/2}\avar^{-1}, \qvar),
	\]
	where $P_L^{Ras}(\tvar, \avar, \qvar)$
	is the~Poincar\'e polynomial of $\HHH^{\textrm{red}}$ with respect to 
	Rasmussen's convention.
\end{remark}
 \subsection{\texorpdfstring{$\gll_1$}{gl(1)} homology}

The technology developed here was first introduced in \cite{RW2}
using \emph{foams} in a more general framework. It was recasted in
\cite{RW3} in a foam-free framework. Here we use this latter point of
view to recall the construction. Unless stated otherwise, in this
section we work with integral coefficients.

With a~web $\web$ we have associated in Section~\ref{sec:foam-functor}
the~\emph{space of decorations} $D(\web) = \bigotimes_{u\in E(\web)} R_u$,
where the~\emph{edge ring} $R_u$  consists of symmetric polynomials
in as many variables as the~thickness of the~edge $u$. A~pure tensor from
$D(\web)$ is visualized by dots on $\web$, see Figure~\ref{fig:decorated-web}.
In what follows we will consider quotients and subquotients of $D(\web)$.

\begin{definition}
\label{defn:coloring}
	Let $\web$ be an annular web of index $k$.
	Denote by $\mathcal{P}(\{X_1,\dots, X_k\})$ the power set
of $\{X_1, \dots, X_k\}$.
	An~\emph{omnichrome coloring} of $\web$ is a~map 
	$c\colon E(\web) \to \mathcal{P}(\{X_1,\dots, X_k\})$, such that
	\begin{itemize}
		\item for each edge $u \in E(\web)$ the~cardinality of $c(u)$ equals
		the~thickness of $u$,
		\item given a~generic section $r$ of the~annulus, the~union of the~sets
		$c(u)$ for all edges $u$ intersecting $r$ is equal to
		$\{X_1,\dots, X_k\}$, and
		\item the~\emph{flow condition} holds: if $u_1$, $u_2$ and $u_3$
		are three adjacent edges with $u_1$ the thickest of them,
		then $c(u_1)= c(u_2)\sqcup c(u_3)$.
  \end{itemize}
  The set $c(u)$ is called the \emph{color of $u$.}
\end{definition}

The~definition of omnichrome colorings has several direct implications.
\begin{enumerate}
  \item At each vertex of $\web$, the~color of the~thickest edge is
  the~disjoint union of the~colors of the~two thin edges.
  \item For a~generic section $r$ of the~anulus, the~union of sets
  $c(u)$ associated with the~edges $u$ that intersect $r$ is actually
  a~disjoint union.
  \item\label{it:varphi}
  Each coloring $c$ induces an~algebra homomorphism
  $\varphi_c\colon \set{decoration}(\web) \to \ZZ[X_1, \dots X_k]$ 
  that for  each $u \in E(\web)$ identifies the~ring $R_u$ with the~subring
  $\ZZ[c(u)]^{\symm_{\ell(u)}}$. \end{enumerate}

Let $\web$ be an annular web and $c$ be an omnichrome coloring of
$\web$. For each split vertex $v$, denote by $u_l(v)$ and $u_r(v)$
the left and right edges going out of $v$. Set
\[
	Q(\web,c) :=
	\prod_{\substack{v \textrm{ split} \\ \textrm{vertex}}}
	\prod_{\substack{X_i \in c(u_l(v)) \\ X_j \in c(u_r(v))}} (X_i - X_j).
\]
Given a~pure tensor $T \in D(\web)$ write $T_u$ for the~factor associated
with an~edge $u$. Using the ring morphism $\varphi_c$ defined in point~(\ref{it:varphi}) just above, we set:
\[
  	P(\web,T,c) = \varphi_c(T) = \prod_{u \in E(\web)} T_u(c(u))
\]
and extend it linearly to all elements of $\set{decoration}(\web)$.
Finally, define a rational function
\[
    \bracket{\web,T,c}_\infty = \frac{P(\web,T,
    c)}{Q(\web, c)} \in \mathbb{Q}(X_1,\ldots,X_k).
\]

\begin{example}
  Consider the~omnichrome coloring $c$
  \[
    \NB{\tikz[scale = 0.8]{\begin{scope}[font = \tiny]
  \draw[dotted] (0,0) circle (0.7cm);
  \draw[dotted] (0,0) circle (3.3cm);

  \coordinate (aA1) at (65:1.5);
  \coordinate (aA2) at (65:3);
  \coordinate (bA1) at (115:1.5);
  \coordinate (bA2) at (115:3);
  \coordinate (aC1) at (245:1.5);
  \coordinate (aC2) at (245:3);
  \coordinate (bC1) at (295:1.5);
  \coordinate (bC2) at (295:3);
  \coordinate (aB1) at (155:1);
  \coordinate (aB2) at (155:2.5);
  \coordinate (bB1) at (205:1);
  \coordinate (bB2) at (205:2.5);
  \coordinate (aD1) at (335:1);
  \coordinate (aD2) at (335:2.5);
  \coordinate (bD1) at (25:1);
  \coordinate (bD2) at (25:2.5);
\begin{scope}[gray]
  \draw  (aA1) arc (65:115:1.5);
  \draw  (aA2) arc (65:115:3);
  \draw  (aC1) arc (245:295:1.5);
  \draw  (aC2) arc (245:295:3);

  \draw  (aB1) arc (155:205:1);
  \draw  (aB2) arc (155:205:2.5);
  \draw  (aD1) arc (-25:25:1);
  \draw  (aD2) arc (-25:25:2.5);

  \draw (bA2) .. controls +(205:1.5) and +(0,0).. (aB2);
  \draw (bC2) .. controls +(25:1.5) and +(0,0) .. (aD2);
  \draw (bB1) .. controls +(295:0.75) and +(0,0) ..(aC1);
  \draw (bD1) .. controls +(115:0.75) and +(0,0) ..(aA1);
  \draw (bA1) -- (aB2);
  \draw (bB2) -- (aC1);
  \draw (bC1) -- (aD2);
  \draw (bD2) -- (aA1);

  \draw (bD2) .. controls +(0,0) and +(335:1.5) .. (aA2);
  \draw (bB2) .. controls +(0,0) and +(155:1.5) .. (aC2);
  \draw (bA1) .. controls +(0,0) and +(65:0.75) ..(aB1);
  \draw (bC1) .. controls +(0,0) and +(245:0.75) ..(aD1);

\end{scope}

      \draw[blue, decorate, decoration={snake, segment length=1.5mm,
        amplitude=0.3mm}, thick]
      (aA2) arc (65:115:3)
      .. controls +(205:1.5) and +(0,0) .. (aB2)
      arc (155:205:2.5)
      -- (aC1)
      arc (245:295:1.5)
      -- (aD2)
      arc  (-25:25:2.5)
      .. controls +(0,0) and +(335:1.5) .. (aA2);
      \draw[very thick, red, densely dashed]
      (aA1) arc (65:115:1.5)
      .. controls +(0,0) and +(65:0.75) .. (aB1)
      arc (155:205:1)
      .. controls +(295:0.75) and +(0,0) ..(aC1)
      arc (245:295:1.5)
      .. controls +(0,0) and +(245:0.75) .. (aD1)
      arc (-25:25:1)
      .. controls +(115:0.75) and + (0,0) .. (aA1);
      \draw[green!50!black, decorate, decoration={coil, segment
        length=1mm, amplitude=0.8mm}]
      (aA1) arc (65:115:1.5) -- (aB2)  arc (155:205:2.5)
.. controls +(0,0) and +(155:1.5) .. (aC2)  arc (245:295:3)
.. controls +(25:1.5) and +(0,0) .. (aD2) arc (-25:25:2.5) -- (aA1);
\end{scope}

\begin{scope}[xshift = 5cm]
  \draw[blue, decorate, decoration={snake, segment length=1.5mm,
    amplitude=0.3mm}, thick]
  (0,1) -- +(1,0) node [pos =1 , right] {$X_1$};
  \draw[very thick, red, densely dashed]
  (0,0) -- +(1,0) node [pos =1 , right] {$X_2$};
  \draw[green!50!black, decorate, decoration={coil, segment
        length=1mm, amplitude=0.8mm}]
  (0,-1) -- +(1,0) node [pos =1 , right] {$X_3$};
\end{scope}}}
  \]
  of the~decorated annular web $(\web,T)$ from Figure~\ref{fig:decorated-web}.
  We compute
  \begin{align*}
    P(\web, T, c) &= X_2^2X_1X_3,\\
    Q(\web, T, c) &= (X_3-X_1)(X_2-X_3)(X_1-X_3)(X_2 - X_1) \\
  \intertext{so that}
    \bracket{\web,T,c}_\infty &= \frac{X_2^2X_1X_3}{(X_3-X_2)(X_2-X_1)(X_3-X_1)^2}.
  \end{align*}
\end{example}

\begin{definition}
\label{defn:evaluations-infty}
  For an~annular web $\web$ and a decoration $T \in D(\web)$ 
   define the ~\emph{$\infty$-evaluation} $ \bracket{\web,T}_\infty$ of $T$ by
  \[
      \sum_{\substack{c\,:\,\textrm{omnichrome} \\ \textrm{coloring}}}
      	\bracket{\web,T,c}_\infty \in \mathbb{Q}(X_1,\ldots,X_k)
  \]
  and the~\emph{$\infty$-pairing} is the~bilinear form
  $\bracket{ -; \web; -}_\infty$  on $\set{decoration}(\web)$,
  defined on decorations $S$ and $T$ as
  $\bracket{S;\web; T}_\infty := \bracket{\web, ST}_\infty$.
  The~\emph{$\gll_\infty$ state space of $\web$} is the quotient
  \[
  	\glinfty(\web) := \quotient{D(\web)}{\mathrm{rad}\bracket{-; \web; -}_\infty}.
  \]  
  For another ring of coefficients $\scalars$ we set
  $\glinfty(\web, \scalars) := \glinfty(\web) \otimes_\ZZ \scalars$.
\end{definition}

\begin{proposition}
  Choose an~annular web $\web$ of index $k$.
  \begin{enumerate}
	\item
	For any $T \in D(\web)$, the~$\infty$-evaluation $\bracket{\web,T}_\infty$ 
	is a~symmetric polynomial in $X_1, \dots, X_k$ with coefficients in $\mathbb{Z}$.
	\item The~defining ideals for $\scalars$-modules
	$\glinfty(\web, \scalars)$ and $B(\web)$
	 as quotients of $D(\web)$ agree. 
In particular, the~Soergel relations \eqref{rel:dot-migration}
	hold in $\glinfty(\web, \scalars)$.
 \end{enumerate}
\end{proposition}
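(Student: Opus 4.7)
The plan is to establish (1) and (2) separately, noting that (1) is needed before the evaluation can even be interpreted as landing in $\mathit{Sym}_k$.

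For part (1), I split the claim into symmetry and polynomiality. Symmetry is essentially manifest: a permutation $\sigma\in\symm_k$ of the alphabet $\{X_1,\dots,X_k\}$ induces a bijection $c\mapsto \sigma\circ c$ on the finite set of omnichrome colorings of $\web$, and inspection of the definitions of $P(\web,T,c)$ and $Q(\web,c)$ shows $\bracket{\web,T,\sigma\circ c}_\infty = \sigma\cdot\bracket{\web,T,c}_\infty$. Summing over all colorings therefore yields a rational function invariant under $\symm_k$. Polynomiality is the delicate point: a priori each term $P/Q$ is only a rational function with simple poles along the hyperplanes $X_i=X_j$. I would argue that these poles cancel in the total sum by a local involution on colorings. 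Given $i\neq j$, the colorings contributing a pole at $X_i=X_j$ are those in which, at some split vertex $v$, one of $X_i,X_j$ lies in $c(u_l(v))$ and the other in $c(u_r(v))$. Swapping $X_i\leftrightarrow X_j$ at the thin edges downstream/upstream of $v$ produces a second omnichrome coloring, and a direct local computation (as in the Robert--Wagner argument) shows that the residues at $X_i=X_j$ of the two paired terms are opposite. Hence $\bracket{\web,T}_\infty$ has no poles and is a polynomial.

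For part (2), the easy direction is the surjection $B(\web)\twoheadrightarrow\glinfty(\web,\scalars)$. Indeed, pick an elementary generator $P(X_u)-P(X_{u'}\sqcup X_{u''})$ of the Soergel ideal $I(\web)$ at a merge/split vertex. For every omnichrome coloring $c$ the flow condition gives $c(u)=c(u')\sqcup c(u'')$, so $\varphi_c$ sends this element to $0$; since $Q(\web,c)$ is untouched, every term of $\bracket{-;\web;-}_\infty$ paired with this element vanishes, and the generator lies in $\ker\bracket{-;\web;-}_\infty$. Thus $I(\web)\subseteq \ker\bracket{-;\web;-}_\infty$ and the quotient map descends.

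The remaining and main obstacle is the reverse inclusion, i.e.\ non-degeneracy of the $\infty$-pairing modulo Soergel relations. My plan here is to reduce to basic webs via Proposition~\ref{prop:annular-reduction} (or Proposition~\ref{prop:elem-annular-reduction} in the elementary case): since $\web\oplus S_L\cong S_R$ in $\cat{AFoam}$ and the local isomorphisms of Proposition~\ref{prop:local-isoms} are defined by Soergel-relation-preserving maps, both $B(-)$ and $\glinfty(-,\scalars)$ respect this decomposition, and it suffices to check the equality on circular webs $\circles_{\underline k}$. For a circular web $\set{Soergel}(\circles_{\underline k})$ is a pure parabolic invariant ring $R^{\underline k}$, which has an explicit basis by Example~\ref{ex:parabolic-basis}, and the omnichrome colorings of $\circles_{\underline k}$ correspond to the cosets $\symm_k/\symm_{\underline k}$. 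A direct evaluation identifies $\bracket{-;\circles_{\underline k};-}_\infty$ with the standard Frobenius trace of the extension $R^{(k)}\subseteq R^{\underline k}$ (up to normalization by the Vandermonde), which is non-degenerate by Theorem~\ref{thm:frob-ext}. This forces $\ker\bracket{-;\circles_{\underline k};-}_\infty=I(\circles_{\underline k})$, completes the reverse inclusion, and consequently yields the isomorphism $B(\web)\cong \glinfty(\web,\scalars)$ in full generality.
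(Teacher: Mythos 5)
The paper's own proof of this proposition is a two-line citation: part (1) is quoted as Lemma~3.13 of \cite{RW3}, and part (2) as Proposition~4.18 of \cite{RW2} combined with the identification $B(\web) = \qHoHom_0(B(\widetilde\web))$ for an annular closure; your attempt to reconstruct the content of those lemmas is therefore a different route by default. Your sketch of part (1) and the forward inclusion $I(\web)\subseteq\ker\bracket{-;\web;-}_\infty$ in part (2) are sound in spirit and match the Robert--Wagner blueprint.

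The genuine gap is in the reverse inclusion for circular webs, which is where the whole argument must land. For $\circles_{\underline k}$ there are no split vertices, so $Q(\circles_{\underline k}, c) = 1$ for every omnichrome coloring and the $\infty$-evaluation is the bare symmetrization $\bracket{\circles_{\underline k}, T}_\infty = \sum_c \varphi_c(T)$, with no Vandermonde-type denominator at all. This is \emph{not} the Frobenius trace of $R^{(k)} \subseteq R^{\underline k}$: the latter (compare the cap-foam formula in Section~2.5 and Example~\ref{ex:elementary-extension}) divides by products of differences, and for $\underline k = (1,1)$ one obtains $T(X_1,X_2) + T(X_2,X_1)$ versus $\bigl(T(X_1,X_2) - T(X_2,X_1)\bigr)/(X_1 - X_2)$ --- a symmetrization versus a divided difference. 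No ``normalization by the Vandermonde'' reconciles the two, since one is symmetric in $(X_1,X_2)$ and the other arises from an antisymmetrization. Consequently the appeal to Theorem~\ref{thm:frob-ext} does not give non-degeneracy of the $\infty$-pairing; one needs a separate argument (e.g.\ a generalized Vandermonde determinant for the family $\{\varphi_c\}_c$ on a monomial basis, taking care of the arbitrary base ring). A second, smaller gap: the reduction to circular webs via Proposition~\ref{prop:annular-reduction} tacitly assumes that $\glinfty(-,\scalars)$ is a functor on $\cat{AFoam}$, i.e.\ that the quotient $B(\web) \twoheadrightarrow \glinfty(\web,\scalars)$ is natural with respect to the isomorphisms of Proposition~\ref{prop:local-isoms}; this is exactly the statement one is trying to reprove (it is part of the content of the cited \cite{RW2} proposition), so it cannot be taken for granted in a from-scratch reconstruction.
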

\begin{proof}
	The~first statement is the~content of \cite[Lemma 3.13]{RW3}
	and the~second one follows directly from \cite[Proposition 4.18]{RW2},
	because $B(\web)$ coincides with $\HoHom_0(B(\widetilde\web))$
	when $\web$ is a~closure of a~directed web $\widetilde\web$.
\end{proof}

\begin{definition} \label{defn:evaluations-gl1}
	Choose an~annular web $\web$ of index $k$.
    Define the~\emph{$\gll_1$ evaluation} of $T \in D(\web)$ by
    \[
      \bracket{\web,T}_1 := (\bracket{\web, T}_\infty)_{
      	\rule[-0.3ex]{0.5pt}{2.5ex}\,X_1,\dots, X_k \mapsto 0}.
    \]
    In other words, $\bracket{\web,T}_1$ is the~constant coefficient
    of $\bracket{\web,T}_\infty$.
    The~\emph{$\gll_1$ pairing} on $\web$ is the~bilinear form
    $\bracket{-; \web; -}_1$ on $\set{decoration}(\web)$ defined
    on decorations $S$ and $T$ by
    $\bracket{S;\web; T}_1 := \bracket{\web, ST}_1$.
    The~\emph{$\gll_1$ state space of $\web$} is the~quotient
    \[
    		\glone(\web) := \quotient{D(\web)}{\mathrm{rad}\bracket{-; \web; -}_1}.
    \]
    For another ring of coefficients $\scalars$ we set
    $\glone(\web, \scalars) = \glone(\web) \otimes_\ZZ \scalars$.
\end{definition}

Following its very definition $\glone(\web)$ is a~quotient of $B(\web)$.
\begin{proposition}\label{proposition:morphism-gl1}\ 
  \begin{enumerate}
  \item \label{it:functor-gl1}
    The~assignment
    $\web\mapsto \glone(\web)$ extends to a~functor
    $\glone\colon \cat{AFoam} \to \cat{grMod}$ that is
    a~quotient of the~functor from Section~\ref{sec:foam-functor}.  In
    particular, the~isomorphisms from
    Proposition~\ref{prop:local-isoms} induce isomorphisms between
    $\gll_1$ state spaces.
      \item \label{it:rank-gl1} $\glone(\web)$ is a~free graded\/
        $\aring$-module for any web $\web$.  It has rank 1 and is
        concentrated in quantum degree $0$ in case $\web$ is
        a~collection of circular webs.
      \item \label{it:annihilation-gl1}Suppose that a~generic section
        of the~annulus intersects edges $u_1,\dots,u_s$ of a~annular
        web $\web$ and let $P\in D(\web)$ represent a homogeneous
        symmetric polynomial in variables
        $X_{u_1}\sqcup\dots\sqcup X_{u_s}$ of positive degree. Then
        $P$ annihilates $\glone(\web)$.
  \end{enumerate}
\end{proposition}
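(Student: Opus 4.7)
I will prove the three statements in the order (3), (1), (2); each uses the preceding ones. For (3), under any omnichrome coloring $c$ of $\web$, the sets $c(u_1),\ldots,c(u_s)$ form a partition of $\{X_1,\ldots,X_k\}$ (by the second consequence of Definition~\ref{defn:coloring}). So if $P$ is symmetric in $X_{u_1}\sqcup\cdots\sqcup X_{u_s}$, its image $\varphi_c(P)$ is the same symmetric polynomial $\widetilde P(X_1,\ldots,X_k)$ independent of $c$. Therefore
\[
	\bracket{\web,\, PT}_\infty
		= \widetilde P(X_1,\ldots,X_k)\cdot\bracket{\web,T}_\infty
\]
for any decoration $T$, and when $P$ has positive degree the right-hand side is a symmetric polynomial of positive degree, whose constant coefficient vanishes. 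Hence $\bracket{S;\web;PT}_1=0$ for every $S\in D(\web)$, so $P$ annihilates $\glone(\web)$.

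For (1), the Soergel functor $\set{Soergel}\colon\cat{AFoam}\to\cat{grMod}$ from Proposition~\ref{prop:Soergel-tqfts} satisfies $\set{Soergel}(\web)=\glinfty(\web)$ by the proposition immediately preceding the one I am proving. Given a foam $F\colon\web\to\web'$, its vertical reflection provides an adjoint foam $F^*\colon\web'\to\web$, and the corresponding Soergel maps should satisfy the adjunction
\[
	\bracket{S';\web';F_*T}_\infty = \bracket{F^*S';\web;T}_\infty,
\]
with both sides lying in $\Sym_k$. Projecting to constant coefficients, the analogous identity holds for the $\gll_1$-pairing, so any $T$ in the $\gll_1$-kernel on $\web$ has $F_*T$ in the $\gll_1$-kernel on $\web'$; consequently $F_*$ descends to a map $\glone(\web)\to\glone(\web')$, giving the required quotient functor. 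The hardest step is verifying the adjunction above; I expect it to follow from the Frobenius structure of the parabolic extensions $R^{\underline k}\subset R^{\underline\ell}$ from Example~\ref{ex:elementary-extension} underpinning the foam-to-bimodule assignment, lifted to annular foams via the horizontal trace machinery of Section~\ref{sec:annulus}.

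For (2), Proposition~\ref{prop:annular-reduction} yields $\web\oplus S_L\cong S_R$ in $\cat{AFoam}$ for graded direct sums of circular webs $S_L, S_R$, so by (1) it suffices to compute $\glone(\circles_{\underline k})$. A circular web has no split vertices, so $Q(\circles_{\underline k},c)=1$ for every omnichrome coloring $c$, and $\bracket{\circles_{\underline k},T}_\infty = \sum_c \varphi_c(T)$ is a homogeneous symmetric polynomial of degree $\deg T$. For $T=1$ this sum equals the multinomial coefficient $\binom{k}{k_1,\ldots,k_r}$, a nonzero integer; for $T$ of positive degree the constant coefficient vanishes. Combined with (3), which already places all positive-degree decorations in the $\gll_1$-kernel, this identifies $\glone(\circles_{\underline k};\ZZ)$ with $\ZZ$ concentrated in quantum degree $0$. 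For a general web, annular reduction and functoriality exhibit $\glone(\web;\ZZ)$ as a direct summand of a finitely generated free $\ZZ$-module, hence free since $\ZZ$ is a PID; tensoring with $\scalars$ produces a free graded $\scalars$-module.
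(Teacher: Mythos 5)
Your overall structure is sound, and your argument for~(\ref{it:annihilation-gl1}) is correct and essentially the same as the paper's: the $\gll_1$ evaluation is the constant term of the $\gll_\infty$ evaluation, and multiplying by a symmetric polynomial in all section variables multiplies $\bracket{\web,T}_\infty$ by a fixed positive-degree symmetric polynomial (independent of the coloring), killing the constant term. The paper itself, however, treats~(\ref{it:functor-gl1}) and~(\ref{it:rank-gl1}) as citations to \cite{RW2} and \cite{RW3} rather than reproving them, so your attempt to reconstruct those arguments is genuinely more work than the paper does.

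The weak point is precisely the step you flag yourself in~(\ref{it:functor-gl1}). The adjunction
$\bracket{S';\web';F_*T}_\infty = \bracket{F^*S';\web;T}_\infty$
is the key technical content here, and you only assert that it ``should'' hold. This is exactly what the universal construction in \cite{RW2} provides: there the pairing is defined by gluing a foam with its mirror and evaluating the closed foam, so the adjunction is built in by construction, and the passage to the foam-free language of \cite{RW3} involves checking that the omnichrome-coloring evaluation coincides with the closed foam evaluation. You have correctly identified the strategy (mirror foam, Frobenius structure, horizontal trace), but without that lemma proved (or cited) the functoriality claim is incomplete. Since~(\ref{it:rank-gl1}) uses the functoriality, this gap propagates.

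In~(\ref{it:rank-gl1}) there is also a small misattribution: you write that ``(\ref{it:annihilation-gl1})\ldots already places all positive-degree decorations in the $\gll_1$-kernel,'' but~(\ref{it:annihilation-gl1}) only applies to decorations \emph{symmetric in all section variables}, not to arbitrary decorations (e.g.\ a monomial on a single circle). Fortunately you do not need it: for a circular web $Q(\circles_{\underline k},c)=1$, so $\bracket{\circles_{\underline k},ST}_\infty$ is homogeneous of degree $\deg S+\deg T$, whence the constant term $\bracket{S;\circles_{\underline k};T}_1$ vanishes for all $S$ whenever $\deg T>0$ --- this is what actually puts every positive-degree decoration in the kernel. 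Stated correctly, your computation together with the nonvanishing of the multinomial coefficient does identify $\glone(\circles_{\underline k};\ZZ)$ with $\ZZ$ in degree $0$, and the direct-summand-of-free-over-a-PID argument for the general case is correct (modulo the functoriality of~(\ref{it:functor-gl1})). One more remark: the paper obtains the statement over an arbitrary $\scalars$ by tensoring the integral result with $\scalars$, which you also correctly observe at the end.
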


\begin{proof}
  When $\aring=\ZZ$, \eqref{it:functor-gl1} was first proven in
  \cite[Section 5.1.2]{RW2} and then reformulated in a foam free
  language in \cite[Section 3.4]{RW3}; \eqref{it:rank-gl1} is the
  content of \cite[Example 3.25]{RW2}. Both statements are obtained from the results over $\ZZ$  after
  tensoring with $\aring$. Statement
  \eqref{it:annihilation-gl1} follows directly from the definition
  of $\bracket{\cdot, \cdot}_1$.
\end{proof}

Applying the~functor $\glone(-)$ to the~formal complex $\Bracket{\bdiagram}$
results in a~chain complex of $\aring$-modules
$\complexglone(\bdiagram; \aring)$ with homology denoted by
$\homologyglone(\bdiagram; \aring)$;
we call it the~\emph{$\gll_1$ homology} of $\bdiagram$.
The~differential is induced by the~zip and unzip maps
listed in \eqref{eq:zip-unzip-in-complex}.

\begin{theorem}[{\cite{RW2}}]\label{thm:hglzero-inv}
  If\/ $\aring$ is a field of characteristic $0$,
  then $\homologyglone(\bdiagram; \aring)$ is a~link invariant.
  Its graded Euler characteristic is $1$ for every link.
\end{theorem}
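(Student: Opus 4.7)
The plan is to prove invariance of $\homologyglone$ by checking the Markov moves (since every link is a braid closure) and then to compute the graded Euler characteristic on a single representative, using homological invariance to propagate the answer.

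Since the formal complex $\Bracket{\bdiagram}$ is already known to be invariant under isotopy of $\bdiagram$ as an annular link, and $\glone$ is a genuine functor $\cat{AFoam}\to\cat{grMod}$ by Proposition~\ref{proposition:morphism-gl1}(\ref{it:functor-gl1}), the complex $\complexglone(\bdiagram)$ is automatically invariant under braid-like Reidemeister II and III moves and under conjugation $\beta\mapsto\alpha\beta\alpha^{-1}$. What must be checked is Markov stabilization $\beta\mapsto\beta\sigma_k^{\pm 1}$, which is not an annular move because it changes the index.

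The approach to stabilization is to view $\Bracket{\widehat{\beta\sigma_k}}$ as a mapping cone of the (un)zip foam between the two resolutions of the new crossing. For positive stabilization, the smoothing resolution produces the annular diagram $\bdiagram$ with an auxiliary trivial circle attached to strand $k$; under $\glone$, this extra circle contributes a free rank-one factor in quantum degree $0$ thanks to Proposition~\ref{proposition:morphism-gl1}(\ref{it:rank-gl1}). The singular resolution contains a dumbbell in the annular closure, which by the digon and square isomorphisms of Proposition~\ref{prop:local-isoms} (transported to $\glone$ via Proposition~\ref{proposition:morphism-gl1}(\ref{it:functor-gl1})) splits into graded pieces matching those produced by the smoothing. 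One then verifies that the connecting zip map pairs these pieces isomorphically except on one summand, so that the mapping cone is homotopy equivalent to $\complexglone(\bdiagram)$ up to the expected overall shift; the shift cancels against the writhe correction in the complex. The negative stabilization is treated analogously with cup/cap replacing zip/unzip. The annihilation statement Proposition~\ref{proposition:morphism-gl1}(\ref{it:annihilation-gl1}) is crucial here: it forces the dumbbell decorations on the singular resolution to reduce to the scalars, which is where the characteristic-zero hypothesis enters through inverting the combinatorial factors arising from the binomial coefficients in the digon relation.

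For the Euler characteristic, apply the invariance just established. The unknot is the closure of the trivial $1$-braid, which has no crossings, so its complex is concentrated in homological degree $0$ and equals $\glone(\S) \cong \aring$ by Proposition~\ref{proposition:morphism-gl1}(\ref{it:rank-gl1}). Hence its graded Euler characteristic is $1$, and by invariance the graded Euler characteristic of $\complexglone(\bdiagram)$ equals $1$ for every link $\widehat{\beta}$.

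The main obstacle is the stabilization argument: one must match the two halves of the mapping cone precisely, showing that after applying $\glone$ the pairing between the singular and smooth resolutions becomes an isomorphism on a complementary summand. Writing the dumbbell of the singular resolution in the basis provided by the digon isomorphism, and computing the image of each basis element under the zip map, produces a triangular system whose diagonal entries are the quantum binomials $\qbinom{a+b}{a}$; their invertibility (which requires characteristic zero in the relevant cases) guarantees the cone reduces to $\complexglone(\bdiagram)$ up to a contractible summand. Getting the signs and shifts right for both positive and negative stabilization, and confirming that all homotopies respect the quantum grading, is the technical heart of the proof.
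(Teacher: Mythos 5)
The paper does not actually prove this theorem; it is cited from \cite{RW2}, and the only commentary the paper adds is the remark immediately following the statement: the construction in \cite{RW2} is equivariant and over $\mathbb{Q}$, the present setting is simpler and non-equivariant so the chain complex makes sense integrally, but the invariance proof ``requires inverses of nonzero integers, see [RW2, Lemmas 5.21 and 5.25].'' So there is no in-paper proof to compare your argument against line by line.

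That said, your outline is the standard strategy and is consistent with both the cited reference and the paper's remark. Braid-like Reidemeister moves and conjugation are annular isotopies and hence handled by the invariance of $\Bracket{\bdiagram}$ in $\cat{AFoam}$ together with the functoriality of $\glone$ from Proposition~\ref{proposition:morphism-gl1}(\ref{it:functor-gl1}); stabilization is the genuine issue and is indeed where characteristic-zero enters; and the graded Euler characteristic follows by evaluating on the unknot via Proposition~\ref{proposition:morphism-gl1}(\ref{it:rank-gl1}). Your identification of the source of the characteristic-zero hypothesis matches the paper's pointer to the specific lemmas of \cite{RW2}.

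One imprecision worth noting: after applying $\glone$ the relevant matrix entries in the delooping of the singular resolution are \emph{integers} (closed-foam evaluations), not quantum binomials. The expression $\qbinom{a+b}{a}$ in Proposition~\ref{prop:local-isoms} is a graded multiplicity describing how many shifted copies appear in the digon isomorphism; it is not itself a scalar that could fail to be invertible. In the uncolored case the digon has $a=b=1$, so $[2]$ degenerates to the integer $2$, and more generally the obstructions come from integers arising in the explicit evaluation of zip/unzip compositions, which is exactly the content of \cite[Lemmas 5.21, 5.25]{RW2}. So the mechanism you describe is correct, but phrasing the invertibility requirement in terms of quantum binomials as matrix entries is slightly off.
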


The construction in \cite{RW2} is done in an~equivariant
setting and over $\mathbb{Q}$. Here we consider a~simpler
non-equivariant setting, in which case the~construction
can be performed with integral coefficients. The~proof of invariance,
however, requires inverses of nonzero integers,
see \cite[Lemma~5.21]{RW2} and \cite[Lemma~5.25]{RW2}.

This invariant can  easily be extended to links colored by arbitrary
positive integers. The~setup described here corresponds to the~case
where all components are colored by $1$, known as the~uncolored case.

 \subsection{\texorpdfstring{$\gll_0$}{gl(0)} homology}

In this section we assume that the braid $\braid$ 
of index $k$ is chosen 
in such a way that its closure is a knot $K$.
Consider the chain complex $\complexglone(\bdiagram; \aring)$.
Having picked a~basepoint $\basepoint$ on $\bdiagram$,
one defines an~endomorphism $\varphi_\basepoint$ of
$\complexglone(\bdiagram; \aring)$ that multiplies the~decoration
of the~marked edge by $x^{\bindex-1}$. Diagrammatically, this reads:
\[
\NB{\tikz[]{
\begin{scope}
  \draw[->] (0,0)--(1,0) node[pos=1, right, font=\tiny] {$1$} node[pos=0.5, red]  {$\star$};
  \node at (0.5, -0.5) {$\vdots$};
    \node at (3.5, -0.5) {$\vdots$};
  \draw[decorate, decoration={brace,amplitude=5pt,mirror}] (-0.05, 0.02) -- (-0.05, -1) node[left, midway, xshift=-5pt, font=\small] {$\bindex$}; 
  \node at (2,-0.5) {$\mapsto$};
  \draw[->] (3,0)--(4,0) node[pos=1, right, font=\tiny] {$1$}
  node [pos =0.5] {$\bullet$} node [pos =0.25, font = \tiny, above] {$\bindex -1$};
  \draw[decorate, decoration={brace,amplitude=5pt}] (4.3, 0.02) -- (4.3, -1) node[right, midway, xshift=5pt, font=\small] {$\bindex$}; 

\end{scope}
}}.
\]
The fact that this is indeed a chain map
follows from the~locality of the~differential and  $\varphi_\basepoint$.
The~image of $\varphi_\basepoint$ is a subcomplex of $\complexglone(\bdiagram; \aring)$.

Let us place a basepoint on the top left endpoint
of the~braid diagram, and denote by
$\complexglzero(\bdiagram; \aring)$ and $\homologyglzero(\bdiagram; \aring)$
the~chain complex $\qshift^{1-\bindex}\image(\varphi_\basepoint)$ and its homology.
It is called the~\emph{$\gll_0$ homology} of $K=\bdiagram$.
Of course, one can act with $\varphi_\basepoint$ on $\glone(\web;\aring)$ for any
pointed annular web $\web$.
The~image defines a~space $\glzero(\web;\aring)$ called
the~\emph{$\gll_0$ state space} of $\web$.\footnote{In \cite{RW3} this space was denoted by $\glzero^\prime$.}

\begin{theorem}[{\cite{RW3}}]\label{thm:RW3}
  Assume that\/ $\aring$ is a~field.
  \begin{enumerate}
    \item The bigraded\/ $\aring$-vector space
    $\homologyglzero(\bdiagram; \aring)$ is an~invariant of the~knot $\bdiagram$.
    \item Its graded Euler characteristic is the~Alexander polynomial
    $\smash{\Delta_{\bdiagram}(\qvar)}$
    normalized to satisfy the~skein relation \eqref{eq:skein-alex}.
    \item If\/ $\aring$ has characteristic $0$, then there is
    a~bigraded spectral sequence from the~reduced triply graded
    homology to the $\gll_0$ homology.
    \item \label{it:rank-gl0}
    Let $D_k$ be the~chain of dumbbells of index $k>0$
    (see Definition~\ref{def:chain-of-dumbbells}
    and Figure \ref{fig:chain-of-dumbles0}).
    Then $\glzero(D_k;\aring)$ has dimension $1$.
  \end{enumerate}
\end{theorem}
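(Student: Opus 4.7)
\emph{Plan of proof.} My plan is to treat the four claims separately, bootstrapping invariance and the Euler characteristic from Theorem~\ref{thm:hglzero-inv} and handling the spectral sequence through a filtration argument.

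For invariance~(1), I would show that the subcomplex $\image(\varphi_\basepoint)\subset\complexglone(\bdiagram;\aring)$ is preserved, up to homotopy equivalence, by the isomorphisms implementing Reidemeister~II/III and Markov stabilization at the level of $\complexglone$. Those isomorphisms are built in \cite{RW2} out of local foam moves which, by Proposition~\ref{proposition:morphism-gl1}, descend to natural transformations on $\glone$. Since $\varphi_\basepoint$ is multiplication by $x_\basepoint^{\bindex-1}$ supported on a single edge, it strictly commutes with any chain map whose local model is disjoint from $\basepoint$, including all braid-move equivalences. Basepoint independence then follows from cyclicity of the quantum trace, reducing the problem to moving $\basepoint$ across a local piece via dot migration~\eqref{rel:dot-migration}. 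Markov stabilization demands more care because the index jumps $\bindex\mapsto\bindex+1$ and the multiplier jumps $x_\basepoint^{\bindex-1}\mapsto x_\basepoint^{\bindex}$; one checks that the extra factor of $x_\basepoint$ is exactly absorbed by the local stabilization isomorphism, which introduces one additional variable on the new strand.

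For the Euler characteristic~(2), I would start from $\chi(\complexglone(\bdiagram))=1$ (Theorem~\ref{thm:hglzero-inv}) and exploit the short exact sequence
\[
	0 \longrightarrow \ker\varphi_\basepoint \longrightarrow \complexglone(\bdiagram;\aring)
		\longrightarrow \image\varphi_\basepoint \longrightarrow 0.
\]
Decomposing each cube vertex via the annular reduction of Proposition~\ref{prop:elem-annular-reduction}, multiplication by $x_\basepoint^{\bindex-1}$ becomes a graded endomorphism on each $\glone(\bdiagram_\resolution;\aring)$ whose kernel has graded dimension expressible as a quantum integer. Summing over resolutions and applying the shift $\qshift^{1-\bindex}$ yields, after telescoping, the normalized Alexander polynomial satisfying~\eqref{eq:skein-alex}; this is the categorical incarnation of the $\gll_0$-specialization of HOMFLY--PT.

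For the spectral sequence~(3), which I expect to be the main obstacle, I would construct a differential $d_0$ on the reduced triply graded complex whose homology is $\homologyglzero$. Concretely, the Koszul resolution~\eqref{eq:Koszul-resolution} of $\set{SoergelRed}(\bdiagram_\resolution)$ is filtered by powers of $x_\basepoint$, and multiplication by $x_\basepoint^{\bindex-1}$ carves out the bottom layer of this filtration. The induced connecting map serves as a candidate differential of $(\avar,\qvar,\tvar)$-degree $(-2,0,1)$; one must then verify in characteristic~$0$ that it squares to zero modulo the topological differential and that the associated spectral sequence converges after finitely many pages. Functoriality along cube edges follows from naturality of the Koszul resolution. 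Finally, for~(4), repeated application of Proposition~\ref{prop:local-elem-isoms} reduces $D_\bindex$ to a graded direct sum of basic elementary webs built from concentric circles, each of which is handled by Proposition~\ref{proposition:morphism-gl1}; multiplication by $x_\basepoint^{\bindex-1}$ then projects onto the unique top-degree decoration on the outermost thin edge, a one-dimensional subspace. The principal difficulty throughout is the spectral-sequence bookkeeping in~(3); all remaining parts are local and combinatorial once the invariance machinery of Theorem~\ref{thm:hglzero-inv} is in place.
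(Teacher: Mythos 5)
The theorem is cited from~\cite{RW3} and not proved in this paper, but the paper does sketch the construction of the spectral sequence in~(3) via Proposition~\ref{prop:d0-vs-S0}, and that sketch diverges substantially from yours. The paper does not filter the Koszul resolution by powers of $x_\basepoint$ and take a connecting map. Instead it introduces an \emph{explicit} second differential $d_0$ by replacing each Koszul factor $x_i\otimes 1 - 1\otimes x_i$ with $x_i\otimes 1 - 1\otimes x_i + 1\otimes 1$, so that $d_0$ consists of the identity maps parallel to $d_{H\!H}$, one checks directly that $d_0$ anticommutes with $d_{H\!H}$, and the spectral sequence arises from computing the total homology $H^{d_0}$ in two steps. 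The identification $H^{d_0}\cong\glzero$ is then established by reducing to chains of dumbbells and verifying that the enlarged relation set $\{x_i = y_i - 1\}$ remains regular. Your $x_\basepoint$-filtration does not produce a differential of the correct $(\avar,\qvar,\tvar)$-degree $(-2,0,1)$: powers of $x_\basepoint$ shift $\qvar$ rather than $\avar$, and the resulting connecting maps live in the wrong direction relative to the Hochschild grading. Without the explicit $1\otimes 1$-correction there is no reason for your candidate $d_0$ to square to zero against $d_{top}$, and the claim that ``one must then verify ...'' conceals the fact that this verification fails for the filtration differential you describe.

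Your argument for~(4) is also flawed. A chain of dumbbells $D_k$ is by definition a \emph{basic} elementary web (see the paragraph after Proposition~\ref{prop:elem-annular-reduction}): for $k\geqslant 3$ it contains no digon, so repeated application of Proposition~\ref{prop:local-elem-isoms} cannot reduce it to concentric circles. You need a direct computation here, e.g.\ the regular-sequence argument in the proof of Proposition~\ref{prop:d0-vs-S0} (or Example~\ref{exa:GilmoreA-chain-dumbells} combined with Proposition~\ref{prop:ourQ-and-gl0} in the quantized setting). Finally, for~(1) you do not engage with the actual point the paper highlights: the field hypothesis is not needed for braid-move or stabilization invariance, but for basepoint independence, where one must know that the relevant homology is torsion-free (cf.\ \cite[Proposition~5.6]{RW3}). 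Your sketch treats basepoint independence as an easy consequence of cyclicity of the quantum trace and dot migration, which misses why the assumption is there at all.
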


Let us make a~few remarks about these results.
In \cite{RW3}, everything is defined and stated over
$\mathbb{Q}$. There is no  difficulty for extending definition over
$\ZZ$ or any ring $\aring$. The fact that $\aring$ is a field is
needed for proving that the construction is independent from the
basepoint: in the proof of \cite[Proposition 5.6]{RW3}, one needs to know
that the homology of a chain complex has no torsion.

It is important to notice that, contrary to $\homologyglone$,
there is no condition on the invertibility of any integers. This comes from
the fact that proofs of invariance under the~first Markov move (stabilization)
are very different in the two contexts. 
    
The same definition works for links with a basepoint.
However the resulting homology may depend on the component
of the link where the basepoint is placed.
We do not have an~example, though, for which different
choices of components yield different invariants.

\begin{remark}
	The endomorphism $\varphi_\basepoint$ used to define $\complexglzero$ admits an
	alternative description. Instead of adding $\bindex-1$ dots on the
	edge with basepoint, one can add a dot on each edge below the
	basepoint. Indeed, in $\glone(\web)$, the following relation holds
	\[
		\NB{\tikz[]{\begin{scope}[yscale = 1.5]
\begin{scope}
  \draw[->] (0,0)-- +(1,0) node[pos=1, right, font=\tiny] {$1$}
  node [pos =0.5] {$\bullet$} node [pos
  =0.25, font = \tiny, above] {$\bindex -1$};
  \draw[->] (0,-0.3)-- +(1,0) node[pos=1, right, font=\tiny] {$1$};
  \draw[->] (0,-0.6)-- +(1,0) node[pos=1, right, font=\tiny] {$1$};
  \node at (0.5, -0.9) {$\vdots$};
  \draw[->] (0,-1.4)-- +(1,0) node[pos=1, right, font=\tiny] {$1$};
  \draw [decorate,decoration={brace,amplitude=5pt}]
(-0.1, -1.45) -- (-0.1,0.05) node [black,midway,xshift=-0.4cm, font= \small] 
{$\bindex$};  
\end{scope}
\node at (2.9, -0.75) {$=\,\, (-1)^{\bindex-1}$};
\begin{scope}[xshift = 4cm]
  \draw[->] (0,0)-- +(1,0) node[pos=1, right, font=\tiny] {$1$};
  \draw[->] (0,-0.3)-- +(1,0) node[pos=1, right, font=\tiny] {$1$} node [pos =0.5] {$\bullet$};
  \draw[->] (0,-0.6)-- +(1,0) node[pos=1, right, font=\tiny] {$1$} node [pos =0.5] {$\bullet$};
  \node at (0.5, -0.9) {$\vdots$};
  \draw[->] (0,-1.4)-- +(1,0) node[pos=1, right, font=\tiny] {$1$} node [pos =0.5] {$\bullet$};
  \draw [decorate,decoration={brace,amplitude=5pt}]
(1.3,0.05) -- (1.3, -1.45) node [black,midway,xshift=0.4cm, font=\small] {$\bindex$};

\end{scope}
\end{scope}
}}
	\]
	because of the~equality
	$$x_2\cdots x_k = \sum_{i=1}^k (-1)^{i-1} x_1^{i-1} e_{k-i}(x_1,\dots,x_k)$$
	and Proposition~\ref{proposition:morphism-gl1} (3).
	The~signs in this formula has absolutely no consequence on
	the definition of $\complexglzero$ since we are only interested
	in the~image of $\varphi_\basepoint$.
\end{remark}

The~complex $\complexglzero(\bdiagram)$ is defined above as
a~subcomplex of $\qshift^{1-\bindex}\complexglone(\bdiagram)$,
but one can change the~point of view and construct it also
as a~quotient of $\qshift^{\bindex-1}\complexglone(\bdiagram)$,
which leads to a~definition via a~universal construction,
Indeed, given a~pointed annular web $\web$, the~map $\varphi_\basepoint$
is the~multiplication by a~decoration, hence, an~endomorphism of $D(\web)$.
This allows us to define a~new pairing $\bracket{-;\web;-}_0$ on $D(\web)$
as $\bracket{S;\web; T}_0 := \bracket{\varphi_\basepoint(S),\web, T}_1$
for all $S,T\in D(\web)$.
Then we have an isomorphism
\begin{equation}\label{eq:iso-gl0-quotient}
	\glzero(\web;\scalars) \cong  \quotient
		{\qshift^{\bindex -1} D(\web)}
		{\mathrm{rad}  \bracket{-; \web; -}_0}
\end{equation}
Clearly, 
\[
	{\mathrm{rad}\bracket{-; \web; -}_1}  \subseteq \mathrm{rad}  \bracket{-; \web; -}_0
\]
and the~isomorphism \eqref{eq:iso-gl0-quotient} commutes with the~differentials,
so that $\complexglzero(\bdiagram)$ is a~quotient of
$\qshift^{\bindex-1}\complexglone(\bdiagram)$.
In particular, for any pointed annular web $\web$,
the~space $\glzero(\web;\scalars)$ is a~quotient of $B(\web)$.

 \subsection{The~spectral sequence from the~reduced triply graded homology
  to \texorpdfstring{$\LieGL_0$}{gl(0)} homology}

We shall discuss here the~construction of the~spectral sequence
from $\HHH^{\textrm{red}}$ to $\homologyglzero$ sketched in \cite{RW3}.
Hereafter it is assumed that $\scalars$ is a~field of characteristic 0.

Given an~elementary web $\web$ of index $k$ write $K^{\textrm{red}}(\web)$
for the~(reduced) Koszul complex associated with $\web$
that is used to compute the~reduced triply graded homology.
It is defined as in \eqref{eq:Koszul-resolution},
except that the~reduced bimodule $\set{SoergelRed}(\web)$ is taken
and the~tensor product is for $i\geqslant 2$.
Besides the~Koszul differential $d_{H\!H}$
this complex admits an~additional differential $d_0$
that is induced by identity maps parallel to the~components of $d_{H\!H}$.
Hence, the~total complex  can be written as
\begin{equation}\label{eq:Koszul-total}
	K^{d_0}(\web) := \set{SoergelRed}(\web)
		\otimes_{R^e}
	\bigotimes_{i=2}^k (\qshift^2 R^e \xrightarrow{\ x_i\otimes 1 - 1\otimes x_i + 1\otimes 1\ } R^e)
\end{equation}
A~quick check shows that $d_0$ anticommutes with $d_{H\!H}$,
so that \eqref{eq:Koszul-total} is indeed a~chain complex of graded $\scalars$-modules.
Let us write $H^{d_0}(\web)$ for the~homology of \eqref{eq:Koszul-total}.

\begin{proposition}[cp.\ \cite{RW3}]
\label{prop:d0-vs-S0}
	The~functor $H^{d_0}$ from $\cat{AFoam}^\basepoint$ to $\scalars$-modules is isomorphic to $\glzero$.
\end{proposition}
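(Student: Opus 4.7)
The plan is to reduce to basic elementary webs via Proposition~\ref{prop:elem-annular-reduction}, compute both sides explicitly on those, and check naturality.

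My first step would be to reinterpret $K^{d_0}(\web)$ as a twisted Koszul complex. Writing the deformed differential as $x_i\otimes 1 - 1\otimes(x_i - 1)$, I recognize it as the standard Koszul differential acting on $\set{SoergelRed}(\web)$ with the right $R$-action twisted by the shift automorphism $\tau\colon x_i\mapsto x_i - 1$ (for $i\geq 2$, while $x_\basepoint$ has already been set to $0$). Consequently,
\[
	H^{d_0}(\web)\;\cong\;\HH_\bullet\bigl(R,\,{}_{\id}\set{SoergelRed}(\web)_{\tau}\bigr),
\]
and the functoriality of $H^{d_0}$ on $\cat{AFoamEl}$ is inherited from that of the Soergel functor, because the chain maps induced by zips, unzips, associators, and multiplication-by-polynomial foams are all $R$-bilinear and therefore pass through the twist unchanged.

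My second step would be to verify the isomorphism on the basic elementary webs that generate $\cat{AFoamEl}$ by Proposition~\ref{prop:elem-annular-reduction}: concentric collections of circles and chains of dumbbells. For a chain of dumbbells $D_k$, the reduced Soergel bimodule has an explicit presentation as a polynomial ring in the edge variables modulo the Soergel relations \eqref{rel:Soergel} with $x_\basepoint = 0$. Taking twisted coinvariants identifies $x_i$ with $x_i - 1$ along the dumbbell chain; combined with the Soergel relations and the reduction $x_\basepoint=0$, this collapses the bimodule to a one-dimensional space, matching $\dim\glzero(D_k)=1$ from Theorem~\ref{thm:RW3}(\ref{it:rank-gl0}). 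The higher twisted Hochschild groups vanish by a regular-sequence argument, analogous to the proof of Proposition~\ref{prop:qHH-vanishes}. The case of concentric circles is handled by a simpler version of the same argument, exploiting the tensor-product structure of the associated Frobenius extensions.

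My third step would be to construct the natural isomorphism. On basic webs, the dimension count essentially pins down the map once generators are matched. Globally, I would define a natural transformation $H^{d_0}\Rightarrow \glzero$ by sending a cycle $P\in\set{SoergelRed}(\web)$ representing a class in $HH_0$ of the twisted bimodule to $\varphi_\basepoint(P)\in\image(\varphi_\basepoint)=\glzero(\web)$, using the quotient description \eqref{eq:iso-gl0-quotient}; the key point is that the twisted coinvariance relation $x_i m = m(x_i - 1)$ maps into the kernel of the $\gll_1$ pairing after multiplication by $x_\basepoint^{k-1}$. The hardest step, I expect, will be matching the quantum-degree shift $\qshift^{1-k}$ in the definition of $\glzero$ with the cumulative shifts coming from the deformed Koszul complex; this requires careful bookkeeping on each basic web but should be unambiguous once the isomorphism is established there, after which naturality under zip and unzip morphisms propagates it to all of $\cat{AFoamEl}$.
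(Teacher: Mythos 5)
Your overall strategy — reduce to basic elementary webs via Proposition~\ref{prop:elem-annular-reduction}, compute there, and propagate by functoriality — is the same as the paper's, and the observation that $K^{d_0}$ is a Hochschild complex twisted by the (non-graded) shift $x_i\mapsto x_i-1$ is a tidy repackaging of what the paper does by hand. However, there are two points where your sketch is thinner than it can afford to be.

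First, the treatment of disconnected basic webs. Proposition~\ref{prop:elem-annular-reduction} says $\web\oplus X_L\cong X_R$ where $X_L$ and $X_R$ are direct sums of concentric collections of circles and chains of dumbbells, and such collections are typically disconnected. For the argument to run, you need $H^{d_0}$ to \emph{vanish} on every disconnected web (matching the fact that $\glzero$ does). Your remark about ``a simpler version of the same argument, exploiting the tensor-product structure'' does not actually supply this. The paper's mechanism is specific: if $\web=\web'\sqcup\web''$ with $\web''$ of width $k''$, then $e_1(x_{k'+1},\dots,x_k)$ acts symmetrically on $\set{SoergelRed}(\web)$, so after twisting the corresponding Koszul factor becomes multiplication by the nonzero constant $k''$, and the whole complex is acyclic. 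Without isolating that symmetric element, the tensor-factorization alone does not give acyclicity. You should state this explicitly.

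Second, the construction of the isomorphism. You propose $P\mapsto\varphi_\basepoint(P)$ and defer the $\qshift^{1-k}$ bookkeeping. The paper avoids this entirely: once one knows $H^{d_0}(\web)$ is concentrated in Hochschild degree $0$ on all elementary webs, it is automatically a quotient of the annular Soergel space $\set{SoergelRed}(\widehat\web)$, and so is $\glzero(\web)$ (via the quotient description \eqref{eq:iso-gl0-quotient}); the isomorphism is then induced by the identity on $\set{SoergelRed}(\widehat\web)$, with no auxiliary multiplication map and no degree bookkeeping beyond the rank computations you already did on basic webs. Your route through $\varphi_\basepoint$ also risks a type mismatch: $\glone$ is a quotient of the unreduced $\set{Soergel}(\widehat\web)$, whereas a representative of a class in your twisted $\HH_0$ lives in $\set{SoergelRed}(\widehat\web)$, so one has to be careful about what $\varphi_\basepoint(P)$ even means. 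The paper's comparison of two quotients of the same object is both simpler and safer.
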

\begin{proof}
	We first show that $H^{d_0}(\web)$ vanishes when $\web$ is disconnected.
	Indeed, suppose that $\web = \web' \sqcup \web''$
	with $\web'$ and $\web''$ of index $k'$ and $k''$ respectively.
	Then $e_1(x_{k'+1},\dots, x_{k}) \in R$ acts symmetrically on $\set{SoergelRed}(\web)$.
	After changing the~basis of $R$ by trading $x_k$ for the~above polynomial,
	the~factor in \eqref{eq:Koszul-total} for $i=k$ is the~identity map.
	Therefore, the~complex is acyclic.

	Let now $\web = D_k$ be the~chain of $k-1$ dumbbells as shown
	in Figure~\ref{fig:labelled-chain-of-dumbbells}.
\begin{figure}[ht]
		\NB{\tikz[font=\tiny,scale=0.5]{\input{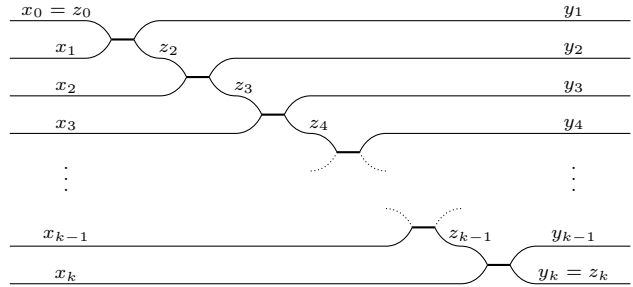}}}
		\caption{A~open chain of dumbbells}
		\label{fig:labelled-chain-of-dumbbells}
	\end{figure}
The~associated reduced Soergel bimodule $\set{SoergelRed}(\web)$ is generated by
	the~variables $x_i, y_i, z_i$ modulo local relations and $y_1 = 0$.
	Notice that the~set $\Lambda$ of these relations is \emph{regular},
	that is none of its elements is a~zero divisor modulo the~others.
	Hence, the~tensor product
	\begin{equation}\label{eq:pres-of-B}
		\bigotimes_{r \in \Lambda} ( D(\web) \xrightarrow{\ r\ } D(\web) ),
	\end{equation}
	where $D(\web)$ is the~algebra of all decorations of $\web$,
	is a~projective resolution of $\set{SoergelRed}(\web)$.
	Enlarging $\Lambda$ by adding relations
	\begin{equation}\label{eq:d0-relations}
		x_i = y_i - 1
	\end{equation}
	for $1<i\leq k$ produces a~complex that computes $H^{d_0}(\web)$.
	We claim that this enlarged set of relations is still regular.
	For that it is enough to notice that \eqref{eq:d0-relations} allows
	to rewrite local linear relations as
	\begin{equation}\label{eq:new-local-linear}
		z_i = y_i + (k-i) \qquad \text{for $1\leq i\leq k$.}
			\end{equation}
The~local quadratic relations can be then rewritten as
	\begin{align*}
		x_{i+1}z_i - y_i z_{i+1}
			&= (y_{i+1} - 1)(y_i + (k-i)) - y_i(y_{i+1} + (k-i-1)) \qquad \text{for $1\leq i <k$}\\
			&= (y_{i+1} - y_i - 1)(k-i)
	\end{align*} 
	and reduced to $y_{i+1} = y_i + 1$, and further to $y_{i+1} = i$.
	Hence, the~complex computing $H^{d_0}(D_k)$ is isomorphic via a~base change
	to the~Koszul complex associated with the~sequence of relations
	\[
		y_i = i-1, \quad
		z_i = (k-1), \quad
		x_{i+1} = i-1,
	\]
	which is clearly a~regular set. Thence, higher homology vanishes
	and $H^{d_0}(\web) = \scalars$ is generated by the~empty decoration.
	
	It follows now from Proposition~\ref{prop:elem-annular-reduction}
	that $H^{d_0}(\web)$ is concentrated in degree 0 for any elementary
	web $\web$. Hence, it is a~quotient of $\set{SoergelRed}(\widehat\web)$,
	the~Soergel space associated with the~annular closure of $\web$.
	The same holds for $\glzero$ and the~desired isomorphism is induced
	by the~identity on $\set{SoergelRed}(\widehat\web)$.
\end{proof}

Proposition~\ref{prop:d0-vs-S0} is the~main ingredient
in the~construction of the~desired spectral sequence.
Instead of computing $H^{d_0}$ at once,
one can first compute the~homology with respect to $d_{H\!H}$
and consider the~spectral sequence induced by $d_0$.
When applied to the~cube of resolutions of a~braid closure $\braid$,
this produces the~desired spectral sequence from
$\HHH^{\text{red}}(\widehat\braid)$ to $\homologyglzero(\widehat\braid)$.
Notice that the~differential on the~second page is induced by $d_0$
and of $(\avar,\qvar,\tvar)$-degree $(-2, 0, 1)$.

\begin{remark}
	A~priori $H^{d_0}(\web)$ does not admit the~quantum grading,
	because it is not preserved by the~total differential.
	However, it can be recovered by Proposition~\ref{prop:d0-vs-S0}.
	Another way to introduce the~quantum grading on $H^{d_0}(\web)$
	is to consider the~spectral sequence on $\HoHom(\set{SoergelRed}(\web))$
	induced by $d_0$ and notice that it collapses immediately \cite{RW3}.
	Hence, $H^{d_0}(\web)$ is the~homology of the~complex
	$(\HoHom(\set{SoergelRed}(\web)), d_0)$,
	in which the~Hochschild and quantum gradings are collapsed to a~single grading.
\end{remark}

\section{Heegaard Floer homology}
\label{sec:hfk}
We shall now review the~(twisted) Heegaard Floer homology
for a~marked singular link $S$ following \cite{OSSSingular, OSCube}.
We write $S_0, \dots, S_r$ for the~components of $S\ric$,
ordered top to bottom with respect to their topmost trace vertices.
In particular $\basepoint \in S_0$.

\subsection{Heegaard diagrams and holomorphic disks}

We begin with describing Heegaard diagrams for a~pointed singular link $S\ric$,
where the~marking is located on the~component $S_0$.
Let $(H_\alpha, H_\beta, \Sigma)$ be a~Heegaard splitting of $\mathbb S^3\ric$,
for which there is a~thickening $V \approx \Sigma\times[0,1]$ of $\Sigma$
satisfying what follows:
\begin{itemize}
	\item $S \cap V$ is a~collection of fibers of $V$ that includes
	all thick edges of $S$ and an~arc with the~marking $\basepoint \in S_0$,
	\item thick edges of $S$ and the~arc carrying the~marking
	are oriented from $H_\alpha$ to $H_\beta$, and
	\item $S \setminus V$ consists of untangled arcs in $H_\alpha$ and $H_\beta$,
	so that it can be isotoped onto $\partial V\ric$.
\end{itemize}
Partition $S\cap \Sigma$ into $\markX \sqcup \markO$,
where $\markX = \{X_0,\dots,X_k\}$ consists of the~intersection points,
at which the~link is oriented from $H_\alpha$ to $H_\beta$,
and $\markO = \{O_0,\dots,O_{k+s}\}$ of the~other ones
($s$ is the~number of thick edges in $S$).
The~intersection points of thick edges with $\Sigma$
form a~subset $\markXX \subset \markX$.
The~elements of $\markX$ (resp.\ $\markO$)
are called \emph{$\markX$-basepoints} (resp.\ \emph{$\markO$-basepoints});
the~points from $\markXX$ are \emph{double basepoints}.
It is understood that $X_0$ coincides with the~marking $\basepoint \in S$
and $O_0$ is located on $S$ just before $X_0$, i.e.\ both basepoints
are connected with an~arc in $H_\alpha$.

The~last condition on the~Heegaard splitting guarantees the~existence
of a~collection of $k+g$ disks in $H_\alpha$ with boundary on $\Sigma$
(here $g$ stands for the~genus of $\Sigma$)
that cut the~handlebody into balls, each with one untangled piece of $S$:
either an~arc or a~$\mathsf{Y}$-shape (a~web with a~single vertex).
The~boundary of this collection $\colalpha = \{\alpha_1, \dots, \alpha_{g+k}\}$
consists of \emph{$\alpha$-curves} that decompose $\Sigma$ into regions $A_0, \dots, A_k$,
each containing a~unique $\markX$-basepoint.
By convention we enumerate the~regions so that $X_i \in A_i$.
We choose a~collection $\colbeta = \{\beta_1, \dots, \beta_{g+k}\}$
of \emph{$\beta$-curves} likewise by decomposing $H_\beta$
and we write $B_0,\dots, B_k$ for the~closures of the~connected
components of $\Sigma \setminus \colbeta$, where $X_i \in B_i$.
It is assumed that the~two families of curves intersect transversely.

The~data $(\Sigma, \colalpha, \colbeta, \markX, \markO)$ determines
the~link $S$ completely.
We call it a~\emph{multipointed Heegaard diagram for $S$}.
It is not determined by $S\ric$, but any two Heegaard diagrams
for $S$ are related by a~finite number of certain moves
\cite[Proposition 3.3]{Multivariable}.

\begin{figure}[ht]
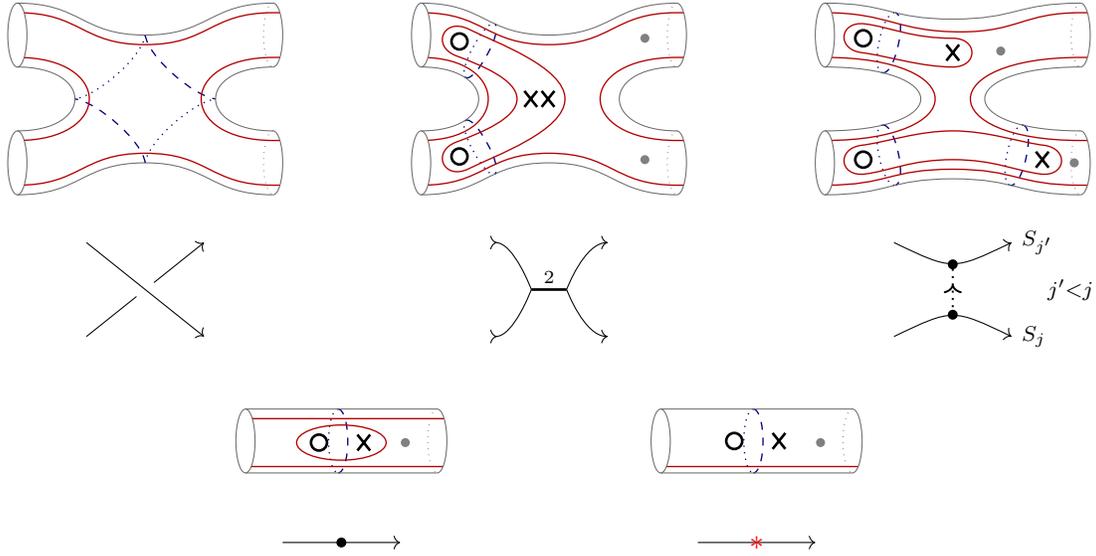

	\def\diagmarkinglevel{1}		\begin{tabular}{cp{1cm}cp{1cm}c}
		\NB{\tikz[scale=0.85]{\input{\imagesfolder/hfgl0_marked-hd-crossing}}} &&
		\NB{\tikz[scale=0.85]{\input{\imagesfolder/hfgl0_marked-hd-singular}}} &&
		\NB{\tikz[scale=0.85]{\input{\imagesfolder/hfgl0_marked-hd-two-components}}}
	\\ && \\[0.5ex]
		\NB{\tikz[scale=1.25]{%
%
\begin{scope}[xscale=1.25]
	\draw[->] (-0.5, -0.5) -- (0.5,0.5);
	\fill[white] (0,0) circle[radius=3pt];
	\draw[->] (-0.5, 0.5) -- (0.5,-0.5);
\end{scope}}} &&
		\NB{\tikz[scale=1.25]{\input{\imagesfolder/hfgl0_singular-resolution}}} &&
		\NB{\tikz[scale=1.25]{\input{\imagesfolder/hfgl0_link-connecting-arc}}}
	\end{tabular}
	\\[5ex]
	\begin{tabular}{cp{2cm}c}
		\NB{\tikz[scale=0.85]{\input{\imagesfolder/hfgl0_marked-hd-bivalent}}} &&
		\NB{\tikz[scale=0.85]{\input{\imagesfolder/hfgl0_marked-hd-marking}}}
	\\ && \\[0.5ex]
		\NB{\tikz[scale=1.25]{\input{\imagesfolder/hfgl0_link-bivalent}}} &&
		\NB{\tikz[scale=1.25]{\input{\imagesfolder/hfgl0_link-marking}}}
	\end{tabular}
	\caption{Local pictures of the~initial Heegaard diagram for a~singular link $S\ric$.
		The~top row shows the~pieces of the~diagram around a~positive crossing
		(flip the~picture for the~negative one), a~singular crossing,
		and at a~chosen line connecting different components of the~link.
		The~bottom row shows configurations around bivalent vertices:
		a~general situation to the~left and the~case of the~distinguished vertex
		(the~marking of $S$) to the~right.
		Dashed blue and solid red arcs represent $\alpha$ and $\beta$ curves respectively.
	}
	\label{fig:initial-heegaard-diagram}
\end{figure}

\begin{example}
	Given a~diagram of a~singular link $S\ric$,
	possibly with thin edges subdivided by bivalent vertices,
	we construct a~multipointed Heegaard diagram,
	called the~\emph{initial Heegaard diagram for $S\ric$}, as follows.
	First, we choose the $\alpha$- and $\beta$-curves
in a way,	
that
	 near crossings and  vertices they look as in the~left and middle pictures
	of Figure~\ref{fig:initial-heegaard-diagram}.
	The~blue $\alpha$-curves 
	are meridians around thin edges of $S$ and curves around
	real crossings. The~red $\beta$-curves are either parallel to the~contours of
	the~surface or they bound disks containing the~$X$ and $O$ basepoints;
	the~latter are called \emph{internal curves}.
	We then perform two modifications. Let $S_0$ be the~component of $S$ that
	carries the marking of $S$.  We enumerate other components $S_1,\ldots,S_r$,
	so that each $S_j$ for $j>0$ can be connected with an arrow to some
	$S_{j'}$ with $j' < j$.
	Attach a~handle to the~surface along each such arrow,
	merge two $\beta$-curves along this handle and add a~new meridional
	$\alpha$-curve around $S_j$
	as shown in the~top right corner of Figure~\ref{fig:initial-heegaard-diagram}.\footnote{
		This configuration differs by a~handle slide from the~one associated
		with a~smooth resolution in \cite{OSCube}.
	}
	This guarantees that the~surface is connected, but it has two more
	$\beta$-curves than $\alpha$-curves.
	We fix it by removing two $\beta$-curves near the~marking:
	the~internal one and one parallel to the~contour of the~surface,
	see the~bottom right corner of Figure~\ref{fig:initial-heegaard-diagram}
	(this is the~piece of the~diagram with basepoints $X_0$ and $O_0$).
	The~reader is encouraged to check that the~resulting diagram satisfies the~conditions
	for a~Heegaard diagram.
	In particular, the~number of $\beta$-curves matches the~number of $\alpha$-curves.
\end{example}

Consider the~symmetric product of the~underlying surface $\Sym^{g+k}(\Sigma)$.
It is a~symplectic manifold with Lagrangian tori
$\torus_\alpha = \alpha_1 \times \dots \times \alpha_{g+k}$
and $\torus_\beta = \beta_1 \times \dots \times \beta_{g+k}$
that are totally real with respect to any compatible almost complex structure.
Let $\CFbasis = \torus_\alpha \cap \torus_\beta$
be the~set of intersection points between the~two tori.
Given $\xgen,\ygen\in\CFbasis$ denote by $\pi_2(\xgen,\ygen)$ the~set of homotopy classes
of \emph{Whitney disks} from $\xgen$ to $\ygen$, i.e.\ continuous maps
of the~standard complex disk into $\Sym^{g+k}(\Sigma)$
that carry $-i$ (resp.\ $i$) to $\xgen$ (resp.\ $\ygen$) and points on the~boundary circle
with positive (resp.\ negative) real part to $\torus_\alpha$ (resp.\ $\torus_\beta$).
This set is never empty when $H_1(Y) = 0$ \cite[Section 2]{OS3mflds}.
Under generic conditions, the~moduli space $\moduli(\phi)$ of pseudo-holomorphic
representatives of $\phi$
is an~orientable smooth manifold of dimension equal to the~\emph{Maslov index} $\mu(\phi)$.
It admits a~free action of the~group of conformal automorphisms of a~unit disk that fix
the~points $i$ and $-i$, which is isomorphic to $\mathbb R$.
We write $\moduliR(\phi)$ for the~quotient of $\moduli(\phi)$ by the~action of this group
and refer to its elements as \emph{unparametrized} disks.
In case $\xgen=\ygen$ we consider also \emph{degenerate} Whitney disks that carry
$i$ to $\xgen$ and the~entire boundary circle either to $\torus_\alpha$ ($\alpha$-degeneracy)
or $\torus_\beta$ ($\beta$-degeneracy), but there is no additional condition on the~image of $-i$.
The~space $\nmoduli(\psi)$ of pseudo-holomorphic representatives of such a~degenerate disk $\psi$
is generically an~orientable smooth manifold of dimension $\mu(\psi)$ that admits
a~free action of a~two-dimensional subgroup $\mathbb G$ of $\mathit{PSL}(2,\mathbb R)$.
In particular, $\mu(\psi)$ is even when $\nmoduli(\psi)\neq\emptyset$.
We write $\nmoduliR(\psi)$ for the~quotient space.

We shall now recall a~combinatorial formula for the~Maslov index \cite{LipshitzCylindrical}.
Given a~Whitney disk $\phi$ and a~point $p\in\Sigma$
we write $n_p(\phi)$ for the~algebraic intersection number
of $\phi$ with the~subvariety $\{p\}\times\Sym^{g+k-1}(\Sigma)$.
Let $\{\Omega_i\}$ be the~set of (closures of) connected components
of $\Sigma \setminus (\colalpha \cup \colbeta)$, and choose $p_i\in\Omega_i$
for each $i$. The~2-chain
\[
	\mathcal D(\phi) := \sum_i n_{p_i}(\phi) [\Omega_i] \in C_2(\Sigma, \alpha\cup\beta)
\]
is called the~\emph{domain associated with $\phi$}.
It determines $\phi$ uniquely when $g+k>2$ and it is \emph{positive},
i.e.\ with no negative coefficients, when $\phi$ has a~holomorphic representative.
The~Maslov index $\mu(\phi)$ can be computed as follows.
Let $(V, \partial V) \subset (\Sigma, \alpha \cup \beta)$ be a~2-chain bounded
by arcs in $\alpha$ and $\beta$ curves and denote by $\mathrm{ac}(V)$ and $\mathrm{obt}(V)$
the~number of acute and obtuse corners of $V$. The~\emph{Euler norm} of $V$
\[
	e(V) = \chi(V) + \tfrac{1}{4}(\mathrm{obt}(V) - \mathrm{ac}(V))
\]
is additive under gluing domains together, so that it can be extended linearly
to relative 2-chains.
The~Maslov index of $\phi \in \pi_2(\xgen, \ygen)$ represented by a~domain
$\mathcal D = \mathcal D(\phi)$ is then given as
\[
	\mu(\phi) = e(\mathcal D) + a_\xgen + a_\ygen,
\]
where $a_\xgen$ (resp. $a_\ygen$) is the~sum of average multiplicities of $\mathcal D$
over the~four regions around each $x_i$ (resp.\ $y_i$). In particular, if $\mathcal D$
has only multiplicities $0$ and $1$, then $a_p = \tfrac 14$ or $\tfrac 34$ when $p$ is
an~acute or an~obtuse corner respectively. Note that both $\mathcal D$ and $\mu$ are
additive with respect to the~juxtaposition of Whitney disks
$\star\colon\pi_2(\xgen,\ygen)\times\pi_2(\ygen,\zgen) \to \pi_2(\xgen,\zgen)$.
\begin{example}

	Suppose that $\mathcal D = \mathcal D(\phi)$ is a~bigon with acute corners and
	no intersection point inside. Then $e(\mathcal D) = 1 - \tfrac{2}{4} = \tfrac{1}{2}$
	and $a_\xgen = a_\ygen = \tfrac{1}{4}$, leading to $\mu(\phi) = 1$.
	This is consistent with the~fact, that $\phi$ has a~unique holomorphic representative
	up to reparametrization by the~Riemann Mapping Theorem.
	More generally, if $\mathcal D$ is a~$2n$-gon with acute corners and intersection points
	$x_i = y_i$ for $i = 1,\ldots,r$ in its interior, then
	$e(\mathcal D) = 1 - \tfrac{n}{2}$ and $a_\xgen = a_\ygen$ = $\tfrac{n}{4} + r$,
	resulting in $\mu(\phi) = 1 + 2r$.
\end{example}

\begin{example}
	Let $\psi \in \pi_2(\xgen, \xgen)$ be a~degenerated disk with its associated domain
	$\mathcal D$ equal to $A_i$ or $B_j$.
	Because $\nmoduli(\psi)$ can be identified with the~group $\mathbb G$ under
	certain conditions on the~almost complex structure \cite[Section 5]{OS3mflds},
	we have $\mu(\psi) = \dim\mathbb G = 2$.
	The~Maslov index of $\psi$ can be also computed combinatorially as follows.
	First notice that the~$\alpha$-curves as well as the~$\beta$-curves decompose $\sfce$
	into pieces of genus 0.
	Hence, if $\mathcal D = A_i$ (resp.\ $B_j$), then it must contain extacly
	$g = g(\mathcal D)$ $\alpha$-curves (resp.\ $\beta$-curves) in its interior
	and each such curve carries a~unique intersection point $x_i$ from $\xgen$;
	this point contributes $1$ towards $a_\xgen = a_\ygen$.
	Likewise, each boundary component of $\mathcal D$ carries an~intersection
	point contributing $\tfrac{1}{2}$. Therefore $\mu(\psi) = \chi(\mathcal D) + 2g + r = 2$
	as desired, where we write $r$ for the~number of boundary components of $\mathcal D$.
\end{example}

A~domain $\pi$ is \emph{periodic} if its boundary is a~linear combination
of $\alpha$- and $\beta$-curves. It represents a~difference of two Whitney
disks connecting the~same intersection points.
Of particular interest are periodic domains that vanish at both $\markX$-
and $\markO$-basepoints. For instance, there is such a~periodic domain
\begin{equation}\label{eq:domain-pi_j}
	\pi_j = \sum_{i : X_i \in S_j} (A_i - B_i)
\end{equation}
associated with any component $S_j$ of the~singular link $S$.
We call them \emph{fundamental periodic domains}.
Clearly $\sum_j \pi_j = 0$ and one can show that
every periodic domain vanishing at both $\markX$- and $\markO$-basepoints
is a~linear combination of $\pi_j$'s.

We say that a~Heegaard
diagram is \emph{admissible} if there are no nontrivial positive periodic
domains that avoid the~set $\markX$.\footnote{
	This condition is equivalent to the~absence of nontrivial
	positive periodic domains of Maslov index 0.
}

\begin{lemma}\label{lem:initial-is-admissible}
	Initial Heegaard diagrams are admissible.
\end{lemma}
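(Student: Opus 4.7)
The plan is to show that no non-trivial positive periodic domain in the initial Heegaard diagram can avoid $\markX$.

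First, I would identify the lattice $\Lambda$ of periodic domains with zero multiplicity at every $\markX$-basepoint. Using the local pictures of Figure~\ref{fig:initial-heegaard-diagram}, a rank computation shows that $\Lambda$ is generated over $\ZZ$ by the fundamental periodic domains $\pi_j = \sum_{i : X_i \in S_j}(A_i - B_i)$ subject to the single relation $\sum_j \pi_j = 0$. The key balancing to verify here is that the number of handles attached to connect distinct components of $S$ equals the number of $\beta$-curves that get merged or removed in order to keep $|\colalpha| = |\colbeta|$, so no extra periodic cycles appear beyond the $\pi_j$'s.

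Second, I would show that no non-trivial combination $\mathcal{D} = \sum_j c_j \pi_j$ is everywhere non-negative. Exploiting $\sum_j \pi_j = 0$, we may normalize so that $\min_j c_j = 0$; if $\mathcal{D} \neq 0$, then some $c_{j_1} > 0$ and some $c_{j_0} = 0$. Because the components of $S$ are tied together into a single surface by the handle tree rooted at $S_0$, one can pick $j_0, j_1$ so that $S_{j_0}$ and $S_{j_1}$ are directly joined by an attached handle (this follows by an elementary argument on the handle tree: the boundary between $\{j : c_j = 0\}$ and its complement is non-empty). The merged internal $\beta$-curve produced by that handle bounds a disk $B_i$ (with $X_i \in S_{j_1}$, say) whose interior extends across the handle into a region $A_{i'}$ with $X_{i'} \in S_{j_0}$. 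A small 2-cell $\Omega \subset B_i \cap A_{i'}$ then has multiplicity $c_{j_0} - c_{j_1} = -c_{j_1} < 0$ in $\mathcal{D}$, contradicting non-negativity. The case of a connected link is immediate since then $\Lambda = 0$.

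The main obstacle is the first step, verifying that $\Lambda$ has no elements beyond the span of the $\pi_j$: the attached handles and the modified configuration near the marking $\basepoint$ (where two $\beta$-curves are removed) must be analyzed carefully to confirm that the rank of the periodic-domain lattice avoiding $\markX$ matches the number of link components minus one. Once this lattice calculation is in place, the second step is a purely local inspection of the overlap between a merged $B_i$-disk and the $A_{i'}$-regions of the component at the other end of the corresponding handle.
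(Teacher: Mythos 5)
Your second step has a genuine gap.  The handle overlap regions go in one direction only: the construction merges two $\beta$-curves so that the $B$-region on the child side ($S_j$, where the new meridinal $\alpha$-curve was added) extends through the tube into an $A$-region of the parent ($S_{j'}$), not the other way around.  The resulting positivity constraint is therefore $c_j \le c_{j'}$ (child $\le$ parent).  Now suppose, after your normalization $\min_j c_j = 0$, that the positive vertex $j_1$ on your chosen tree edge happens to be the parent of the zero vertex $j_0$.  Then the only handle-overlap 2-cell you have access to lies in $B_{i}\cap A_{i'}$ with $X_i\in S_{j_0}$ (child side, coefficient $0$) and $X_{i'}\in S_{j_1}$ (parent side, coefficient $>0$), so the multiplicity there is $c_{j_1}-c_{j_0} > 0$ --- there is no local contradiction.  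In fact the constraints $c_j\le c_{j'}$ alone are consistent with, say, $c_0>0$ and all other $c_j=0$, so the handle-tree edges cannot finish the argument.  The missing ingredient (which the paper does use) is the global outer region $B_0$ on the $\beta$-side: it intersects \emph{every} $A$-region, which forces $c_j \ge c_0$ for all $j$, and together with the tree constraint $c_j \le c_0$ this pins down all coefficients.

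On the first step: the paper avoids your ``lattice rank'' computation entirely.  It writes a periodic domain avoiding $\markX$ directly as $\pi = \sum_i a_i(A_i - B_i)$ and observes that positivity forces $a_i \ge a_j$ whenever $A_i\cap B_j \ne\emptyset$; since multiplicities cannot increase as one walks along the underlying link $S$ and every directed path in $S$ extends to a cycle, the $a_i$ are constant on each component, giving $\pi=\sum_j c_j\pi_j$ without ever needing to identify the full lattice of periodic domains.  This is both shorter and more robust than the rank argument you flagged as your main obstacle, and it only proves the claim under the positivity hypothesis, which is all the admissibility statement requires.
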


\begin{proof}
    The argument is motivated by the~one in \cite[Lemma 2.1]{ManolescuCube}.
    A~periodic domain that avoids $\markX$ is given by a 2-chain
    $\pi = \sum_{i=0}^k a_i (A_i - B_i)$. Assume that $\pi$ is positive,
    in which case $a_i \geqslant a_j$ whenever $A_i \cap B_j \neq \emptyset$.
    In particular, the coefficients $a_i$ cannot decrease when we follow
    any path in $S$. Because $S$ can be expressed as an infinite union of oriented circles (see Figure~\ref{fig:lamination}), this implies that the~coefficients are in fact
    locally constant, i.e.\ $a_i = a_{i'}$ if both $X_i$ and $X_{i'}$ lie on the~same
    component of $S$. This forces $\pi = \sum_j c_j \pi_j$ for some $c_j \in \ZZ$.
    
    Let $S_0, \ldots, S_r$ be the~connected components of $S$,
    where $\basepoint \in S_0$.
    Suppose that $S_j$ and $S_{j'}$ are next to each other as in the~top right
    picture of shown on Figure~\ref{fig:initial-heegaard-diagram}
    with $X_j$ and $X_{j'}$ the~lower and upper $X$ basepoints respectively.
    Recall that in case $j'=0$ the~red curve around $X_{j'}$ is removed.
    The~disk $B_j$, bounded by the~red curve around $X_j$, intersects $A_{j'}$
    that carries $X_{j'}$. Hence, $c_j \leqslant c_{j'}$ by the~positivity of $\pi$.
    In other words, the~values of $c_j$'s do not decrease when we go along
    the trace~section towards the~marking $\basepoint \in S_0$,
    so that $c_j \leqslant c_0$ for each $j$.
    On the~other hand, the~region $B_0$, obtained form the~surface by removing
    all disks bounded by internal $\beta$ curves, intersects all $A$ regions.
    Thus all the~coefficients $c_j$ coincide with $c_0$,
    forcing $\pi = c_0\sum_j \pi_j = 0$.
\end{proof}

\begin{remark}
	The~initial diagram from \cite[Fig. 3]{OSCube} fails to be admissible
	in case of links, because in this diagram periodic domains associated
	with different components of the~link are disjoint.
\end{remark}

The~moduli space of unparametrized curves $\moduliR(\phi)$ admits a~Gromov
compactification that adds as boundary points \emph{nodal disks} or
\emph{broken flow lines},
which are juxtapositions of Whitney disks $\phi_1, \dots, \phi_r$,
degenerated disks $\psi_1,\dots,\psi_s$ and holomorphic spheres
$S_1,\dots, S_t$ with the~property that
\[
	\mathcal D(\phi)
		= \sum_{i=1}^r \mathcal D(\phi_i)
		+ \sum_{j=1}^s \mathcal D(\psi_j)
		+ \sum_{k=1}^t \mathcal D(S_k),
\]
where $r>1$, $s>0$, or $t>0$.
In particular, $\moduliR(\phi)$ is already compact and $0$-dimensional
when $\mu(\phi)=1$ and the same holds for $\nmoduliR(\psi)$ with $\mu(\psi)=2$.
Note that holomorphic spheres do not appear unless $\mathcal D(\phi)$
covers the~entire surface, because $\mathcal D(S) = \Sigma$ for any holomorphic
sphere $S$, see \cite{OS3mflds}. 

Having chosen an~orientation of moduli spaces, we can write
$\#\moduliR(\phi)$ (resp.\ $\#\nmoduliR(\psi)$) for the~signed count
of points, where the~sign of a~point depends on whether
the~action of the~parametrization group
on the~moduli space $\moduli(\phi)$ (resp.\ $\nmoduli(\psi)$)
preserves or reverses the~orientation.
According to \cite{AliEft} one can always choose a~system of orientations
with the~following properties:
\begin{enumerate}
	\item the~orientation of $\moduliR(\phi_1)\times\moduliR(\phi_2)$ is induced
	from $\moduliR(\phi_1\star\phi_2)$ for any Whitney disks
	$\phi_1\in\pi_2(\xgen,\ygen)$ and $\phi_2\in\pi_2(\ygen,\zgen)$,
	\item $\nmoduliR(\psi)$ carries the~orientation induced from
	$\moduliR(\psi)$ in case of $\alpha$-degeneracies and the~opposite
	one in case of $\beta$-degeneracies,
	\item $\#\nmoduliR(\psi) = 1$ when $\psi = A_i$ or $B_j$.
\end{enumerate}
As an~easy application of the~above definition we show the~following fact,
which is an~important tool to analyze the Heegaard Floer differential
recalled in Section~\ref{sec:HFK}.

\begin{lemma}\label{lem:signs-for-AB-pair}
    Suppose that $A_i\cap B_i$ is a~bigon for some $i$ and let
    $\phi_1$ and $\phi_2$ be Whitney disks represented by the~domains
    $A_i \setminus B_i$ and $B_i\setminus A_i$.
    Then $\#\moduliR(\phi_1) + \#\moduliR(\phi_2) = 0$.
\end{lemma}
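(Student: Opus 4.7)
The plan is to relate $\#\moduliR(\phi_1)$ and $\#\moduliR(\phi_2)$ to the counts of the degenerate disks $\psi_A$ and $\psi_B$ (with domains $A_i$ and $B_i$) by identifying all four as ends of a pair of $1$--dimensional moduli spaces; the orientation convention that flips the induced orientation between $\alpha$-- and $\beta$--degeneracies will then force the two contributions to cancel. First I would pin down the combinatorics. Since $A_i\cap B_i$ is a bigon, its corners are two points $x,y$ at the transverse intersection of a single $\alpha$--curve with a single $\beta$--curve, determining intersection points $\xgen,\ygen\in\CFbasis$ differing only in that one coordinate. The bigon supports a unique (up to reparametrization) holomorphic Whitney disk $\tau\in\pi_2(\ygen,\xgen)$ of Maslov index~$1$, so $\#\moduliR(\tau)=\pm 1$ by the Riemann mapping theorem. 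A direct check of boundary orientations of the $2$--chains $A_i\setminus B_i$ and $B_i\setminus A_i$ (induced from the orientation of~$\Sigma$) shows that both $\phi_1$ and $\phi_2$ lie in $\pi_2(\xgen,\ygen)$ and have Maslov index~$1$.

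Next I would consider, for $j=1,2$, the Whitney disk classes $\Phi_j := \tau\star\phi_j\in\pi_2(\ygen,\ygen)$; their domains are $\mathcal D(\Phi_1)=A_i$ and $\mathcal D(\Phi_2)=B_i$, with $\mu(\Phi_j)=2$, so each $\moduliR(\Phi_j)$ is $1$--dimensional. Under the admissibility established in Lemma~\ref{lem:initial-is-admissible} together with the bigon hypothesis, the only positive decompositions of $A_i$ and $B_i$ into Maslov--$1$ subdomains are the evident ones $(A_i\cap B_i)+(A_i\setminus B_i)$ and $(A_i\cap B_i)+(B_i\setminus A_i)$; sphere bubbling is excluded because neither domain covers $\Sigma$. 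Consequently, the Gromov compactification of $\moduliR(\Phi_j)$ has exactly two ends: the broken configuration $\{\tau\}\times\{\phi_j\}$, and a degeneration into the $\alpha$--degenerate disk $\psi_A$ (for $j=1$) or the $\beta$--degenerate disk $\psi_B$ (for $j=2$) based at~$\ygen$, appearing as the boundary loop on the relevant torus contracts to~$\ygen$.

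Since the signed count of ends of a $1$--dimensional moduli space vanishes, I obtain
\[
  \#\moduliR(\tau)\cdot\#\moduliR(\phi_1)+\sigma_A\cdot\#\nmoduliR(\psi_A)=0
  \quad\text{and}\quad
  \#\moduliR(\tau)\cdot\#\moduliR(\phi_2)+\sigma_B\cdot\#\nmoduliR(\psi_B)=0,
\]
where $\sigma_A,\sigma_B\in\{\pm 1\}$ compare the gluing orientation at the degenerate end with the prescribed orientation on $\nmoduliR(\psi_{A/B})$. The orientation convention that flips the induced orientation between $\alpha$-- and $\beta$--degeneracies forces $\sigma_A=-\sigma_B$, while the convention $\#\nmoduliR(\psi)=1$ for $\psi=A_i$ or $B_j$ gives $\#\nmoduliR(\psi_A)=\#\nmoduliR(\psi_B)=1$. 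Summing the two identities and using $\#\moduliR(\tau)\neq 0$ yields $\#\moduliR(\phi_1)+\#\moduliR(\phi_2)=0$.

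The main obstacle I expect lies in justifying the second type of end: one needs a gluing/degeneration analysis showing that the Whitney classes $\Phi_j$ compactify by absorbing into the degenerate disks $\psi_A$ and $\psi_B$ as the boundary loop on $\torus_\alpha$ or $\torus_\beta$ collapses, and, more delicately, that the resulting orientation on this end is precisely the one dictated by the $\alpha$--vs--$\beta$ convention, so that $\sigma_A$ and $\sigma_B$ genuinely have opposite signs. A secondary technical point is ruling out additional broken ends coming from hypothetical further bigons $A_i\cap B_j$ or $A_j\cap B_i$ nested inside $A_i$ or $B_i$; here the admissibility of Lemma~\ref{lem:initial-is-admissible} together with the specific bigon hypothesis must be invoked to guarantee uniqueness of the Maslov--$1$ decomposition.
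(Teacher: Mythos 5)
Your argument is essentially the paper's, just phrased with the two intersection points relabeled ($\xgen \leftrightarrow \ygen$). The paper also sets $\psi_A, \psi_B \in \pi_2(\xgen,\xgen)$ with domains $A_i, B_i$, observes that $\moduliR(\psi_A)$ and $\moduliR(\psi_B)$ are one-dimensional whose signed end counts vanish, identifies each moduli space's two ends as the broken flow line through the bigon $\phi_0$ and $\phi_j$ together with the degenerate disk $\psi_{A/B}$, and then uses the orientation conventions from \cite{AliEft} listed just before the lemma --- in particular that $\#\nmoduliR(\psi) = 1$ for $\psi = A_i$ or $B_j$ and that $\alpha$- and $\beta$-degeneracies carry opposite induced orientations --- to get the two equations $\#\moduliR(\phi_0)\#\moduliR(\phi_1)+1=0$ and $\#\moduliR(\phi_0)\#\moduliR(\phi_2)-1=0$, which sum to the claim. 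Your $\sigma_A=-\sigma_B$ is exactly this sign flip and your $\Phi_j$ are the $\psi$'s. The only small inaccuracy is the invocation of Lemma~\ref{lem:initial-is-admissible} to rule out further broken ends: admissibility controls periodic domains, whereas the uniqueness of the Maslov-one decomposition here is a direct combinatorial consequence of the bigon hypothesis, which the paper states as ``$\phi_0$, $\phi_1$, and $\phi_2$ are the only Whitney disks ... associated with domains contained in $A_i\cup B_i$.''
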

\begin{proof}
	Choose $\xgen$ and $\ygen$, such that $\phi_1,\phi_2 \in \pi_2(\ygen, \xgen)$.
	Let $\phi_0 \in \pi_2(\xgen, \ygen)$ and $\psi_A,\psi_B \in \pi_2(\xgen, \xgen)$ be Whitney
	disks represented by the~domains $A_i \cap B_i$, $A_i$, and $B_i$ respectively.
	Notice that $\phi_0$, $\phi_1$, and $\phi_2$ are the only Whitney disks between $\xgen$
	and $\ygen$ that have Maslov index 1 and are associated with domains contained in $A_i \cup B_i$.
	Because $\moduliR(\psi_A)$ and $\moduliR(\psi_B)$ are 1-dimensional,
	counting endpoints of each with signs gives zero. Hence,
	\begin{align*}
		0 &= \#\partial\moduliR(\psi_A)
		   = \#\left(\moduliR(\phi_0) \times \moduliR(\phi_1) \right) + \#\nmoduliR(\psi_A)
		   = \#\moduliR(\phi_0) \cdot \#\moduliR(\phi_1) + 1, \\
		0 &= \#\partial\moduliR(\psi_B)
		   = \#\left(\moduliR(\phi_0) \times \moduliR(\phi_2) \right) - \#\nmoduliR(\psi_B)
		   = \#\moduliR(\phi_0) \cdot \#\moduliR(\phi_2) - 1.
	\end{align*}
	Summing up the~two equations proves the~thesis.
\end{proof}

Computing the~numbers $\#\moduliR(\phi)$ in general is very challenging,
but in some cases the answer is known.
For instance, $\#\moduliR(\phi) = \pm1$ if $\mathcal D(\phi)$
is a~bigon. The~following is a~generalization of that.

\begin{lemma}[{\cite[Lemma 3.11]{OSCube}}]\label{lem:index-for-disk-missing-disks}
	Suppose that the~2-chain $\mathcal D(\phi)$ associated with a~Whitney
	disk $\phi \in \pi_2(\xgen, \ygen)$ is a~planar region with the~following
	properties:
	\begin{itemize}
		\item all its boundary components are $\alpha$-curves (resp.\ $\beta$-curves)
		except one that is a~$2n$-gon with acute corners and edges alternating between
		arcs in $\alpha$- and $\beta$-curves (thus, the~corners alternate between components
		of $\xgen$ and $\ygen$),
		\item there is a~collection of arcs in $\beta$-curves
		(resp.\ $\alpha$-curves) between boundary components of $\mathcal D(\phi)$
		that cut the~domain into a~disk, and
		\item no component $x_i$ or $y_i$ is in the~interior of $\mathcal D(\phi)$.
	\end{itemize}		
	Then $\mu(\phi)=1$ and $\#\moduliR(\phi) = \pm 1$.
\end{lemma}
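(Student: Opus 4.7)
I plan to verify the two assertions separately: the Maslov index via Lipshitz's combinatorial formula, and the count of pseudo-holomorphic representatives by cutting $\mathcal D(\phi)$ open into a standard disk. Throughout I would treat the case where the extra boundary components are closed $\alpha$-curves; the other case is symmetric.

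For the index, I would apply the formula $\mu(\phi) = e(\mathcal D) + a_\xgen + a_\ygen$ recalled just before the statement. Suppose $\mathcal D$ has $r$ closed $\alpha$-boundary components in addition to the $2n$-gon. Planarity gives $\chi(\mathcal D) = 1 - r$; the $2n$ acute exterior corners contribute $-n/2$ to the Euler norm and $n/4 + n/4 = n/2$ to $a_\xgen + a_\ygen$, so the corner terms cancel and one is left to compensate for the $-r$ from $\chi(\mathcal D)$. The core sublemma will be that each component of $\xgen$ (resp.\ $\ygen$) sitting on a closed $\alpha$-boundary contributes exactly $1/2$ to $a_\xgen$ (resp.\ $a_\ygen$), yielding a total of $r$ and hence $\mu(\phi) = 1$. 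This local computation depends on the second hypothesis, as the cutting $\beta$-arcs pin down the sector arrangement in a neighborhood of each interior $\alpha$-boundary.

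For the count, I would cut $\mathcal D$ along the $\beta$-arcs supplied by the second hypothesis to obtain a topological disk $\widetilde{\mathcal D}$ whose boundary alternates between $\alpha$- and $\beta$-arcs with the same $2n$ acute corners coming from $\xgen \cup \ygen$. The Riemann Mapping Theorem then furnishes a unique identification of $\widetilde{\mathcal D}$ with the standard unit disk sending $-i$ and $i$ to designated corners, up to the $\mathbb R$-action of reparametrizations fixing $\pm i$; postcomposing with the quotient that reglues the cut $\beta$-arcs produces a holomorphic representative of $\phi$. Conversely, any holomorphic representative must lift to $\widetilde{\mathcal D}$, since the third hypothesis forbids the additional interior $\xgen$- or $\ygen$-points that branching along a cut $\beta$-arc would require. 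Thus $\moduliR(\phi)$ consists of a single point, and the coherent orientation system of~\cite{AliEft} attaches it a sign, giving $\#\moduliR(\phi) = \pm 1$.

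The main obstacle will be the bookkeeping at the interior $\alpha$-boundaries in the index computation: the deficit $-r$ in $\chi(\mathcal D)$ must be absorbed by precisely the right local contributions from $\xgen$ and $\ygen$, and this in turn requires the second hypothesis to constrain the local $\beta$-arrangement around those points. Once that is in hand, the holomorphic side reduces to a standard application of the Riemann Mapping Theorem, with the sign decoration determined entirely by the fixed choice of coherent orientations.
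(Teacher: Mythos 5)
The paper offers no proof of this lemma; it cites \cite[Lemma 3.11]{OSCube} and moves on, so there is no in-paper argument to compare against, and your sketch should be judged as a reconstruction. The two ingredients you use — the Lipshitz formula $\mu(\phi) = e(\mathcal D) + a_\xgen + a_\ygen$ for the index and a cut-to-a-disk/Riemann-mapping argument for the count — are exactly the expected ones, and the index arithmetic is correct: $\chi(\mathcal D) = 1-r$, the $2n$ acute corners contribute $-n/2$ to $e(\mathcal D)$ and $n/2$ to $a_\xgen + a_\ygen$, and each closed $\alpha$-boundary carries one component of $\xgen$ and one of $\ygen$ at smooth boundary points, each worth $1/2$, supplying the remaining $r$ and giving $\mu(\phi) = 1$. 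However, you tacitly assume that these are the \emph{only} boundary contributions. The hypothesis rules out interior components but not, for instance, a component of $\xgen$ or $\ygen$ lying at a smooth point of a $\beta$-arc of the $2n$-gon, which would contribute an extra $1/2$ and spoil the count; this configuration needs to be excluded, either by an additional observation about the Heegaard diagram or as a standing assumption folded into how the lemma is applied. Your appeal to the second hypothesis for this step is not really where the work is: the $1/2$ contribution at a smooth $\alpha$-boundary point is a purely local computation and does not use the cutting arcs.

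The count claim is where the genuine content lies, and your treatment is necessarily informal. A biholomorphism $\widetilde{\mathcal D} \to \mathbb{D}$ does not by itself yield a Whitney disk into $\Sym^{g+k}(\Sigma)$; one has to repackage the cut-and-reglue picture as a holomorphic map via the cylindrical reformulation (or an equivalent combinatorial scheme), and one also has to show that \emph{every} index-one pseudo-holomorphic representative arises this way — in particular that there is no branching, no degenerate component hiding on a closed boundary circle, and no higher-multiplicity phenomena. Your converse argument (that branching would force an interior $\xgen$- or $\ygen$-point) gestures at the right idea but is not a proof. You flag this obstacle yourself, so I would call the proposal a correct outline with the hard analytic/combinatorial step deferred rather than carried out.
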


 \subsection{The~Heegaard Floer complex}
\label{sec:HFK}

We are ready to state the definition of the~twisted Heegaard Floer homology from \cite{OSCube}.
As usual $\scalars$ is a~commutative ring and we fix an~invertible $t\in\scalars$.

\begin{definition}
	Let $(\Sigma, \colalpha, \colbeta, \markX, \markO)$ be an~admissible Heegaard diagram
	associated with a~marked singular link $S\subset Y\ric$.
	Given a~finite set of markings
	$P\subset\Sigma - (\colalpha \cup \colbeta \cup \markX \cup \markO)$
	we define the~\emph{twisted Heegaard Floer complex} $\tCFK-(S,P)$
	as the~free module over the~polynomial algebra $R=\scalars[U_0,\dots,U_{k+s}]$
	with a~basis consisting of the~intersection points
	$\xgen \in \CFbasis$ and the~differential given by
	\begin{equation}\label{eq:CFdiff}
		\partial\xgen =
			\sum_{\ygen \in \CFbasis}
			\sum_{\substack{
				\phi\in\pi_2(\xgen, \ygen)\\
				\mu(\phi) = 1\\
				\forall i\colon X_i(\phi) = 0 
			}}
			\# \moduliR(\phi)\,t^{P(\phi)}\,U_0^{O_0(\phi)} \cdots U_{k+s}^{O_{k+s}(\phi)}\ \ygen,
	\end{equation}
	where $P(\phi)$ counts multiplicities of $\mathcal D(\phi)$ at elements of $P\ric$,
	$X_i(\phi)$ is the~multiplicity of $\phi$ at $X_i$ and likewise for $O_i(\phi)$.
	The~complex $\tCFK(S,P)$ is defined as the~quotient $\tCFK-(S,P)/(U_0=0)$
	by the~variable $U_0$ associated with the~basepoint $O_0$ that is located
	on the~arc of $S$ that terminates at the~marking $\basepoint\in S$.
	The~homology of the~complexes are denoted respectively by $\tHFK-(S,P)$ and $\tHFK(S,P)$.
\end{definition}

\begin{remark}
	The~admissibility implies that any two points $\xgen, \ygen \in \CFbasis$
	are connected by only finitely many positive domains that avoid the~set $\markX$,
	making the~right hand side of \eqref{eq:CFdiff} a~finite sum.
\end{remark}

\begin{remark}
	The~above definition recovers the~usual Heegaard Floer
	complexes $\CFK-(S)$ and $\CFK(S)$ when $t=1$ or $P$ is empty.
\end{remark}

\begin{example}
	The~\emph{Ozsv\'ath--Szab\'o} twisting is given by the~set of markings
	$\OSTwist$ visualized in Figure~\ref{fig:initial-heegaard-diagram}
	with gray dots. When $S$ is in a~braid position, then we can pick
	a~subset $\trTwist \subset \OSTwist$ that consists only
	of the~dots near trace vertices. These two sets lead to essentially
	the~same twisted complex, see Corollary~\ref{cor:OS-vs-tr-twisting}.
\end{example}

The~Heegaard Floer complex is bigraded, with the~\emph{Alexander grading} $A(\xgen)\in\ZZ$
and the~\emph{Maslov grading} $M(\xgen)\in\ZZ$.
In this paper, however, we consider slightly different gradings:
the~\emph{quantum grading} $\qdeg(\xgen) := -2 A(\xgen)$ and
the~\emph{homological grading} $\hdeg(\xgen) := 2A(\xgen) - M(\xgen)$
that satisfy
\begin{align}
	\label{eq:qdeg-for-disk}
	\qdeg(\ygen) - \qdeg(\xgen) &= 2 \markX(\phi) + 2 \markXX(\phi) - 2 \markO(\phi)
\\
	\label{eq:hdeg-for-disk}
	\hdeg(\ygen) - \hdeg(\xgen) &= \mu(\phi) - 2 \markX(\phi)
\end{align}
for any $\phi \in \pi_2(\xgen, \ygen)$, where given a~finite set $Q\subset\Sigma$
we write $Q(\phi)$ for the~sum of multiplicities of $\phi$ at points from $Q$.
Note that points from $\markXX$ are counted four times in \eqref{eq:qdeg-for-disk},
because $\markXX \subset \markX$.

\begin{remark}
	At the~first sight it might seem that the~formulas
	\eqref{eq:qdeg-for-disk} and \eqref{eq:hdeg-for-disk}
	do not match those from \cite[after Definition 2.1, p.~871]{OSCube}.
	The~reason is that double $\markX$-basepoints are considered
	in \cite{OSCube} as pairs of basepoints,
	so that $\markX(\phi)$ from \cite{OSCube}
	matches our $\markX(\phi) + \markXX(\phi)$.
\end{remark}

The~homological grading is normalized using the~quotient complex
$\widehat{\mathit{CF}} = \CFK-(S)/(U_i=1)$,
in which all variables $U_i$ are specialized to 1.
It is known that this~complex computes the~cohomology of the~$k$-torus
and we require that the~top degree generator of the~homology is in degree 0.
The~quantum grading is then normalized to match the~formula
\[
	\Delta(S) = (\qvar-\qvar^{-1})^\sigma \sum_{d,s}
		(-1)^d \qvar^s \mathrm{rk} \tHFK_d(S, P; s),
\]
where $\sigma$ is the~number of singular crossings in $S$ and
$\tHFK_d(S, P; s)$ is the~degree $s$ component of the~$d$-th homology group.

We finish this section with a~short discussion
on how the~complex depends on the~twisting set.
It is known that the homology of the twisted complex over the ring of Laurent polynomials is isomorphic to that of the untwisted complex over the base field tensored with the ring of Laurent polynomials \cite[Lemma 2.2]{OSCube}. The~proof can actually be extended to singular
knots.
In case of a~singular link we can only identify complexes
when the~twisting sets are ``proportional'' on the~fundamental periodic domains
$\pi_1, \dots, \pi_r$ defined in \eqref{eq:domain-pi_j},
which are associated with the~components of the~link
that do not carry the~marking $\basepoint$.

\begin{proposition}\label{prop:untwisting}
	Let $S$ be a~singular link and suppose that $P$ and $P'$ are two
	sets of markings on a~Heegaard diagram for $S$,
	such that $P(\pi_i) = \lambda P'(\pi_i)$ for some fixed $\lambda\in\ZZ$
	and every fundamental periodic domain $\pi_i$.
	Then there is a~$\scalars$-linear isomorphism of complexes
	\[
		\Phi\colon \tCFK-(S, P, t) \xrightarrow{\ \cong\ } \tCFK-(S, P', t^\lambda),
	\]
	where the~left (resp.\ right) complex is twisted by $t$ (resp.\ $t^\lambda$).
	In particular, 
	\[
		\Phi\colon \tCFK-(S, P) \xrightarrow{\ \cong\ } \CFK-(S) \otimes_{\ZZ} \scalars
	\]
	for any singular knot $S$ and a~twisting set $P$.
\end{proposition}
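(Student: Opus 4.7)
The plan is to define $\Phi$ diagonally on the distinguished basis by $\Phi(\xgen) := t^{f(\xgen)}\,\xgen$, extended $R$-linearly, for a suitable function $f\colon\CFbasis\to\ZZ$. Since the signed counts $\#\moduliR(\phi)$ and the $U_j$-monomials in \eqref{eq:CFdiff} are identical in the two complexes, commuting with the differentials reduces to requiring
\[
    f(\ygen)-f(\xgen) \;=\; \lambda\,P'(\phi)-P(\phi)
\]
for every Whitney disk $\phi\in\pi_2(\xgen,\ygen)$ with $\mu(\phi)=1$ and $X_i(\phi)=0$ for all $i$.

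The first step is to show that $P-\lambda P'$ vanishes on every periodic domain that avoids $\markX$. By the~discussion preceding Lemma~\ref{lem:initial-is-admissible}, every such periodic domain is a $\ZZ$-linear combination of the fundamental periodic domains $\pi_j$, so the~hypothesis $P(\pi_j)=\lambda P'(\pi_j)$ together with linearity of $P$ and $P'$ in the domain gives the~claim.

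Next, I would construct $f$ as follows. Consider the~graph on $\CFbasis$ whose edges are Whitney disks avoiding $\markX$, and choose a base vertex $\xgen_0$ in each of its connected components. For a~generator $\xgen$ in the~component of $\xgen_0$, pick any connecting disk $\phi_\xgen\in\pi_2(\xgen_0,\xgen)$ avoiding $\markX$ and set $f(\xgen):=\lambda P'(\phi_\xgen)-P(\phi_\xgen)$, with $f(\xgen_0)=0$. Independence of the~choice of $\phi_\xgen$ is immediate from the~previous step, since two such choices differ by a~periodic domain avoiding $\markX$. Applying the~same vanishing to $\phi_\xgen\star\phi\star\phi_\ygen^{-1}$ for any connecting disk $\phi$ verifies the~required compatibility relation above; since each $t^{f(\xgen)}$ is a unit, $\Phi$ is an $R$-linear isomorphism of chain complexes.

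Finally, the~``in particular'' statement is immediate: for a singular knot $S$ the unique component carries every $X_i$, so the only fundamental periodic domain is
\[
    \pi_0 \;=\; \sum_i (A_i-B_i) \;=\; \Sigma-\Sigma \;=\; 0,
\]
and the~hypothesis holds vacuously. Choosing $P'=\emptyset$ with any $\lambda$ produces the~untwisted complex $\CFK-(S)\otimes\scalars$ on the right and yields the~desired isomorphism. The only potentially delicate point in the~whole argument is the~linear-span statement for periodic domains avoiding $\markX$; since it is already recorded in the~paper, the~proof reduces to the~bookkeeping just sketched.
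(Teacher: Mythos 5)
Your map $\Phi$ is declared to be $R$-linear, with the twisting exponent $f(\xgen)$ depending only on the intersection point $\xgen$. That is a strictly stronger property than what the paper constructs, and it cannot in general be achieved: the correct exponent must depend on the whole generator $U_0^{r_0}\cdots U_{k+s}^{r_{k+s}}\xgen\in\mathcal B$, not just on $\xgen$. This is exactly why the paper introduces $W_0$-equivalence on $\mathcal B$ and defines $\Phi(\mathbf a) = t^{P(\phi)-\lambda P'(\phi)}\mathbf a$, where $\phi$ is a disk whose $O$-multiplicities are pinned down by \eqref{eq:disk-equivalence}; the resulting map is only $\scalars$-linear.

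The concrete failure in your argument is the reduction step: you assert that every periodic domain avoiding $\markX$ is a $\ZZ$-linear combination of the fundamental periodic domains $\pi_j$. The paper asserts this only for periodic domains that vanish at \emph{both} the $\markX$- and the $\markO$-basepoints. A periodic domain avoiding $\markX$ has the form $\sum_i a_i(A_i-B_i)$ with arbitrary integer weights $a_i$, and it generally hits $\markO$-basepoints; it lies in the span of the $\pi_j$'s only when the $a_i$ are constant on each link component. Hence two disks $\phi_\xgen,\phi'_\xgen\in\pi_2(\xgen_0,\xgen)$ avoiding $\markX$ differ by a periodic domain $\pi$ on which $P(\pi)-\lambda P'(\pi)$ need \emph{not} vanish, and your $f$ is not well-defined. (For a knot this is especially visible: there are no nontrivial $\pi_j$'s, yet there are many periodic domains avoiding $\markX$.) The paper sidesteps this: once the $U$-exponents of $\mathbf a$ and of the chosen $W_0$-representative are fixed, the connecting disk $\phi$ has prescribed $O$-multiplicities, so the difference of two admissible choices vanishes at $\markO$ as well, hence lies in $\langle\pi_1,\dots,\pi_r\rangle$ and the hypothesis applies. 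To repair your argument, replace $\CFbasis$ by $\mathcal B$ and base points by a set of $W_0$-representatives; this is precisely the paper's proof.
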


As a~direct consequence, there is no essential difference between Ozsv\'ath--Szab\'o
twisting and its restriction to trace vertices in case of layered
diagrams as defined in Section~\ref{sec:cube-of-resolutions}.

\begin{corollary}\label{cor:OS-vs-tr-twisting}
	Let $S$ be a~layered diagram of a singular link with vertices at $n$ levels.
	Then there is an~isomorphism of twisted complexes
	\[
		\tCFK-(S, \OSTwist, t)
			\cong
		\tCFK-(S, \trTwist, t^n).
	\]
\end{corollary}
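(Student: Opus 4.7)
The plan is to deduce the statement directly from Proposition~\ref{prop:untwisting}, with $P = \OSTwist$, $P' = \trTwist$ and $\lambda = n$. For this it suffices to verify the identity
\[
	\OSTwist(\pi_j) \;=\; n \cdot \trTwist(\pi_j)
\]
for every fundamental periodic domain $\pi_j = \sum_{i : X_i\in S_j} (A_i - B_i)$ associated with a~component $S_j$ of the~link $S$.

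To do so I would exploit the layered structure. By definition a~layered diagram in braid position is cut by the~$n$ vertical lines $\ell_0,\dots,\ell_{n-1}, \ell_n=\mu$ into $n$ congruent vertical slabs, each of which contains only identity strands together with the~singularities and bivalent vertices supported on one level line. Correspondingly the~initial Heegaard diagram decomposes as a~juxtaposition of $n$ local pieces, one per level, each modelled on the~local pictures of Figure~\ref{fig:initial-heegaard-diagram}. In the~$\OSTwist$ twisting convention a~gray dot is inserted inside each such local piece (near every bivalent vertex and every singular or real crossing), while in $\trTwist$ only the~gray dots lying in the~slab containing the~trace section $\mu$ are retained.

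The key step is to compute, for fixed $j$, the~multiplicity of $\pi_j$ at the~OS--markings carried by one level. Restricted to a~single vertical slab the~periodic domain $\pi_j$ is a~union of rectangular strips running along the~strands of $S_j$ in that slab, and its multiplicity at every gray dot of Figure~\ref{fig:initial-heegaard-diagram} can be read off from the~local picture as a~single combinatorial quantity $m_j$ that depends only on $S_j$ and not on the~particular level. Indeed, every level slab looks the~same up to relabeling of strands, and a~fundamental periodic domain associated with a~component only depends on which strands belong to that component, so the~local contribution is always $m_j$. Summing over the~$n$ slabs gives $\OSTwist(\pi_j) = n\, m_j$; on the~other hand, keeping only the~gray dots of the~slab of $\mu$ gives $\trTwist(\pi_j) = m_j$, which establishes the~required proportionality with $\lambda = n$.

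Once this proportionality is proven, Proposition~\ref{prop:untwisting} produces a~linear chain isomorphism
\[
	\Phi\colon \tCFK-(S, \OSTwist, t) \xrightarrow{\ \cong\ } \tCFK-(S, \trTwist, t^n),
\]
exactly as claimed. The~main obstacle in this argument is the~combinatorial verification that the~OS marking contribution $m_j$ to~$\pi_j$ is indeed independent of the~slab; this is the~step that genuinely uses layeredness, since in a~general braid diagram some levels may carry more or fewer bivalent vertices than others and the~per-slab contribution need not be constant. Everything else is bookkeeping of the~trivial observation that inserting an~identity slab of strands replicates the~same local multiplicities.
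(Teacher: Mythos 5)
Your proposal is correct and follows essentially the same route as the paper: reduce to Proposition~\ref{prop:untwisting} with $\lambda = n$ and verify $\OSTwist(\pi_j) = n\,\trTwist(\pi_j)$ on each fundamental periodic domain. The paper handles the verification in one line by identifying the per-level contribution concretely: $\OSTwist(\pi_j) = b_j n$ and $\trTwist(\pi_j) = b_j$, where $b_j$ is the width of the component $S_j$, i.e.\ the number of braid strands belonging to $S_j$. Since $b_j$ is a global count, the level-independence of the contribution is manifest and requires no appeal to how the slabs look. Your version leaves that contribution as an unnamed $m_j$, and the heuristic you offer in its support, that ``every level slab looks the same up to relabeling of strands,'' is not literally true of a layered diagram: different level lines can carry a singular crossing, a real crossing with its two bivalent vertices, or a plain insertion, and these local pieces are not isomorphic. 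What actually rescues the step is the observation you make immediately after, that $\pi_j$ depends only on which strands belong to $S_j$; spelling out that this gives $m_j = b_j$ removes the imprecision and recovers the paper's one-line computation.
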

\begin{proof}
	We have $\OSTwist(\pi_i) = b_in$ and $\trTwist(\pi_i) = b_i$,
	where $b_i$ is the~index of the~component $S_i$.
\end{proof}

In order to proof Proposition~\ref{prop:untwisting} we need a~suitable
decomposition of the~twisted complex.

\begin{definition}
	Generators $\mathbf a = (U_0^{r^{\vphantom\prime}_0}\cdots U_{k+s}^{r_{k+s}})\xgen$
	and $\mathbf b =  (U_0^{r'_0}\cdots U_{k+s}^{r'_{k+s}})\ygen$
	are \emph{$W_0$-equivalent}, written $\mathbf a \sim \mathbf b$,
	if there is a~Whitney disk
	$\phi\in\pi_2(\xgen, \ygen)$ such that
	\begin{equation}\label{eq:disk-equivalence}
		O_i(\phi) = r'_i - r_i
	\quad\text{and}\quad
		X_j(\phi) = 0
	\end{equation}
	for each $X_j\in\markX$ and $O_i\in\markO$.
\end{definition}

The~$W_0$-equivalence is an~equivalence relation on the~set
\[
	\mathcal B = \left\{
		(U_0^{r_0^{\vphantom\prime}}\cdots U_{k+s}^{r_{k+s}}) \xgen
	\ |\ 
		\xgen \in \CFbasis, r_i\geqslant 0
	\right\}
\]
of $\scalars$-linear generators of the~Heegaard Floer complex
and it is compatible with the~differential. Hence, it induces
a~decomposition
\[
	\tCFK-(S,P) = \bigoplus_{r \in \mathcal B/_{\!\sim}} \tCFK-(S,P; r)
\]
parametrized by the~set of equivalence classes.
When $S$ is a~knot, then for any two generators of the~same Alexander degree
there is a~disk connecting them,\footnote{
	Indeed, such a~disk can be constructed by correcting any Whitney
	disk $\phi\in\pi_2(\xgen, \ygen)$ as follows.
	First, impose multiplicity zero at each $X_i$ by adding to $\mathcal D(\phi)$
	domains $A_i$ sufficiently many times.
	The~total multiplicity at $\markO$-basepoints is now equal
	to $\qdeg(\ygen) - \qdeg(\xgen) = \sum_i (r'_i - r_i)$,
	but local multiplicities at each $O_i$ may not match \eqref{eq:disk-equivalence}.
	This is fixed by travelling along the~knot and adding differences $A_i-B_i$
	to correct the~mutliplicity at each $O_i$, which affects the~multiplicity
	only at the~next basepoint.
	Once the~last basepoint is reached, the~local multiplicity is already as expected.
}
so that $\mathcal B/_{\!\sim} \approx \ZZ$.

\begin{proof}[Proof of Proposition~\ref{prop:untwisting}]
	Pick a~set $\mathcal R \subset \mathcal B$ of representants of $W_0$-equivalence classes.
	Given a~generator $\mathbf a = (U_0^{r_0^{\vphantom\prime}}\cdots U_{k+s}^{r_{k+s}}) \xgen$
	that is $W_0$-equivalent to a~chosen representant
	$(U_0^{r'_0} \cdots U_{k+s}^{r'_{k+s}}) \ygen \in \mathcal R$ pick a~Whitney disk
	$\phi\in\pi_2(\xgen,\ygen)$ satisfying \eqref{eq:disk-equivalence}.
	A~difference between any two such disks is a~linear combination
	of the~periodic domains $\pi_1, \dots, \pi_r$, so that the~value
	$P(\phi) - \lambda P'(\phi)$ is independent of the~choice of $\phi$.
	Hence,
	\[
		\Phi(\mathbf a) := t^{P(\phi) - \lambda P'(\phi)}\mathbf a
	\]
	is a~well-defined isomorphism of complexes.
	The case of a~singular knot follows, because there are no periodic
	domains $\pi_i$. In particular, the~disk $\phi$ is unique.
\end{proof}

The~assumption on connectivity of the~diagram is important for untwisting:
the~two complexes have drastically different homology when $S$ is a~split link.
For instance, the~twisted homology may vanish for fully singular links
with more than one component, see Proposition~\ref{prop:hfk-for-disconnected}
or the~proof of \cite[Proposition 3.4]{OSCube}.

 \subsection{Skein exact triangles}
\label{sec:skein-triangle}

Let us now recall the~skein exact triangle for Heegaard Floer homology.
In this section we use \emph{planar} Heegaard diagrams that look
locally as depicted in Figure~\ref{fig:planar-Heegaard-diagram}.
For simplicity, we start with the~untwisted complex and discuss
the~differences afterwards.

\begin{figure}[ht]
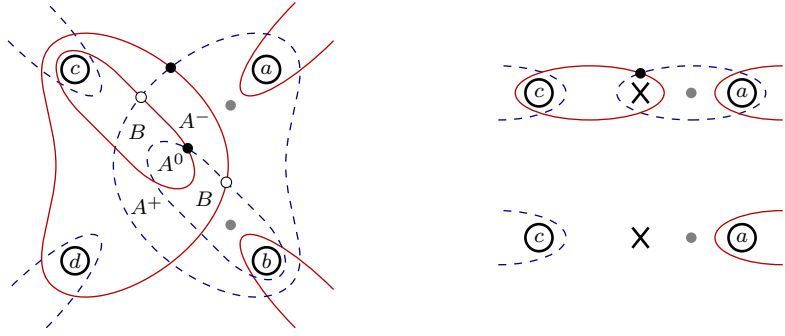

	\NB{\tikz[]{\input{\imagesfolder/hfgl0_planar-generic}}}\hskip 2cm
	\begin{tabular}{c}
		\NB{\tikz[]{\input{\imagesfolder/hfgl0_planar-trace}}} \\[8ex]
		\NB{\tikz[]{\input{\imagesfolder/hfgl0_planar-marking}}}
	\end{tabular}
	\caption{Local pictures of the~planar Heegaard diagram for a~resolution of a~singular link.
		Near a~smoothed resolution the~left diagram is used with one $\markX$-mark at
		each region decorated with $B$, whereas for a~positive (resp.\ negative) crossing
		we place $\markX$-marks at $A^0$ and $A^+$ (resp.\ $A^-$);
		the~local diagram for a~singular crossing is obtained by placing a~double
		$\markX$-mark at $A^0$ and removing the~pair of ellipses around it. For the convenience of the reader, these diagrams are repeated explicitly on Figure~\ref{fig:planar-Heegaard-diagram-2}. 
		The~top right configuration is used near a~bivalent vertex other
		than the~basepoint $\basepoint$, in which case
		the~bottom variant is used instead. The~marked intersection points
		are components of the~canonical generator $\xgen_0$: the~white points
		at smooth resolutions and the~black points elsewhere.
		Gray dots represent the~twisting markings due to Ozsv\'ath and Szab\'o.
	}
	\label{fig:planar-Heegaard-diagram}
\end{figure}

\begin{figure}[ht]
  	\NB{\tikz[]{\input{\imagesfolder/hfgl0_planar-generic-pos}}}\hskip 3cm
  	\NB{\tikz[]{\input{\imagesfolder/hfgl0_planar-generic-neg}}} \\ [0.5cm] \NB{\tikz[]{\input{\imagesfolder/hfgl0_planar-generic-sing}}} \hskip 3cm \NB{\tikz[]{\input{\imagesfolder/hfgl0_planar-generic-smooth}}}
         \caption{ From left to right and from top to bottom, 
           local pictures of the~planar Heegaard diagram for a positive crossing, a negative crossing, a singular crossing and a smoothing.
	}
	\label{fig:planar-Heegaard-diagram-2}
\end{figure}

\begin{lemma}
	The~planar Heegaard diagram associated with a~singular link is admissible.
\end{lemma}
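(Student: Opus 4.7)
The plan is to mimic the proof of Lemma~\ref{lem:initial-is-admissible}. Suppose $\pi$ is a positive periodic domain in the planar Heegaard diagram of a singular link $S$ that avoids $\markX$. As in the initial diagram case, one first expresses $\pi$ as $\sum_{i=0}^{k} a_i(A_i - B_i)$, where $A_i$ (resp.\ $B_i$) denotes the connected component of $\Sigma\setminus\colalpha$ (resp.\ $\Sigma\setminus\colbeta$) containing $X_i$. This uses that the vanishing of $\pi$ at each $X_i$ equates the $A_i$- and $B_i$-coefficients (since the multiplicity at a region $A_i \cap B_j$ equals $a_i - b_j$). Positivity then yields $a_i \geqslant a_j$ whenever $A_i \cap B_j \neq \emptyset$.

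The next step is a local case analysis on each configuration depicted in Figure~\ref{fig:planar-Heegaard-diagram}: smoothings, positive and negative real crossings, singular crossings, generic bivalent vertices, and the marking $\basepoint$. At each local piece I would read off the adjacency pattern of the $A$- and $B$-regions carrying $\markX$-marks and verify that the inequalities $a_i \geqslant a_j$ force equalities between coefficients at $\markX$-basepoints lying on a common strand. Since any two $\markX$-basepoints on the same component $S_j$ of $S$ are connected by a directed path in the resolution, and the chain of inequalities cannot strictly decrease around a cycle, all coefficients coincide on each component; call the common value $c_j$. Hence $\pi = \sum_j c_j \pi_j$.

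To conclude, I would follow the end of the proof of Lemma~\ref{lem:initial-is-admissible}: identify a region whose adjacencies force all $c_j$ to agree with $c_0$ (the coefficient on the component $S_0$ carrying $\basepoint$), and use the special local model at the marking (bottom right of Figure~\ref{fig:planar-Heegaard-diagram}) to force $c_0 = 0$, which then gives $\pi = 0$. The main obstacle is the local case analysis at the singular-crossing model, where the double $\markX$-basepoint at $A^0$ and the removal of the surrounding pair of ellipses alter the usual $\alpha$/$\beta$-adjacency pattern; verifying that the desired propagation of coefficient equalities still goes through in that model is the key delicate check. Once the local verifications are completed, the global argument is formally identical to that of Lemma~\ref{lem:initial-is-admissible}.
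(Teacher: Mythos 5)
Your proposal takes a much more roundabout route than the paper's own proof, which is a two-line observation and needs none of the case analysis you plan. The paper simply notes that in the planar diagram the regions $A_0$ and $B_0$ are \emph{both unbounded}, so $A_0$ meets every $B_i$ and $B_0$ meets every $A_i$. Positivity then gives $a_0 \geqslant a_i$ (from $A_0 \cap B_i \neq \emptyset$) and simultaneously $a_0 \leqslant a_i$ (from $B_0 \cap A_i \neq \emptyset$), so all coefficients coincide at once and $\pi = \sum_i a_0(A_i - B_i) = 0$. In particular, there is no need to propagate equalities along strands, no case-by-case check at crossings, and the structure at singular crossings is irrelevant --- the argument sidesteps precisely the ``key delicate check'' you flag as the main obstacle. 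Your plan would most likely go through after that verification, but it buys nothing over the direct argument.

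Separately, your concluding step is misremembered. The proof of Lemma~\ref{lem:initial-is-admissible} does \emph{not} use the marking $\basepoint$ to force $c_0 = 0$; once all $c_j$ coincide, it concludes via the identity $\sum_j \pi_j = 0$, so that $\pi = c_0 \sum_j \pi_j = 0$ regardless of the value of $c_0$. The local model at the marking is not needed for this vanishing, and there is in general no reason for a positive periodic domain avoiding $\markX$ to have coefficient zero on the component containing $\basepoint$ \emph{before} invoking the relation $\sum_j \pi_j = 0$. If you do pursue the longer route, the correct endgame is the one in Lemma~\ref{lem:initial-is-admissible}, not the one you describe.
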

\begin{proof}
	Suppose that $\pi = \sum_i a_i(A_i - B_i)$ is a~positive periodic domain,
	where $A_0$ and $B_0$ are the~unbounded regions. Then $A_0$ intersects each
	$B_i$ forcing $a_0 \geqslant a_i$. Likewise, $B_0$ intersects each $A_i$
	forcing $a_0 \leqslant a_i$. Hence, all $a_i$ coincide and $\pi=0$.
\end{proof}

We distinguish a~generator $\xgen_0 = \xgen_0(S) \in \CFK-(S)$ that is given
by the~intersection points marked on Figure~\ref{fig:planar-Heegaard-diagram}
with black dots except neighborhoods of smoothed resolutions, where
white points are taken instead.
The~gradings can be normalized by specifying
\begin{equation}\label{eq:deg-normalization-for-x0}
	\hdeg(\xgen_0) = 0
	\quad\text{and}\quad
	\qdeg(\xgen_0) = s-n-1,
\end{equation}
where $n$ is the~number of real crossings in $S$ and $s$ is the~number of
Seifert circles, obtained by smoothing in $S$ all crossings
(both real and singular).
The~first choice follows from the~observation below,
whereas the~normalization of the~quantum grading is justified
at the~end of this section.

\begin{lemma}\label{lem:x0-free-in-HFK}
The~generator $\xgen_0$, when considered as an~element of $\widehat{\mathit{CF}}=\CFK-(S)/(U_i=1)$,
	is a~cycle that generates the~top degree homology.
\end{lemma}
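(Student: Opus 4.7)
My plan is to verify the two assertions of the lemma separately: that $\xgen_0$ is a cycle in $\widehat{\mathit{CF}}$, and that its homology class generates the (one-dimensional) top-degree part of $H^*(\widehat{\mathit{CF}}) \cong H^*(T^k)$.

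For the cycle property, I would proceed by a local analysis of Whitney disks emanating from $\xgen_0$. Any disk $\phi \in \pi_2(\xgen_0, \ygen)$ contributing to $\partial \xgen_0$ must satisfy $\mu(\phi) = 1$ and $X_i(\phi) = 0$ for all $X_i \in \markX$. By Lemma~\ref{lem:moduli-for-disconnected-domain}, the support of $\phi$ is connected, so it must lie within a single local configuration from Figure~\ref{fig:planar-Heegaard-diagram}. Going case by case through the local models, I would enumerate the positive domains with $\mu=1$ and observe that each is forced to cover an $\markX$-basepoint. The key point is that the $\markX$-basepoints are placed precisely to guard the components of $\xgen_0$: at a real crossing they sit in the regions $A^0$ and $A^{\pm}$ adjacent to the black component of $\xgen_0$; at a smoothed resolution the two $\markX$-marks in the $B$-regions guard the white component; at a singular crossing the double $\markX$-mark combined with the removal of the ellipses leaves no room for a local disk to escape; bivalent vertices (including the distinguished one) are handled similarly.

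For the top-degree assertion, I would destabilize the planar diagram via a sequence of isotopies, handleslides and (de)stabilizations to reduce it to a standard multipointed diagram for $S^3$ in which the canonical top generator is manifest, and then track $\xgen_0$ through these moves, which preserve canonical generators by construction. Alternatively, one can argue directly that $\xgen_0$ attains the maximal homological grading: using \eqref{eq:hdeg-for-disk}, positivity of holomorphic domains together with the placement of $\markX$-basepoints forces $\mu(\phi) - 2\markX(\phi) \leq 0$ for every $\phi \in \pi_2(\xgen_0, \ygen)$, so no other generator has strictly larger $\hdeg$. Combined with the one-dimensionality of the top piece of $H^*(T^k)$, this pins down $\xgen_0$ as the unique generator of the top-degree homology.

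The main obstacle is the systematic enumeration of Whitney disks in the cycle step, particularly near a singular crossing (where the local topology is altered by the removal of the ellipses around $A^0$) and near the distinguished bivalent vertex carrying the marking $\basepoint$ (where one $\beta$-curve has been deleted). These cases require careful use of the Euler norm formula together with the additivity of the Maslov index under juxtaposition, in order to ensure that no positive domain avoiding $\markX$ escapes the analysis.
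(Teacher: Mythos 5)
Your local-enumeration strategy for the cycle property cannot work as stated. The paper's own footnote attached to the sentence introducing the canonical generator exhibits a holomorphic Maslov-index-one disk from $\xgen_0$ that avoids $\markX$ entirely: the one supported on the rectangle labelled $A^-$ near a positive crossing (for a positive crossing the two $\markX$-marks sit in $A^0$ and $A^+$, not in $A^-$). So the assertion that ``each positive domain with $\mu=1$ is forced to cover an $\markX$-basepoint'' is false, and the very same disk has $\mu(\phi)-2\markX(\phi)=1>0$, which also invalidates your alternative direct degree bound.

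The vanishing of $\partial\xgen_0$ in $\widehat{\mathit{CF}}$ therefore does not come from the absence of contributing disks. It comes either from pairs of such disks cancelling with opposite signs, which is precisely the mechanism isolated in Lemma~\ref{lem:signs-for-AB-pair}, or, as the paper actually argues, from first exploiting the passage to $\widehat{\mathit{CF}}$: once $U_i=1$ for all $i$ the $\markO$-basepoints impose no constraint, so one may isotope, handleslide and destabilize the diagram into a form where every $\alpha$-curve meets exactly one $\beta$-curve in exactly two points; in that reduced diagram $\xgen_0$ is the unique intersection point of top degree, and both claims of the lemma follow immediately from degree considerations. The destabilization route you also mention is exactly the paper's argument and is the only one of your two options that works; the local case analysis you flag as the main obstacle is both unnecessary and, in the form proposed, irreparable.
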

\begin{proof}
	Setting $U_i=1$ for all $i$ allows us to forget the~$\markO$-basepoints.
	The~Heegaard diagram can be then reduced, so that each $\alpha$-curve
	intersects only one $\beta$-curve, exactly in two points,
	one of which is a~component of $\xgen_0$. In fact, $\xgen_0$
	is the~top degree generator of the~associated Heegaard Floer complex,
	hence, a~cycle.
\end{proof}

We call $\xgen_0$ the~\emph{canonical generator}\footnote{
	Generally, $\xgen_0$ is not a~cycle in $\CFK-(S)$---for instance,
	the~rectangle decorated $A^-$ in a~neighborhood of
	a~positive crossing represents a~nontrivial holomorphic disk
	from $\xgen_0$.
}
of $\CFK-(S)$.
Because  
 $\widehat{\mathit{HF}}$ 
is the homology of the $k$-torus \cite[Lemma 9.1]{OS3mflds} and hence
 free,
we obtain a~following generalization of Lemma~\ref{lem:signs-for-AB-pair}
that will play an~important role in the~next section.
The~case $k=2$ has been proven in an~unpublished version of \cite{OSCube}.

\begin{corollary}\label{cor:sum-of-signs-for-x0}
	Choose an~intersection point $\ygen \in \tCFK-(S)$ with
	$\hdeg(\ygen) = \hdeg(\xgen_0)-1$. Then
	\begin{equation}\label{eq:vanishing-sum-of-disks}
		\#\moduliR(\phi_1) + \ldots + \#\moduliR(\phi_k) = 0,
	\end{equation}
	where $\phi_1,\dots,\phi_k$
	are all classes of Whitney disks from $\ygen$ to $\xgen_0$
	that have Maslov index one and are disjoint from $\markX$.
\end{corollary}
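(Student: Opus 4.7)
The plan is to identify the signed count $\#\moduliR(\phi_1)+\cdots+\#\moduliR(\phi_k)$ with the coefficient of $\xgen_0$ in the image of $\ygen$ under the untwisted differential of $\widehat{\mathit{CF}}=\CFK-(S)/(U_i-1)$, and then to argue that this coefficient vanishes by invoking the fact that $\xgen_0$ represents the top Maslov degree generator of the homology of $\widehat{\mathit{CF}}$ (Lemma~\ref{lem:x0-free-in-HFK}).

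First I would check the grading constraints. By \eqref{eq:hdeg-for-disk}, any $\phi\in\pi_2(\ygen,\xgen_0)$ with $\mu(\phi)=1$ and $\markX(\phi)=0$ satisfies $\hdeg(\xgen_0)-\hdeg(\ygen)=1$, consistent with the hypothesis. Specialising $U_i=1$ collapses all basepoint weights to one, so the coefficient of $\xgen_0$ in $\widehat\partial\ygen$ computed in $\widehat{\mathit{CF}}$ equals $\sum_j \#\moduliR(\phi_j)$, with no twisting factors surviving.

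Next I would prove the vanishing, mirroring the strategy of Lemma~\ref{lem:signs-for-AB-pair}. For each $\phi_j$ I would seek an auxiliary $\mu=1$ Whitney disk $\phi'_j\in\pi_2(\xgen_0,\ygen)$ with $\markX(\phi'_j)=0$, such that the juxtaposition $\phi'_j\star\phi_j\in\pi_2(\xgen_0,\xgen_0)$ is either an $\alpha$- or a $\beta$-degenerate disk $\psi_j$ of Maslov index two with domain some $A_i$ or $B_i$. Counting the one-dimensional moduli space $\moduliR(\psi_j)$ would produce a relation
\[
\#\moduliR(\phi'_j)\cdot\#\moduliR(\phi_j)\pm\#\nmoduliR(\psi_j)=0,
\]
with sign determined by the type of degeneracy and $\#\nmoduliR(\psi_j)=\pm 1$. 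Summing over $j$ with signs arranged so that the $\alpha$- and $\beta$-degenerate boundary contributions cancel in pairs would yield $\sum_j \#\moduliR(\phi_j)=0$.

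The main obstacle will be producing the auxiliary disks $\phi'_j$ in the required generality. In the $k=2$ bigon setting of Lemma~\ref{lem:signs-for-AB-pair} a single disk $\phi_0=A_i\cap B_i$ served as the common auxiliary for both Whitney disks; for arbitrary $\phi_j$ one must exploit the specific combinatorics of the planar Heegaard diagram around $\xgen_0$, and possibly invoke Lemma~\ref{lem:index-for-disk-missing-disks} to verify that the relevant $\phi'_j$ have $\#\moduliR(\phi'_j)=\pm 1$. An algebraic fallback would be to pass to the reduced diagram used in the proof of Lemma~\ref{lem:x0-free-in-HFK}, where $\xgen_0$ is literally the unique top Maslov degree generator and no Whitney disk can boundary-map to $\xgen_0$ purely for grading reasons; one would then pull this vanishing back to the original $\widehat{\mathit{CF}}$ through the chain homotopy equivalence provided by the reduction.
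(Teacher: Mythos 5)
Your first paragraph is precisely the paper's proof, and it is already complete: the sum $\sum_j \#\moduliR(\phi_j)$ is the coefficient of $\xgen_0$ in $\widehat{\partial}\ygen$ computed in $\widehat{\mathit{CF}}=\CFK-(S)/(U_i=1)$, and Lemma~\ref{lem:x0-free-in-HFK} forces this coefficient to vanish because $[\xgen_0]$ generates the top degree of $\widehat{\mathit{HF}}\cong H^*(T^k)$, which is a free $\ZZ$-module; if the coefficient were a nonzero $m$, the class $[\xgen_0]$ would be $m$-torsion or zero, contradicting freeness.

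The remaining paragraphs are an unnecessary detour. You attempt to re-derive the vanishing by mimicking Lemma~\ref{lem:signs-for-AB-pair}---constructing auxiliary disks $\phi'_j$ and degenerate disks $\psi_j$ whose boundary cancellations account for the sum---but, as you observe yourself, producing the $\phi'_j$ with controlled counts in the required generality is hard, and this is exactly the combinatorial work that the homological argument is designed to sidestep. The ``algebraic fallback'' at the end is the right mechanism, but the phrase ``no Whitney disk can boundary-map to $\xgen_0$ purely for grading reasons'' is incorrect: grading by itself does not obstruct Maslov index one disks from a $\ygen$ with $\hdeg(\ygen)=\hdeg(\xgen_0)-1$; the obstruction is that $\widehat{\mathit{HF}}$ is free and $[\xgen_0]$ is a free generator of its top degree, so $\xgen_0$ cannot lie in the image of the differential. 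Trimmed to the first paragraph with this clarification, your proof coincides with the one in the paper.
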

\begin{proof}
	The~homology class of $\xgen_0$ cannot be free in $\widehat{\mathit{HF}}$
	when the~sum in \eqref{eq:vanishing-sum-of-disks} does not vanish.
\end{proof}

Let $S_+$, $S_-$, $S_\times$, $S_0$ be diagrams of singular links that differ only
in a~small neighborhood of a~point $p$, where the~first three have respectively
a~positive, a~negative and a~singular crossing, whereas in $S_0$ 
the crossing is smoothed.
Write $k$ (resp.\ $s$) for the~number of $\markX$-basepoints (resp.\ double
$\markX$-basepoints) in the~associated planar Heegaard diagrams
and let $R = \scalars[U_0,\dots,U_{k+s}]$.
We write
$A_a$ and $A_b$ (resp.\ $B_c$ and $B_d$) for the~regions bounded by $\alpha$-curves
(resp.\ $\beta$-curves) that contain the~$\markO$-basepoints
labeled $a$ and $b$ (resp.\ $c$ and $d$).
Let $U^{(p)}_a$ and $U^{(p)}_b$ (resp.\ $U^{(p)}_c$ and $U^{(p)}_d$)
be the~associated variables and consider a~two term complex
\[
	\mathcal L_p = \Big( \qshift R
		\xrightarrow{U^{(p)}_a + U^{(p)}_b - U^{(p)}_c - U^{(p)}_d}
	\qshift^{-1} R \Big)
\]
generated by $\mathbf u$ and $\mathbf 1$ in homological degrees $-1$ and $0$ respectively.

\begin{theorem}[cf.\ \cite{OSCube}]\label{thm:CFK-cube}
	There are homotopy equivalences of complexes
	\begin{align*}
		\CFK-(S_+) &\simeq \tshift^{-1} \left(
			\CFK-(S_\times) \otimes_R \mathcal L_p
				\xrightarrow{\ \piz_p\ }
			\qshift^{-1} \CFK-(S_0)
		\right)
	\\
		\CFK-(S_-) &\simeq \left(
			\qshift \CFK-(S_0)
				\xrightarrow{\ \zip_p\ }
			\CFK-(S_\times) \otimes_R \mathcal L_p
		\right)
	\end{align*}
	where $\piz_p(\xgen_0(S_\times) \otimes \mathbf 1) = \xgen_0(S_0)$
	and $\zip_p(\xgen_0(S_0)) = (U^{(p)}_b - U^{(p)}_c) (\xgen_0(S_\times) \otimes \mathbf 1)$.
      \end{theorem}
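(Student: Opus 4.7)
The proof plan follows the approach of Ozsv\'ath--Szab\'o over $\Ztwo$, but with systematic attention to signs. The starting point is the observation that the four planar Heegaard diagrams associated with $S_+$, $S_-$, $S_\times$, $S_0$ agree outside a small neighborhood $\mathcal N$ of the crossing $p$; inside $\mathcal N$ they differ only by the positions of the $\markX$-basepoints and, for $S_\times$, by the removal of one pair of ellipses. First, I would fix these diagrams and identify the local intersection points in $\mathcal N$. A careful decomposition of $\CFbasis(S_+)$ according to which local intersection points are used yields a splitting of the underlying module of $\CFK-(S_+)$ as $(\CFK-(S_\times) \otimes_R \mathcal L_p) \oplus \qshift^{-1}\CFK-(S_0)$ after the appropriate grading shifts, where the two generators $\mathbf{1}$ and $\mathbf{u}$ of $\mathcal L_p$ correspond to the two local configurations matching the singular diagram and the third configuration corresponds to the smoothed one.

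Next, I would analyze the differential $\partial_+$ on $\CFK-(S_+)$ by classifying index-one Whitney disks according to how their domains interact with $\mathcal N$. Using Lemma~\ref{lem:moduli-for-disconnected-domain} and positivity of domains, the disks fall into three types: those whose domains miss $\mathcal N$, which reproduce the differentials of $\CFK-(S_\times)$ and $\CFK-(S_0)$ on the two summands; small disks supported entirely in $\mathcal N$, whose signed moduli counts give the internal differential $U^{(p)}_a + U^{(p)}_b - U^{(p)}_c - U^{(p)}_d$ of $\mathcal L_p$; and mixing disks crossing between $\mathcal N$ and its complement, which constitute the connecting map $\piz_p\colon \CFK-(S_\times) \otimes_R \mathcal L_p \to \qshift^{-1}\CFK-(S_0)$. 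The small disks in $\mathcal N$ are controlled by the Riemann Mapping Theorem and Lemma~\ref{lem:index-for-disk-missing-disks}; at the level of the canonical generator, inspection of the few local rectangles and triangles shows that $\piz_p$ sends $\xgen_0(S_\times) \otimes \mathbf 1$ to $\xgen_0(S_0)$, pinning down the normalization.

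The main obstacle, and the genuinely new content over $\ZZ$, is the signed verification. For this I would invoke the coherent system of orientations from \cite{AliEft}, then use Corollary~\ref{cor:sum-of-signs-for-x0} to control the relative signs of the disks ending at the canonical generator $\xgen_0$, and Lemma~\ref{lem:signs-for-AB-pair} to enforce cancellation among the bigon pairs formed by the two local ellipses in $\mathcal N$. The identity $\zip_p(\xgen_0(S_0)) = (U^{(p)}_b - U^{(p)}_c)(\xgen_0(S_\times) \otimes \mathbf{1})$ in the second statement then emerges from inspection of precisely the two local rectangles carrying the $\markO$-basepoints labeled $b$ and $c$, with the relative minus sign forced by the coherence of orientations. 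Combining the module-level decomposition of Step~1 with the differential analysis of Step~2 and these signed calculations exhibits $\CFK-(S_+)$ as the desired mapping cone; the statement for $S_-$ follows by the same argument with the roles of zip and unzip reversed.
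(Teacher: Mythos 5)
Your outline captures the ambient strategy (work from planar Heegaard diagrams, decompose according to the local picture at $p$, control signs via coherent orientations), but there is a genuine gap at the heart of Step~1. You claim a direct splitting of the \emph{underlying module} of $\CFK-(S_+)$ as $(\CFK-(S_\times)\otimes_R\mathcal{L}_p)\oplus\qshift^{-1}\CFK-(S_0)$, attributing it to ``three local configurations''. This cannot hold: the Heegaard diagrams for $S_\pm$ and $S_0$ differ only by placement of the $\markX$-basepoints (same surface, same $\alpha$- and $\beta$-curves), so $\CFK-(S_+)$ and $\CFK-(S_0)$ have the same underlying module, and $\CFK-(S_\times)$ has a \emph{smaller} one (the pair of ellipses around $A^0$ is removed). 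The right-hand side therefore has two copies of $\CFK-(S_\times)$ plus a full $\CFK-(S_0)$ — strictly larger than $\CFK-(S_+)$. The theorem asserts only a homotopy equivalence, and that extra content is precisely what needs to be produced, not assumed.

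The paper resolves this by a different and sharper decomposition. $\CFK-(S_-)$ splits into \emph{two} pieces, not three: a subcomplex $X_-$ spanned by generators using a chosen intersection point of the two ellipses, and the quotient $Y_-$ spanned by the rest. One then builds a commuting square of complexes with vertices $\qshift^2\tshift^2 X_-$, $\qshift^2\tshift X_-$, $Y_-$, $X_-$ and maps including an identity $\qshift^2\tshift^2 X_-\xrightarrow{\id}\qshift^2\tshift X_-$, the vertical map $U^{(p)}_a+U^{(p)}_b-U^{(p)}_c-U^{(p)}_d$, and the disk-counting maps $\Phi_{A^\pm}$, $\Phi_B$, $\Phi_{A^\pm B}$ (defined by imposing specific multiplicities at the five marked regions, not by whether the domain is ``supported in $\mathcal N$'' or ``mixing'' as in your Step~2). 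Gaussian elimination of the identity shows the total complex is homotopy equivalent to the mapping cone of $\Phi_B\colon Y_-\to X_-$, which is $\CFK-(S_-)$; meanwhile the two columns are recognized as $\qshift\tshift\CFK-(S_0)$ and $\CFK-(S_\times)\otimes_R\mathcal{L}_p$. The square diagram \emph{is} the mechanism by which the two mismatched modules become homotopy equivalent; without it your proposed proof has no way to close that gap.

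Two smaller points. The signs on the $U^{(p)}_x$ terms come from ends of index-two moduli spaces together with the orientation convention for $\alpha$- versus $\beta$-degenerate disks, while the formula $\Phi_B(\ygen_0^-)=\pm(U^{(p)}_b-U^{(p)}_c)\xgen_0$ comes from the two bigon domains $A_b\setminus B_c$ and $B_c\setminus A_b$ and Lemma~\ref{lem:signs-for-AB-pair}; Corollary~\ref{cor:sum-of-signs-for-x0}, Lemma~\ref{lem:index-for-disk-missing-disks}, and Lemma~\ref{lem:moduli-for-disconnected-domain} are not needed for this theorem (they enter later, in the proof of Theorem~\ref{thm:A-vs-HFK-for-planar}). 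You should also be careful that the $S_+$ case is not quite ``zip and unzip with roles reversed'': there $X_+$ is a \emph{quotient} rather than a subcomplex, and the square is oriented differently, which is why the degree shifts in the statement are asymmetric.
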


\begin{proof}
	Consider first the~case of $S_-$, so that the~Heegaard diagram for $\CFK-(S_-)$
	near $p$ has $\markX$ basepoints at regions marked $A^0$ and $A^-$.
	Generators containing the~intersection point between the~regions marked $A^0$ and $A^-$
	span a~subcomplex $X_-$ that contains $\xgen_0(S_-)$ and the~differential of which
	counts holomorphic disks with multiplicity 0 at the~five marked regions.
	Let $Y_-$ be the~quotient complex, spanned by the~remaining generators,
	and consider the~diagram
\[
	\begin{tikzpicture}[x=10em,y=12ex]
		\node[anchor=mid] (00) at (0 ,0) {$\qshift^2\tshift^2 X_-$};
		\node[anchor=mid] (01) at (1, 0) {$\qshift^2\tshift X_-$};
		\node[anchor=mid] (10) at (0,-1) {$Y_-$};
		\node[anchor=mid] (11) at (1,-1) {$X_-$};
		\path (00) edge[->] node[midway,above] {$\scriptstyle\id$} (01)
		           edge[->] node[midway,above] {$\scriptstyle \Phi_{A^-B}$} (11)
		           edge[->] node[midway,left] {$\scriptstyle \Phi_{A^-}$} (10)
		      (11) edge[<-] node[midway,above] {$\scriptstyle \Phi_B$} (10)
		           edge[<-] node[midway,right] {$\scriptstyle
		           		U^{(p)}_a + U^{(p)}_b - U^{(p)}_c - U^{(p)}_d
		           $} (01);
	\end{tikzpicture}
	\]
	where the~maps $\Phi_B$, $\Phi_{A^-}$ and $\Phi_{A^-B}$ count Maslov index one
	holomorphic disks $\phi$, such that
	\begin{itemize}
		\item $B(\phi) = 1$ and $A^-(\phi) = A^0(\phi) = 0$ in case of $\Phi_B$,
		\item $B(\phi) = 0$ and $A^-(\phi) + A^0(\phi) = 1$ in case of $\Phi_{A^-}$,
		\item $B(\phi) = 1$ and $A^-(\phi) + A^0(\phi) = 1$ in case of $\Phi_{A^-B}$,
	\end{itemize}
	and $B(\phi)$ is the~total multiplicity of $\phi$ at both regions labeled $B$.
	Considering ends of moduli spaces of holomorphic disks of Maslov index two,
	we get that the~total of the~diagram is a~chain complex, where the~terms
	$U^{(p)}_a$, $U^{(p)}_b$, $U^{(p)}_c$, and $U^{(p)}_d$ come from degenerated disks represented
	by the~domains $A_a$, $A_b$, $B_c$ and $B_d$ respectively, which explains the~signs.
	The~total chain complex is clearly homotopy equivalent to the~mapping
	cone of $\Phi_B$, which is $\CFK-(S_-)$.
	The~right column is identified with $\CFK-(S_\times) \otimes_R \mathcal L_p$
	by forgetting the~fixed intersection point.
	This takes $\xgen_0(S_-) \in X_-$, which is in degree $s-n-1$,
	to $\xgen_0(S_\times) \in \CFK-(S_\times)$, which lives in degree $s-n$.
	The~left column is identified with $\qshift\tshift \CFK-(S_0)$,
	in which $\xgen_0(S_0)$ is identified with a~generator $\ygen_0^- \in Y_-$
	that is given by the~same collection of intersection points, but living
	in homological degree $-1$ and quantum degree $s-n+1$
	(apply \eqref{eq:qdeg-for-disk} and \eqref{eq:hdeg-for-disk} to the~rectangle marked $A^-$).
	There are two Whitney disks from $\ygen_0^-$ to $\xgen_0(S_-)$
	with multiplicity one at $B$, represented by domains $A_b\setminus B_c$
	and $B_c\setminus A_b$. Hence, $\Phi_B(\ygen_0^-) = \pm(U^{(p)}_b - U^{(p)}_c)\xgen_0(S_-)$,
	which is compatible with the~formula for $\zip_p$.
	
	The~case of $S_+$ is similar.
	This time the~diagram has basepoints at regions $A^0$ and $A^+\ric$.
	Gene\-rators containing the~intersection point between the~two regions
	span a~quotient complex $X_+$ of $\CFK-(S_+)$,
	which is identified with $\qshift\tshift\CFK-(S_\times)$:
	the~canonical generator $\xgen_0(S_\times)$ corresponds to $\xgen_0^+ \in X_+$
	that differ from $\xgen_0(S_+)$ by picking the~other corner of the~bigon labelled $A^0$.
	In particular, $\hdeg(\xgen_0^+) = -1$ and $\qdeg(\xgen_0^+) = s-n+1$.
	Writing $Y_+$ for the~subcomplex spanned by other generators, we consider the~diagram
	\[
	\begin{tikzpicture}[x=10em,y=12ex]
		\node[anchor=mid] (00) at (0, 1) {$X_+$};
		\node[anchor=mid] (01) at (1, 1) {$Y_+$};
		\node[anchor=mid] (10) at (0 ,0) {$\qshift^{-2}\tshift^{-1} X_+$};
		\node[anchor=mid] (11) at (1, 0) {$\qshift^{-2}\tshift^{-2} X_+$};
		\path (00) edge[->] node[midway,above] {$\scriptstyle\Phi_B$} (01)
		           edge[->] node[midway,above] {$\scriptstyle \Phi_{A^+B}$} (11)
		           edge[->] node[midway,left] {$\scriptstyle
		           	U^{(p)}_a + U^{(p)}_b - U^{(p)}_c - U^{(p)}_d
		           $} (10)
		      (11) edge[<-] node[midway,above] {$\scriptstyle \id$} (10)
		           edge[<-] node[midway,right] {$\scriptstyle \Phi_{A^+}$} (01);
	\end{tikzpicture}
	\]
	where the~maps $\Phi_B$, $\Phi_{A^+}$ and $\Phi_{A^+B}$ are analogues of the~maps from
	the~case of a~negative crossing, but using the~region marked $A^+$ instead of $A^-\ric$.
	Again, the~total object is a~chain complex that is homotopy equivalent to the~mapping
	cone of $\Phi_B$, which is $\CFK-(S_+)$.
	The~left column is identified with $\CFK-(S_\times) \otimes_R \mathcal L_p$
	and the~right one with $\qshift^{-1}\tshift^{-1} \CFK-(S_0)$.
	The~applied degree shifts follow from the~observation
	that $\xgen_0^+$, when considered as a~generator of $\CFK-(S_0)$,
	lives in the~same homological and quantum degree as $\xgen_0(S_0)$,
	because both are connected by the~domain $A^- - A^0$.
	The~property of the~unzip map follows easily.
\end{proof}

\begin{remark}
	Theorem~\ref{thm:CFK-cube} remains true for twisted complexes once powers of $t$
	are properly distributed in the~formulas for the~differential in $\mathcal L_p$
	as well as for the~zip map. Namely, each $U^{(p)}_x$ must be scaled by $t^{m_x}\!$,
	where $m_x$ counts twisting markings in the~region $A_x$ or $B_x$ (depending on $x$).
	In particular, the~formulas are unchanged when twisted by $\trTwist\!$, whereas
	each $U^{(p)}_a$ and $U^{(p)}_b$ is scaled by $t$ in case of the~Ozsv\'ath--Szab\'o
	marking $\OSTwist$.
\end{remark}

Theorem~\ref{thm:CFK-cube} implies immediately that the~degree normalization
\eqref{eq:deg-normalization-for-x0} matches the~standard one: the~polynomial
\[
	(\qvar - \qvar^{-1})^\sigma \sum_{d,s} (-1)^d \qvar^s \mathrm{rk} \HFK_d(S; s)
\]
assigned to a~diagram $S$, where $\sigma$ counts singular crossings,
satisfies the~skein relation of the~Alexander polynomial.

 \subsection{Computation for planar singular links}
Let $S$ be a~planar singular knot considered as a~diagram with no real crossings.
Recall that a~Kauffman state of such diagram is a~collection of markings
at singular crossing as shown in Figure~\ref{fig:kauffman-states}, such that
each region not adjacent to $\basepoint \in S$ contains exactly one marking.
Because switching $D^-$ with $D^+$ has the~effect of replacing $\qvar$ with $1/\qvar$,
the~Alexander polynomial $\Delta_S(\qvar)$ is symmetric.
It can be shown that the~minimal power of $\qvar$ is equal to $s-n-1$,
where $s$ is the~number of Seifert circles in $S$ and $n$ is the~number
of singular crossings.

\begin{figure}[ht]
	\NB{\tikz[]{\input{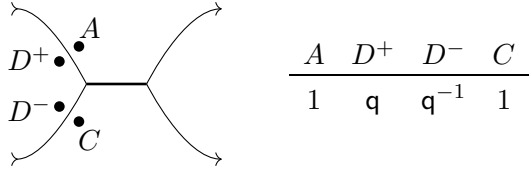}}}
	\qquad
	\begin{tabular}{cccc}
		$A$ & $D^+$ & $D^-$ & $C$
	\\ \hline
		$1$ & $\qvar$ & $\vphantom{\Big|}\qvar^{-1}$ & $1$
	\end{tabular}
	\caption{Kauffman markings at a~singular crossing and their (multiplicative)
		contributions towards the~evaluation of the~Kauffman state.
	}
	\label{fig:kauffman-states}
\end{figure}

In terms of intersection points on the~initial Heegaard diagram,
picking a~Kauffman state is equivalent to fixing points on
the~$\beta$-curves parallel to the~contours of the~underlying surface,
leaving a~choice between two intersection points on each internal $\beta$-curve,
see Figure~\ref{fig:kauffman-state-generator}.
Thus, there are $2^n$ generators associated with a~fixed Kauffman state,
where $n$ is the~total of singular crossings and bivalent vertices in $S$
other than $\basepoint$.
\begin{figure}[ht]
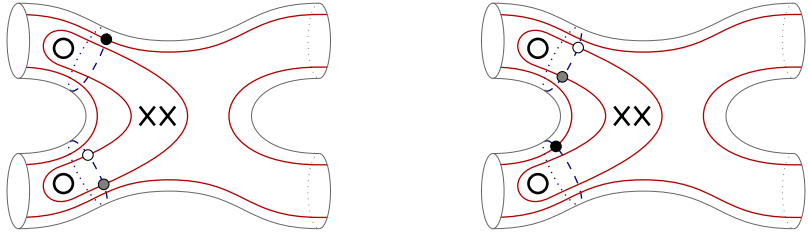

	\def\KauffmanState{A}\NB{\tikz[]{\input{\imagesfolder/hfgl0_hd-singular-kauffman}}}\hskip 2cm
	\def\KauffmanState{nD}\NB{\tikz[]{\input{\imagesfolder/hfgl0_hd-singular-kauffman}}}\caption{Local pictures for generators associated with the~$A$ state
		(to the~left) and the~$D^-$ state (to the~right).
		The~black dot on one of the~meridians is fixed by the~state,
		which leaves a~choice between the~white and gray point on
		the~other meridian. Choosing always the~white point produces
		the~generator of the~highest homological degree.
	}
	\label{fig:kauffman-state-generator}
\end{figure}
It is shown in \cite{OSSSingular} that with each Kauffman state there is
associated a~unique generator of highest (in the~conventions of this paper)
homological degree, represented in Figure~\ref{fig:kauffman-state-generator}
by black and white dots. This generator is actually in homological degree 0
and its quantum grading matches the~contribution of the~associated Kauffman
state to the~Alexander polynomial.
Our main goal is to provide an~argument that no other
generators contribute towards the~twisted homology.

Consider now the~initial diagram for $S$ together with
the~Ozsv\'ath--Szab\'o twisting $\OSTwist\!$.
As in the~previous section, given a~singular crossing $p$
we denote the~$\markO$-basepoints on outgoing (resp.\ incoming) arcs
by $O^{(p)}_a$ and $O^{(p)}_b$ (resp.\ $O^{(p)}_c$ and $O^{(p)}_d$).
Likewise, when $p$ is a~bivalent vertex, then $O^{(p)}_a$ and $O^{(p)}_c$
are located at the~outgoing and incoming arc respectively.
Define
\[
	\underline{\mathcal L}_S := \bigotimes_{p\in\setcrossings} \left(
		R \xrightarrow{\ tU^{(p)}_a + tU^{(p)}_b - U^{(p)}_c - U^{(p)}_d\ } R
	\right)
\]
as the~tensor product over $R$ of linear complexes taken over all singular
crossings $p$ (where, again, we write $R$ for the~$\scalars$-algebra
of polynomials in all variables $U_i$).

For a $\scalars$-module $M$, its $\scalars$-torsion is the $\scalars$-submodule $\{ m \in M | \exists \lambda \in \scalars \;\; \text{with}\; \lambda m =0\}$.

\begin{proposition}\label{prop:hfk-for-connected-planar}
	Let $S$ be a~planar singular knot.
	Then the quotient of
	$H_*(\tCFK-(S, \OSTwist) \otimes_R \underline{\mathcal L}_S)$
	 by its  $\scalars$-torsion is a~free\/ $\scalars[U_0]$-module concentrated
	in homological degree zero and generated by Kauffman states of $S\ric$.
	The~same holds for $H_*(\tCFK(S, \OSTwist)\otimes_R \underline{\mathcal L}_S)$
	with\/ $\scalars$ in place of\/ $\scalars[U_0]$.
\end{proposition}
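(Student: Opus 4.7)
My plan is to follow the combinatorial approach of \cite{OSSSingular} by splitting the generators of $\tCFK-(S,\OSTwist)$ according to Kauffman states of $S$. A Kauffman state $\sigma$ fixes one intersection point on each external $\beta$-curve, leaving two choices on each of the $n$ internal $\beta$-curves (one per singular crossing and per bivalent vertex other than $\basepoint$). It therefore spans a subcomplex $C_\sigma \subset \tCFK-(S,\OSTwist)$ of $R$-rank $2^n$, with a distinguished ``all-white'' generator $\xgen_\sigma$ in homological degree $0$ whose quantum grading recovers the contribution of $\sigma$ to $\Delta_S(\qvar)$. That each $C_\sigma$ is a subcomplex follows from the observation that any Maslov-index-one Whitney disk disjoint from $\markX$ has a positive domain which cannot cross the $\alpha$-curves separating distinct $\markX$-basepoints.

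I would then identify $C_\sigma$ with a Koszul-type complex on $n$ relations $f_1, \ldots, f_n \in R$, where $R = \scalars[U_0, \ldots, U_{k+s}]$ and each $f_p$ is the $U$-monomial counted by the unique small bigon through the $p$-th internal $\beta$-curve. The shape of $f_p$ is dictated by the local model of Figure \ref{fig:initial-heegaard-diagram} at the corresponding singular crossing or bivalent vertex, with the Maslov index of the bigon computed via Lemma \ref{lem:index-for-disk-missing-disks}. That no larger domain disjoint from $\markX$ contributes at Maslov index one is ensured by Lemma \ref{lem:initial-is-admissible} together with the connectedness obstruction of Lemma \ref{lem:moduli-for-disconnected-domain}. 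Tensoring over $R$ with $\underline{\mathcal L}_S$ then presents $C_\sigma \otimes_R \underline{\mathcal L}_S$ as the total Koszul complex on the $2n$ relations $f_1, g_1, \ldots, f_n, g_n$, with $g_p := t U^{(p)}_a + t U^{(p)}_b - U^{(p)}_c - U^{(p)}_d$.

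The heart of the argument is the claim that after inverting $t$ these $2n$ relations form a regular sequence in $R$ whose quotient is free of rank one over $\scalars[U_0]$. Intuitively, processing the $g_p$ relations along the knot identifies all the $U$-variables up to powers of $t$, while the $f_p$ relations supply exactly the $t$-torsion needed to leave a free $\scalars[U_0]$-module generated by the class of $\xgen_\sigma$. Assembling over Kauffman states yields the stated description of $H_*(\tCFK-(S,\OSTwist) \otimes_R \underline{\mathcal L}_S)$, and the hat version follows by setting $U_0 = 0$. The main obstacle will be sign bookkeeping: to guarantee that the total complex genuinely has this Koszul form---rather than merely the right graded Euler characteristic---I will need to pin down the signs of the bigon counts using the coherent orientation system of \cite{AliEft} together with Lemma \ref{lem:signs-for-AB-pair} and Corollary \ref{cor:sum-of-signs-for-x0}, in a manner compatible with the sign conventions used in the proof of Theorem \ref{thm:CFK-cube}.
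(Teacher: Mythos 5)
Your decomposition of $\tCFK-(S,\OSTwist)$ into summands $C_\sigma$ indexed by Kauffman states has a genuine gap: you assert that each $C_\sigma$ is a \emph{subcomplex}, justifying this by claiming a positive $\markX$-avoiding domain ``cannot cross the $\alpha$-curves separating distinct $\markX$-basepoints.'' That claim is not correct. Positive domains of Maslov index one with vanishing $\markX$-multiplicities can and do wind around the diagram, crossing $\alpha$- and $\beta$-curves; what they cannot do is \emph{cover} an $\markX$-basepoint. Such non-local domains pass through regions carrying the twisting markings of $\OSTwist$ and hence contribute terms with strictly positive $t$-power to the differential, and these terms in general connect generators lying over different Kauffman states. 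The paper is explicit about this: it only claims that ``the $t$-degree zero part of the differential relates only generators associated with the same Kauffman state.'' Consequently the Kauffman-state decomposition governs the associated graded of the $t$-adic filtration, not the complex itself, and the argument must be run as a spectral sequence of that filtration rather than as a direct sum of Koszul complexes.

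Relatedly, the paper's proof first passes to $\scalars = \ZZ[t^{1/2},t^{-1/2}]$ and replaces $\OSTwist$ by an enlarged twisting set $Q$ (Figure~\ref{fig:extra-twisting}), using Proposition~\ref{prop:untwisting} to see this does not change the complex. This renormalization is what makes the $t$-filtration well-behaved --- in particular it ensures the $t$-degree zero part is exactly the Koszul-type complex you describe --- and you omit it. Your proposed regularity claim also changes once the filtration is used: the paper establishes regularity of the set $\{U^{(p)}_x\} \cup \{U^{(p)}_c+U^{(p)}_d\}$, i.e.\ the associated graded of your $f_p$'s and $g_p$'s (the $t$-terms drop), rather than of the full sequence ``after inverting $t$.'' Finally, once the associated graded homology is seen to be concentrated in the expected homological degree (which forces collapse of the spectral sequence), one still needs the uniqueness-of-limit argument over the completed ring $\ZZ[t^{-1/2},t^{1/2}]][U_0]$ --- this is where the ``up to $t$-torsion'' qualification in the statement comes from, and it is not accounted for in your outline. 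Your instinct about signs, Lemma~\ref{lem:signs-for-AB-pair}, and Corollary~\ref{cor:sum-of-signs-for-x0} is sound, but those lemmas enter the companion Theorem~\ref{thm:A-vs-HFK-for-planar} more than this proposition; here the main technical input is the filtration/spectral-sequence machinery you skipped.
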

\begin{proof}
	Without loss of generality we can assume that $\scalars = \ZZ[t,t^{-1}]$.
	We further extend the~ring by a~square root of $t$ and extend the~set
	of markings as shown in Figure~\ref{fig:extra-twisting}. 
\begin{figure}[ht]
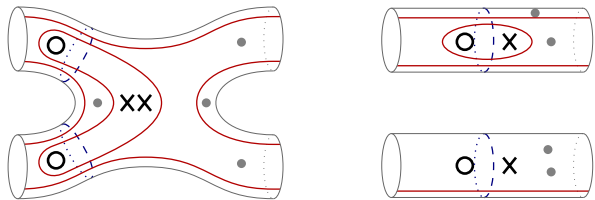

		\def\diagmarkinglevel{2}\NB{\tikz[scale=0.85]{\input{\imagesfolder/hfgl0_marked-hd-singular}}}
			\hskip 1cm
		\begin{tabular}{c}
			\NB{\tikz[scale=0.85]{\input{\imagesfolder/hfgl0_marked-hd-bivalent}}}
		\\[1cm]
			\NB{\tikz[scale=0.85]{\input{\imagesfolder/hfgl0_marked-hd-marking}}}
		\end{tabular}
		\caption{The~extra twisting markings on an~initial Heegaard diagram.
		}
		\label{fig:extra-twisting}
	\end{figure}
Write $Q$ for this new set. Because $S$ is connected,
	the~complex $\tCFK-(S,Q; t^{1/2})$ is isomorphic to $\tCFK-(S,\OSTwist; t)$
	by Proposition~\ref{prop:untwisting}.
	Hence, it suffices to prove the~thesis for the~twisting set $Q$.

	The~complex $\tCFK(S,Q)$	is filtered with respect to the~power of $t$
	and the~$t$-degree zero part of the~differential relates only generators associated
	with the~same Kauffman state.
	In fact, the~component of the~graded associate complex spanned
	by generators corresponding to a~fixed Kauffman state $\mathfrak s$
	has the~form of the~tensor product over $R$
	\[
		\mathcal N_{\mathfrak s} = \bigotimes_{p \neq \basepoint}
			\Big( R \xrightarrow{\ U^{(p)}_x\ } R \Big)
	\]
	taken over all singular crossings and bivalent vertices of $S$
	other than the~basepoint $\basepoint$,
	and where $x=c$ or $d$ depending on $p$ and $\mathfrak s$.
	We claim that the~homology of
	\begin{equation}\label{eq:H-for-KS}
		\mathcal N_{\mathfrak s} \otimes_R \mathrm{gr}\underline{\mathcal L}_S
			\cong
		\bigotimes_{p \neq \basepoint} \Big( R \xrightarrow{\ U^{(p)}_x\ } R \Big)
			\otimes_R
		\bigotimes_{p \in \setcrossings} \Big( R \xrightarrow{\ U^{(p)}_c + U^{(p)}_d\ } R \Big)		
	\end{equation}
	is freely generated by the~highest homological degree generator
	associated with $\mathfrak s$.
	This follows from the~observation that the~set of relations
	\begin{equation}\label{eq:relations-in-HFK0}
		\big\{ U^{(p)}_x \ \big|\ p \neq \basepoint \big\}
			\cup
		\big\{ U^{(p)}_c + U^{(p)}_d\ \big|\ p \in \setcrossings \big\}
	\end{equation}
	is regular, i.e.\ each element is a~non-zero divisor in the~quotient
	of $R$ by other elements.
	In particular, the~relations eliminate all variables except $U_0$.
	Following the~proof of \cite[Proposition 3.4]{OSCube} we show that
	the~generator of \eqref{eq:H-for-KS} is in homological degree $0$,
	i.e.\ its Maslov degree is twice the~Alexander degree,
	so that the~spectral sequence associated with the~filtration collapses
	immediately. This shows that each component
	$N_{\mathfrak s}\otimes \mathrm{gr}\underline{\mathcal L}_S$ contributes
	exactly one free generator towards the~ $E^\infty$ page.
	
	The~uniqueness of a~limit of a~spectral sequence (compare Appendix \ref{subsec:uniqueness})
	implies that
	\[
		H_*(\tCFK-(S,Q) \otimes_R \underline{\mathcal L}_S),
	\]
	when computed over the~completed ring $\ZZ[t^{-1/2},t^{1/2}]][U_0]$,
	is freely generated by Kauffman states and concentrated in homological degree 0.
	This proves the~statement for the~twisting set $Q$ and the~case
	of $\OSTwist$ follows from Proposition~\ref{prop:untwisting}.
	Finally, the~computation for $\tCFK$ follows, because adding $U_0$
	to \eqref{eq:relations-in-HFK0} does not affect the~regularity of the~set.
\end{proof}

Let us now discuss briefly the~case of disconnected diagrams.
In \cite{OSCube} it is shown that the~twisted homology vanishes,
because the~set of generators is empty (the~initial diagram used
in the~paper is not admissible for disconnected diagrams).
This argument is no longer true for our initial diagram,
but the~vanishing result (up to $\scalars$-torsion) still holds.

\begin{proposition}\label{prop:hfk-for-disconnected}
	Suppose that $S$ is a~planar singular link with at least two components.
	Then both $H_*(\tCFK-(S, \OSTwist) \otimes_R \underline{\mathcal L}_S)$
	and $H_*(\tCFK(S, \OSTwist)\otimes_R \underline{\mathcal L}_S)$
	are $\scalars$-torsion.
\end{proposition}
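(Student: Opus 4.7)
The plan is to adapt the proof of Proposition~\ref{prop:hfk-for-connected-planar} to the disconnected setting, with the key difference being that the extra topology coming from the handles attached between components creates $t$-torsion rather than free generators. The first step is to extend scalars to $\ZZ[t^{-1/2}, t^{1/2}]$ and pass to the auxiliary extended twisting set $Q$ of Figure~\ref{fig:extra-twisting}. Proposition~\ref{prop:untwisting} does not yield a direct isomorphism of complexes in the disconnected case, since the fundamental periodic domains $\pi_j$ for the various components are linearly independent. However, for the component $S_0$ containing the basepoint the comparison still holds, and it suffices to carry out the argument working with the set $Q$ on all components and comparing to $\OSTwist$ only at the end.

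Next I would filter $\tCFK-(S, Q)\otimes_R \underline{\mathcal L}_S$ by the total power of $t$ and compute the graded associated complex, exactly as in the proof of Proposition~\ref{prop:hfk-for-connected-planar}. The graded associated complex splits as a direct sum, indexed by generalized Kauffman states of $S$; each such state is now a choice of marking at every singular crossing together with a choice at each of the $r$ handles connecting the components, since every such handle contributes two intersection points of the new meridinal $\alpha$-curve with the merged $\beta$-curve. For each fixed state $\mathfrak s$, the associated summand is a tensor product of linear complexes of the form $\mathcal N_{\mathfrak s}\otimes_R \mathrm{gr}\,\underline{\mathcal L}_S$ analogous to \eqref{eq:H-for-KS}.

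The main content of the proof is then an analysis of when the corresponding set of defining relations is regular. For each non-basepoint component $S_j$ with $j \geq 1$, travelling around $S_j$ once and applying successively the relations coming from $\underline{\mathcal L}_S$ at the singular crossings of $S_j$ (together with the $\beta$-handle relations at the attachment points) identifies the product of $U$-variables along $S_j$ with itself up to an overall factor of $t^{n_j}$, where $n_j = Q(\pi_j) > 0$ is the $Q$-count of the fundamental periodic domain of $S_j$. This yields a relation of the form $(1 - t^{n_j})\cdot m_j = 0$ for a certain monomial $m_j$; after simplification using the remaining relations of $\mathrm{gr}\,\underline{\mathcal L}_S$, this monomial reduces to the generator of the would-be Kauffman state. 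Hence every element of $H_*(\mathcal N_{\mathfrak s}\otimes_R\mathrm{gr}\,\underline{\mathcal L}_S)$ is annihilated by $\prod_{j\geq 1}(1 - t^{n_j})$, so that the graded associated complex has $t$-torsion homology. By the uniqueness of limits of spectral sequences (Appendix~\ref{sec:bockstein}), after inverting $t$ the $E^1$-page vanishes, so the homology of the original complex is $t$-torsion. The analogous argument, with the set of relations augmented by $U_0$, handles $\tCFK$. Finally, reverting to the twisting $\OSTwist$ changes the exponents $n_j$ but preserves their non-vanishing, which is the only property used.

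The main obstacle is the third step: explicitly identifying the factor $(1 - t^{n_j})$ inside the Koszul complex and verifying that the monomial $m_j$ does not already lie in the ideal generated by the other relations. This requires a careful bookkeeping of the handle-$\beta$-curves and of the way successive singular-crossing relations transport $U$-variables around $S_j$; the case in which a non-basepoint component carries no singular crossings must be treated separately, using instead the bivalent-vertex insertions together with the handle-induced $\alpha$-meridian to produce the required relation.
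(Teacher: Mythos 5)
Your overall plan (extend scalars, pass to the auxiliary twisting $Q$, filter by the $t$-power of the differential, analyze the associated graded complex) matches the paper's strategy, but there are two substantive issues.

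First, your claim that Proposition~\ref{prop:untwisting} ``does not yield a direct isomorphism of complexes in the disconnected case'' is incorrect. The proposition only requires that $P(\pi_i) = \lambda P'(\pi_i)$ for a \emph{single} $\lambda$ and all fundamental periodic domains $\pi_i$, and the extended twisting $Q$ satisfies $Q(\pi_i) = 2\OSTwist(\pi_i)$ for every component $S_i$. So the paper invokes it directly to get $\tCFK-(S,Q;t^{1/2}) \cong \tCFK-(S,\OSTwist;t)$, and you can too; there is no need to treat $S_0$ separately.

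Second, and more seriously, your mechanism for producing $t$-torsion is inconsistent with the filtration you set up. Once you pass to the associated graded complex, the differential has $t$-degree zero by construction: the graded relations are $U_c^{(p)}+U_d^{(p)}$ with no factors of $t$, so $H_*(\mathcal N_{\mathfrak s}\otimes_R\mathrm{gr}\,\underline{\mathcal L}_S)$ is a free $\ZZ[t^{1/2},t^{-1/2}]$-module and can never be $t$-torsion unless it is zero. Your ``travel around $S_j$'' argument, which picks up a factor $(1-t^{n_j})$, is secretly using the ungraded relations $tU^{(p)}_a+tU^{(p)}_b-U^{(p)}_c-U^{(p)}_d$, so it lives in the original complex, not the associated graded one; mixing the two invalidates the spectral-sequence conclusion. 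The paper avoids this by showing the associated graded complex is outright \emph{acyclic}, which then forces the homology of the original (completed) complex to vanish and hence the homology over $\ZZ[t,t^{-1}]$ to be $t$-torsion. The concrete observation you are missing is about the $t$-degree-zero part of the differential near a connecting neck: besides bigons, the graded differential also counts two rectangles (visible in Figure~\ref{fig:kauffman-state-for-links}), and these produce a tensor factor of the form
\[
  R \xrightarrow{\ \left(\begin{smallmatrix}1\\U\end{smallmatrix}\right)\ }
  R\oplus R \xrightarrow{\ \left(\begin{smallmatrix}U & -1\end{smallmatrix}\right)\ }
  R
\]
which is acyclic on the nose. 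This is the replacement for the Kauffman-state splitting you are trying to carry over: in the disconnected case the associated graded complex does not split into one free generator per state, but instead collapses entirely.
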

\begin{proof}
	We may assume as before that $\scalars = \ZZ[t^{1/2},t^{-1/2}]$
	and consider the~twisting set $Q$.
	For any component $S_i$ of $S$ and the~associated
	periodic domain $\pi_i$ the~equality $Q(\pi_i) = 2\OSTwist(\pi_i)$ holds,
	so that Proposition~\ref{prop:untwisting} provides again an~isomorphism
	between $\tCFK-(S,Q; t^{1/2})$ and $\tCFK-(S,\OSTwist; t)$.
	Because $S$ has at least two
	components, the~initial diagram contains a~neck
	connecting surfaces built for different components of $S$.
	Figure~\ref{fig:kauffman-state-for-links} shows four
	possibly choices of intersection points near such a~neck
	(a~black point paired with either a~white or a~gray point
	from the~same picture). Notice that the~top intersection
	points on the~two meridians cannot be picked, because
	the~intersecting $\beta$ curve must carry an~intersection
	point with some meridian around the~upper component of $S$.
		
	\begin{figure}[ht]
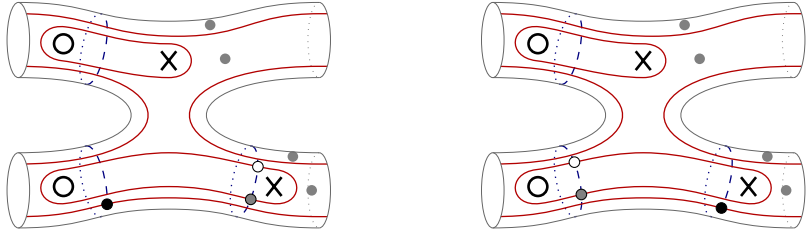

		\def\diagmarkinglevel{2}\def\KauffmanState{C1}\NB{\tikz[]{\input{\imagesfolder/hfgl0_hd-two-components-kauffman}}}\hskip 2cm
		\def\KauffmanState{C2}\NB{\tikz[]{\input{\imagesfolder/hfgl0_hd-two-components-kauffman}}}\caption{Local pictures for generators near a~neck connecting
			different components of a~singular link $S$.
		}
		\label{fig:kauffman-state-for-links}
	\end{figure}

	Consider now the~graded associate $\mathrm{gr}\tCFK-(S,Q)$.
	The~differential counts not only bigons carrying
	the~$\markO$-basepoints, but also two rectangles
	near each connecting neck. In particular, the~complex
	admits a~tensor factor of the~form
	\[
		R \xrightarrow{\ \left(\begin{smallmatrix}1 \\ U\end{smallmatrix}\right)\ }
		R\oplus R \xrightarrow{\ \left(\begin{smallmatrix}U & -1\end{smallmatrix}\right)\ }
		R,
	\]
	where $U$ is the~variable associated with the~lower $\markO$-basepoint
	in the~picture.
	This complex is acyclic, and so is the~entire graded associate complex.
	Thus, $\tCFK-(S,Q)$ is $\scalars$-torsion.
	The~statement for $\tCFK-(S;\OSTwist)$ follows
	from Proposition~\ref{prop:untwisting}.
	The~argument carries over to $\tCFK$ with no change.
\end{proof}

\section{Main results}\label{sec:main}
In this section we prove the~main results of this paper.
We start by defining \emph{Gilmore space} as a~quotient
of the~reduced Soergel space $\set{qSoergelRed}(\web)$ 
associated with a~pointed annular web $\web$ by non-local relations.
By inserting this space at vertices in the cube construction
we produce the~\emph{Gilmore complex} $\complexgilmore(\bdiagram)$.
In Theorem \ref{thm:gilmore-vs-hfk} we prove that 
the~complex is quasi-isomorphic to the twisted Heegaard Floer complex
$\tCFK(\bdiagram)$ if $\scalars = \ZZ[q^{-1},q]]$.
In Section \ref{sec:pseudo-completion} we define
our algebro-geometric quotient $\ourQ(\web)$ of the~Gilmore space.
By applying the~Bockstein spectral sequence to $\ourQ(\bdiagram)$
we prove Theorems \ref{thm:main} and \ref{thm:HHHHFK}.

\subsection{The~normalized Gilmore complex}
\label{sec:gilmore-revised}

Choose a~pointed annular web $\web$ and recall that the~marking
$\basepoint$ is on an~edge of thickness 1 that is at the~same
time an~outer edge.
We say that a~simple closed curve $\gamma$ is \emph{adapted to $\web$}
if it avoids vertices of the~web, intersects its edges transversally,
and the~region $R_\gamma$ bounded by the~curve does not contain the~marking $\basepoint$,
see Figure~\ref{fig:adapted-curve}.
The~intersection points between $\web$ and $\gamma$ fall
into two categories: \emph{incoming} and \emph{outgoing} points,
at which the~web is oriented inwards and outwards the~region $R_\gamma$
respectively.

\begin{figure}[ht]
	\NB{\tikz[]{\input{\imagesfolder/hfgl0_example-resolution-nonlocal}}}
	\caption{A~curve $\gamma$ adapted to the~web from Figure~\ref{fig:example-resolution}
		and the~bounded region $R_\gamma$. There are four incoming edges
		and three outgoing edges, one of which it thick.
		A~curve adapted to $\web$ can have turn-backs and it can cross
		an~edge more than once.
	}
	\label{fig:adapted-curve}
\end{figure}

In Section~\ref{sec:annulus} we have associated with a~pointed annular
web $\web$ the~polynomial algebra
\[
	\set{qSoergelRed}(\web) = 
	\qHoHom_0(R^{\underline k}; \set{Soergel}(\tilde\web))/(x_\basepoint),
\]
where $x_\basepoint$ is the~variable associated with the~edge terminating
at the~basepoint and $\tilde\web$ is the~directed web obtained by
cutting $\web$ along the~trace section.
Consider the~ideal $\nonlocalrelations_\web \subset \set{qSoergelRed}(\web)$
of non-local relations defined as follows.
Pick a~curve $\gamma$ adapted to $\web$ and write $\topesym_p$
for the~product of variables associated with the~edge containing
the~intersection point $p \in \web \cap \gamma$. Define
\[
	x_{\inp(\gamma)} := \prod_{p \in (\web\cap\gamma)^+} \topesym_p
\quad\text{and}\quad
	x_{\outp(\gamma)} := \prod_{p \in (\web\cap\gamma)^-} \topesym_p,
\]
where $(\web\cap\gamma)^+$ and $(\web\cap\gamma)^-$ are respectively
the~sets of incoming and outgoing intersection points, and put
\begin{equation}\label{eq:non-local-normalized}
	\nlrel_\gamma := x_{\outp(\gamma)} - q^{2i} x_{\inp(\gamma)},
\end{equation}
where $i$ is the~number of trace vertices in $R_\gamma$.
Note that $\gamma$ may intersect an~edge several times, in which case
the~variables associated with such an~edge appear in both products,
possibly with exponents bigger than 1. The~ideal $\nonlocalrelations_\web$
is generated by $\nlrel_\gamma$ for all such curves $\gamma$.

\begin{definition}\label{def:normalized-gilmore}
	The~quotient space
	\[
		\gilmoreA(\web) = \quotient
			{\set{qSoergelRed}(\web)}
			{\nonlocalrelations_\web}
	\]
	assigned to a~pointed annular web $\web$
	is called the~\emph{Gilmore space} of $\web$.
\end{definition}

Following the~common practice we write $\gilmoreA(\web; \scalars)$
to emphasize the~choice of coefficients.

\begin{example}\label{exa:GilmoreA-elementary-web}
	When $\web$ is an~elementary web, then $\gilmoreA(\web)$
	is generated by variables $x_i$ associated to its thin edges modulo
	the~following local relations
	\begin{center}
		\begin{tabular}{cp{7mm}cp{7mm}cp{7mm}c}
			\NB{\tikz[font=\small]{
				\draw[>->] (0,0) -- (1,0)
				    node[pos=0, left]  {$x_c$}
				    node[pos=1, right] {$x_a$};
				\draw (0.5,-2pt) -- ++(0,4pt);
	         }}
	         &&
			\NB{\tikz[font=\small]{
				\draw[densely dashed, \colormembrane, line width=0.3pt] (0.5, 0.5) -- (0.5, -0.5);
				\draw[>->] (0,0) -- (1,0)
				    node[pos=0, left]  {$x_c$}
				    node[pos=1, right] {$x_a$};
				\draw (0.5,-2pt) -- ++(0,4pt);
	         }}
	         &&
			\NB{\tikz[font=\small]{
				\draw[densely dashed, \colormembrane, line width=0.3pt] (0.5, 0.5) -- (0.5, -0.5);
				\draw[>->] (0,0) -- (1,0)
					node[pos=0, left]  {$x_c$}
					node[pos=1, right] {$x_a$}
					node[midway, font =\normalsize, fill=white, inner sep=1pt] {$\basepoint$};
			}}
			&&
	         \NB{\tikz[font=\small]{
	         	\draw[>-<]
	         		(-0.75, 0.5) node[left] {$x_c$}
	         			.. controls ++(0.25,0) and ++(0,0) ..
	         		(-0.25, 0) .. controls ++(0,0) and ++(0.25,0) ..
	         		(-0.75,-0.5) node[left] {$x_d$};
	         	\draw[<->]
	         		(0.75, 0.5) node[right] {$x_a$}
	         			.. controls ++(-0.25,0) and ++(0,0) ..
	         		(0.25, 0) .. controls ++(0,0) and ++(-0.25,0) ..
	         		(0.75,-0.5) node[right] {$x_b$};
		  		\draw[line width=2pt] (0.25,0) ++(0.5pt,0) -- (-0.25,0) -- ++(-0.5pt,0);
			}}
			\\[5ex]
			$x_a = x_c$ && $x_a = q^2 x_c$ && $x_a = 0$ && $x_a+x_b = x_c+x_d$
			\\[1ex]
			&&          &&             && $x_a x_b = x_c x_d$
			\\[1.5ex]
		\end{tabular}
	\end{center}
	and non-local relations $\nlrel_\gamma$ for curves $\gamma$ adapted to $\web$
	that do not intersect thick edges.
	Note the~special role of the~marking $\basepoint$: we do not enforce $x_c = 0$,
	which holds in $\set{qSoergelRed}(\web)$.
	This follows from the~non-local relation associated with a~small loop
	around the~marking, because it forces $x_a$ and $x_c$ to be~proportional.
\end{example}

\begin{example}\label{exa:GilmoreA-chain-dumbells}
  If $\web$ is a~chain of dumbbells (see Figure~\ref{fig:chain-of-dumbbells}),
  then $\gilmoreA(\web) \cong \scalars$ is generated by the constant polynomial
  if $1-q^d$ is invertible for each $d>1$. To see this,
  assign to thin edges of $\web$ variables $x_i$, $y_i$, and $z_i$
  for $i=1,\dots,k$, so that at the~$i$-th thick edge we have
  the~following situation:
  \[
	\begin{tikzpicture}[font=\small]
  		\draw[>-<]
  			(-1.5, 0.75) node[left] {$z_i$} 
  				.. controls ++(0.5,0) and ++(0,0) .. (-0.5, 0)
  				.. controls ++(0,0) and ++(0.5,0) .. (-1.5,-0.75) node[left] {$x_{i+1}$};
  		\draw[<->]
  			(1.5, 0.75) node[right] {$y_i$} 
  				.. controls ++(-0.5,0) and ++(0,0) .. (0.5, 0)
  				.. controls ++(0,0) and ++(-0.5,0) .. (1.5,-0.75) node[right] {$z_{i+1}$};
  		\draw[line width=2pt] (0.5,0) ++(0.75pt, 0) -- (-0.5,0) -- ++(-0.75pt, 0);
  		\draw[\colorgamma, densely dashed]
  			(-1.5, 0.2) .. controls ++(1,0) and ++(-0.5,0) .. (0, 0.6)
  				node[above] {$\gamma'_i$}
  				.. controls ++(0.5, 0) and ++(-1, 0) .. (1.5, 0.2)
  			(-1.5,-0.2) .. controls ++(1,0) and ++(-0.5,0) .. (0,-0.6)
  				node[below] {$\gamma_i$}
  				.. controls ++(0.5, 0) and ++(-1, 0) .. (1.5,-0.2);
	\end{tikzpicture}
  \]
  where the~curves $\gamma_i$ and $\gamma'_i$ have no more intersections with $\web$
  and the~edges with variables $x_i$ and $y_i$ meet at a~trace vertex, so that
  $x_i = q^2y_i$. It is understood that $z_1 = x_1$ and $z_k = y_k$.
  The~non-local relations associated with curves $\gamma_i$
  and $\gamma'_i$ forces $z_i = q^{2i-2k}y_i$ for each $i$.
  Substituting that in the~linear local relation
  \[
  	z_i + x_{i+1} = y_i + z_{i+1}
  \]
  forces $(q^{2i-2k}-1)(y_i - q^2y_{i+1}) = 0$,
  so that all variables are proportional to each other.
  In particular, to $y_1$, which is killed by the~basepoint relation.
  Finally, since there is no non-trivial relation involving polynomials
  of degree $0$, one has $\gilmoreA(\web) \cong \scalars$ as claimed.
\end{example}

\begin{figure}[ht]
  \[
    \NB{\tikz[font= \tiny ,scale=0.5]{\input{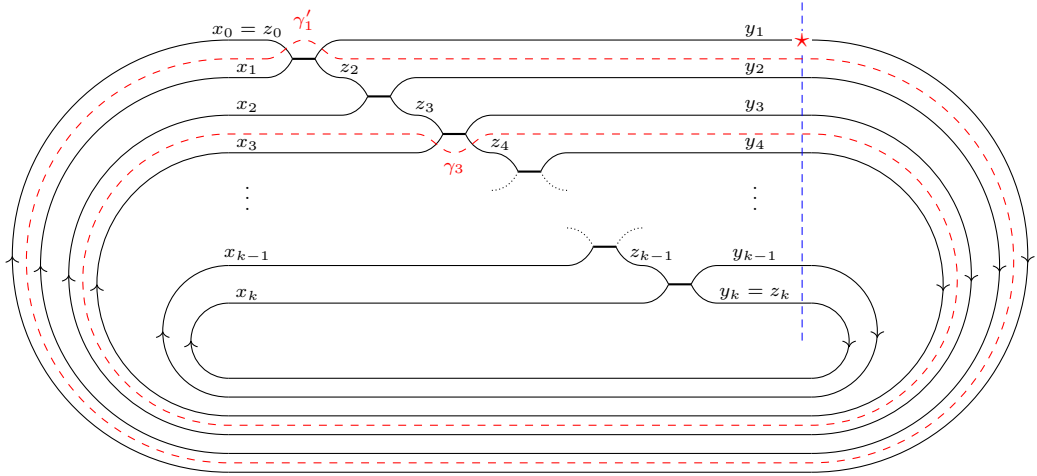}}}
  \]    
  \caption{A~pointed chain of dumbbells with a~trace section.}
  \label{fig:chain-of-dumbbells}
\end{figure}

\begin{example}\label{exa:GilmoreA-disconnected-web}
	Suppose that $\web$ is a~disjoint union of webs $\web_0, \dots, \web_r$,
	positioned so that $\web_i$ is surrounded by $\web_{i-1}$ for $i=1,\dots,r$,
	and write $w_i$ for the~index of the~component $\web_i$.
	Consider a~loop $\gamma_i$ separating $\web_{i-1}$ from $\web_i$;
	the~associated non-local relations forces $1 = q^{w_i + \dots + w_r}$.
	Hence, $\gilmoreA(\web)$ is annihilated by $1 - q^{\gcd(w_1,\dots,w_r)}$.
	In particular, the~space vanishes when $1-q^d$ is invertible for all $d > 0$.
\end{example}

\begin{proposition}\label{prop:gilmore-functor}
	The~assignment $\web \mapsto \gilmoreA(\web)$ extends to a~functor
	\[
		\gilmoreA\colon \cat{qAFoam}^\basepoint \to \cat{grMod}
	\]
	that is a~quotient of the~functor $\set{qSoergelRed}$ from Section~\ref{sec:annulus}.
\end{proposition}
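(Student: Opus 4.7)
The first observation is that $\gilmoreA(\web)$ is tautologically a quotient of $\set{qSoergelRed}(\web)$ on objects via the canonical projection $\pi_\web\colon \set{qSoergelRed}(\web) \twoheadrightarrow \gilmoreA(\web)$. To upgrade the family $\{\pi_\web\}$ to a natural transformation, and hence to extend $\gilmoreA$ to a functor that is a quotient of $\set{qSoergelRed}$, I must verify that for every 2-morphism $\foam\colon \web_1 \to \web_2$ in $\cat{qAFoam}^\basepoint$, the map $\set{qSoergelRed}(\foam)$ sends the ideal $\nonlocalrelations_{\web_1}$ into $\nonlocalrelations_{\web_2}$; functoriality of $\gilmoreA$ will then follow from functoriality of $\set{qSoergelRed}$ established in Proposition~\ref{prop:Soergel-tqfts}.

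By additivity and the fact that $\set{qSoergelRed}$ is already a functor, it suffices to verify the compatibility for each of the seven generating foam pieces of Figure~\ref{fig:basic-foams}. Each such generator is supported in a 3-ball whose projection is an open disk $U \subset \SxR$, and outside $U$ the webs $\web_1$ and $\web_2$ coincide with identical edge labels, with no trace vertices being created, destroyed, or crossed. For any curve $\gamma$ adapted to $\web_1$ that avoids $U$, the relation $\nlrel_\gamma$ is literally an element of both $\nonlocalrelations_{\web_1}$ and $\nonlocalrelations_{\web_2}$, so the issue only concerns curves that cross~$U$.

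I would then handle the seven cases in turn. The multiplication $m_P$ does not change the web and $\nonlocalrelations_\web$ is an ideal, so preservation is automatic. The associativity and coassociativity foams change the web by a rearrangement at a vertex with no effect on edge thicknesses across $U$, so every curve $\gamma$ matches a curve $\gamma'$ in $\web_2$ with identical input and output products, reducing to the trivial case. The cup and cap foams create or remove a digon; applying the dot-migration relation \eqref{rel:dot-migration} to the affected thick edge and the explicit formula for the Frobenius trace $P \star Q$ rewrites $x_{\inp(\gamma)}$ and $x_{\outp(\gamma)}$ in terms of the new edge variables, and a direct computation identifies the image of $\nlrel_\gamma$ with $\nlrel_{\gamma'}$ for the obvious adapted curve $\gamma'$.

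The main obstacle is the zip/unzip pair. Here the image of $\nlrel_\gamma$ under the formula in Section~\ref{sec:foam-functor} involves the sum $\sum_{\alpha \in T(a,b)} (-1)^{|\widehat\alpha|}$ of Schur-decorated webs, and there is no immediate reason for the resulting expression to lie in $\nonlocalrelations_{\web_2}$. The resolution is Corollary~\ref{cor:sym-pol-add-variables}: it asserts exactly that the sum $\sum_\alpha (-1)^{|\widehat\alpha|} s_\alpha(X) s_{\widehat\alpha}(Y)$ is invariant under adding a common variable set $Z$ to both $X$ and $Y$. Applied to the new edges produced (or absorbed) by the zip/unzip inside $U$, this corollary translates—via dot migration on the affected trivalent vertices—into the statement that $x_{\inp(\gamma)} - q^{2i} x_{\outp(\gamma)}$ pushes forward to an element congruent modulo local Soergel relations to a non-local relation on $\web_2$, with the same exponent $2i$ because the local foam introduces no new trace vertex. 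This completes the compatibility check for all generators and establishes the required quotient of functors.
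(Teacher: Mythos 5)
Your overall strategy matches the paper's: reduce to the seven generating foams and verify that each sends $\nonlocalrelations_{\web_1}$ into $\nonlocalrelations_{\web_2}$. You also correctly identify that $\zip$ is the hard case and that Corollary~\ref{cor:sym-pol-add-variables} is the key ingredient for it. However, there are two significant gaps.

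First, the paper proves a preparatory lemma (Lemma~\ref{lem:nlrel-vertex-move}) stating that $\nlrel_\gamma = \nlrel_{\gamma'}$ in $\set{qSoergelRed}(\web)$ whenever $\gamma$ and $\gamma'$ differ only by isotoping past a trivalent vertex (swapping incoming for outgoing edges), which follows immediately from the Soergel relations. This one observation disposes of $\bigon$, $\nogib$, $\assoc$, $\coassoc$, and $\piz$ uniformly: any curve crossing the local disk can be isotoped away from the support of the foam, and there is nothing left to compute. Your proposal instead suggests a ``direct computation'' with the Frobenius trace formula $P\star Q$ for cup and cap, which is both heavier and unnecessary; more importantly you never make the vertex-isotopy principle explicit, which is what actually makes the easy cases easy.

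Second, for the zip case your argument is too vague to be checkable, and contains an incorrect claim: you say the image of $\nlrel_\gamma$ is ``congruent modulo local Soergel relations to a non-local relation on $\web_2$.'' It is not a single non-local relation. As the paper's computation shows (equation~\eqref{eq:Pgamma}), the image of $\nlrel_\gamma$ under $\zip$ is a linear combination of the \emph{two} relations $\nlrel_{\gamma_1}$ and $\nlrel_{\gamma_2}$ (for two distinct adapted curves that route around the new thick edge on opposite sides), with coefficients that are alternating sums of Schur polynomials. Seeing this requires the non-trivial step of splitting $T(a,b)$ into Young diagrams with full versus incomplete first column and matching the resulting pieces to $e_a$ and $e_b$ factors that produce $x_{\inp(\gamma_1)}, x_{\outp(\gamma_1)}$ and $x_{\inp(\gamma_2)}, x_{\outp(\gamma_2)}$; only after that does Corollary~\ref{cor:sym-pol-add-variables} close the argument. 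Without this decomposition your appeal to the corollary does not by itself land the image inside the ideal $\nonlocalrelations_{\web_2}$, so the central case of the proposition remains unproved.
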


In order to prove the~proposition, we need the~following property of non-local relations.

\begin{lemma}\label{lem:nlrel-vertex-move}
	Let $\gamma$ and $\gamma'$ be curves adapted to a~pointed annular web $\web$
	that coincide everywhere except a~small neighborhood of a~vertex $v$,
	in which $\gamma$ intersects only the~incoming edges,
	whereas $\gamma'$ intersects the~outgoing edges.
	Then $\nlrel_\gamma = \nlrel_{\gamma'}$ in $\set{qSoergelRed}(\web)$.
\end{lemma}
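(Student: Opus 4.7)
The proof is a local calculation at $v$: since $\gamma$ and $\gamma'$ coincide outside a small neighborhood $N$ of $v$, the contributions of intersection points outside $N$ to $x_{\inp}$ and $x_{\outp}$ are the same for both curves, and trace vertices outside $N$ contribute equally to $i$ and $i'$. So I reduce to matching local contributions within $N$ while accounting for any change in trace-vertex count inside $N$.

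The main case is $v$ trivalent. Say $v$ is a split vertex with thick incoming edge $u$ of thickness $a+b$ and thin outgoing edges $u'$, $u''$ of thicknesses $a$ and $b$. Since trivalent vertices are chosen away from the trace section $\mu$, the count $i$ is unchanged. The topology of the move $\gamma \leadsto \gamma'$ forces the three local intersection points ($p \in \gamma \cap u$ and $p', p'' \in \gamma' \cap (u' \cup u'')$) to share the same orientation (all outgoing or all incoming) relative to their respective regions. The Soergel relation at $v$ identifies symmetric polynomials in $X_u$ with those in $X_{u'} \sqcup X_{u''}$; in particular
\[
	e_{a+b}(X_u) = e_a(X_{u'}) \cdot e_b(X_{u''}),
\]
so $\topesym_p = \topesym_{p'} \cdot \topesym_{p''}$. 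Thus the local contributions to $x_{\outp}$ (or $x_{\inp}$) agree for $\gamma$ and $\gamma'$, giving $\nlrel_\gamma = \nlrel_{\gamma'}$. The merge vertex case is symmetric under reversal of orientations.

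If $v$ is a bivalent vertex not on $\mu$ (an insertion vertex), the variables on the two sides of $v$ are identified by the insertion relation, the contributions literally match, and $i = i'$, so the equality is immediate. The most delicate case is when $v$ is a trace vertex, i.e., bivalent and on $\mu$: here the move $\gamma \leadsto \gamma'$ changes $i$ by $\pm 1$, but the quantum trace relation across $\mu$ introduces a compensating $q$-factor on the $\topesym$ contribution at the local intersection point. Since trace vertices arise on thin edges in the configurations of this paper (compare Example~\ref{exa:GilmoreA-chain-dumbells}), the relevant $\topesym$ is a single variable of degree~$2$, and the quantum trace factor $q^{\mp 2}$ exactly cancels the $q^{\pm 2}$ arising from the $i$-shift. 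The main obstacle is fixing the sign conventions---the coorientation of $\mu$, the direction of the move $\gamma \leadsto \gamma'$, and the exponent in the quantum trace relation---so that the two compensating factors appear with opposite signs; with the conventions of Section~\ref{sec:annulus} in place this reduces to a direct computation.
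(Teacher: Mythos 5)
Your treatment of the trivalent case is correct and matches the paper's one-sentence proof: the only change in $\nlrel$ when sliding across a split or merge vertex $v$ is that $\topesym_u = e_{a+b}(X_u)$ is replaced by $\topesym_{u'}\topesym_{u''} = e_a(X_{u'})e_b(X_{u''})$, which agree by the Soergel relation at $v$. You also make explicit the topological observation (implicit in the paper) that $p$, $p'$, $p''$ all lie on the same side, i.e.\ all contribute to $x_{\inp}$ or all to $x_{\outp}$, depending on whether $v$ ends up inside $R_\gamma$ or inside $R_{\gamma'}$; this is a worthwhile clarification.

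However, you have misjudged the scope and the difficulty. In this paper a \emph{web} is an oriented \emph{trivalent} graph (Definition~\ref{def:web}), and a curve adapted to $\web$ avoids its vertices; trace vertices and insertions are auxiliary bivalent structure, not vertices of $\web$. The lemma, and the paper's proof (which invokes only Soergel relations, never the quantum trace relation), concern only trivalent vertices. Your designation of the trace-vertex case as ``the most delicate case'' and ``the main obstacle'' therefore misidentifies the content of the proof. Worse, your claim that the quantum-trace $q^{\mp 2}$ always cancels the $q^{\pm 2}$ from the shift in $i$ is false. Write the shared part of the two curves as contributing $Z_{\outp}$ and $Z_{\inp}$, and let $x_c$, $x_a = q^2 x_c$ be the incoming and outgoing variables at the trace vertex $v$. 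If $v \in R_\gamma \setminus R_{\gamma'}$, the local intersection point contributes to $x_{\inp}$ and one finds
$\nlrel_{\gamma'} = Z_{\outp} - q^{2(i-1)}x_a Z_{\inp} = Z_{\outp} - q^{2i}x_c Z_{\inp} = \nlrel_\gamma$,
so the factors indeed cancel. But if $v \in R_{\gamma'} \setminus R_\gamma$, the local intersection point contributes to $x_{\outp}$, which carries no $q^{2i}$, and one gets
$\nlrel_{\gamma'} = x_a Z_{\outp} - q^{2(i+1)}Z_{\inp} = q^2\bigl(x_c Z_{\outp} - q^{2i}Z_{\inp}\bigr) = q^2\nlrel_\gamma$,
a genuine $q^2$ discrepancy. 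No adjustment of the sign conventions in Section~\ref{sec:annulus} removes it; the equality of elements claimed by the lemma simply does not extend to moving across a trace vertex. Since that case is not part of the lemma, your main argument is unaffected, but you should be aware that the extension you attempted is not merely a matter of bookkeeping---it is false.
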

\begin{proof}
	The~only difference between $\nlrel_{\gamma}$ and $\nlrel_{\gamma'}$
	is that in one of the~two terms of $\nlrel_{\gamma}$ a~product of
	variables associated with the~edges terminating at $v$ is replaced by
	a~product of variables associated with the~edges originating at $v$.
	The~equality of both products is imposed by Soergel relations.
\end{proof}

\begin{proof}[Proof of Proposition~\ref{prop:gilmore-functor}]
    We have to check that linear maps induced by foams preserve the~ideal
    of non-local relations. In all diagrams, the~region $R_\gamma$ enclosed
    by a~simple closed curve $\gamma$ is located below $\gamma$. 

    There are six maps ($\bigon$, $\nogib$, $\zip$, $\piz$, $\assoc$ and
    $\coassoc$) to be inspected, but in the~view of Lemma~\ref{lem:nlrel-vertex-move}
    only $\zip$ required a~non-trivial check. Indeed, let us demonstrate
    how the~lemma is used in case of the~map $\nogib$, which eliminates a~bigon.

    Denote by $\web$ and $\web'$ marked annular webs with a~membrane that
    are identical except in a~small disk $D$ disjoint from the membrane and
    the~marking $\basepoint$, in which
    \[
      \web =\NB{\tikz[scale=2, font = \tiny]{}}
     \qquad\text{and}\qquad
      \web' =\NB{\tikz[scale=2, font = \tiny]{}}
     \quad.
    \]
    If a~curve $\gamma$ does not pass through the~bigon in $\web$,
    then the~relation $\nlrel_\gamma$ is clearly preserved. Otherwise,
    we apply Lemma~\ref{lem:nlrel-vertex-move} to isotope $\gamma$
    away from the~bigon:
    \[
      \NB{\tikz[scale=2, font = \tiny]{\begin{scope}
	\draw[>-] (0,0) -- +(0.2,0) node[pos=0, left] {$a+b$};
	\draw[->] (0.8,0) -- +(0.2, 0) node[pos=1, right] {$a+b$};
	\draw[->-] (0.2, 0) .. controls +(0.3,0.3) and +(-0.3, 0.3)
	   .. (0.8,0) node[pos =0.5, below] {$a$};
	\draw[->-] (0.2, 0) .. controls +(0.3,-0.3) and +(-0.3, -0.3)
	   .. (0.8,0) node[pos =0.5, below] {$b$};
	\draw[densely dashed,\colorgamma] (0, -0.2) .. controls +(0.6, 0) and +(-0.4, 0)
	   .. (1, 0.3) node [pos =1, right, \colorgamma] {$\gamma'$};
\end{scope}
}}
      \quad \rightsquigarrow \quad
      \NB{\tikz[scale=2, font = \tiny]{\begin{scope}
	\draw[>-] (-0.1,0) -- +(0.3,0) node[pos=0, left] {$a+b$};
	\draw[->] (0.8,0) -- +(0.2, 0) node[pos=1, right] {$a+b$};
	\draw[->-] (0.2, 0) .. controls +(0.3,0.3) and +(-0.3, 0.3)
	        .. (0.8,0) node[pos =0.5, below] {$a$};
	\draw[->-] (0.2, 0) .. controls +(0.3,-0.3) and +(-0.3, -0.3)
	        .. (0.8,0) node[pos =0.5, below] {$b$};
	\draw[densely dashed,\colorgamma] (-0.1, -0.2)
		.. controls +(0.2, 0.1) and +(-0.3, -0.1) .. (0.4,0.29)
	    .. controls ++(0.09, 0.03) and ++(-0.1,0) .. (1, 0.32)
	    node [pos =1, right, \colorgamma] {$\gamma'$};
    \end{scope}
}}
    \]
    Analogue arguments ensure that $\assoc$, $\coassoc$, $\bigon$ and $\piz$
    induce morphisms on quotient spaces.

    Let us now deal with $\zip$. Denote by $\web$ and $\web'$
    pointed annular webs with a~membrane that are identical except in a~small
    disk $D$ disjoint from the membrane and the marking $\basepoint$, in which
    \[
      \web =\NB{\tikz[scale=2, font = \tiny]{\begin{scope}[font = \tiny]
  \coordinate (lb) at (-.5,-0.25);
  \coordinate (lt) at (-.5,0.25);
  \coordinate (rb) at ( .5,-0.25);
  \coordinate (rt) at ( .5,0.25);
  \draw[->] (lt) -- (rt) node[pos =0.0, left] {$a$};
  \draw[->] (lb) -- (rb) node[pos =0.0, left] {$b$};
\end{scope}}} \qquad\text{and} \qquad
      \web' =\NB{\tikz[scale=2, font = \tiny]{\begin{scope}[font = \tiny]
  \coordinate (lb) at (-.6,-0.25);
  \coordinate (lt) at (-.6,0.25);
  \coordinate (lo) at (-.25, 0);
  \coordinate (ro) at ( .25, 0);
  \coordinate (rb) at ( .6,-0.25);
  \coordinate (rt) at ( .6,0.25);
  \draw[>-] (lb) .. controls +(0.2,0) and (lo) .. (lo) node[pos=0, left] {$b$};
  \draw[>-] (lt) .. controls +(0.2,0) and (lo) .. (lo) node[pos=0, left] {$a$};
  \draw[->] (ro) .. controls (ro) and +(-0.2, 0) .. (rt) node[pos=1, right] {$a$};
  \draw[->] (ro) .. controls (ro) and +(-0.2, 0) .. (rb) node[pos=1, right] {$b$};
  \draw[->-] (lo) -- (ro) node[pos=0.5, inner sep=1pt, above] {$a+b$};

\end{scope}}}.
    \]
    The only problematic curves are the ones that, inside $D\ric$, go between
    the~two edges of $\web$:
    \[
       \NB{\tikz[scale=2, font = \tiny]{\begin{scope}[font = \tiny]
  \coordinate (lb) at (-.5,-0.25);
  \coordinate (lt) at (-.5,0.25);
  \coordinate (rb) at ( .5,-0.25);
  \coordinate (rt) at ( .5,0.25);
  \draw[->] (lt) -- (rt) node[pos =0.0, left] {$a$};
  \draw[->] (lb) -- (rb) node[pos =0.0, left] {$b$};
  \draw[densely dashed, \colorgamma] (-0.50, 0) -- (0.5, 0) node [pos =1, right, \colorgamma] {$\gamma$};

\end{scope}}}.
    \]
  Let us denote by $\gamma_1$ and $\gamma_2$ curves adapted to $\web'$
  that are identical to $\gamma$ outside of $D\ric$, whereas inside they look
  like in the~following diagram:
  \[
    \NB{\tikz[scale=2, font = \tiny]{\begin{scope}[font = \tiny]
  \coordinate (lb) at (-.6,-0.25);
  \coordinate (lt) at (-.6,0.25);
  \coordinate (lo) at (-.25, 0);
  \coordinate (ro) at ( .25, 0);
  \coordinate (rb) at ( .6,-0.25);
  \coordinate (rt) at ( .6,0.25);
  \draw[>-] (lb) .. controls +(0.2,0) and (lo) .. (lo) node[pos=0, left] {$b$};
  \draw[>-] (lt) .. controls +(0.2,0) and (lo) .. (lo) node[pos=0, left] {$a$};
  \draw[->] (ro) .. controls (ro) and +(-0.2, 0) .. (rt) node[pos=1, right] {$a$};
  \draw[->] (ro) .. controls (ro) and +(-0.2, 0) .. (rb) node[pos=1, right] {$b$};
  \draw[->-] (lo) -- (ro) node[pos=0.5, inner sep=1pt, above] {$a+b$};
  \draw[densely dashed,\colorgamma] (-0.6, 0) -- (-0.52,0)
  	.. controls +(0.2, 0) and +(-0.2, 0) .. (0, 0.35) node [pos=1, above, \colorgamma] {$\gamma_1$}
  	.. controls +(0.2, 0) and +(-0.2, 0) .. (0.52, 0) -- (0.6,0);
  \draw[densely dashed, \colorgamma] (-0.6, 0) -- (-0.52,0)
  	.. controls +(0.2, 0) and +(-0.2, 0) .. (0, -0.3) node [pos=1, below, \colorgamma] {$\gamma_2$}
  	.. controls +(0.2, 0) and +(-0.2, 0) .. (0.52, 0) -- (0.6,0);
  
\end{scope}}}.
  \]
  In order to prove that the~zip map is well-defined, we shall show that
  $\nlrel_\gamma$ is mapped onto an element of the~form
  \begin{align}\label{eq:Pgamma}
	\nlrel_{\gamma_1} \sum_{\alpha \in T(a-1,b)}
		(-1)^{|\widehat{\alpha}|}
		s_{\alpha}(\myunderliney')
		s_{\widehat{\alpha}}(\myunderlinez)
	\quad+\quad
	\nlrel_{\gamma_2} \sum_{\alpha \in T(a,b-1)}
		(-1)^{b+|\widehat{\alpha}|}
		s_{\alpha}(\myunderliney')
		s_{\widehat{\alpha}}(\myunderlinez),
  \end{align}  
  where the~set of variables $Y\ric$, $Z\ric$, $Y'\ric$,
  and $Z'$ are associated with edges of the~web as indicated in
  the~figure below:
  \[
    \NB{\tikz[font = \tiny, scale =2]{\begin{scope}[font = \tiny]
  \coordinate (lb) at (-.6,-0.25);
  \coordinate (lt) at (-.6,0.25);
  \coordinate (lo) at (-.25, 0);
  \coordinate (ro) at ( .25, 0);
  \coordinate (rb) at ( .6,-0.25);
  \coordinate (rt) at ( .6,0.25);
  \draw[>-] (lb) .. controls +(0.2,0) and (lo) .. (lo)
	node[pos=0, left] {$b$} coordinate[pos=0.5] (pb);
  \draw[>-] (lt) .. controls +(0.2,0) and (lo) .. (lo)
  	node[pos=0, left] {$a$} coordinate[pos=0.5] (pa);
  \draw[->] (ro) .. controls (ro) and +(-0.2, 0) .. (rt)
  	node[pos=1, right] {$a$} coordinate[pos=0.5] (pap);
  \draw[->] (ro) .. controls (ro) and +(-0.2, 0) .. (rb)
  	node[pos=1, right] {$b$} coordinate[pos=0.5] (pbp);
  \draw[->-] (lo) -- (ro) node[pos=0.5, inner sep=1pt, above] {$a+b$};
 
	\node[draw,rectangle, gray] (PAP) at ($(pap) + (0,0.4)$) {$\myunderliney'$};
	\draw[gray] (pap) -- (PAP);
	\node[draw,rectangle, gray] (PBP) at ($(pbp) + (0,-0.4)$) {$\myunderlinez'$};
	\draw[gray] (pbp) -- (PBP);
	\node[draw,rectangle, gray] (PA) at ($(pa) + (0,0.4)$) {$\myunderliney$};
	\draw[gray] (pa) -- (PA);
	\node[draw,rectangle, gray] (PB) at ($(pb) + (0,-0.4)$) {$\myunderlinez$};
	\draw[gray] (pb) -- (PB);

\end{scope}
}}.
  \]
  This implies that $\zip(\nlrel_\gamma)$ belongs to $\nonlocalrelations_{\web'}$,
  hence, non-local relations are preserved by the~map.
  Using the~equality  
  \begin{equation}
	\sum_{\alpha \in T(a,b)}
		(-1)^{|\widehat{\alpha}|}
		s_\alpha(\myunderliney')
		s_{\widehat{\alpha}}(\myunderlinez) 
    = \sum_{\alpha \in T(a,b)}
		(-1)^{|\widehat{\alpha}|}
		s_\alpha(\myunderliney)
		s_{\widehat{\alpha}}(\myunderlinez')
  \end{equation}
  that holds in $\set{Soergel}(\web')$, we can rewrite the~image of
  $\nlrel_\gamma = x_{\outp(\gamma)} - q^{2i} x_{\inp(\gamma)}$ as
  \begin{equation}\label{eq:zip-of-NL}
	x_{\outp(\gamma)} \sum_{\alpha \in T(a,b)}
		(-1)^{|\widehat{\alpha}|}
		s_\alpha(\myunderliney')
		s_{\widehat{\alpha}}(\myunderlinez) 
	\quad-\quad
	q^{2i} x_{\inp(\gamma)} \sum_{\alpha \in T(a,b)}
		(-1)^{|\widehat{\alpha}|}
		s_\alpha(\myunderliney)
		s_{\widehat{\alpha}}(\myunderlinez').
  \end{equation}
  We shall analyze each term separately. Notice first that
  \begin{align}
  \label{eq:ine}
    x_{\mathrm{in}(\gamma_1)} &= x_{\mathrm{in}(\gamma)}
                                e_a(\myunderliney), &
    x_{\mathrm{in}(\gamma_2)} &= x_{\mathrm{in}(\gamma)}
                                e_b(\myunderlinez'),\\
  \label{eq:oute}
    x_{\mathrm{out}(\gamma_1)} &= x_{\mathrm{out}(\gamma)}
                                e_a(\myunderliney'), &
    x_{\mathrm{out}(\gamma_2)} &= x_{\mathrm{out}(\gamma)}
                                e_b(\myunderlinez).
  \end{align}
  Denote by $T_1(a,b)$ the subset of Young diagrams with exactly $a$ boxes
  in the~first column and set $T_2(a,b) = T(a,b)\setminus T_1(a,b)$.
  Note that $\widehat{\beta}$ has exactly $b$ boxes the~first column
  when $\beta \in T_2(a,b)$. Hence, for such $\alpha$ and $\beta$ one has
  \begin{align*}
	s_\alpha(\myunderliney) &=
		e_a(\myunderliney) s_{\alpha'}(\myunderliney), &
	s_{\widehat{\beta}}(\myunderlinez) &=
		e_b(\myunderlinez) s_{\widehat{\beta}'}(\myunderlinez),
	\\
	s_\alpha(\myunderliney') &=
		e_a(\myunderliney') s_{\alpha'}(\myunderliney') &
	s_{\widehat{\beta}}(\myunderlinez') &=
		e_b(\myunderlinez') s_{\widehat{\beta}'}(\myunderlinez'),
  \end{align*}
  where $\alpha'$ (resp.\ $\widehat{\beta}'$) is the~Young diagram
  $\alpha$ (resp.\ $\widehat{\beta}$) with its first column removed.
  On the one hand, using (\ref{eq:oute}) one obtains:
  \begin{multline}\label{eq:xout}
	x_{\mathrm{out}(\gamma)} \sum_{\alpha \in T(a,b)}
  		(-1)^{|\widehat{\alpha}|}
  		s_\alpha(\myunderliney')
  		s_{\widehat{\alpha}}(\myunderlinez)
  	\\
  	=\quad x_{\mathrm{out}(\gamma_1)} \sum_{\alpha\in T_1(a,b)}
         (-1)^{\left|\widehat{\alpha}\right|}
         s_{\alpha'}(\myunderliney')
         s_{\widehat{\alpha}}(\myunderlinez)
	\quad+\quad x_{\mathrm{out}(\gamma_2)} \sum_{\alpha\in T_2(a,b)}
         (-1)^{\left|\widehat{\alpha}\right|}
		s_\alpha(\myunderliney')
		s_{\widehat{\alpha}'}(\myunderlinez)
	\\
	=\quad x_{\mathrm{out}(\gamma_1)} \sum_{\alpha\in T(a-1,b)}
         (-1)^{\left|\widehat{\alpha}\right|}
         s_{\alpha}(\myunderliney')s_{\widehat{\alpha}}(\myunderlinez)
    \quad+\quad x_{\mathrm{out}(\gamma_2)} \sum_{\alpha\in T(a,b-1)}
         (-1)^{b+\left|\widehat{\alpha}\right|}
		s_\alpha(\myunderliney')
		s_{\widehat{\alpha}}(\myunderlinez).
  \end{multline}
  On the other hand, using (\ref{eq:ine}) and Corollary~\ref{cor:sym-pol-add-variables} one computes
  \begin{multline}\label{eq:xin}
	x_{\mathrm{in}(\gamma)} \sum_{\alpha \in T(a,b)}
		(-1)^{|\widehat{\alpha}|}
		s_\alpha(\myunderliney)
		s_{\widehat{\alpha}}(\myunderlinez')
	\\
	=\quad x_{\mathrm{in}(\gamma_1)} \sum_{\alpha\in T(a-1,b)}
         (-1)^{\left|\widehat{\alpha}\right|}
         s_{\alpha}(\myunderliney)
         s_{\widehat{\alpha}}(\myunderlinez')
	\quad+\quad x_{\mathrm{in}(\gamma_2)} \sum_{\alpha\in T(a,b-1)}
         (-1)^{b+\left|\widehat{\alpha}\right|}
		s_\alpha(\myunderliney)
		s_{\widehat{\alpha}}(\myunderlinez')
	\\
	=\quad x_{\mathrm{in}(\gamma_1)} \sum_{\alpha\in T(a-1,b)}
		(-1)^{|\widehat\alpha|}
		s_{\alpha}(\myunderliney\sqcup\myunderlinez)
		s_{\widehat\alpha}(\myunderlinez'\sqcup\myunderlinez)
	\quad+\quad x_{\mathrm{in}(\gamma_2)} \sum_{\alpha\in T(a,b-1)}
		(-1)^{b+|\widehat\alpha|}
		s_\alpha(\myunderliney\sqcup\myunderlinez)
		s_{\widehat\alpha}(\myunderlinez'\sqcup\myunderlinez)
  	\\
	=\quad x_{\mathrm{in}(\gamma_1)} \sum_{\alpha\in T(a-1,b)}
		(-1)^{|\widehat\alpha|}
		s_{\alpha}(\myunderliney'\sqcup\myunderlinez')
		s_{\widehat\alpha}(\myunderlinez'\sqcup\myunderlinez)
	\quad+\quad x_{\mathrm{in}(\gamma_2)} \sum_{\alpha\in T(a,b-1)}
		(-1)^{b+|\widehat\alpha|}
		s_\alpha(\myunderliney'\sqcup \myunderlinez')
		s_{\widehat\alpha}(\myunderlinez' \sqcup \myunderlinez)
  	\\
	=\quad x_{\mathrm{in}(\gamma_1)} \sum_{\alpha\in T(a-1,b)}
		(-1)^{|\widehat\alpha|}
		s_{\alpha}(\myunderliney')
		s_{\widehat\alpha}(\myunderlinez)
	\quad+\quad x_{\mathrm{in}(\gamma_2)} \sum_{\alpha\in T(a,b-1)}
		(-1)^{b+|\widehat\alpha|}
		s_\alpha(\myunderliney')
		s_{\widehat\alpha}(\myunderlinez).
\end{multline}
Putting (\ref{eq:xout}) and (\ref{eq:xin}) together, we get that
formulas \eqref{eq:Pgamma} and \eqref{eq:zip-of-NL} coincide as desired.
\end{proof}

Proposition~\ref{prop:gilmore-functor} allows us to use the~framework
from Section~\ref{sec:cube-of-resolutions} to associate with a~braid
diagram $\bdiagram$ of a~link a~chain complex $\complexgilmore(\bdiagram)$,
by applying the~functor $\gilmoreA$ to the~formal complex $\Bracket{\bdiagram}$.
We refer to it as the~\emph{(normalized) Gilmore complex of $\bdiagram$}.
It follows immediately that the~homotopy type of $\complexgilmore(\bdiagram)$
is invariant under braid moves and conjugation away from the~marking $\basepoint$.
It can be also shown that the~homology is invariant under stabilization
if $1-q^d$ is invertible for all $d > 0$.
The~question whether the~complex is truly a~knot invariant remains open.

 \subsection{The~identification with \texorpdfstring{$\protect\tHFK$}{twisted HFK}}

Hereafter we show that $\complexgilmore(\bdiagram)$ computes
the~twisted Hee\-gaard Floer homology of $\bdiagram$.
The~first step is to compare the~two constructions for diagrams with no real
crosssings; the~general case follows by applying the~exact skein triangle.
We begin by reducing the~set of generators of the~ideal
$\nonlocalrelations_\web$.
Consider a~\emph{coherent cycle} $Z$ in $S$,
i.e.\ a~directed closed path in $\web$ with no self-intersections
that does not pass through the~distinguished vertex $\basepoint$. 
Write $\nlrel_Z$ for the~nonlocal relation associated
with the~curve $\gamma_Z$ obtained from $Z$ by pushing the~cycle
slightly away from the~region $R_Z$ enclosed by $Z$. This is illustrated in Figure~\ref{fig:coherentcycles}.

\begin{figure}[ht]
  \centering
  \NB{\tikz[scale=0.75]{\input{\imagesfolder/hfgl0_example-gamma_Z}}}
  \caption{A coherent cycle $Z$ in a web and the corresponding curve $\gamma_Z$.}
  \label{fig:coherentcycles}
\end{figure}

\begin{lemma}\label{lem:nlrel-reduction}
	The~polynomials $\nlrel_Z$ parametrized by coherent cycles in $S$
	generate the~ideal $\nonlocalrelations_\web$.
\end{lemma}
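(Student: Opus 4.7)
The plan is to show that for an arbitrary adapted curve $\gamma$, the relation $\nlrel_\gamma$ lies in the ideal $I \subset \nonlocalrelations_\web$ generated by $\nlrel_Z$ for coherent cycles $Z$. One inclusion is immediate since every $\gamma_Z$ is itself an adapted curve; the work is to establish the converse.

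The first step is a \emph{normalization} of $\gamma$ via Lemma~\ref{lem:nlrel-vertex-move}. That lemma permits us to push $\gamma$ across any trivalent vertex of $\web$ lying just inside $R_\gamma$ without changing $\nlrel_\gamma$ as an element of $\set{qSoergelRed}(\web)$. Iterating, I may assume that $R_\gamma$ contains no trivalent vertex of $\web$ — the only vertices of $\web$ possibly remaining inside $R_\gamma$ are trace vertices on $\mu$. In this situation, the subweb $W_\gamma := \web \cap R_\gamma$ is a disjoint union of simple arcs, each entering $R_\gamma$ once and exiting once, possibly punctuated by crossings of the trace section.

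The second step is a \emph{decomposition} of the normalized $\gamma$. If $W_\gamma$ splits into two subfamilies that can be separated by an embedded arc in $R_\gamma$ from $\gamma$ to itself, I cut $\gamma$ along this arc to obtain two adapted curves $\gamma_1$, $\gamma_2$ whose $x_{\inp}, x_{\outp}$ and trace-vertex counts combine multiplicatively into those of $\gamma$, giving $\nlrel_\gamma = x_{\inp(\gamma_1)} \nlrel_{\gamma_2} + q^{2i_2} \nlrel_{\gamma_1} x_{\outp(\gamma_2)}$ or the analogous identity. By induction on the number of arcs of $W_\gamma$, this reduces the problem to the case where $R_\gamma$ meets $\web$ in a single coherent family of arcs; up to further isotopy avoiding $\web$, such a $\gamma$ is precisely $\gamma_Z$ for a coherent cycle $Z$ (either the core cycle through the arcs of $W_\gamma$ when $R_\gamma$ is an annular collar, or a trivial cycle around a single edge when $R_\gamma$ is a disk).

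The main obstacle I anticipate is the careful bookkeeping of the $q^{2i}$ prefactor under these manipulations: the integer $i$ counting trace vertices in $R_\gamma$ must transform correctly under vertex moves (which do not alter it) and under the decomposition step (where $i = i_1 + i_2$), and must match the $q$-exponent produced when $x_{\inp}$ and $x_{\outp}$ are pushed across trace vertices via the quantum trace relation. Verifying compatibility in the base case of a single trace vertex enclosed by $\gamma$, and then showing inductively that every combinatorial configuration either already equals some $\gamma_Z$ up to isotopy or splits into strictly simpler pieces by the decomposition step, is where the bulk of the work lies.
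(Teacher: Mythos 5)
Your normalization step is the crux of the proposal, and it fails. Lemma~\ref{lem:nlrel-vertex-move} is a strictly local move: it relates $\gamma$ and $\gamma'$ only when the two curves agree outside a small neighborhood of a single vertex $v$, in which they meet no edges other than those incident to $v$. To expel a trivalent vertex lying deep inside $R_\gamma$ you would first have to isotope $\gamma$ across other arcs of $\web \cap R_\gamma$; a finger move across an edge $u$ introduces a pair of intersection points with $u$, multiplying both $x_{\inp(\gamma)}$ and $x_{\outp(\gamma)}$ by $x_u$, and hence rescales $\nlrel_\gamma$ by $x_u$. Since $x_u$ is not invertible, this is not the kind of identity you can iterate freely, and in particular you never reach interior vertices while preserving $\nlrel_\gamma$.

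If the claimed normalization could always be carried through, the argument would prove too much. Once $R_{\gamma'}$ contains no trivalent vertex, each arc of $\web \cap R_{\gamma'}$ enters and exits through halves of a single edge, differing only across trace vertices, so the quantum trace relation gives $x_{\outp(\gamma')} = q^{2i}\, x_{\inp(\gamma')}$ in $\set{qSoergelRed}(\web)$, where $i$ counts the trace vertices in $R_{\gamma'}$. Thus $\nlrel_{\gamma'}$ is \emph{literally} zero, and since Lemma~\ref{lem:nlrel-vertex-move} is an honest equality, the original $\nlrel_\gamma$ would vanish as well. This contradicts Example~\ref{exa:GilmoreA-chain-dumbells}, where a curve enclosing the whole chain of dumbbells gives the nonzero relation $1 - q^{2k}$. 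Your final identification with $\gamma_Z$ is also internally inconsistent: $\gamma_Z$ is defined as a push-off of the coherent cycle $Z$ on the side away from $R_Z$, so $R_{\gamma_Z}$ \emph{contains} the trivalent vertices of $Z$; a curve from which all such vertices have been expelled cannot equal any $\gamma_Z$.

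The paper's argument goes the opposite way: instead of pushing vertices out, it keeps them in and reduces complexity by \emph{surgery}. If there is an arc in $R_\gamma$ from $\gamma$ to itself, either disjoint from $S$ or contained in the interior of an edge, cutting along it exhibits $\nlrel_\gamma$ as a consequence of $\nlrel_{\gamma'}$ and $\nlrel_{\gamma''}$ for two simpler curves. Iterating, together with Lemma~\ref{lem:nlrel-vertex-move} applied only to vertices adjacent to the curve (to ensure every edge is met at most once), one reaches curves that surround at least one vertex and admit no further surgery; a short topological argument shows these are exactly the push-offs $\gamma_Z$ of coherent cycles. Retaining the enclosed vertices is essential for the base case to be nontrivial.
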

\begin{proof}
	Pick a~curve $\gamma$ adapted to $\web$.
	When $\gamma$ does not surround any vertex of $S$,
	then $\inp(\gamma) = \outp(\gamma)$ and $\nlrel_\gamma = 0$.
	Suppose the~converse and that there is an~arc $\alpha \subset R_\gamma$ disjoint
	from $S$ that connects two points of $\gamma$.
	Performing a~surgery on $\gamma$ along this arc produces
	two curves $\gamma'$ and $\gamma''$ adapted to $\web$;
	$\nlrel_\gamma$ is clearly a~consequence of $\nlrel_{\gamma'}$
	and $\nlrel_{\gamma''}$. Likewise, we can perform surgery
	along such an~arc if it is inside the~interior of an~edge $u$.
	In this case
	\[
		\inp(\gamma) = \inp(\gamma')\inp(\gamma'')x_u
	\quad\text{and}\quad
		\outp(\gamma) = \outp(\gamma')\outp(\gamma'')x_u,
	\]
	so that $NL_\gamma$ is again a~consequence of $NL_{\gamma'}$ and $NL_{\gamma''}$.
	
	The~above together with Lemma~\ref{lem:nlrel-vertex-move}
	allows us to restrict generators of $\nonlocalrelations_\web$
	to relations $\nlrel_\gamma$ parametrized by curves $\gamma$ surrounding
	at least one vertex of $S$ and for which none of the~above surgeries
	can be performed.
	Such a~curve $\gamma$ must intersect every edge of $S$ at most once and
	if it surrounds a~vertex $p$, then it must also surround
	at least one edge going out of $p$ and one edge coming to $p$.
	Such curves are exactly those of the~form $\gamma_Z$,
	where $Z$ consists of the~edges of $S$ that are contained in $R_\gamma$
	and that can be connected to $\gamma$ by an~arc with its interior disjoint from $S$.
\end{proof}

Define $\gilmoreAext(\web)$ as $\gilmoreA(\web)$ but ignoring the~marking
$\basepoint$: the~usual trace relation is applied at this vertex and non-local
relations are also imposed for curves than engulfs it.
Alternatively, we can think of $\basepoint$ as placed at inifinity,
outside of $\web$, or that $\web$ is put inside a~marked loop.
The~following is a~direct consequence of Lemma~\ref{lem:nlrel-reduction}.

\begin{lemma}\label{lem:gilmore-for-disconnected}
	Let $S$ be a~planar singular link in a~braid position
	with components $S_0, \dots, S_r$,
	where $S_i$ is nested inside $S_{i-1}$ for $i=1,\dots,r$ and $S_0$
	carries the~marking $\basepoint$. Then
	\[
		\gilmoreA(S) \cong \gilmoreA(S_0)
			\otimes \gilmoreAext(S_1)
			\otimes \dots
			\otimes \gilmoreAext(S_r)
	\]
\end{lemma}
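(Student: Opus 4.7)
The plan is to identify both sides as quotients of a common tensor product and to check that the resulting non-local ideals coincide, with Lemma~\ref{lem:nlrel-reduction} as the main reduction tool.

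First, because $S$ is a disjoint union of its components, the decoration space $D(S)$ factors as $\bigotimes_j D(S_j)$, and the Soergel and trace relations on $\set{qSoergelRed}(S)$ split along components. Since $\basepoint \in S_0$, only the factor $S_0$ carries the basepoint relation. This yields a canonical isomorphism
\[
	\set{qSoergelRed}(S) \;\cong\; \set{qSoergelRed}(S_0) \otimes \set{qSoergel}(S_1) \otimes \cdots \otimes \set{qSoergel}(S_r).
\]
Denote by $\mathcal{I}$ the ideal of $\set{qSoergelRed}(S)$ generated by the non-local relations of each factor in this decomposition (regarded via the inclusions of each factor into the tensor product), so that the right-hand side of the statement is $\set{qSoergelRed}(S)/\mathcal{I}$; since $\gilmoreA(S) = \set{qSoergelRed}(S)/\nonlocalrelations_S$, the claim reduces to $\mathcal{I}=\nonlocalrelations_S$.

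By Lemma~\ref{lem:nlrel-reduction}, both ideals are generated by relations $\nlrel_Z$ attached to coherent cycles $Z$. A coherent cycle, being a connected directed simple closed path avoiding $\basepoint$, lies entirely inside a single component $S_i$, so the indexing set of generators is the same for both ideals. The variables appearing in $\nlrel_Z$ all belong to $S_i$, since $\gamma_Z$ only crosses edges adjacent to $Z$; so the two versions of $\nlrel_Z$ differ only in their $q$-weight. In $\nonlocalrelations_S$ the exponent $q^{2K}$ counts all trace vertices of $S$ inside $R_{\gamma_Z}$, whereas in $\mathcal{I}$ (where $Z$ is regarded as a cycle of the isolated $S_i$) it counts only those of $S_i$. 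Because each other component $S_j$ is connected and disjoint from $\gamma_Z$, it lies entirely inside or entirely outside $R_{\gamma_Z}$, so the discrepancy is exactly the scalar $\prod_{j\neq i,\, S_j\subset R_{\gamma_Z}} q^{2W_j}$, where $W_j$ is the number of trace vertices of $S_j$.

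The main (and only) remaining point is to secure, for each $j\geq 1$, the scalar relation $1=q^{2W_j}$ in the factor $\gilmoreAext(S_j)$; granting this, the excess factors above collapse to $1$ in $\mathcal{I}$ and the two ideals agree. For this I would return to the original definition of the non-local ideal (before the reduction of Lemma~\ref{lem:nlrel-reduction}): a small simple loop $\lambda_j$ encircling the isolated $S_j$ is adapted to $S_j$, crosses no edge of $S_j$, and bounds a region containing exactly $W_j$ trace vertices; the associated relation is precisely $\nlrel_{\lambda_j}=1-q^{2W_j}$. With these scalar relations incorporated into $\mathcal{I}$, the excess factor $\prod q^{2W_j}$ above becomes trivial, so $\mathcal{I}=\nonlocalrelations_S$ and the asserted tensor product decomposition of $\gilmoreA(S)$ follows.
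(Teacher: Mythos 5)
Your approach is the right one and is almost certainly what the authors have in mind (the paper offers no proof beyond calling this a direct consequence of Lemma~\ref{lem:nlrel-reduction}). The tensor decomposition of $\set{qSoergelRed}(S)$, the reduction to $\mathcal{I}=\nonlocalrelations_S$, and the identification of the $q$-power discrepancy between $\nlrel_Z$ computed in $S$ versus in an isolated component are all correct.

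There is, however, a one-sided gap at the end. You verify that the scalar relation $1-q^{2W_j}$ lies in $\mathcal{I}$ (via the engulfing loop $\lambda_j$ adapted to the isolated $S_j$), and this is exactly what is needed to conclude the inclusion $\nonlocalrelations_S\subseteq\mathcal{I}$: the $q$-weight of $\nlrel_Z^{\,\mathrm{full}}$ can be corrected modulo elements of $\mathcal{I}$. But for the reverse inclusion $\mathcal{I}\subseteq\nonlocalrelations_S$ you also need to see that those same scalars vanish modulo $\nonlocalrelations_S$, and you never check this. Without it you have only produced a surjection $\gilmoreA(S)\twoheadrightarrow\gilmoreA(S_0)\otimes\gilmoreAext(S_1)\otimes\cdots\otimes\gilmoreAext(S_r)$, not an isomorphism.

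The fix is short and in fact uses the very same curves, only read inside the full link. For each $j\geq 1$, a curve adapted to $S$ separating $S_{j-1}$ from $S_j$ (equivalently, the pushoff of the outermost coherent cycle of $S_j$, now regarded in $S$) crosses no edges and bounds a region containing $S_j,\dots,S_r$, giving
\[
1-q^{2(W_j+\cdots+W_r)}\in\nonlocalrelations_S.
\]
Since every coherent cycle in a directed annular web is essential, $R_{\gamma_Z}$ for a cycle $Z\subset S_i$ contains all of $S_{i+1},\dots,S_r$, so the discrepancy is exactly $q^{2(W_{i+1}+\cdots+W_r)}$, and the relation above suffices to transfer $\nlrel_Z^{\,\mathrm{isolated}}$ into $\nonlocalrelations_S$. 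A telescoping identity $(1-q^{2a})-q^{2c}(1-q^{2b})=1-q^{2c}$ with $a=b+c$ then also yields $1-q^{2W_j}\in\nonlocalrelations_S$, matching the extra generators of $\mathcal{I}$. With this added, both inclusions hold and your proof is complete.
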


Notice that $\gilmoreAext(\web)$ is annihilated by $1-q^{2w}$,
where $w$ is the~index of $\web$.
Hence, $\gilmoreA(\web)$ is torsion when $\web$ is disconnected,
as already shown in Example~\ref{exa:GilmoreA-disconnected-web}.

A~similar result holds for the~twisted Heegaard Floer complex.
The~following is inspired by \cite[Lemma 2.2]{ManolescuCube}.

\begin{lemma}\label{lem:hfk-for-disconnected}
	Let $S$ be a~singular link in a~braid position with split components
	$S_0, \dots, S_r$, where $\basepoint \in S_0$.
	Then there is an~isomorphism of complexes
	\begin{equation}\label{eq:CFK-for-disconnected}	
		\tCFK-(S) \cong \tCFK-(S_0) \otimes \tCFK-(U \sqcup S_1)
			\otimes \dots \otimes
		\tCFK-(U \sqcup S_r),
	\end{equation}
	where in each $U\sqcup S_i$ the~marking lies on the~unknotted component.
\end{lemma}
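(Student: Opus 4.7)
The plan is to produce a Heegaard diagram for $S$ that mirrors its split structure, and then argue that generators, differentials, and twisting factorize across the pieces. Since $S_0,\dots,S_r$ are split, there exist pairwise disjoint 3-balls $B_1,\dots,B_r$ with $S_i \subset B_i$ for $i\geq 1$ and with $S_0$ disjoint from all $B_i$. I would build a Heegaard diagram $(\Sigma,\colalpha,\colbeta,\markX,\markO)$ for $S$ as a connect sum $\Sigma=\Sigma_0\#\Sigma_1\#\cdots\#\Sigma_r$, where the sphere defining the $i$th connect sum lies on the boundary of $B_i$; thus $\Sigma_0$ carries a diagram for $S_0$ and $\Sigma_i$ carries a diagram for $S_i$ for $i\ge 1$. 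Simultaneously, for each $i\ge 1$, I would pick a Heegaard diagram for $U\sqcup S_i$ that uses exactly the same surface $\Sigma_i$ for $S_i$, and adds a small unknot $U$ (with the marking $\basepoint$ on it) inside a neighborhood of the connect-sum sphere; the corresponding Heegaard surface is $\Sigma_0'\#\Sigma_i$, with $\Sigma_0'$ a genus-$1$ surface carrying the diagram of an unknot.

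Next I would check that intersection points decompose. Because every $\alpha$- and $\beta$-curve lies on a single piece $\Sigma_i$, we have $\torus_\alpha\cap\torus_\beta \cong \prod_i(\torus_{\alpha,i}\cap\torus_{\beta,i})$, so linearly $\tCFK\text{-}(S)$ is the tensor product (over the polynomial ring in all $U$-variables) of the generating sets of the pieces. To match the statement it is enough to identify the extra generating set that the unknot contributes with $\scalars$ (after a suitable identification of variables), which is immediate since $\tCFK\text{-}(U)\cong \scalars[U_\basepoint]$ concentrated in bidegree $(0,0)$ and the unknot sits in a standard genus-one diagram with a single intersection point.

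Step three is to identify the differential. Stretching the neck at each connect-sum sphere as in Ozsv\'ath--Szab\'o's connect-sum argument, any pseudo-holomorphic Whitney disk in $\Sym^{g+k}(\Sigma)$ of Maslov index one decomposes as a juxtaposition of disks supported in the individual symmetric products $\Sym(\Sigma_i)$, and the Maslov index is additive (cf.\ Lemma~\ref{lem:moduli-for-disconnected-domain}). Moreover, the admissibility of each piece ensures that $\#\moduliR(\phi)$ for $\phi$ on the connect-sum surface equals the product of the counts on each piece; this is precisely the connect-sum formula for Heegaard Floer homology. Finally, the set of markings $\OSTwist$ splits as a disjoint union $\OSTwist_0\sqcup\OSTwist_1\sqcup\cdots\sqcup\OSTwist_r$ across the pieces, and each fundamental periodic domain $\pi_j$ lies in a single $\Sigma_j$. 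Consequently the $t$-power $t^{P(\phi)}$ in the differential \eqref{eq:CFdiff} factorizes as a product of $t$-powers, one per piece, giving the claimed tensor-product isomorphism \eqref{eq:CFK-for-disconnected}.

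The main obstacle is step three: the neck-stretching argument must be run simultaneously with the twisting, and one has to ensure that the twisting contributions assigned to the added unknot components in $U\sqcup S_i$ are trivial (which is achieved by placing $\basepoint$ on $U$ so that $O_0^U$ supplies no $t$-factor, and by checking that no twisting marking of $\OSTwist$ for $S$ falls inside the neck region); similarly, the algebraic identification of variables between $\tCFK\text{-}(S)$ and the tensor product requires matching the $U_i$'s on each piece with the ones on $S$. Admissibility, verified piece by piece as in Lemma~\ref{lem:initial-is-admissible}, guarantees that finiteness of disk counts is preserved by the decomposition.
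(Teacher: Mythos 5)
You take a genuinely different route from the paper. You would build a new connect-sum Heegaard diagram for $S$ and then invoke a neck-stretching argument to decompose disk counts, whereas the paper works directly with the planar Heegaard diagram it has already constructed: there the split components $S_0,\dots,S_r$ appear as nested annular sub-diagrams $\mathcal H_0,\dots,\mathcal H_r$ with $X_0$ in the outside region, and after observing that each piece $\mathcal H_i$ (for $i>0$) is self-contained --- it meets the trace section only in $\alpha$-curves, with all basepoints and twisting markings lying inside $\alpha$-curves --- the paper slides the pieces out of one another by isotopies and handle slides until they occupy disjoint disks on the sphere, with the complementary region carrying $X_0$. Once the pieces are disjoint, the differential factorizes by a one-line appeal to Lemma~\ref{lem:moduli-for-disconnected-domain}: a Maslov-index-one Whitney disk with a holomorphic representative has \emph{connected} support, and since the complement of the disks is the $X_0$-region which the disk must avoid, the support must lie in a single piece. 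The twisting markings travel with the pieces, so the $t$-powers factor automatically.

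Your step three has a real gap, and you flag it yourself. The connect-sum/K\"unneth statement for Heegaard Floer homology is a statement about homology (or about the hat flavor of closed $3$-manifolds), is usually formulated without twisting, and its neck-stretching proof works by degenerating the almost-complex structure; to get the claimed isomorphism of \emph{complexes} for $\tCFK-$ of a singular link with an arbitrary twisting set one would have to re-run that degeneration while keeping track of the $t$-weights and the $U$-variables, and that is not a citable fact. You also invoke Lemma~\ref{lem:moduli-for-disconnected-domain} merely as a source of Maslov-index additivity, but the part of the lemma you actually need --- connected support for Maslov index one --- is exactly the combinatorial substitute for neck-stretching. Had you arranged your diagram so that an $\markX$-basepoint separates the pieces (or, better, so that the pieces lie in disjoint disks with the complement marked by $X_0$, as the paper does), the decomposition of disks would follow from that lemma alone, with no degeneration analysis and with the twisting handled for free because $\OSTwist$ is already distributed over the pieces.
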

\begin{proof}
	The~planar Heegaard diagram for $S$ can be seen as a~sequence of nested
	annular diagrams $\mathcal H_0, \dots, \mathcal H_r$ with $X_0$ outside of them
	as shown at the~left side of Figure~\ref{fig:unnesting-hd}. Here $\mathcal H_0$ together with $X_0$ represents the~component $S_0$,
	whereas each $\mathcal H_i$ for $i>0$, with an~extra pair of basepoints
	outside, represents $U \sqcup S_i$ with $\basepoint \in U$.
	Thence the~isomorphism \eqref{eq:CFK-for-disconnected} is clear at the~level
	of chain groups.
	In order to compare the~differentials we first modify the~Heegaard diagram,
	so that the~components $\mathcal H_i$ are no longer nested,
	but instead they are contained in disjoint disks on the~sphere
	and the~complement of those disks is decorated with $X_0$,
	see the~right side of Figure~\ref{fig:unnesting-hd}.
	We argue then that the~differential of the~chain complex associated
	with this modified diagram matches with that of the~right hand side
	of \eqref{eq:CFK-for-disconnected}.

	For the first step, we use the~following generalization of a~handle slide.
	Suppose that $\mathcal H = \mathcal H' \sqcup \mathcal H''$,
	where $\mathcal H''$ is contained in a~disk $D$ disjoint from
	$\mathcal H'$ and all basepoints and markings of $\mathcal H''$
	are surrounded by $\alpha$-curves of this diagram.
	Then any $\alpha$-curve of $\mathcal H'$ can be slid over $\mathcal H''$.
	This claim is easy to check by using handle slides and isotopies.
	
	Observe now that each $\mathcal H_i$ for $i > 0$ has all its basepoints
	and twisting markings inside $\alpha$-curves
	(compare Figure \ref{fig:planar-Heegaard-diagram}) and the~union
	$\mathcal H_+ = \mathcal H_1 \amalg\dots\amalg \mathcal H_r$ is contained in a~disk.
	Furthermore, $\mathcal H_0$ intersects the~trace section only in $\alpha$-curves.
	Hence, we can apply the~generalized handle slide to these $\alpha$-curves,
	pushing $\mathcal H_+$ outside of $\mathcal H_0$ as the~result.
	Applying the~same procedure inductively to $\mathcal H_+$
	we complete the first step.
	
	For the~second step, we observe first that the~resulting Heegaard diagram $\mathcal H'$
	represents the~connected sum $(\mathbb S^2, \mathcal H_0) \# \dots \#
	(\mathbb S^2, \mathcal H_r)$ of $r{+}1$ copies of a~sphere decorated
	with Heegard diagrams $\mathcal H_i$.\footnote{
		Strictly speaking we equip $\mathcal H_i$ for $i>0$ with an~extra pair
		of basepoints.
	}
	It follows now from \cite[Theorem 5.1]{Multivariable} that any holomorphic
	disk $\phi$ in the~symmetric power of $(\mathbb S^2, \mathcal H')$
	and with multiplicity $0$ at $X_0$ corresponds to a~collection of holomorphic disks
	$\phi_i$, each supported in the~symmetric power of $(\mathbb S^2, \mathcal H_i)$,
	such that $\mathcal D(\phi) = \sum_i \mathcal D(\phi_i)$
	and $\mu(\phi) = \sum_i \mu(\phi_i)$.
	Hence, $\mu(\phi) = 1$ forces all $\phi_i$ except one to be constant,
	so that $\supp\phi$ is contained in exactly one of the~diagrams $\mathcal H_i$.
	This shows that the~differentials of the~two sides of \eqref{eq:CFK-for-disconnected}
	coincide modulo 2. Finally, in order to match the~signs in the~differentials,
	we can equip moduli spaces of holomorphic curves in the~symmetric power of
	$(\mathbb S^2,\mathcal H)$ with orientations induced from the~moduli spaces
	of holomorphic curves in the~symmetric powers of $(\mathbb S^2, \mathcal H_i)$.
\end{proof}

\begin{figure}[ht]
   \begin{tikzpicture}[scale=0.9]
      \begin{scope}[xshift=-110pt]
        \foreach \r/\lbl in {18pt/1,45pt/0} {
          \draw[style=hd/alpha] (0,\r)++(0,-6pt) -- ++(-30pt,0);
          \draw[style=hd/alpha] (0,\r)++(0,-3pt) -- ++(-30pt,0);
          \draw[style=hd/alpha] (0,\r)++(0,+0pt) -- ++(-30pt,0);
          \draw[style=hd/alpha] (0,\r)++(0,+3pt) -- ++(-30pt,0);

          \draw[gray!50,line width=15pt]
          (0,\r) arc[radius=\r, start angle=90, end angle=-90] -- ++(-30pt,0)
          arc[radius=\r, start angle=270, end angle=90] -- ++(5pt, 0); \node[font=\small] at (-35pt,\r) {$\mathcal H_\lbl$};
        }
        \draw[gray!50,line width=15pt]
        (0,18pt) -- ++(-26pt,0);
        
        \draw[densely dashed, \colormembrane, line width=0.3pt] (-12.5pt, 5pt) -- (-12.5pt, 80pt);
        \draw[line width=1pt] (-20pt,75pt)
        ++(0.1,0.125) -- ++(-0.2,-0.25) ++(0.2,0) -- ++(-0.2,0.25);
        \node at (-12.5pt,0) {$\dots$};
      \end{scope}

      \draw[->] (-52pt,0) -- ++(50pt,0)
      node[midway,above,font=\tiny] {isotopies and}
      node[midway,below,font=\tiny] {handle slides};
      \draw[->] (145pt,0) -- ++(50pt,0)
      node[midway,above,font=\tiny] {isotopies and}
      node[midway,below,font=\tiny] {handle slides};
      
      \begin{scope}[xshift=86pt]
        \foreach \r/\lbl in {18pt/1,45pt/0} {
\draw[style=hd/alpha] (0,\r)++(0,+0pt) -- ++(-30pt,0);
          \draw[style=hd/alpha] (0,\r)++(0,+3pt) -- ++(-30pt,0);
                                                                                
          \draw[gray!50,line width=15pt]
          (0,\r) arc[radius=\r, start angle=90, end angle=-90] -- ++(-30pt,0)
          arc[radius=\r, start angle=270, end angle=90] -- ++(5pt, 0); \node[font=\small] at (-35pt,\r) {$\mathcal H_\lbl$};
        }
        \draw[gray!50,line width=15pt]
        (0,18pt) -- ++(-26pt,0);
        \draw[style=hd/alpha] (0,45pt)++(0,-6pt) arc (90:270:3pt) arc (90:-90: 33pt) -- ++(-30pt,0) arc (270:90: 33pt) -- ++(5pt,0) arc (-90:90:3pt);
        \draw[style=hd/alpha] (0,45pt)++(0,-3pt) arc (90:270:6pt) arc (90:-90: 30pt) -- ++(-30pt,0) arc (270:90: 30pt) -- ++(5pt,0) arc (-90:90:6pt);

        \draw[densely dashed, \colormembrane, line width=0.3pt] (-12.5pt, 5pt) -- (-12.5pt, 80pt);
        \draw[line width=1pt] (-20pt,75pt)
        ++(0.1,0.125) -- ++(-0.2,-0.25) ++(0.2,0) -- ++(-0.2,0.25);
        \node at (-12.5pt,0) {$\dots$};
      \end{scope}

      \begin{scope}[xshift=230pt]
        \foreach \x/\lbl in {0/0, 65pt/1}{
          \draw[gray!50, line width=16pt]
          (\x,20pt) -- ++(5pt,0)
          arc[radius=20pt, start angle=90, end angle=-90] -- ++(-5pt,0)
          arc[radius=20pt, start angle=270, end angle=90] -- cycle;
          \draw[densely dashed, \colormembrane, line width=0.3pt] (\x, 5pt) ++(7pt,0) -- ++(0, 30pt);
          \draw (\x,20pt) ++(-2pt,0) node[font=\small] {$\mathcal H_\lbl$};
        }
        \node at (67.5pt,0) {$\dots$};
        \draw[line width=1pt] (32.5pt,50pt)
        ++(0.1,0.125) -- ++(-0.2,-0.25) ++(0.2,0) -- ++(-0.2,0.25);
      \end{scope}
   \end{tikzpicture}
   \caption{Sketches of the~planar Heegaard diagram for a~split link
      and the~diagram desired for the~proof of Lemma~\ref{lem:hfk-for-disconnected}.
      Each thick circle represent a~collection of $\alpha$- and $\beta$-curves
      that intersect each other and represent one of the~components of the~link.
   }
   \label{fig:unnesting-hd}
\end{figure}
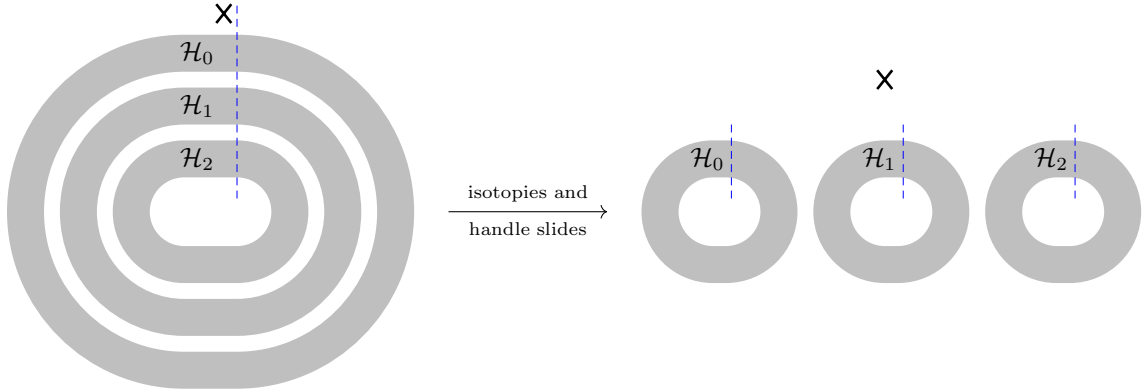

Consider now the~complex $\tCFK-(S)$ computed from the~planar
Heegaard diagram for $S\ric$, where the~link diagram is considered
to consists only of singular crossings and bivalent vertices.
This simplifies the~diagram a~lot: each basepoint, except
the~one related to the~marking $\basepoint$, lies in a~bigon.
The~generators of the~complex are associated in \cite[Section 3]{OSCube}
with \emph{coherent multicycles} in $S\ric$,
i.e.\ (possibly empty) collections of disjoint coherent cycles;
the~multicycle $Z$ associated with a~generator $\xgen$
consists of the~edges of $S$ that corresponds to those bigons
with $\markO$-basepoints that have a~component of $\xgen$
at one of the~corners. Other components of $\xgen$ are corners
of bigons containing $\markX$-basepoints.

\begin{theorem}[cp.\ {\cite[Theorem 3.1]{OSCube}}]
\label{thm:A-vs-HFK-for-planar}
	Let $S$ be a~diagram of a~planar singular link in a~braid position.
	Then	 there is an~isomorphism
	\[
		H_0(\tCFK(S) \otimes \mathcal L_S) \cong \gilmoreA(S)
	\]
	that sends the~homology class of\/ $\xgen_0$ to the~unit of $\gilmoreA(S)$
	and for every thin edge $e$ identifies the~action of\/ $U_e$ with
	the~multiplication by $x_e$.
\end{theorem}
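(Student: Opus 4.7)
The plan is to construct an $R$-equivariant homomorphism
$$\Phi\colon \gilmoreA(S) \longrightarrow H_0(\tCFK(S) \otimes_R \underline{\mathcal L}_S)$$
by sending the unit $1 \in \gilmoreA(S)$ to the homology class $[\xgen_0]$, and then show $\Phi$ is an isomorphism. Both sides carry compatible $R$-module structures: $R$ acts on the right-hand side via the variables $U_e$ of the Heegaard Floer complex, and on $\gilmoreA(S)$ via multiplication by $x_e$. Since $\gilmoreA(S)$ is generated as an $R$-module by $1$, the assignment $x_e \mapsto U_e \cdot [\xgen_0]$ is forced; the content lies in checking that it is well-defined and bijective.

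The well-definedness amounts to verifying that every defining relation of $\gilmoreA(S)$ is killed upon acting on $[\xgen_0]$. The linear Soergel relation $x_a + x_b = x_c + x_d$ at a singular crossing is precisely the relation imposed by the tensor factor of $\underline{\mathcal L}_S$ (up to the twist $t$), while the quadratic Soergel relation $x_a x_b = x_c x_d$ follows from a pair of canonical holomorphic bigons adjacent to the singular crossing, exactly as exploited in the proof of Proposition~\ref{prop:hfk-for-connected-planar}. The bivalent-vertex relation reduces to the identification of $\markO$-basepoints on a common edge, the marking relation $x_\basepoint = 0$ matches the quotient by $U_0$ in the definition of $\tCFK$, and the quantum trace relation $x_a = q^2 x_c$ across a trace vertex is precisely the factor of $t$ (identified with $q^2$ after a choice of square root) coming from the twisting set $\trTwist$.

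The main obstacle is the non-local relation $\nlrel_Z = x_{\outp(\gamma_Z)} - q^{2i}\, x_{\inp(\gamma_Z)}$ attached to a coherent cycle $Z$; by Lemma~\ref{lem:nlrel-reduction} it suffices to treat this case. Here the idea is to exhibit a distinguished positive domain supported inside the region $R_Z$ whose boundary combines the local factors $U_a + U_b - U_c - U_d$ along the singular crossings of $Z$ and whose interior multiplicities at $\markO$-basepoints produce $x_{\outp(\gamma_Z)} - t^i x_{\inp(\gamma_Z)}$, where $i$ is the number of twisting markings of $\trTwist$ enclosed by $\gamma_Z$, i.e.\ the number of trace vertices inside $R_Z$. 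This identification uses the same skein-triangle holomorphic disk analysis underlying Theorem~\ref{thm:CFK-cube} and the explicit description of the fundamental periodic domains $\pi_j$ on which the Ozsv\'ath--Szab\'o twisting is proportional to $\trTwist$ (cf.\ Corollary~\ref{cor:OS-vs-tr-twisting}). The power $q^{2i}$ then drops out after identifying $t$ with $q^2$.

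Finally, to conclude that $\Phi$ is an isomorphism, I would proceed by a generator/rank comparison. Surjectivity is immediate from Proposition~\ref{prop:hfk-for-connected-planar}: every Kauffman state generator of $H_0(\tCFK(S)\otimes_R \underline{\mathcal L}_S)$ is of the form $U_{e_1}\cdots U_{e_k} \cdot [\xgen_0]$ for an explicit monomial, so lies in the image. For injectivity, an inductive computation analogous to Example~\ref{exa:GilmoreA-chain-dumbells}---solving the local relations and then the non-local ones successively along the layers of the braid diagram---shows that $\gilmoreA(S)$ is a~free $\scalars$-module with basis indexed by Kauffman states, of the same rank as the target. The disconnected case is handled by Lemma~\ref{lem:gilmore-for-disconnected} together with Lemma~\ref{lem:hfk-for-disconnected}, noting that both sides become $t$-torsion for split diagrams, in agreement with Proposition~\ref{prop:hfk-for-disconnected}.
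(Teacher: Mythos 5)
The architecture of your argument---mapping $1\mapsto[\xgen_0]$ and verifying that the Gilmore relations hold in $H_0(\tCFK(S)\otimes\mathcal L_S)$---agrees with the first half of the paper's proof, and the holomorphic-disk inputs you cite (the bigon cancellation of Lemma~\ref{lem:signs-for-AB-pair}, the linear relation coming from the $\mathcal L_S$ tensor factor, the reduction to coherent cycles via Lemma~\ref{lem:nlrel-reduction}) are the correct ones. Two remarks on that half: first, surjectivity of $\Phi$ needs no detour through Proposition~\ref{prop:hfk-for-connected-planar}, since the paper shows via the degree formula $\hdeg(\xgen) = -|Z|-d$ (with $Z$ the multicycle and $d$ the deviation) that $\xgen_0$ is the \emph{only} intersection point in degree zero, so $C_0$ is the cyclic $R$-module $R\cdot\xgen_0\otimes\mathbf 1^{\otimes n}$ and $H_0$ is automatically a quotient of $R\cdot[\xgen_0]$. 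Second, your ``distinguished positive domain'' for the non-local relation should be two domains $\phi_1,\phi_2\in\pi_2(\xgen_Z,\xgen_0)$ (built from $R_Z$ glued with $B_i$'s, respectively $A_i$'s), each counted $\pm 1$ by Lemma~\ref{lem:index-for-disk-missing-disks}, with the cancellation of signs from Corollary~\ref{cor:sum-of-signs-for-x0} producing the binomial $t^{w(Z)}U_{\outp(Z)}-U_{\inp(Z)}$; a single positive domain would yield a monomial, not a relation.

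Where your proposal genuinely diverges, and where there is a gap, is injectivity. You propose to show independently that $\gilmoreA(S)$ is a free $\scalars$-module with basis indexed by Kauffman states and then conclude by a rank comparison. This ``inductive computation analogous to Example~\ref{exa:GilmoreA-chain-dumbells}'' is not supplied and is not straightforward: that example has a \emph{single} Kauffman state, while a general planar singular knot in a braid position has many, and establishing freeness with the Kauffman-state basis purely on the algebraic side is essentially as strong as the theorem itself (indeed it is a \emph{consequence} of the theorem via Proposition~\ref{prop:hfk-for-connected-planar}, so using it without proof risks circularity). The paper avoids this entirely: having reduced to $H_0 = R\xgen_0/\partial C_{-1}$, it \emph{classifies all} degree $-1$ generators using the formula $\hdeg=-|Z|-d$ (deviation-one generators with $Z=\emptyset$, the unique $\xgen_Z$ for each connected coherent cycle $Z$, and the $\xgen_0\otimes\mathbf u$ terms from $\mathcal L_S$) and computes that each of their boundaries is, on the nose, a local or non-local Gilmore relation. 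This proves both inclusions of ideals at once and makes the rank count unnecessary. If you want to salvage your proposal, the missing step is precisely this converse classification, not an algebraic computation of $\gilmoreA(S)$.
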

\begin{proof}
Assume first that $S$  has only one component.
	Following \cite{OSCube} we show first that $\xgen_0$ is a~cycle
	that generates the~homology.
	For that define the~\emph{deviation} of a~generator $\xgen \in \tCFK-(S)$
	as the~number of components of $\xgen$ that are right corners of bigons.
	As in \cite[Section 3]{OSCube} we can show that
	$\hdeg(\xgen) = -|Z| - d$, 
where $|Z|$ is the~number of components
	of the~multicycle $Z$ associated with $\xgen$ and $d$ is the~deviation
	of the~generator.
	Hence, each multicycle $Z$ admits a~unique generator $\xgen_Z$
	of maximal homological degree $-|Z|$.
	In particular, this implies that $\xgen_0 = \xgen_\emptyset$
	is a~cycle that generates the~0th homology.
The~relations come from counting pseudo-holomorphic disks from
	generators of degree $-1$ to $\xgen_0$. There are two types of such generators:
	the~deviation 1 generators associated to $Z=\emptyset$ and the~generators
	$\xgen_Z$ associated with (connected) coherent cycles $Z$.
	
	The~first kind of generators impose local relations on $\xgen_0$.
	Let $\xgen$ be deviated from $\xgen_0$ at one bigon. Because $Z=\emptyset$,
	the~bigon contains a~basepoint $X_p$ that corresponds to a~vertex $p\in S$.
	Write $A_p$ and $B_p$ for the~regions containing $X_p$ and bounded respectively
	by $\alpha$- and $\beta$-curves.
	The~differential $\partial\xgen$ has only two terms that correspond
	to  $A_p \setminus B_p$ and $B_p \setminus A_p$, and which appear
	with opposite signs due to Lemma~\ref{lem:signs-for-AB-pair}. Thence,
	\[
		\partial\ygen = \pm\left( U^{(p)}_a U^{(p)}_b - U^{(p)}_c U^{(d)}_d \right)\xgen_0
	\]
	when $p$ is a~singular crossing and
	\[
		\partial\ygen = \pm\left( \tau U^{(p)}_a - U^{(p)}_c \right)\xgen_0
	\]
	in case of a~bivalent vertex, where $\tau=t$ appears only in case
	of trace vertices and $\tau=1$ otherwise.
	Hence, all local relations hold in the~homology.

	The~second kind of generators impose non-local relations on $\xgen_0$.
	For any connected coherent cycle $Z$ there are exactly two positive
	Maslov index one Whitney disks from $\xgen_Z$ to $\xgen_0$
	that avoid $\markX$:
	\begin{equation}
		\phi_1 = (R_Z \cup \bigcup_{X_i\in Z} B_i) \setminus \bigcup_{i > 0} A_i
	\qquad\text{and}\qquad
		\phi_2 = (R_Z \cup \bigcup_{X_i\in Z} A_i) \setminus \bigcup_{i > 0} B_i,
	\end{equation}	
	see \cite[Figure 10]{OSCube}.
	Note that the~connectivity of $S$ is important for this to hold.
	By Lemma~\ref{lem:index-for-disk-missing-disks}, $\moduliR(\phi_i) = \pm1$,
	and $\moduliR(\phi_1) + \moduliR(\phi_2) = 0$ by Corollary~\ref{cor:sum-of-signs-for-x0}.
	In our framework, the~disks $\phi_1$ and $\phi_2$ contribute
	(up to sign) towards $\partial\xgen_Z$
	respectively $t^{w(Z)} U_{\outp(Z)}$ and $U_{\inp(Z)}$,
	where $U_{\outp(Z)}$ (resp.\ $U_{\inp(Z)}$) is the~product of variables associated
	with edges going out of (resp.\ coming into) the~(closed) region $R_Z$ bounded by $Z$
	and $w(Z)$ is the~number of trace vertices in $R_Z$.
	Hence,
	\[
		\partial\xgen_Z = \pm\left( t^{w(Z)} U_{\outp(Z)} - U_{\inp(Z)} \right)\xgen_0,
	\]
	where the~difference in parentheses matches $\nlrel_{\gamma_Z}$ for $t=q^{-2}$.
	Together with Lemma~\ref{lem:nlrel-reduction} this implies
	that non-local relations hold in the~homology.
	
	Suppose now that $S = U \sqcup S'\ric$,
	where $S'$ is connected and surrounded by the~unknot $U$
	that carries the~basepoint $\basepoint$.
	The~same argument as above shows that the~0th homology
	coincides with $\gilmoreA(S) = \gilmoreAext(S')$.
	Together with Lemmata~\ref{lem:gilmore-for-disconnected}
	and \ref{lem:hfk-for-disconnected} this proves the~general case.
\end{proof}

Choose now a~braid diagram $\bdiagram$ of a~link.
By applying Theorem~\ref{thm:CFK-cube} we can replace $\tCFK(\bdiagram)$
with a~homotopy equivalent complex $\widetilde C(\bdiagram)$ modelled
on the~cube of resolutions from Section~\ref{sec:cube-of-resolutions}:
\begin{itemize}
	\item a~vertex corresponding to a~full resolution $S$
	is decorated with $\tCFK(S)\otimes \mathcal L_S$,
	\item edges are decorated with $\zip$ and $\piz$ maps,
	\item higher components of the~differential are possible.
\end{itemize}
The~column filtration on this cube leads to a~spectral sequence
converging to $\tHFK(\bdiagram)$.
When $\scalars$ is $q$-complete, e.g.\ $\scalars=\ZZ[q^{-1},q]]$,
then $H(\tCFK(S)\otimes\mathcal L_S)$ is concentrated in homological
degreee 0 by Proposition~\ref{prop:hfk-for-connected-planar}
and the~first page of the~spectral sequence coincides with $\complexgilmore(\bdiagram)$.
In particular, the~spectral sequence collapses on the~second page.

\begin{theorem}\label{thm:gilmore-vs-hfk}
	Suppose that $\bdiagram$ is a~braid diagram of a~knot
	and let $\scalars=\ZZ[q^{-1},q]]$ with $t=q^{-2}$.
	Then there is a~quasi-isomorphism
	\[
		\tCFK(\bdiagram) \xrightarrow{\ \sim\ } \complexgilmore(\bdiagram).
	\]
	In particular,
	$H(\complexgilmore(\bdiagram)) \cong \HFK(\bdiagram)\otimes\scalars$
	is a~knot invariant.
\end{theorem}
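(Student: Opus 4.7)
The plan is the spectral sequence argument sketched in the paragraph preceding the statement. Starting from the braid diagram $\bdiagram$, I would iterate the skein exact triangles of Theorem~\ref{thm:CFK-cube} over all $n$ real crossings, producing a~multicomplex $\widetilde{C}(\bdiagram)$ modeled on the~cube of resolutions: the~vertex indexed by $\resolution\colon\setcrossings\to\{0,1\}$ carries $\tCFK(\bdiagram_\resolution)\otimes_R\underline{\mathcal{L}}_{\bdiagram_\resolution}$, the~edges are decorated by the~$\zip_p$ and $\piz_p$ maps, and higher-order components of the~differential may appear along higher-dimensional subcubes as byproducts of the~iterated mapping cone construction. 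By construction, $\widetilde{C}(\bdiagram)$ is chain homotopy equivalent to $\tCFK(\bdiagram)$, so it suffices to compare $\widetilde{C}(\bdiagram)$ with $\complexgilmore(\bdiagram)$.

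Next I would equip $\widetilde{C}(\bdiagram)$ with the~descending filtration by cube homological degree; the~associated spectral sequence converges to $\tHFK(\bdiagram)$, convergence being guaranteed by the~$q$-completeness of $\scalars=\ZZ[q^{-1},q]]$. The~$E^1$-page is computed vertex-by-vertex. When $\bdiagram_\resolution$ is a~connected planar singular knot, Proposition~\ref{prop:hfk-for-connected-planar} identifies $H_\ast(\tCFK(\bdiagram_\resolution)\otimes_R\underline{\mathcal{L}}_{\bdiagram_\resolution})$, up to $t$-torsion, with a~free $\scalars$-module concentrated in homological degree zero and generated by Kauffman states; Theorem~\ref{thm:A-vs-HFK-for-planar} promotes this to a~canonical isomorphism with $\gilmoreA(\bdiagram_\resolution)$ sending $\xgen_0\mapsto 1$ and intertwining the~action of $U_e$ with multiplication by $x_e$. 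When $\bdiagram_\resolution$ is disconnected, Proposition~\ref{prop:hfk-for-disconnected} shows the~homology is entirely $t$-torsion, matching the~vanishing of $\gilmoreA(\bdiagram_\resolution)$ in this case (Example~\ref{exa:GilmoreA-disconnected-web}). Since $1-q^{2w}$ is a~unit in $\ZZ[q^{-1},q]]$ for every $w>0$ (its leading Laurent coefficient is $1$), all torsion annihilated by such an~element vanishes after tensoring with $\scalars$, yielding
\[
    E^1_{i,0} \cong \bigoplus_{|\resolution|=i+n_-} \qshift^{-i}\gilmoreA(\bdiagram_\resolution)
\]
and $E^1_{i,j}=0$ for $j\neq 0$.

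It remains to match the~$E^1$ differentials with those of $\complexgilmore(\bdiagram)$. The~edge contributions are precisely those extracted from Theorem~\ref{thm:CFK-cube}: $\piz_p(\xgen_0(S_\times)\otimes\mathbf{1})=\xgen_0(S_0)$ and $\zip_p(\xgen_0(S_0))=(U_b^{(p)}-U_c^{(p)})\,\xgen_0(S_\times)\otimes\mathbf{1}$, which under the~identification $\xgen_0\leftrightarrow 1$, $U_e\leftrightarrow x_e$ reproduce the~algebraic unzip and zip maps of the~functor $\gilmoreA$ defined in Section~\ref{sec:gilmore-revised} (this is exactly what Proposition~\ref{prop:gilmore-functor} ensures is well defined). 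Higher diagonal components of the~cube differential, if present in $\widetilde{C}(\bdiagram)$, induce zero maps on $E^1$: each such higher component factors through modules whose $H_\ast$ is concentrated in degree zero, where the~surviving contributions are already accounted for by the~edge maps; this is the~standard collapse argument for the~iterated mapping cone. Since $E^1$ is supported in a~single row, the~spectral sequence collapses at $E^2$, giving
\[
    H(\complexgilmore(\bdiagram)) \cong E^2 \cong E^\infty \cong \tHFK(\bdiagram).
\]
The~desired quasi-isomorphism $\tCFK(\bdiagram)\xrightarrow{\sim}\complexgilmore(\bdiagram)$ is realized by the~zig-zag comparison between $\widetilde{C}(\bdiagram)$ and its $E^1$-page. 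Finally, for a~knot $\bdiagram$ Proposition~\ref{prop:untwisting} specializes to $\tCFK(\bdiagram)\cong\CFK(\bdiagram)\otimes_\ZZ\scalars$, hence $\tHFK(\bdiagram)\cong\HFK(\bdiagram)\otimes\scalars$, which is a~knot invariant.

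The~main obstacle will be making the~treatment of the~higher diagonal components of $\widetilde{C}(\bdiagram)$ genuinely rigorous---that is, proving they die at $E^1$ rather than only asserting it by analogy with the~standard cube construction. A~related technical subtlety is tracking signs through the~iterated mapping cones so that the~induced $E^1$ edge maps carry the~sign convention $\epsilon(\resolution,c)$ used in Section~\ref{sec:cube-of-resolutions} to define $\Bracket{\bdiagram}$; this requires the~coherent system of orientations of \cite{AliEft} to be compatible with the~combinatorial signs chosen algebraically.
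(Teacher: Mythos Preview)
Your strategy is the same as the paper's---iterate Theorem~\ref{thm:CFK-cube} to get a cube $\widetilde C(\bdiagram)\simeq\tCFK(\bdiagram)$, then compare vertex-by-vertex with $\complexgilmore(\bdiagram)$---and it is correct. The paper, however, executes the comparison more directly and thereby dissolves the obstacle you flag at the end.

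Instead of running the column-filtration spectral sequence and then extracting a quasi-isomorphism by a zig-zag, the paper writes down an explicit chain map $\widetilde C(\bdiagram)\to\complexgilmore(\bdiagram)$: at each vertex, project $\tCFK(\bdiagram_\resolution)\otimes\underline{\mathcal L}_{\bdiagram_\resolution}$ onto its $H_0\cong\gilmoreA(\bdiagram_\resolution)$. The point is that the canonical generator $\xgen_0$ has \emph{maximal} internal homological degree, so each vertex complex lives in internal degrees $\leq 0$. A length-$d$ diagonal of the cube differential ($d\geq 2$) has internal degree $1-d\leq -1$, hence lands in strictly negative internal degree and is annihilated by the projection; the edge maps preserve degree $0$ and descend to the Gilmore zip/unzip by Theorem~\ref{thm:A-vs-HFK-for-planar}. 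So the projection is a chain map with no further argument, and it is a filtered quasi-isomorphism because vertex-wise homology is concentrated in degree zero (Proposition~\ref{prop:hfk-for-connected-planar}, with the disconnected case handled via the identification in Theorem~\ref{thm:A-vs-HFK-for-planar}).

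Two small corrections to your write-up. First, convergence of the spectral sequence does not use $q$-completeness: the cube filtration is finite. Working over $\ZZ[q^{-1},q]]$ is needed for a different reason, namely to make $1-q^d$ invertible for $d\neq 0$, which kills the $t$-torsion and forces $E^1$ into a single row. Second, your account of why higher diagonals die on $E^1$ is muddled: higher diagonals contribute to $d^r$ for $r\geq 2$, not to $d^1$, and these vanish for degree reasons once $E^1$ is supported in a single row---no ``factoring through'' argument is needed. Your sign concern is legitimate but the paper does not treat it more explicitly than you do; the bracket $\Bracket{\bdiagram}$ is in any case independent of the sign assignment up to isomorphism.
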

\begin{proof}
	The~desired quasi-isomorphism is a~composition of a~sequence
	of homotopy equivalences from Theorem~\ref{thm:CFK-cube},
	which replaces $\tCFK(\bdiagram)$ with a~cube of complexes
	computed from full resolutions of $\bdiagram$,	
	followed by an~epimorphism onto $\complexgilmore(\bdiagram)$
	given by
	\[
		\tCFK(\bdiagram_\resolution) \otimes \mathcal L_{\bdiagram_\resolution}
			\longrightarrow
		H_0(\tCFK(\bdiagram_\resolution) \otimes \mathcal L_{\bdiagram_\resolution})
			\cong \gilmoreA(\bdiagram_\resolution)
	\]
	at each resolution $\bdiagram_\resolution$. The~projection
	on $0$th homology is well-defined, because the~canonical generator
	$\xgen_0 \in \tCFK(\bdiagram_\resolution)$ has the~maximal homological degree.
	It is a~quasi-isomorphism, because higher homology groups vanish,
	see Proposition~\ref{prop:hfk-for-connected-planar}.
\end{proof}

 \subsection{An~identification with the~original Gilmore complex}

Choose a~braid diagram $\bdiagram$ of a~knot and its complete resolution $\bdiagram_\resolution$.
The~original construction of the~algebra $\gilmoreA(\bdiagram_\resolution)$
as described in \cite{OSCube, Gilmore} computes the~Heegaard Floer homology
twisted by the~set $\OSTwist$.
This algebra, denoted here by $\gilmoreA'(\bdiagram_\resolution)$,
assumes $\bdiagram_\resolution$ is layered in the~sense of Section~\ref{sec:cube-of-resolutions}
with $n+1$ levels, where $n$ is the~number of crossings in $\bdiagram$ (the~extra level is the~trace section)
and it is generated like $\gilmoreA(\bdiagram_\resolution)$ by thin edges
of $\bdiagram_\resolution$. The~local relations, however, are twisted differently:
\begin{center}
	\begin{tabular}{cp{7mm}cp{7mm}cp{7mm}c}
		\NB{\tikz[font=\small]{
			\draw[>->] (0,0) -- (1,0)
			    node[pos=0, left]  {$x_c$}
			    node[pos=1, right] {$x_a$};
			\draw (0.5,-2pt) -- ++(0,4pt);
         }}
         &&
		\NB{\tikz[font=\small]{
			\draw[densely dashed, \colormembrane, line width=0.3pt] (0.5, 0.5) -- (0.5, -0.5);
			\draw[>->] (0,0) -- (1,0)
			    node[pos=0, left]  {$x_c$}
			    node[pos=1, right] {$x_a$};
			\draw (0.5,-2pt) -- ++(0,4pt);
         }}
         &&
		\NB{\tikz[font=\small]{
			\draw[densely dashed, \colormembrane, line width=0.3pt] (0.5, 0.5) -- (0.5, -0.5);
			\draw[>->] (0,0) -- (1,0)
				node[pos=0, left]  {$x_c$}
				node[pos=1, right] {$x_a$}
				node[midway, font =\normalsize, fill=white, inner sep=1pt] {$\basepoint$};
		}}
		&&
         \NB{\tikz[font=\small]{
         	\draw[>-<]
         		(-0.75, 0.5) node[left] {$x_c$}
         			.. controls ++(0.25,0) and ++(0,0) ..
         		(-0.25, 0) .. controls ++(0,0) and ++(0.25,0) ..
         		(-0.75,-0.5) node[left] {$x_d$};
         	\draw[<->]
         		(0.75, 0.5) node[right] {$x_a$}
         			.. controls ++(-0.25,0) and ++(0,0) ..
         		(0.25, 0) .. controls ++(0,0) and ++(-0.25,0) ..
         		(0.75,-0.5) node[right] {$x_b$};
	  		\draw[line width=2pt] (0.25,0) ++(0.5pt,0) -- (-0.25,0) -- ++(-0.5pt,0);
		}}
		\\[5ex]
		$tx_a = x_c$ && $x_a = x_c$ && $x_a = 0$ && $t(x_a+x_b) = x_c+x_d$
		\\[1ex]
		&&          &&             && $t^2 x_a x_b = x_c x_d$
		\\[1.5ex]
	\end{tabular}
\end{center}
and non-local relations, parametrized by coherent cycles $Z$ in $S$,
take the~form
\[
	\nlrel'_Z = t^{|Z|}x_{\outp(Z)} - x_{\inp(Z)},
\]
where $|Z|$ is a~weighted counts of (non-trace) vertices in $R_Z$:
each (non-trace) bivalent vertex contributes $1$, whereas a~singular
crossing contributes $2$.
The~form of local relations suggests already
an~isomorphism between the~two algebras.

\begin{proposition}\label{prop:renormalized-gilmore}
	Let\/ $\scalars = \ZZ[t^{1/2},t^{-1/2}]$ and set $q=t^{-n/2}$,
	where $n$ is the~number of crossings in $\bdiagram$.
	Then there is an~isomorphism of algebras
	\[
		\gilmoreA'(\bdiagram_\resolution) \ni x_\alpha
			\xrightarrow{\ \cong\ }
		t^{-n(\alpha)} x_{\overline\alpha} \in \gilmoreA(\bdiagram_\resolution),
	\]
	where $\overline\alpha$ is the~edge of $\web$ that contains the~image of
	the~semi-arc $\alpha$ in the~resolution and $n(\alpha)$ is the~number
	of crossings in $\braid$ to the~left of $\alpha$.
\end{proposition}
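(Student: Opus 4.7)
The plan is to verify directly that the rescaling $x_\alpha \mapsto t^{-n(\alpha)} x_{\overline\alpha}$ sends the defining relations of $\gilmoreA'(\bdiagram_\resolution)$ to those of $\gilmoreA(\bdiagram_\resolution)$ up to invertible factors. Both algebras are quotients of a polynomial ring on the same set of thin semi-arcs, and the rescaling is invertible on that polynomial ring, so it remains only to check the relations.

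First I would handle the local relations one vertex at a time. At a non-trace bivalent vertex at level $\ell_k$ one has $n(\alpha_a) = n(\alpha_c) + 1$, so Gilmore's relation $t x_a = x_c$ rescales to $t^{-n(\alpha_c)}(x_{\overline a} - x_{\overline c})$, which is a unit multiple of the normalized relation $x_{\overline a} = x_{\overline c}$. At a trace vertex the right-hand semi-arc wraps past $\mu$ back to the left end of $\braid$, so $n(\alpha_a) = 0$ and $n(\alpha_c) = n$; Gilmore's relation $x_a = x_c$ rescales to $x_{\overline a} - t^{-n} x_{\overline c} = x_{\overline a} - q^2 x_{\overline c}$, the normalized trace relation. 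The singular crossing relations rescale analogously, pulling out a common factor $t^{-k}$ (respectively $t^{-2k}$) in front of $x_{\overline a} + x_{\overline b} - x_{\overline c} - x_{\overline d}$ (respectively $x_{\overline a} x_{\overline b} - x_{\overline c} x_{\overline d}$), and the marking relation is preserved trivially.

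For the non-local relations, by Lemma~\ref{lem:nlrel-reduction} it suffices to consider the generators $\nlrel'_Z$ for coherent cycles $Z$. Writing $N_+ = \sum_{\alpha \in \outp(Z)} n(\alpha)$ and $N_- = \sum_{\alpha \in \inp(Z)} n(\alpha)$, the image of $\nlrel'_Z = t^{|Z|}x_{\outp(Z)} - x_{\inp(Z)}$ is
\[
  t^{|Z|-N_+} x_{\outp(\gamma_Z)} - t^{-N_-} x_{\inp(\gamma_Z)},
\]
which is proportional to $\nlrel_{\gamma_Z} = x_{\outp(\gamma_Z)} - t^{-ni} x_{\inp(\gamma_Z)}$ precisely when
\[
  |Z| = N_+ - N_- + n i,
\]
with $i$ the number of trace vertices enclosed by $\gamma_Z$. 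I would prove this combinatorial identity by a discrete divergence argument. Assign to each vertex $v$ the quantity $\delta(v) := \sum_{\alpha \text{ out thin}} n(\alpha) - \sum_{\alpha \text{ in thin}} n(\alpha)$; a direct case analysis gives $\delta = +1$ at a non-trace bivalent vertex, $\delta = -n$ at a trace vertex, and $\delta = +2$ for a singular crossing (treated as the merge--split pair). Summing $\delta(v)$ over vertices enclosed by $\gamma_Z$, every fully interior thin edge contributes its $n$-value once with each sign and so telescopes out, leaving exactly the signed flux $N_+ - N_-$ across $Z$; by the pointwise formulas the same sum equals $|Z| - n i$.

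The main obstacle will be the bookkeeping in this divergence step, in particular the treatment of vertices on $Z$ itself. A key structural observation is that coherent cycles must traverse the thick edge at any singular crossing they meet (since a merge has two thin-in and only one thick-out edge, and dually for splits), which forces every merge--split pair either to lie entirely inside $R_Z$ or to sit entirely on $Z$ with its thick edge used by $Z$; this prevents partial inclusions that could break the telescoping, and a parallel check at bivalent and trace vertices on $Z$ shows their boundary contributions cancel. Once the identity is established the rescaling descends to an algebra homomorphism, and invertibility is immediate since each scaling factor is a unit of $\scalars$.
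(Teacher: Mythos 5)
Your local-relation checks match the paper's (the paper works in the inverse direction, setting $\tilde x_\alpha := t^{n(\alpha)} x_\alpha$ and showing the relations become the normalized ones, which is the same computation). For the non-local relations you take a genuinely different organization of the underlying telescoping. The paper first resolves every singular crossing into a pair of bivalent vertices so that $\bdiagram_\resolution$ becomes a union of concentric loops, and then argues loop by loop: a loop whose trace vertex lies inside $\gamma_Z$ contributes exactly $n$ to the exponent after renormalization, one whose trace vertex lies outside contributes $0$, with the arc-by-arc bookkeeping on each loop verifying the cancellation. Your discrete divergence argument organizes the same telescoping by vertices rather than by loops: you show $\sum_{v \in R_{\gamma_Z}} \delta(v)$ equals $|Z| - ni$ by inspecting vertex types (namely $+1$ at a non-trace bivalent vertex, $+2$ at a singular crossing, $-n$ at a trace vertex) and equals $N_+ - N_-$ by telescoping over interior thin edges, which yields the identity $|Z| = N_+ - N_- + ni$ without the auxiliary resolution of singular crossings. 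Both routes are correct; yours is more uniform across vertex types, the paper's more picture-driven. Your structural remark that a coherent cycle must traverse the thick edge at any singular crossing it meets is a valid and necessary observation (it guarantees the merge and split of a crossing are never separated by $\gamma_Z$), though once $\gamma_Z$ is a push-off of $Z$ to the exterior every vertex of $\web$ is cleanly inside or outside $\gamma_Z$ and the telescoping over interior thin edges versus edges crossing $\gamma_Z$ is automatic, so the ``parallel check'' of boundary contributions you mention is not a separate step.
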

\begin{proof}[Proof of Proposition~\ref{prop:renormalized-gilmore}]
	Renormalize the~basis of $\gilmoreA'(\bdiagram_\resolution)$ by setting
	$\tilde x_\alpha := t^{n(\alpha)}x_\alpha$. Clearly, the~local relations
	at non-trace vertices do not involve $t$ anymore, whereas at a~trace vertex
	the~linear relation $x_a = x_c$ is replaced with $\tilde x_a = t^n \tilde x_c$,
	that coincides with the~quantum trace relation $\tilde x_a = q^{-2} \tilde x_c$.
	In particular, variables at both sides of a~bivalent vertex other than
	the~trace vertex are identified.

	It remains to show that the~non-local relation
	$\nlrel'_Z$ associated with a~coherent cycle $Z$,
	when rewritten in the~new basis,
	takes the~form \eqref{eq:non-local-normalized}
	for $\gamma = \gamma_Z$ a~small push of $Z$.
	In other words, the~power of $t$ must equal $in$,
	where $i$ is the~number of trace vertices in $R_Z$.
	For that resolve $\bdiagram_\resolution$ into a~collection of concentric
	loops $\ell_1, \dots, \ell_k$ by replacing every singular crossing with
	two horizontal lines, each with a~bivalent vertex on it.
	The~exponent of $t$ in $\nlrel'_Z$ counts then bivalent vertices inside $\gamma_Z$.
	
	Consider first a~loop $\ell_r$, the~trace vertex of which is inside $\gamma_Z$.
	If it is entirely contained by $\gamma_Z$, then it contributes exactly $n$ towards
	the~power of $t$. Otherwise, each arc with $s$ bivalent vertices
	outside of $\gamma_Z$
	\[
		\begin{tikzpicture}
			\def\tag(#1)#2{\draw (#1) ++(#2:-3pt) -- ++(#2:6pt)}\draw[\colorgamma, densely dashed] (0,0.5) .. controls ++(1,-0.25) and ++(-1,-0.25) .. (3,0.5);
			\draw (0,0)
			    .. controls ++(1,0) and ++(-1,0) .. (1.5,1.5)
			    .. controls ++(1,0) and ++(-1,0) .. (3,0);
			\tag(0.75,0.75){-15};
			\tag(1.01,1.298){-42};
			\tag(1.5,1.5){90};
			\tag(1.99,1.298){42};
			\tag(2.25,0.75){15};
			\node[above,color=\colorgamma] at (2.85, 0.45) {$\scriptstyle\gamma_Z$};
			\node[below] at (0.5, 0.15) {$\scriptstyle\alpha$};
			\node[below] at (2.5, 0.15) {$\scriptstyle\beta$};
		\end{tikzpicture}
	\]
	lowers the~contributions of the~loop towards $\weight(\gamma_Z)$ by $s$.
	However, the~semi-arcs $\alpha$ and $\beta$ containing the~left and
	right endpoints of the~arc satisfy $n(\beta) = n(\alpha) + s$,
	so that renormalizing the~variables increases the~contribution back.
	Hence, in the~renormalized basis, each such loop contributes exactly
	$n$ towards $\weight(\gamma_Z)$.
	
	Conversely, if the~trace vertex of $\ell_r$ is not contained by $\gamma_Z$,
	then $\ell_r$ does not contribute towards the~power of $t$.
	Indeed, for every arc of $\ell_r$ with $s$ vertices inside $\gamma_Z$
	\[
		\begin{tikzpicture}[yscale=-1]
			\def\tag(#1)#2{\draw (#1) ++(#2:-3pt) -- ++(#2:6pt)}\draw[\colorgamma, densely dashed] (0,0.5) .. controls ++(1,-0.25) and ++(-1,-0.25) .. (3,0.5);
			\draw (0,0)
			    .. controls ++(1,0) and ++(-1,0) .. (1.5,1.5)
			    .. controls ++(1,0) and ++(-1,0) .. (3,0);
			\tag(0.75,0.75){-15};
			\tag(1.01,1.298){-42};
			\tag(1.5,1.5){90};
			\tag(1.99,1.298){42};
			\tag(2.25,0.75){15};
			\node[below,color=\colorgamma] at (2.85, 0.45) {$\scriptstyle\gamma_Z$};
			\node[above] at (0.5, 0.15) {$\scriptstyle\alpha$};
			\node[above] at (2.5, 0.15) {$\scriptstyle\beta$};
		\end{tikzpicture}
	\]
	and the~left and right endpoints on semi-arcs $\alpha$ and $\beta$ respectively,
	we have $n(\beta) - n(\alpha) = s$. Hence, renormalizing variables lowers
	the~power of $t$ by $s$, cancelling the~contribution of the~vertices from the~arc.

	Hence, the power of $t$ in $\nlrel'_Z$, when rewritten in the~new basis,
	is equal to $in$ as desired.
\end{proof}
 \subsection{A pseudo completion}
\label{sec:pseudo-completion}

In this section, we introduce a~functor $\ourQ$
that interpolates $\gll_0$ homology and knot Floer homology.
It comes from the~observation that Theorem~\ref{thm:gilmore-vs-hfk}
relates Gilmore's construction to knot Floer homology when
coefficients are $\ZZ[q^{-1},q]]$.
On the~other hand, the~definition of $\gll_0$ homology can be ``morally''
thought of as the Gilmore one specialized at $q=1$.
The functor $\ourQ$ aims to take the best of these two incompatible worlds.

Coefficients over which chain complexes are considered will play an
important role in this section. We emphasize this importance by
writing them systematically. Moreover, despite the~construction of $\ourQ$
makes sense for any pointed annular web, we focus on the~case of elementary
webs.

Given an~annular web $\web$, consider the map:
\[
	Q_\web \co \gilmoreA(\web; \ZZ[q,q^{-1}]) \lra
		\gilmoreA(\web; \ZZ[q^{-1},q]]) 
\]
given by extending the scalars. It may not be injective. Define
\[
   \ourQ(\web) := \quotient{\gilmoreA(\web; \ZZ[q,q^{-1}])}{\ker Q_\web}
\]
and more generally $\ourQ(\web; \scalars) :=
\ourQ(\web) \otimes_{\ZZ[q,q^{-1}]} \scalars$
for any $\ZZ[q,q^{-1}]$-module $\scalars$. 
Notice that in $\ourQ(\web)$ we kill every decoration $x\in D(\web)$
that is annihilated in $\gilmoreA(\web; \ZZ[q,q^{-1}])$
by some nontrivial polynomial $p(q)\in\ZZ[q,q^{-1}]$.
Because the~homomorphism $Q_\web$ is natural with respect to actions of foams,
$\ourQ(-; \scalars)$ extends to a~functor on $\cat{qAFoamB}$.

\begin{lemma}\label{lem:Q-fin-gen}
	If\/ $\scalars$ is a~PID and $\web$ is an elementary pointed annular web,
	then $\ourQ(\web; \scalars)$ is a~free\/ $\scalars$-module of finite rank.
\end{lemma}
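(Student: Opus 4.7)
The plan is to reduce to basic elementary pointed webs via Proposition \ref{prop:elem-annular-reduction}. The paper remarks that this proposition still holds in the pointed elementary category $\cat{qAFoamB}$, with basic webs taken to be chains of dumbbells marked on the outer thin edge. Hence there are formal direct sums $X_L, X_R$ of such pointed chains of dumbbells together with an isomorphism $\web \oplus X_L \cong X_R$ in $\cat{qAFoamB}$. Applying the additive functor $\ourQ(-;\scalars)$ yields
\[
	\ourQ(\web;\scalars) \oplus \ourQ(X_L;\scalars) \cong \ourQ(X_R;\scalars),
\]
so it suffices to compute $\ourQ$ on a single pointed chain of dumbbells $D_k$ (with $D_1$ being a marked circle).

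For this, Example \ref{exa:GilmoreA-chain-dumbells} shows that $\gilmoreA(D_k;\ZZ[q^{-1},q]]) \cong \ZZ[q^{-1},q]]$ is generated by the empty decoration. Crucially, the relations $(q^{2i-2k}-1)(y_i - q^2 y_{i+1}) = 0$ appearing in that computation already hold over $\ZZ[q,q^{-1}]$; iterating them together with the basepoint relation $y_1 = 0$ shows that every edge variable of $D_k$ is annihilated over $\ZZ[q,q^{-1}]$ by an element of the form $1-q^{2j}$ with $j\neq 0$. Since such elements are invertible in $\ZZ[q^{-1},q]]$, every nonconstant decoration lies in $\ker Q_{D_k}$. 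Therefore the image of $Q_{D_k}$ equals $\ZZ[q,q^{-1}]\cdot 1 \subset \ZZ[q^{-1},q]]$, a free $\ZZ[q,q^{-1}]$-module of rank one, and after base change $\ourQ(D_k;\scalars) \cong \scalars$.

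Returning to the general case, $\ourQ(X_R;\scalars)$ is then a finite direct sum of copies of $\scalars$, hence a free $\scalars$-module of finite rank. Since $\ourQ(\web;\scalars)$ is a direct summand of $\ourQ(X_R;\scalars)$ and every direct summand of a finitely generated free module over a PID is itself free of finite rank, the lemma follows.

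The main obstacle is the careful computation on $D_k$ over $\ZZ[q,q^{-1}]$: one must verify, without appealing to invertibility of $1-q^{2j}$, that every edge variable has an annihilator of this form already in $\gilmoreA(D_k;\ZZ[q,q^{-1}])$. The linear local relations $z_i + x_{i+1} = y_i + z_{i+1}$, the quadratic relations $x_{i+1}z_i = y_iz_{i+1}$, the trace identification $x_i = q^2 y_i$, and the non-local relations $z_i = q^{2i-2k}y_i$ together propagate the torsion along the chain starting from the basepoint $y_1 = 0$; this is a direct but somewhat intricate induction rather than a conceptual issue.
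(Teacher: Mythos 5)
Your overall strategy matches the paper's: reduce to basic elementary pointed webs via Proposition~\ref{prop:elem-annular-reduction}, compute $\ourQ$ on the basics, and conclude by observing that $\ourQ(\web;\scalars)$ sits as a direct summand of a free module of finite rank over a PID. The chain-of-dumbbells computation you carry out is also essentially the one the paper reads off from Example~\ref{exa:GilmoreA-chain-dumbells}. (One small imprecision: the annihilators you produce are products of factors $1-q^{2j}$, not single such factors, since the relation $(q^{2i-2k}-1)(y_i - q^2 y_{i+1})=0$ only propagates torsion after multiplying by the previously acquired annihilators; this does not affect the conclusion, as products of these factors are still nonzero in $\ZZ[q,q^{-1}]$ and invertible in $\ZZ[q^{-1},q]]$.)

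However, there is a genuine gap. You take the basic pointed elementary webs to be precisely the pointed chains of dumbbells $D_k$ (with $D_1$ a marked circle). That is not what the paper defines them to be: a basic elementary web is a \emph{concentric collection} of circles and chains of dumbbells, and in particular may be disconnected. In the pointed setting these are such configurations with the marking on the outer thin edge of the outermost component; the paper's own proof of the companion Proposition~\ref{prop:ourQ-and-gl0}, which invokes the very same reduction, explicitly treats the case where the basic web ``has more than one component.'' The reduction $\web \oplus X_L \cong X_R$ can therefore produce disconnected basics in $X_L$ and $X_R$, and your argument that $\ourQ(X_R;\scalars)$ is a finite direct sum of copies of $\scalars$ does not account for these. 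The fix is short and the paper states it up front: $\ourQ$ vanishes on any disconnected pointed web, because by Example~\ref{exa:GilmoreA-disconnected-web} the Gilmore space $\gilmoreA(\web;\ZZ[q^{-1},q]])$ is zero when $\web$ is disconnected (the annihilator $1-q^{\gcd(w_1,\dots,w_r)}$ is invertible there), so $\ker Q_\web$ is everything and $\ourQ(\web)=0$. With that supplement, each basic summand contributes either $\scalars$ or $0$, and the rest of your argument goes through unchanged.
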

\begin{proof}
	Notice first that $\ourQ(\web; \scalars)$ vanishes when $\web$ is disconnected
	and is free of rank one when $\web$ is a~chain of dumbbells, see
	Example~\ref{exa:GilmoreA-chain-dumbells}. The~thesis
	 follows now from the~functoriality of $\ourQ(-; \scalars)$ and
	Proposition~\ref{prop:elem-annular-reduction}, because a~submodule
	of a~finitely generated free module over a~PID is finitely generated
	and torsion-free, hence free.
\end{proof}

Using the~cube of resolutions approach one extends $\ourQ$
to braid diagrams and we write $\ourQH(\bdiagram; \scalars)$
for the~homology of the corresponding complex.
This complex plays a~central in our subsequent constructions.
We simplify the~notation to $\ourQone$ and $\ourQHone$ respectively when $q=1$.

While it can be shown that $\ourQH(\bdiagram; \scalars)$ is a~braid invariant
that is preserved under stabilization, checking the~first Markov move
(conjugacy) is challenging.

\begin{customconj}{1}
	If\/ $\scalars$ is a~field of characteristic $0$,
	then $\ourQH$ is a~knot invariant for any $\qnt$.
\end{customconj}

As a direct consequence of the~construction, $\ourQ(\web)$
can be identified with a~$\ZZ[q,q^{-1}]$-subspace of
$\gilmoreA(\web; \ZZ[q^{-1},q]])$ of maximal rank.
This observation leads immediately to the~following result.

\begin{proposition}\label{prop:ourQ-to-Gilmore}
	For any~braid closure $\bdiagram$,
$\ourQ(\bdiagram; \ZZ[q^{-1},q]])$ 	and
	$\complexgilmore(\bdiagram; \ZZ[q^{-1},q]])$ are isomorphic
	complexes of graded $\ZZ[q^{-1},q]]$-modules.
	In particular, $\ourQ(\bdiagram; \ZZ[q^{-1},q]])$
	is quasi-isomorphic to $\CFK(\bdiagram)\otimes \ZZ[q^{-1},q]]$
	when $\bdiagram$ is a~knot.
\end{proposition}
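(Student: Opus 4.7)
The plan is to prove this statement essentially formally, by showing that the quotient map from $\gilmoreA$ to $\ourQ$ becomes an isomorphism after extending scalars from $\ZZ[q,q^{-1}]$ to $\ZZ[q^{-1},q]]$, and that this comparison is compatible with the differentials of the cube of resolutions.

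First I would fix a complete resolution $\web = \bdiagram_\resolution$ and work at a single vertex. By construction $\ourQ(\web) = \gilmoreA(\web)/\ker Q_\web$, which fits into the short exact sequence of $\ZZ[q,q^{-1}]$-modules
\[
  0 \to \ker Q_\web \to \gilmoreA(\web) \xrightarrow{\pi_\web} \ourQ(\web) \to 0.
\]
Applying $- \otimes_{\ZZ[q,q^{-1}]} \ZZ[q^{-1},q]]$ and using the identifications $\gilmoreA(\web; \ZZ[q^{-1},q]]) = \gilmoreA(\web) \otimes \ZZ[q^{-1},q]]$ and $\ourQ(\web; \ZZ[q^{-1},q]]) = \ourQ(\web) \otimes \ZZ[q^{-1},q]]$, right exactness of the tensor product yields the exact sequence
\[
  \ker Q_\web \otimes \ZZ[q^{-1},q]] \xrightarrow{\iota} \gilmoreA(\web; \ZZ[q^{-1},q]]) \xrightarrow{\bar\pi_\web} \ourQ(\web; \ZZ[q^{-1},q]]) \to 0.
\]
The key observation is that for $x \in \ker Q_\web$ and $r \in \ZZ[q^{-1},q]]$, the map $\iota$ sends $x \otimes r$ to $r \cdot Q_\web(x) = 0$, so $\iota$ is the zero map and $\bar\pi_\web$ is an isomorphism.

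Next I would verify that the isomorphisms $\bar\pi_\web$ assemble into an isomorphism of complexes. The differentials in both $\complexgilmore(\bdiagram; \ZZ[q^{-1},q]])$ and $\ourQ(\bdiagram; \ZZ[q^{-1},q]])$ are induced (with the appropriate cube signs) by the foam maps between $\gilmoreA$-spaces, which by Proposition~\ref{prop:gilmore-functor} and the very construction of $\ourQ$ as a functorial quotient descend to $\ourQ$ through the quotient maps $\pi_\web$. Naturality of the extension of scalars then implies that the family $\{\bar\pi_\web\}$ commutes with the cube differentials, yielding the desired isomorphism of complexes of graded $\ZZ[q^{-1},q]]$-modules.

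For the "in particular" clause, when $\bdiagram$ is a knot, Theorem~\ref{thm:gilmore-vs-hfk} provides a quasi-isomorphism $\tCFK(\bdiagram) \xrightarrow{\sim} \complexgilmore(\bdiagram; \ZZ[q^{-1},q]])$ with $t = q^{-2}$, and Proposition~\ref{prop:untwisting}, applied to a singular knot, identifies $\tCFK(\bdiagram)$ with $\CFK(\bdiagram) \otimes \ZZ[q^{-1},q]]$. Composing with the isomorphism just established gives the claim. There is essentially no substantive obstacle in this proposition itself; the technical content has already been carried out in Theorem~\ref{thm:gilmore-vs-hfk} and Proposition~\ref{prop:gilmore-functor}, and here only the vanishing of $\iota$ and the natural identifications of tensor products need to be verified.
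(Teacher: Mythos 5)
Your proposal is correct and takes essentially the same approach as the paper: the paper observes that the injection $\ourQ(\web;\ZZ[q,q^{-1}])\hookrightarrow \gilmoreA(\web;\ZZ[q^{-1},q]])$ induced by inclusion of coefficients becomes an isomorphism after tensoring with $\ZZ[q^{-1},q]]$, which is exactly what your right-exactness argument together with the vanishing of $\iota$ establishes. You merely spell out the details (why $\iota=0$, and why the vertex-wise isomorphisms assemble into a chain map) that the paper leaves implicit.
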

\begin{proof}
	The~map $\ourQ(\web,\ZZ[q,q^{-1}] )\to \gilmoreA(\web,\ZZ[q^{-1},q]])$
	induced by the inclusion of the coefficients is injective,
	due to the definition of $\ourQ$,
	and it becomes an~isomorphism after tensoring with $\ZZ[q^{-1},q]]$
	over $\ZZ[q,q^{-1}]$.
	The~last statement follows from Theorem~\ref{thm:gilmore-vs-hfk}.
\end{proof}

Specializing the~complex $\complexgilmore(\bdiagram)$
at $q=1$ does not recover the~$\LieGL_0$ complex,
e.g.\ $\gilmoreA(\web)$ may not vanish for a~disconnected web $\web$.
The~situation is different for $\ourQ$.

\begin{proposition}\label{prop:ourQ-and-gl0}
	For any elementary pointed annular web $\web$ there is
	an~isomorphism $\ourQone(\web; \scalars) \cong \glzero(\web; \scalars)$
	that intertwines the~action of foams.
	In particular,
	$\ourQone(\bdiagram; \scalars)$ and $\complexglzero(\bdiagram; \scalars)$
	are naturally isomorphic
	as complexes of graded\/ $\scalars$-modules.
\end{proposition}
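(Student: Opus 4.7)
The plan is to reduce to basic elementary webs via Proposition~\ref{prop:elem-annular-reduction} and verify the isomorphism there. Both $\ourQone(-;\scalars)$ and $\glzero(-;\scalars)$ are functors on the $q=1$ specialization of $\cat{qAFoamEl}$ that arise as quotients of the reduced Soergel-space functor $\set{qSoergelRed}(-)|_{q=1}$: the first by construction (via Proposition~\ref{prop:gilmore-functor}) and the second by \eqref{eq:iso-gl0-quotient}. Since every elementary web $\web$ fits into an isomorphism $\web \oplus X_L \cong X_R$ with $X_L$, $X_R$ direct sums of basic elementary webs, any natural transformation between the two functors that is an isomorphism on basic webs and compatible with foams will be an isomorphism on all elementary webs. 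The statement about complexes then follows resolution-by-resolution in the cube of resolutions.

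First I will check the vanishing on disconnected basic elementary webs. For $\ourQone$, Example~\ref{exa:GilmoreA-disconnected-web} shows that $\gilmoreA(\web; \ZZ[q,q^{-1}])$ is annihilated by $1 - q^{2d}$ for some $d > 0$; since this element is invertible in $\ZZ[q^{-1},q]]$ via geometric series, the module $\gilmoreA(\web; \ZZ[q^{-1},q]])$ vanishes, so $Q_\web = 0$ and hence $\ourQone(\web;\scalars) = 0$. For $\glzero$, a quantum-degree argument suffices: by Proposition~\ref{proposition:morphism-gl1}(ii), $\glone$ of a concentric collection of circular webs is concentrated in quantum degree zero, whereas $\varphi_\basepoint$ shifts the quantum degree upward by $2(\bindex - 1) \geq 2$ when the index $\bindex$ is at least~$2$. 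The chain-of-dumbbells case is then immediate: $\ourQone(D_\bindex;\scalars) \cong \scalars$ by specializing Example~\ref{exa:GilmoreA-chain-dumbells} at $q=1$ (the argument there forces all variables to be proportional to $y_1 = 0$), and $\glzero(D_\bindex;\scalars) \cong \scalars$ by Theorem~\ref{thm:RW3}(iv); both are generated by the class of the empty decoration.

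To upgrade these pointwise agreements to a natural isomorphism, I will construct an explicit natural transformation $\eta \colon \gilmoreA(-)|_{q=1} \to \glzero(-)$ by verifying that the relations defining $\gilmoreA$ at $q=1$---the Soergel relations, the basepoint relation, and the non-local relations $x_{\outp(\gamma)} - x_{\inp(\gamma)}$---all lie in $\ker\bracket{-;\web;-}_0$. The Soergel and basepoint relations pass trivially. For the non-local ones I exploit Proposition~\ref{proposition:morphism-gl1}(iii), which says that homogeneous symmetric polynomials of positive degree in a transverse family of variables annihilate $\glone(\web)$: combined with Soergel dot-migration inside the region bounded by $\gamma$, the product $x^{\bindex - 1}(x_{\outp(\gamma)} - x_{\inp(\gamma)}) = \varphi_\basepoint(\nlrel_\gamma|_{q=1})$ can be rewritten as such a polynomial, hence lies in $\ker\bracket{-;\web;-}_1$, hence $\nlrel_\gamma|_{q=1} \in \ker\bracket{-;\web;-}_0$. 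Since by Steps~1--2 and functoriality $\glzero(\web;\scalars)$ is torsion-free for every elementary $\web$, the map $\eta_\web$ factors through $\ourQone(\web;\scalars)$, giving a natural surjection $\ourQone \twoheadrightarrow \glzero$ that is tautologically compatible with foam morphisms.

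The main obstacle is the verification in the previous paragraph that $\varphi_\basepoint(\nlrel_\gamma|_{q=1})$ genuinely annihilates $\glone(\web)$: one must exhibit, modulo Soergel relations, an explicit expression for the product $x^{\bindex-1}(x_{\outp(\gamma)} - x_{\inp(\gamma)})$ as a sum of positive-degree polynomials symmetric on transverse sections. Once this is in place, Steps~1--2 show $\eta_\web$ is an isomorphism on basic elementary webs, and Proposition~\ref{prop:elem-annular-reduction} propagates this to all elementary webs via the direct-sum decomposition $\web \oplus X_L \cong X_R$.
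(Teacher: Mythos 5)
Your overall strategy---reduce to basic elementary webs via Proposition~\ref{prop:elem-annular-reduction}, verify there, propagate by functoriality---is the same as the paper's. However, you insert an extra step that is both unnecessary and incomplete: you attempt to construct an explicit natural transformation $\eta\colon \gilmoreA(-)|_{q=1}\to\glzero$ by verifying that the Soergel, basepoint, and non-local relations at $q=1$ lie in $\ker\bracket{-;\web;-}_0$, and you explicitly flag the non-local case as ``the main obstacle'' without resolving it. The paper sidesteps this entirely: since \emph{both} $\ourQone$ and $\glzero$ are quotients of the same functor $\set{Soergel}$, the question reduces to whether the two kernels coincide, and one shows they do on all elementary webs via the direct-sum decomposition $\web\oplus X_L\cong X_R$ (if $\ker_1 = \ker_2$ on $X_L$ and $X_R$, then $\ker_1(\web)\oplus\ker_1(X_L) = \ker_2(\web)\oplus\ker_2(X_L)$ forces $\ker_1(\web)=\ker_2(\web)$). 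On basic connected webs the kernels agree because both quotients are free of rank one generated by the empty decoration and hence kill all positive-degree decorations; no relation-by-relation verification is needed. So the hard computation you correctly identify as the main obstacle is one you do not actually need to perform.

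Two further gaps in your write-up. First, your claim that $\eta_\web$ factors through $\ourQone(\web;\scalars)$ ``since $\glzero(\web;\scalars)$ is torsion-free'' does not work as stated: $\ker Q_\web$ consists of elements of $\gilmoreA(\web;\ZZ[q,q^{-1}])$ annihilated by polynomials of the form $1-q^d$, and at $q=1$ these annihilators vanish, so the image of $\ker Q_\web$ in $\gilmoreA(\web)|_{q=1}$ may well be non-trivial, and torsion-freeness over $\scalars$ says nothing about whether $\eta$ kills it. Second, your quantum-degree argument for the vanishing of $\glzero$ on disconnected basic webs relies on Proposition~\ref{proposition:morphism-gl1}(\ref{it:rank-gl1}), which states that $\glone$ is concentrated in quantum degree zero only for collections of \emph{circular} webs; basic elementary webs also include chains of dumbbells, for which $\glone$ is not concentrated in one degree. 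For the disconnected case you should instead cite the vanishing of $\glzero$ (or equivalently $H^{d_0}$, cf.\ the first paragraph of the proof of Proposition~\ref{prop:d0-vs-S0}) on disconnected webs directly.
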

\begin{proof}
	Both $\ourQone(\web; \scalars)$ and $\glzero(\web; \scalars)$ are quotients of
	the~Soergel space $\set{Soergel}(\web)$ of the~web $\web$. We claim that
	the~identity on $\set{Soergel}(\web)$ induces the~desired isomorphism.
	Due to functoriality of both constructions and
	Proposition~\ref{prop:elem-annular-reduction}
	it is enough to check the~claim for basic elementary webs.
	
	This is clear when $\web$ has more than one component,
	because in this case both spaces are zero.
	Otherwise $\web$ is either a~single circle or a~chain of dumbbells and in each case
	both spaces are freely generated by the~empty decoration,
	see Theorem~\ref{thm:RW3}, Examples~\ref{exa:GilmoreA-elementary-web}
	and \ref{exa:GilmoreA-chain-dumbells}.
\end{proof}

Proposition \ref{prop:ourQ-vs-gl0} from the~introduction
is an~immediate corollary of the~above result.

\begin{customprop}{A}
	The~homology theories $\ourQHone$ and $\homologyglzero$ coincide.
	Hence, $\ourQHone$ is a~knot invariant if\/ $\scalars$ is a~field.
\end{customprop}
\begin{proof}
	The~first statement follows from applying Proposition~\ref{prop:ourQ-and-gl0}
	to each vertex in the~cube of resolutions.
	Since the~$\LieGL_0$ homology is a~knot invariant when $\scalars$ is a~field,
	then so is $\ourQHone\!$.
\end{proof}

\begin{remark}
	Another consequence of Propositon~\ref{prop:ourQ-and-gl0}
	is that $\ourQone(\web)$ is a~free $\scalars$-module for
	any elementary pointed web $\web$ and any ring $\scalars$,
	because $\glzero(\web; \scalars)$ is free.
	This strengthens Lemma~\ref{lem:Q-fin-gen} when $q=1$.
\end{remark}

 \subsection{The~spectral sequence}
\label{sec:specseq}

In this short section we establish the~main result of the~paper.
The~idea is to apply to $\ourQH(\bdiagram)$ the~$(q\mapsto 1)$ Bockstein spectral sequence,
discussed in details in Appendix~\ref{sec:bockstein}.
For this purpose we fix an~arbitrary field $\mathbb K$
and work over a~PID $\mathbb K[q,q^{-1}]$, where we can specialize $q=1$.

\begin{customthm}{B}
  Assume that $\mathbb K$ is a~field and $K$ is a~knot represented
  by a~braid closure $\bdiagram$.
  Then the~$(q\mapsto 1)$ Bockstein spectral sequence applied to
  $\ourQ(\bdiagram; \mathbb{K}[q, q^{-1}])$ has
  $\homologyglzero(K;\mathbb K)$ as its first page
  and converges after finitely many steps.
  The last page is (non-canonically) isomorphic to $\HFK(K; \mathbb K)$.
\end{customthm}

\begin{proof}
  The~thesis follows directly from Proposition~\ref{prop:BocksteinConvergesq1},
  which we can apply thanks to Lemma~\ref{lem:Q-fin-gen}. Indeed it states that
  the $(q\mapsto 1)$ Bockstein spectral sequence has
  $\ourQHone(\bdiagram; \mathbb K)$
on the first page and converges to the quotient of
  $\ourQH(\bdiagram; \mathbb K[q^{-1},q])$ by its torsion submodule, tensored with $\mathbb K$.
  The~former is isomorphic to $\homologyglzero(K, \mathbb K)$
  by Proposition~\ref{prop:ourQ-vs-gl0} and we identify the~latter
  with $\HFK(K, \mathbb K)$ as follows. 

  Because $\mathbb K[q^{-1}, q]]$ contains the~field of fractions of
  $\mathbb K[q,q^{-1}]$, the universal coefficient theorem
  and Proposition \ref{prop:ourQ-to-Gilmore} imply that 
  the~homology groups of $\ourQ(\bdiagram, \mathbb K[q,q^{-1}])$ and
  $\complexgilmore(\bdiagram, \mathbb K[q^{-1}, q]])$ have the~same rank.
  On the~other hand, we know from Theorem~\ref{thm:gilmore-vs-hfk}
  that $H(\complexgilmore(\bdiagram, \mathbb K[q^{-1}, q]]))$
  is isomorphic to $\HFK(\bdiagram) \otimes \mathbb K[q^{-1}, q]]$.
  Hence, the quotient of $\ourQH(\bdiagram, \mathbb K[q,q^{-1}])$ by its torsion submodule
  has the same rank as $\HFK({\bdiagram}) \otimes \mathbb K[q, q^{-1}]$
  and we conclude by tensoring both sides with $\mathbb K$.
\end{proof}

If the~characteristic of $\mathbb K$ is $0$,
then we can consider this spectral sequence together with the~one
from Theorem \ref{thm:RW3},
establishing the DGR Conjecture. 

\begin{customthm}{C}[DGR Conjecture]
	For any knot $K$ and any field of characteristic zero, the bigraded dimension of  
	$\HHH^{\mathrm{red}}$ (after forgetting the $\avar$-grading) is greater or equal to the bigraded  dimension of $\HFK$.
\end{customthm}
 
\appendix

\section{On Bockstein spectral sequences}
\label{sec:bockstein}
\subsection{Limits of spectral sequences}\label{subsec:uniqueness}

This short section is a~survey of \cite[Chapter 3]{MR1793722}.
In what follows, we consider decreasing
filtrations of modules and chain complexes, not necessary bounded.
More explicitly, a~\emph{filtration} of a~$\scalars$-module $M$ is
a~sequence $(F^n)_{n \in  \ZZ}$ of submodules of $M$ satisfying
\[
  F^{n} \supseteq F^{n+1}
  \qquad\text{for all }n \in \ZZ.
\]
Its {\it associated graded} module $\gr^{\bullet}(M)$ is defined
as the~sequence of quotients 
\[
	\gr^n(M)=F^n / F^{n+1} \qquad \text{for }n \in \ZZ.
\]
For a~chain complex $(C,d)$ it is understood that the~submodules $F^n$
are subcomplexes.
In such case each chain group $C_i$ is filtered by $F^nC_i := F^n \cap C_i$
and $d(F^nC_i) \subseteq F^nC_{i+1}$ for all $i,n \in \ZZ$.
There is also an~associated filtration on the~homology
with $F^n\!H(C,d)$, defined as the~image of the~natural map
$H(F^n, d)\to H(C,d)$.

With every filtered chain complex $(C,d,F)$ there is an~associated
\emph{spectral sequence} $\{E_r\}_{r\in\mathbb N}$ with the~first page
\[
	E^{n,p-n}_1= H^{p}(\gr^n(C)).
\]
We say that the~spectral sequence \emph{converges} to the~homology of
the~filtered chain complex $(C, d, F)$ if the $\infty$-page is directly
related to the~filtration of $H(C, d)$ by
\[
	E^{n,p-n}_\infty=\gr^n H^{p}(C, d).
\]
By \cite[Theorem 3.2]{MR1793722} this is the~case
if the~filtered chain complex $(C,d,F)$ is \emph{exhaustive},
i.e.\ $\bigcup_{n} F^nC = C$,
and \emph{weakly convergent}.
The~later holds for instance when $\bigcap_{n} F^n C =0$;
for the~general definition see \cite[Definition 3.1]{MR1793722}.

\begin{example}\label{ex:A1}
	Consider graded filtered  $\scalars$-complexes $(C, d, F)$
	and  $(C\oplus \scalars, d\oplus 0, \tilde F)$
	with $\tilde F^n = F^n \oplus \scalars$.
	The~associated graded of these complexes coincide,
	so that they induce the~same exact sequence.
	We deduce that the~spectral sequence converges simultaneously
	to both $H(C,d)$ and $H(C,d)\oplus \scalars$.
\end{example}

To assure the~uniqueness of the~limit of a~spectral sequence
associated with $(C,d,F)$,
the~filtration needs to be \emph{Hausdorff},
that is weakly convergent and
\[
	\bigcap_{n \in \ZZ} F^n\!H(C,d) =0.
\]
Note that the~second filtration in Example \ref{ex:A1} is not Hausdorff.
An~important source of exhaustive and Hausdorff filtrations
is provided by~\emph{completions} of filtered chain complexes.

Recall that the~\emph{completion} of a~filtered module $(M, F)$ is the~inverse limit
\[
	\widehat M := \lim_{\leftarrow s} M / F^s
\]
together with the~filtration		
\[
	\widehat F^n := \lim_{\leftarrow s} F^n / F^{n+s}.
\]
The~filtered module $(\widehat M, \widehat F)$ is exhaustive and Hausdorff
by \cite[Prop. 3.12]{MR1793722}.
Finally, given a~map $f\colon (M,F_M) \to (N,F_N)$ of exhaustive filtered modules,
the~induced map $\hat f\colon \widehat M\to \widehat N$ is an~isomorphism if and only if
$\gr f \colon \gr M \to \gr N$ is an~isomorphism
\cite[Prop. 3.14]{MR1793722}.
This implies the~following important statement (compare \cite[Cor. 3.15]{MR1793722}).

\begin{theorem}
	Assume that $\left\{f_r: E_r \to E'_r\right\}_{r\geq 0}$
	is a~morphism of spectral sequences $E_r$ and $E'_r$
	that converges to $(M,F_M)$ and $(N, F_N)$ respectively.
	If $f_n$ is an~isomorphism for some $n$,
	then	 $f_\infty$ induces an~isomorphism of filtered modules
	$(\widehat M, \widehat F_M)$ and $(\widehat N, \widehat F_N)$.
\end{theorem}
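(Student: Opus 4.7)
The plan is to propagate the isomorphism $f_n$ up the spectral sequence and then invoke the two~cited results of McCleary to pass to the~completions. First I would note that a~morphism of spectral sequences commutes with the~differentials $d_r$ on every page and, by construction, $f_{r+1}$ is the~map induced on homology by $f_r$. Hence, if $f_n$ is an~isomorphism of bigraded modules, the~five lemma (applied to the~short exact sequences $0 \to \ker d_n \to E_n \to \image d_n \to 0$ and $0 \to \image d_n \to \ker d_n \to E_{n+1} \to 0$) implies that $f_{n+1}$ is an~isomorphism. Iterating, $f_r$ is an~isomorphism for every $r \geq n$, and therefore the~limit map $f_\infty\colon E_\infty \to E'_\infty$ is an~isomorphism.

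Next, I would invoke the~convergence hypothesis: the~spectral sequences converge to $(M,F_M)$ and $(N,F_N)$, which by definition means that there are canonical identifications $E_\infty^{n,p-n} \cong \gr^n H^p(M)$ and $(E'_\infty)^{n,p-n} \cong \gr^n H^p(N)$, and that both identifications are natural in morphisms of filtered complexes. Hence $f_\infty$ can be read as an~isomorphism $\gr f\colon \gr M \to \gr N$ of the~associated graded modules with respect to the~filtrations $F_M$ and $F_N$. By Proposition~3.14 of McCleary (\cite{MR1793722}), cited above, the~map $\hat f\colon \widehat M \to \widehat N$ induced on completions is then an~isomorphism of filtered modules, and by Proposition~3.12 loc.\ cit.\ the~completed filtrations $\widehat F_M$ and $\widehat F_N$ are exhaustive and Hausdorff, so the~conclusion is a~genuine isomorphism in the~category of filtered modules.

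The~only subtle step is the~second one: one has to check that the~identification of $E_\infty$ with the~associated graded of the~abutment is natural, i.e.\ that the~square
\[
	\begin{tikzcd}
		E_\infty^{n,p-n} \ar[r,"f_\infty"] \ar[d,"\cong"'] &
		(E'_\infty)^{n,p-n} \ar[d,"\cong"] \\
		\gr^n H^p(M) \ar[r,"\gr f"'] & \gr^n H^p(N)
	\end{tikzcd}
\]
commutes. This is a~standard consequence of how the~$E_\infty$ page is built from cycles and boundaries in the~filtered complex: a~class in $E_\infty^{n,p-n}$ is represented by an~element $x \in F^n C^p$ with $dx \in F^{n+r} C^{p+1}$ for all $r$, and $f$ sends such a~representative to a~representative of the~same kind in $F^n C'^p$, compatibly with the~projection to $\gr^n H^p$.

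The~main (mild) obstacle is really just being careful about the~convergence hypothesis itself. The~paper works with weakly convergent exhaustive filtrations, which are sufficient to identify $E_\infty$ with $\gr H$ but need not be Hausdorff, so one cannot directly conclude that $H(f)$ itself is an~isomorphism---only that $\gr H(f)$ is. This is precisely why the~statement is formulated in terms of completions: passing to $\widehat M$ and $\widehat N$ forces the~Hausdorff property, and Proposition~3.14 of \cite{MR1793722} then upgrades the~graded isomorphism to an~honest isomorphism of filtered modules.
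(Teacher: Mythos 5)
Your argument is correct and follows exactly the route the paper intends: the theorem is stated there without a separate proof, presented as an immediate consequence of propagating the page-wise isomorphism together with Proposition~3.14 of \cite{MR1793722} (the criterion that $\hat f$ is an isomorphism if and only if $\gr f$ is), which is precisely the chain of reasoning you spell out. Your explicit checks---naturality of the identification $E_\infty \cong \gr H$ and the observation that weak convergence only gives an isomorphism on associated gradeds, forcing the passage to completions---supply the details the paper leaves implicit.
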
	

\begin{example} 
	Consider the~module $\ZZ[t]$ filtered by powers of $t$.
	Its associated graded 
	\[
		\gr\,\ZZ[t]
			= \bigoplus_{n \in \mathbb N} \frac{t^n \ZZ[t]}{t^{n+1} \ZZ[t]}
			= \bigoplus_{n \in \mathbb N} \ZZ \{t^n\}
	\]
	can be identified with $\ZZ[t]$ as an~abelian group
	(note that the~action of $t$ annihilates $\mathrm{gr}\,\ZZ[t]$),
	whereas the~completion is exactly $\ZZ[[t]]$.
	Consider $M:=\ZZ[t]/(1 - tp )$ for some $p\in \ZZ[t]$ with the~induced filtration.		 
	Then $F^n\!M = M$ for any $n$, because $t$ is invertible in $M\ric$.
	This forces $\gr^n M = 0$ for all $n\geq 0$
	and $\widehat M = 0$ as a~result.
	Note that the~filtration is not weakly convergent and also $M[[t]] = 0$.
\end{example}	
	
\begin{example}
	Consider the~ring of Laurent polynomials $\ZZ[t,t^{-1}]$ as a~$\ZZ[t]$-module
	and let $F^n$ by generated by $t^n$ for $n\in\ZZ$.
	Contrary to the~previous case, this filtration is not bounded from below.
	The~associated graded can be identified as an~abelian group with $\ZZ[t,t^{-1}]$
	as in the~previous example and the~completion
	\[
		\widehat{\ZZ[t,t^{-1}]}
			= \lim_{\leftarrow s} \frac{\ZZ[t^{-1},t]}{t^s\ZZ[t]}
			= \ZZ[t^{-1},t]]
	\]
	is filtered by $\widehat F^n = t^n \ZZ[[t]]$ for $t\in \ZZ$.
	Note that the~filtration is exhaustive and weakly convergent.
	As before, consider $M := \ZZ[t,t^{-1}]/(1-tp)$ for some
	polynomial $p\in\ZZ[t]$ with the~induced filtration.
	Again, $t$ is invertible in $M\ric$, so that both $\gr M$ and $\widehat M$ vanish.
\end{example}
 \subsection{The \texorpdfstring{mod-$p$}{mod-p} Bockstein spectral sequence.}
\label{sec:modp}

The aim of  this section is to recall the classical Bockstein
sequence in the context of $\ZZ$-modules,
following \cite{MayPrimerSS}.
This is generalized in section \ref{sec:qmapsto-1-bockstein}
to the~case of modules over Laurent polynomials
and specializing the~value of the~formal variable.

Let $C$ be a~chain complex of $\ZZ$-modules and $p$ a~prime number.
The short exact sequence
\[
	0 \lra \ZZ \stackrel{\cdot p}{\lra} \ZZ
   \stackrel{\pi}{\lra} \ZZ/p\ZZ \lra 0
\]
induces a~long exact sequence of homology groups\footnote{
	Recall that for us a~differential in a~chain complex
	has degree $+1$, i.e.\ it increases the~homological degree.
}
\[
	\ldots
		\xrightarrow{\ \partial\ }   H_{\bullet}(C; \ZZ)
		\xrightarrow{\,H(\cdot p)\,} H_{\bullet}(C; \ZZ)
		\xrightarrow{\,H(\pi)\,}     H_{\bullet}(C; \ZZ/p\ZZ)
		\xrightarrow{\ \partial\ }   H_{\bullet+1}(C; \ZZ)
		\xrightarrow{\,H(\cdot p)\,} \ldots
\]
which can be thought of as an~exact triangle
\[
  \begin{tikzpicture}
    \node (A) at (-1.5,0) {$H(C; \ZZ)$};
    \node (B) at (1.5,0) {$H(C; \ZZ)$};
    \node (C) at (0,-1.5) {$H(C; \ZZ/p\ZZ)$};
    \draw[-to] (A) -- (B) node [midway, above, font=\scriptsize] {$H(\cdot p)$};
    \draw[-to] (B) -- (C) node [midway, right, font=\scriptsize] {$H(\pi)$};
    \draw[-to] (C) -- (A) node[midway, left, font=\scriptsize]
    {$\partial$};\end{tikzpicture}
\]
and described in terms of exact couples as we explain below.

Recall from \cite{MasseyExactCouples} that an~\emph{exact couple}
is a~tuple $(A,B, f,g,h)$ consisting of objects $A$ and $B$ from an~abelian category
and morphisms $f\co A \to A$, $g\co A \to B$ and $h\co B \to A$
satisfying $\image f = \ker g$, $\image g = \ker h$ and $\image h= \ker f\ric$.
Defining
\begin{itemize}
  \item $A' = \image f\ric$,
  \item $B'= \ker (g \circ h) / \image(g \circ h)$,
  \item $f'\co A' \to A'$ as the restriction of $f$ to $A'$,
  \item $h'\co B' \to A'$ as induced by $h$, and
  \item $g'\co A' \to B'$ by declaring that $a' = f(a) \in A'$ is
        mapped on $g(a') = g(a) \in B'$
\end{itemize}
yields another exact couple $(A',B', f',g', h')$.
Inductively one constructs a~sequence of exact couples
$(A^{(n)},B^{(n)}, f^{(n)},g^{(n)}, h^{(n)})_{n \in \NN}$
and checks that
$(B^{(n)}, g^{(n)} \circ f^{(n)})$ is a spectral sequence\footnote{
	Not necessarily bigraded in general.
}. 
The~\emph{Bockstein spetral sequence} arises from the~exact couple
\[
	(H(C; \ZZ), H(C;\ZZ/p\ZZ), H(\cdot p), H(\pi), \partial).
\]

\begin{example}\label{ex:Bocksteinpk}
  Consider the chain complex $C = \ZZ \stackrel{\cdot p^k}{\lra} \ZZ$ 
  for some $k \geq 1$. The first exact couple at stake  is:
\[
\NB{  \begin{tikzpicture}
    \node (A1) at (-1.5,0) {$\ZZ/p^k \ZZ$};
    \node (A2) at ( 1.5,0) {$\ZZ/p^k \ZZ$};
    \node (B) at (0, -1.7) {$ \ZZ/p\ZZ  \oplus \ZZ/p\ZZ $};
\draw[-to] (A1) -- (A2) node[midway, above, font= \scriptsize] {$\cdot p$};
    \draw[-to] (A2) -- (B) node[midway, right, font= \scriptsize]
    {\;$\displaystyle{\begin{pmatrix} 0 \\ 1 \end{pmatrix}} $};
    \draw[-to] (B) -- (A1) node[midway, left, font= \scriptsize]
    {$\left(p^{k-1}\,\,\, 0 \right)$};
  \end{tikzpicture}}.
\]
In general, for $1\leq  i\leq k$, the $i$th exact couple  is given by:
\[\NB{
  \begin{tikzpicture}
    \node (A1) at (-1.5,0) {$\ZZ/p^{k+1-i} \ZZ$};
    \node (A2) at ( 1.5,0) {$\ZZ/p^{k+1-i} \ZZ$};
    \node (B) at (0, -1.7) {$\ZZ/p\ZZ \oplus \ZZ/p\ZZ$};
\draw[-to] (A1) -- (A2) node[midway, above, font= \scriptsize] {$\cdot p$};
\draw[-to] (A2) -- (B) node[midway, right, font= \scriptsize]
    {\;$\displaystyle{\begin{pmatrix} 0 \\ 1 \end{pmatrix}} $};
    \draw[-to] (B) -- (A1) node[midway, left, font= \scriptsize]
    {$\left(p^{k-i}\,\,\, 0 \right)$};

  \end{tikzpicture}
  }
\]
and finally the $(k+1)$-st exact couple is identically $0$.
\end{example}

\begin{proposition}\label{prop:BocksteinConvergesModP}
  The first page of the~Bockstein spectral sequence of a~chain complex
  $C$ of abelian groups is $H(C; \ZZ/p\ZZ)$.
  If the chain complex $C$ is free and finitely generated,
  then the~Bockstein spectral sequence converges in finitely many steps
  and the infinite page is canonically isomorphic to
  the~quotient of\/ $H(C; \ZZ)$ by its torsion submodule, tensored with $\ZZ/p\ZZ$. 
\end{proposition}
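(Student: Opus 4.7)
The plan is to prove both assertions by reducing to elementary summands via the Smith normal form. The first assertion is immediate from the construction recalled in Section~\ref{sec:modp}: the Bockstein exact couple has its $B$-component equal to $H(C;\ZZ/p\ZZ)$, which is by convention the first page of the associated spectral sequence, with differential $d_1 = H(\pi)\circ\partial$ (the connecting Bockstein homomorphism followed by reduction mod $p$).

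For the convergence claim, I exploit that formation of the Bockstein exact couple, passage to homology, and construction of derived couples are all additive with respect to direct sums of chain complexes. Under the hypothesis that $C$ is free and finitely generated, the Smith normal form applied to the differentials decomposes $C$ as a finite direct sum of elementary subcomplexes of three types: single-generator complexes $\ZZ$ concentrated in one degree, and two-term complexes of the form $\ZZ \xrightarrow{\cdot m} \ZZ$, which I further split according to whether $m = p^k$ for some $k\geq 1$ or $\gcd(m,p)=1$.

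On a trivial $\ZZ$-summand the map $H(\cdot p)$ is injective, so $\partial = 0$ and the Bockstein SS is concentrated on the first page with value $\ZZ/p\ZZ$. On $\ZZ \xrightarrow{\cdot m} \ZZ$ with $\gcd(m,p)=1$, already $H(C;\ZZ/p\ZZ) = 0$, so the entire spectral sequence vanishes. On $\ZZ \xrightarrow{\cdot p^k} \ZZ$, Example~\ref{ex:Bocksteinpk} shows the spectral sequence collapses to zero after exactly $k$ steps. Finite generation gives a uniform bound on the exponents $k$ that occur, hence the full Bockstein SS converges in finitely many pages. Only the trivial $\ZZ$-summands survive to the infinite page, and they contribute precisely $\mathrm{rank}_\ZZ H(C;\ZZ)$ copies of $\ZZ/p\ZZ$.

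The main subtlety is canonicity, since the Smith normal form decomposition depends on non-canonical choices of basis. To promote the isomorphism to a canonical one, I would argue intrinsically at the level of homology: the descending chain of images $p^n H(C;\ZZ)$ stabilizes (for $n$ large enough, since $H(C;\ZZ)$ is finitely generated) to $p^N F$, where $F$ denotes the free part of $H(C;\ZZ)$, because the torsion subgroup is eventually annihilated by iterated multiplication by $p$. Multiplication by $p^N$ gives a canonical identification $F \cong p^N F$, and unwinding the definition of the derived couple identifies the infinite page of the Bockstein SS with $p^N F / p^{N+1} F$, which is canonically $F \otimes \ZZ/p\ZZ$, independently of $N$. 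This is the natural isomorphism claimed by the proposition.
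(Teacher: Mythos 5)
Your approach is essentially the same as the paper's: decompose via Smith normal form into elementary one- and two-term subcomplexes and analyze the Bockstein spectral sequence on each, with the $p$-torsion cases handled by Example~\ref{ex:Bocksteinpk}. The first assertion you correctly observe is immediate from the construction of the exact couple.

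Two small points. First, your case split for the two-term summands $\ZZ \xrightarrow{\cdot m} \ZZ$ into ``$m=p^k$'' versus ``$\gcd(m,p)=1$'' is not exhaustive: the general case is $m=p^k r$ with $k\geq 1$ and $\gcd(r,p)=1$, as in the paper's case~(3). This is harmless because the $r$-part contributes only prime-to-$p$ torsion, on which multiplication by $p$ is an automorphism, so the derived-couple analysis reduces to the case $m=p^k$; still, your list as written misses these summands. Second, in your canonicity paragraph the claim that ``the torsion subgroup is eventually annihilated by iterated multiplication by $p$'' is not correct: only the $p$-primary torsion dies under $\cdot p^n$, while the prime-to-$p$ torsion $T_{p'}$ satisfies $pT_{p'}=T_{p'}$. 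Thus $p^n H(C;\ZZ)$ stabilizes to $p^N F\oplus T_{p'}$, not to $p^N F$. Your conclusion is nonetheless correct, since in the stable couple the infinite page is $A^{(N)}/pA^{(N)}=(p^N F\oplus T_{p'})/(p^{N+1}F\oplus T_{p'})\cong p^N F/p^{N+1}F\cong F\otimes\ZZ/p\ZZ$. The canonicity argument itself is a genuine addition: the paper asserts canonicity but its proof sketch, resting entirely on a non-canonical Smith basis, does not justify it, so your intrinsic identification of $E_\infty$ with the stable image modulo $p$ fills a real gap.
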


\begin{proof}[Sketch of the~proof]
  This is a~very classical result and the proof is rather elementary.
  First, using Smith normal form of differentials,
  one obtains that every free and finitely generated complex of
  $\ZZ$-modules is a direct sum of shifted complexes of the form
  \begin{enumerate}\item\label{it:diffcomplex1} $0 \lra \ZZ \lra 0$,
    \item\label{it:diffcomplex2} $0\lra \ZZ \stackrel{\cdot r}{\lra}
      \ZZ \lra 0$ with $r$ an integer coprime with $p$,
    \item\label{it:diffcomplex3} $0\lra \ZZ \xrightarrow{\,\cdot
        p^kr\,} \ZZ \lra 0$ with $k\geq 1$ and $r$ an integer
      coprime with $p$.
  \end{enumerate}
  In case (\ref{it:diffcomplex1}), the spectral sequence converges
  immediately and its infinite page is equal to $\ZZ/p\ZZ$.
  In case (\ref{it:diffcomplex2}), the spectral sequence converges
  immediately and its infinite page is equal to $0$.
  Case (\ref{it:diffcomplex3}) is dealt with in
  Example~\ref{ex:Bocksteinpk}: it converges to $0$ at the $(k+1)$-st page. 
\end{proof}

\subsection{The \texorpdfstring{$(q\mapsto 1)$}{(q->1)} Bockstein sequence }
\label{sec:qmapsto-1-bockstein}

Let $\mathbb K$ be a~field and $\laurent:=\mathbb K[q, q^{-1}]$
be the~ring of Laurent polynomial over $\mathbb K$.
Note that $\laurent$ is a~principal ideal domain (PID),
so that Smith's normal form result applies.
We endow $\mathbb K$  with an~$\laurent$-module structure by letting
$q$ act by $1$. In other words, we have an~exact sequence of $\laurent$-modules
\[
	0 \lra \laurent
		\xrightarrow{\,\cdot (q-1)\,} \laurent
		\xrightarrow{\,q\mapsto 1\,} \mathbb K
		\lra 0.
\]
Let $C$ be a~chain complex of $\laurent$-modules.
Just like in subsection~\ref{sec:modp},
one can use the induced long exact sequence of homology
to construct an~exact couple
\[
	(H(C; \laurent), H(C;
  	\mathbb K), H(\cdot (q-1)), H(q\mapsto 1), \partial).
\]
This exact couple induces a~spectral sequence,
which we call the~\emph{$(q\mapsto 1)$ Bockstein spectral sequence}.

\begin{proposition}\label{prop:BocksteinConvergesq1}
	The first page of the~$(q\mapsto 1)$ Bockstein spectral sequence
	of a chain complex $C$ of\/ $\laurent$-modules is $H(C; \mathbb K)$.
	If the~chain complex $C$ is free and finitely generated,
	then the~$(q\mapsto 1)$ Bockstein spectral sequence converges
	in finitely many steps the infinite page is canonically isomorphic
	to the~quotient of\/ $H(C; \laurent)$ by its torsion submodule, tensored with\/ $\mathbb K$.
\end{proposition}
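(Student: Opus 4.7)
The plan is to mirror the proof of Proposition~\ref{prop:BocksteinConvergesModP}, replacing the prime $p \in \ZZ$ by the element $q-1 \in \laurent$. The identification $E_1 = H(C;\mathbb K)$ is built directly into the construction of the Bockstein spectral sequence from the exact couple
\[
	(H(C;\laurent),\,H(C;\mathbb K),\,H(\cdot (q-1)),\,H(q\mapsto 1),\,\partial),
\]
so I would state it immediately with essentially no extra work.

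For the convergence statement, the decisive input is that $\laurent = \mathbb K[q, q^{-1}]$ is a PID, being a localization of the Euclidean ring $\mathbb K[q]$. The structure theorem for bounded chain complexes of finitely generated free modules over a PID then decomposes $C$ into a direct sum of shifted building blocks of three types, in exact analogy with the classical case:
\begin{enumerate}
    \item[(1)] $0 \to \laurent \to 0$;
    \item[(2)] $0 \to \laurent \xrightarrow{\,\cdot r\,} \laurent \to 0$ with $r \in \laurent$ coprime to $q-1$, i.e.\ $r(1)\neq 0$;
    \item[(3)] $0 \to \laurent \xrightarrow{\,\cdot (q-1)^k r\,} \laurent \to 0$ with $k\geq 1$ and $r(1)\neq 0$.
\end{enumerate}
The Bockstein construction commutes with direct sums, so the claim reduces to a case-by-case verification on each such summand.

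The three cases are then routine. In case (1) the spectral sequence collapses at $E_1 = \mathbb K$, matching the free part of $H(C;\laurent)=\laurent$ tensored with $\mathbb K$. In case (2), since $r(1)\neq 0$ the element $r$ acts invertibly on $\mathbb K$, so $H(C;\mathbb K)=0$; correspondingly $H(C;\laurent)=\laurent/(r)$ is torsion, so both sides vanish. For case (3) I would transcribe Example~\ref{ex:Bocksteinpk} verbatim with $p$ replaced by $q-1$: at the $i$th page the derived exact couple becomes
\[
	\laurent/(q-1)^{k+1-i}\laurent \xrightarrow{\,\cdot(q-1)\,} \laurent/(q-1)^{k+1-i}\laurent
\]
paired with $\mathbb K \oplus \mathbb K$ and the analogous auxiliary maps, and the sequence becomes identically zero at page $k+1$, again matching the vanishing of the free part of $H(C;\laurent)=\laurent/((q-1)^k r)$.

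The only point requiring genuine care, and the main potential obstacle, is invoking the structure theorem for chain complexes over the PID $\laurent$ rather than only for individual $\laurent$-linear maps. Because $C$ is finitely generated and free, this is a standard degree-by-degree application of Smith normal form with compatible change of basis in adjacent degrees, but it is worth stating explicitly. Once the decomposition is in hand, canonicity of the identification $E_\infty \cong (\text{free part of } H(C;\laurent))\otimes_\laurent \mathbb K$ follows from the naturality of the Bockstein construction with respect to direct sums.
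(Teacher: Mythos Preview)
Your proposal is correct and follows essentially the same approach as the paper's own proof: reduce via Smith normal form over the PID $\laurent$ to the three elementary building blocks and verify each case directly, with case~(3) handled by the $(q-1)$-analogue of Example~\ref{ex:Bocksteinpk}. The only minor discrepancy is that in case~(3) the paper keeps track of the coprime factor $r$ in the $A$-terms of the derived couples, writing $\laurent/\langle(q-1)^{k+1-i}r\rangle$ rather than $\laurent/(q-1)^{k+1-i}\laurent$; this does not affect the $B$-side or the conclusion, since the $\laurent/(r)$ summand is invisible after tensoring with $\mathbb K$.
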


\begin{proof}[Sketch of the~proof]
  The~proof follows the~same line as the~one of
  Proposition~\ref{prop:BocksteinConvergesModP}.
  Every free and finitely generated complex of $\laurent$-module
  is a~direct sum of shifted complexes of the~form
  \begin{enumerate}
    \item\label{it:diffcomplex1q1}
      $0 \lra \laurent \lra 0$,
    \item\label{it:diffcomplex2q1}
      $0\lra \laurent \xrightarrow{\,\cdot p(q)\,} \laurent \lra 0$
      with $p(q)$ a~polynomial coprime with $q-1$, or
    \item\label{it:diffcomplex3q1}
      $0\lra \laurent \xrightarrow{\,\cdot (q-1)^k p(q)\,} \laurent \lra 0$
      with $k\geq 1$ and $p(q)$ a~polynomial coprime with $q-1$.
  \end{enumerate}
  In case (\ref{it:diffcomplex1q1}), the spectral sequence converges
  immediately and its infinite page is equal to $\mathbb K$.
  In case (\ref{it:diffcomplex2q1}), the spectral sequence converges
  immediately and its infinite page is equal to $0$.
  Case (\ref{it:diffcomplex3q1}) is similar to
  Example~\ref{ex:Bocksteinpk}. The first exact couple at stake is
  \[
   \NB{\begin{tikzpicture}
      \node (A1) at (-2,0) {$\laurent/ \langle(q-1)^k p(q)\rangle $};
      \node (A2) at ( 2,0) {$\laurent/ \langle(q-1)^k p(q)\rangle $};
      \node (B) at (0, -1.7) {$\mathbb K \oplus \mathbb K$};
      \draw[-to] (A1) -- (A2) node[midway, above, font= \scriptsize] {$\cdot (q-1)$};
      \draw[-to] (A2) -- (B) node[midway, right, font= \scriptsize]
        {\;\;$\displaystyle{\begin{pmatrix} 0 \\ 1 \end{pmatrix}} $};
      \draw[-to] (B) -- (A1) node[midway, left, font= \scriptsize]
      {$\left((q-1)^{k-1} p(q)\,\,\, 0 \right)\;$};
    \end{tikzpicture}}.
  \]
  In general, for $1\leq  i\leq k$, the $i$th exact couple  is given by
  \[\NB{
    \begin{tikzpicture}
      \node (A1) at (-2.5,0) {$\laurent/\langle(q-1)^{k+1-i} p(q)\rangle $};
      \node (A2) at ( 2.5,0) {$\laurent/\langle (q-1)^{k+1-i} p(q) \rangle $};
      \node (B) at (0, -1.7) {$\mathbb K \oplus \mathbb K$};
      \draw[-to] (A1) -- (A2) node[midway, above, font= \scriptsize] {$\cdot (q-1)$};
      \draw[-to] (A2) -- (B) node[midway, right, font= \scriptsize]
        {\;\;\; $\displaystyle{\begin{pmatrix} 0 \\ 1 \end{pmatrix}} $};
      \draw[-to] (B) -- (A1) node[midway, left, font= \scriptsize]
        {$\left((q-1)^{k-i} p(q)\,\,\, 0 \right)\;$};
    \end{tikzpicture}}.
  \]
  Finally, the~$(k+1)$-st exact couple is identically $0$.
  Hence, in all three cases the $(q\mapsto 1)$ Bockstein spectral
  sequence converges to the~quotient of $H(C; \laurent)$
  by its torsion submodule, tensored with $\mathbb K$ as desired.
\end{proof}

\section{Cyclicity of the~quantum Hochschild homology}
\label{sec:qHH-is-cyclic}
For this section we fix a~graded algebra $A$ and consider its quantum
Hochschild complex $\qCHoHom_\bullet(A)$ with the~differential denoted
by $\partial$.  The~complex arises actually from a~\emph{simplicial
module},\footnote{
	For more details about simplicial and cyclic module see \cite{LodayBook}.
}
which means that each chain group $\qCHoHom_n(A)$ admits
two families of homomorphisms: the~family of \emph{face maps}
$\{d_i\colon M_n\to M_{n-1}\}_{0\leqslant i\leqslant n}$ and of
\emph{degeneracy maps}
$\{s_j\colon M_n\to M_{n+1}\}_{0\leqslant j\leqslant n}$, which
satisfy the~equalities
\begin{align}\label{rels:simplicial}
	d_id_j &= d_{j-1}d_i \quad \textrm{for } i<j,\\
	s_is_j &= s_js_{i-1} \quad \textrm{for } i>j,\\
	d_is_j &= \begin{cases}
		s_{j-1}d_i & \textrm{for } i<j,\\
		\id        & \textrm{for } i=j,j+1,\\
		s_jd_{i-1} & \textrm{for } i>j+1.
	\end{cases}
\end{align}
Indeed, the~face maps are the~components of the~quantum Hochschild differential,
\begingroup
\def\qntterm{q^{-|a_n|} a_n a_0}\begin{align*}
	d_i(a_0\otimes \dots \otimes a_n) &:= \begin{cases}
		\hphantom{\qntterm}\mathllap{a_0 a_1}\otimes a_2 \otimes \dots \otimes a_n
			& \textrm{ if }i=0,\\
		\hphantom{\qntterm}\mathllap{a_0} \otimes \dots \otimes a_ia_{i+1}
			\otimes \dots \otimes a_n
			& \textrm{ if }0<i<n,\\
		\qntterm \otimes a_1 \otimes \dots \otimes a_{n-1} & \textrm{ if }i=n,
	\end{cases}
\intertext{whereas the~degeracy map $s_j$ inserts $1\in A$ after $j$-th factor:
}
	s_j(a_0\otimes \dots \otimes a_n) &:= a_0 \otimes \dots \otimes
			a_j \otimes 1 \otimes a_{j+1} \otimes \dots \otimes a_n.
\intertext{In addition to that, there is a~family of component-wise endomorphisms
}
	t_n(a_0 \otimes \dots \otimes a_n) &:=
		q^{-|a_n|} a_n \otimes a_0 \otimes \dots \otimes a_{n-1},
\end{align*}\endgroup
which satisfy the~equalities
\begin{equation}\label{rels:semicyclic}
	d_it_n = \begin{cases}
		\phantom{t_{n-1}}d_n & \textrm{ for }i=0,\\
		t_{n-1}d_{i-1}       & \textrm{ for } i>0,
	\end{cases}
	\qquad
	s_jt_n = \begin{cases}
		t_{n+1}^2s_n   & \textrm{ for }j=0,\\
		t_{n+1}s_{j-1} & \textrm{ for }j>0.
	\end{cases}
\end{equation}
Consider the~endomorphism $T$ of $\qCHoHom_\bullet(A)$ defined
by $T_n := t_n^{n+1}$. It is the~identity map when $q=1$,
which means that the~classical Hochschild homology is a~cyclic module,
but in general case it scales a~homogeneous degree $d$ Hochschild
chain by $q^d$. However, it is not far from the~identity map.

\begin{lemma}\label{lem:T-htpic-to-id}
	The~endomorphism $T$ is chain homotopic to the~identity map.
\end{lemma}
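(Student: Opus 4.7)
The plan is to exhibit an explicit chain homotopy $h_\bullet$ with components $h_n\colon \qCHoHom_n(A) \to \qCHoHom_{n+1}(A)$ satisfying $\partial h_n + h_{n-1}\partial = T_n - \id$, using only the simplicial and semicyclic identities \eqref{rels:simplicial} and \eqref{rels:semicyclic}. First I would observe that these relations make the quantum Hochschild complex into a \emph{paracyclic module} in the classical sense: exactly the cyclic relations of Connes, but without the normalization $t_n^{n+1} = \id$. A direct calculation from \eqref{rels:semicyclic} shows that $T_n = t_n^{n+1}$ commutes with every face and degeneracy map, so $T$ is automatically a chain map; what remains is nullhomotopy.

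Next I would propose a homotopy of the form
\[
	h_n \;=\; \sum_{j=0}^{n} (-1)^{n-j}\, t_{n+1}^{\,j+1}\, s_n\, t_n^{\,n-j},
\]
or a~minor reindexing thereof. This is a~quantum analogue of the~standard construction of an~extra degeneracy $s_{-1} := t_{n+1}\,s_n$ used for classical cyclic modules. I would then compute $d_i\,h_n$ and $h_{n-1}\,d_i$ by pushing the face maps through the $t$'s using the~second relation of \eqref{rels:semicyclic} and through the~$s_n$ using \eqref{rels:simplicial}. The resulting double alternating sum should telescope: every intermediate term $t^{\,a} s_b t^{\,c}$ appearing in $\partial h_n$ should cancel with a corresponding term in $h_{n-1}\partial$, leaving only the boundary contributions from $j=0$ and $j=n$, which evaluate to $t_n^{\,n+1} = T_n$ and $-\id$ respectively after applying $d_n t_n = d_0$ (and its iterates) to collapse the remaining powers of $t$.

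To recover the~fact that $T_n$ acts as $q^d$ on a homogeneous chain of internal degree $d$, I would separately note that each semicyclic move across a face introduces the scalar $q^{-|a_n|}$, and that these scalars accumulate over the full rotation to give precisely the overall internal-degree scaling; hence the homotopy identity $\partial h + h\partial = T - \id$ is consistent with $T$ being the identity when $q=1$ (in which case the complex is genuinely cyclic and the homotopy collapses to a~trivial statement).

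The main obstacle will be the bookkeeping of indices, signs, and powers of $t$: each application of \eqref{rels:semicyclic} reindexes the face or degeneracy and transports a~$t$ past it, so organizing the double summation so that the cancellations pair up correctly is the delicate part. Once the~telescoping is set up and verified on the~universal paracyclic generators (i.e.\ symbolically, treating $d_i$, $s_j$ and $t_n$ as formal operators subject only to \eqref{rels:simplicial}--\eqref{rels:semicyclic}), the~conclusion follows uniformly for any graded algebra~$A$.
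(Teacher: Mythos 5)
Your approach is essentially the paper's: exhibit an explicit homotopy built from the paracyclic ``extra degeneracy'' $\sigma_n := t_{n+1}s_n$, organize it as an alternating sum over rotations $t_n^j$, and verify the homotopy identity by telescoping. But the specific formula you propose,
\[
	h_n = \sum_{j=0}^{n} (-1)^{n-j}\, t_{n+1}^{\,j+1}\, s_n\, t_n^{\,n-j},
\]
is not a ``minor reindexing'' away from one that works. Pushing the $t_n$'s through $s_n$ via $s_k t_n = t_{n+1}s_{k-1}$ collapses your summand to $t_{n+1}^{j+1} s_n t_n^{n-j} = t_{n+1}^{\,n+1}\,s_j$, so
\[
	h_n \;=\; (-1)^n\, t_{n+1}^{\,n+1}\sum_{j=0}^n (-1)^j s_j.
\]
The factor $\sum_j (-1)^j s_j$ lives entirely in the simplicial structure, so $\partial h_n + h_{n-1}\partial$ cannot produce the cyclic boundary term $T_n$. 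Concretely, already at $n=1$ one computes
\[
	\partial_2 h_1 + h_0 \partial_1 \;=\; -t_1 - t_1^2 + (t_1 + t_1^2)\,s_0 d_0,
\]
which is not $\pm(t_1^2 - \id)$ since $s_0 d_0 \neq \id$.

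The correct homotopy is $h_n = \sum_{j=0}^n (-1)^{jn}\sigma_n t_n^{\,j}$, i.e.\ the extra degeneracy $\sigma_n = t_{n+1}s_n$ appears \emph{exactly once} per summand, and the rotations accumulate purely on the \emph{right}; your formula smears additional powers of $t_{n+1}$ to the left of $s_n$, which is what breaks the telescoping. Note also that the sign pattern is $(-1)^{jn}$ rather than $(-1)^{n-j}$: this is forced because the corresponding terms of $h_{n-1}\partial$ carry the sign $(-1)^{i+j(n-1)}$, and these must cancel the terms of $\partial h_n$ with sign $(-1)^{i+jn}$ after the reindexing $d_{i+1}\sigma_n t_n^j = \sigma_{n-1}t_{n-1}^{\,?}\,d_{?}$ coming from the paracyclic relations. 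So the right instinct (extra degeneracy plus telescoping) is there, but the two free parameters you left open---the distribution of the $t$-powers and the sign---both need to be pinned down differently than you guessed.
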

\begin{proof}
	Define $\sigma_n := t_{n+1}s_n$, so that
	\begin{equation}
		d_i \sigma_n = \begin{cases}
			\id         & \textrm{for }i=0,\\
			\sigma_{n-1} d_{i-1} & \textrm{for }0<i<n,\\
			t_n         & \textrm{for }i=n.
		\end{cases}
	\end{equation}
	We claim that $h_n=\sum_{j=0}^n(-1)^{jn} \sigma_n t_n^j$ is
	a~desired chain homotopy. First, write
	\begin{align}
		\label{htpy:hd}
		h_{n-1} \partial_n &= \sum_{j=0}^{n-1} \sum_{i=0}^n (-1)^{i+j(n-1)} \sigma_{n-1} t_{n-1}^j d_i\\
		\label{htpy:dh}
		\partial_{n+1} h_n &= \sum_{i=0}^{n+1} \sum_{j=0}^n (-1)^{i+jn} d_i \sigma_n t_n^j
	\end{align}
	and notice the~following cancellation in \eqref{htpy:dh}:
	\begin{equation}
		   (-1)^{n+1+jn} d_{n+1} \sigma_n t_n^j
		= -(-1)^{(j+1)n} t_n^{j+1} \\
		= -(-1)^{(j+1)n} d_0 \sigma_n t_n^{j+1}.
	\end{equation}
	Hence,
	\begin{align}
		\sum_{j=0}^n (-1)^{jn} (d_0 - (-1)^nd_{n+1}) \sigma_n t_n^j
		= d_0 \sigma_n - d_{n+1} \sigma_n t_n^n = \id - t_n^{n+1}.
	\end{align}
	Put the~remaining terms of $\partial h$ as well as the~terms of $h\partial$ in the~lexicographic order with respect to $i$ then $j$, to create $n(n+1)$ pairs:
	\begin{equation}\label{eq:the-order-of-terms}
		\setlength\arraycolsep{0.7ex}
		\begin{array}{ccccccccccccc}
			d_1 \sigma_n     &<& d_2 \sigma_n     &<&\cdots&<& d_n\sigma_n         &<& d_1\sigma_n t_n  &<& d_2\sigma_n t_n &<&\cdots
			\\[1ex]
			\updownarrow     & & \updownarrow     & &      & & \updownarrow        & & \updownarrow     & & \updownarrow
			\\[1ex]
			\sigma_{n-1} d_0 &<& \sigma_{n-1} d_1 &<&\cdots&<& \sigma_{n-1}d_{n-1} &<& \sigma_{n-1} d_n &<& \sigma_{n-1} t_{n-1} d_0 &<&\cdots
		\end{array}
	\end{equation}
	It is enough to show that none of the~pair contributes
	to $\partial h + h\partial$.
	
	The~term $d_{i+1} \sigma_n t_n^j$ is at the~position $jn+i+1$ in the~upper sequence of \eqref{eq:the-order-of-terms} and it appears in \eqref{htpy:dh} with sign $(-1)^{jn+i+1}$. We compute
	\begin{equation}
		d_{i+1} \sigma_n t_n^j = \sigma_{n-1} d_i t_n^j = \begin{cases}
			\sigma_{n-1} t_{n-1}^{j-1} d_{i-j+n+1} & \textrm{if }0\leqslant i<j,\\
			\sigma_{n-1} t_{n-1}^j d_{i-j}         & \textrm{if }j\leqslant i<n,
		\end{cases}
	\end{equation}
	obtaining a~term at the~position $jn+i+1$ in the~lower sequence of
	\eqref{eq:the-order-of-terms}, which appears in \eqref{htpy:hd} with
	sign $(-1)^{j(n-1)+i-j} = (-1)^{jn+i}$.
	Hence, the~two terms cancel each other and the~thesis follows.
\end{proof}

We are now ready to prove the~statement about quantum Hochschild homology
for a~polynomial algebra $R^{\underline k}$. In fact, Proposition~\ref{prop:qHH-vanishes}
is a~special case of the~following result.

\begin{proposition}
	Suppose that $A$ is supported in nonnegative degrees and that\/ $1-q^d$ is invertible
	for $d\neq 0$. Then the~inclusion of the~degree zero subalgebra $A_0 \subset A$
	induces a~homotopy equivalence
	of chain complexes $\qCHoHom_{\!\bullet}(A_0) \to \qCHoHom_{\!\bullet}(A)$.
	In particular, $\qHoHom_{\!\bullet}(A) \cong \qHoHom_{\!\bullet}(A_0)$.
\end{proposition}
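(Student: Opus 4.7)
The plan is to use the internal grading on $\qCHoHom_\bullet(A)$ to split off the image of the inclusion as a direct summand and show that the complementary summand is contractible by appealing to Lemma~\ref{lem:T-htpic-to-id}.

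First I would decompose $\qCHoHom_\bullet(A)$ according to the internal grading inherited from $A$. Since $A$ is supported in nonnegative degrees and the quantum Hochschild differential preserves the internal grading, we get a direct sum decomposition
\[
	\qCHoHom_\bullet(A) \;=\; \bigoplus_{d\geqslant 0} \qCHoHom_\bullet(A)_d,
\]
where $\qCHoHom_\bullet(A)_d$ is spanned by pure tensors $a_0\otimes\cdots\otimes a_n$ of total degree $|a_0|+\cdots+|a_n|=d$. The degree $0$ summand consists of tensors whose factors all have degree $0$, hence lie in $A_0$. On this summand the twist $q^{-|a_n|}$ in the last face of the differential trivializes, so $\qCHoHom_\bullet(A)_0$ coincides with $\qCHoHom_\bullet(A_0)$, and the inclusion in the statement is precisely the inclusion of this direct summand.

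Second, the endomorphism $T$ of $\qCHoHom_\bullet(A)$ of Lemma~\ref{lem:T-htpic-to-id} acts on the degree $d$ summand as multiplication by the scalar $q^d$ (a direct unrolling of $T_n = t_n^{n+1}$ produces the prefactor $q^{-(|a_0|+\cdots+|a_n|)}$, which after the chosen sign convention equals $q^d$, cf.\ the remark preceding Lemma~\ref{lem:T-htpic-to-id}). By Lemma~\ref{lem:T-htpic-to-id} there is a chain homotopy $h$ with $\partial h + h\partial = T - \id$. Restricted to $\qCHoHom_\bullet(A)_d$ this reads $\partial h + h\partial = (q^d-1)\cdot\id$. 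When $d\neq 0$ the hypothesis makes $q^d-1$ invertible, so $(q^d-1)^{-1}h$ is a contracting homotopy, and $\qCHoHom_\bullet(A)_d$ is contractible.

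Assembling these observations: the inclusion $\qCHoHom_\bullet(A_0)\hookrightarrow\qCHoHom_\bullet(A)$ fits into a splitting
\[
	\qCHoHom_\bullet(A) \;\cong\; \qCHoHom_\bullet(A_0)\; \oplus\; \bigoplus_{d>0}\qCHoHom_\bullet(A)_d,
\]
with contractible complement, so the inclusion is a homotopy equivalence whose homotopy inverse is projection onto the degree $0$ summand. The main obstacle is the bookkeeping step that $T$ acts as $q^d$ on $\qCHoHom_\bullet(A)_d$: it is a straightforward iterated application of $t_n$, but getting the sign of the exponent right (and compatible with the hypothesis that $1-q^d$ is invertible for $d\neq 0$) is where one has to be careful. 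Everything else reduces to invoking the homotopy of Lemma~\ref{lem:T-htpic-to-id} and grading considerations.
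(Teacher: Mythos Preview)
Your proposal is correct and follows essentially the same route as the paper: decompose $\qCHoHom_\bullet(A)$ by internal degree, identify the degree~$0$ summand with $\qCHoHom_\bullet(A_0)$, and use Lemma~\ref{lem:T-htpic-to-id} together with the invertibility of $q^{\pm d}-1$ to contract each positive-degree summand. Your write-up is simply more explicit about the contracting homotopy $(q^d-1)^{-1}h$; the paper compresses this into the single observation that $T-\id$ being nullhomotopic forces the positive-degree subcomplex to be contractible.
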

\begin{proof}
	Let $T$ be the~endomorphism of $\qCHoHom_{\!\bullet}(A)$
	that maps a~homogeneous chain $c$ to $q^{|c|}c$.
	The~map $T - \id$ is nullhomotopic by Lemma~\ref{lem:T-htpic-to-id},
	so that the~subcomplex generated by chains of positive degree is contractible,
	whereas the~degree 0 subcomplex coinsides with $\qCHoHom_{\!\bullet}(A_0)$.
\end{proof}
 
\section{Computations of \texorpdfstring{$\gll_0$}{gl(0)} homology}
\label{sec:computations-gl0}

This section provides details of computation
of the~$\gll_0$ homology of the~two trefoil knots,
the~figure eight knot and the~$(5,2)$-torus knot.
These computation is used in the~introduction to prove detection results.

\subsection{Trefoils}
\label{sec:trefoils}

We see the right-handed (resp.{} left-handed) trefoil $3_1$ (resp.{}
$\bar 3_1$) as the closure
of the braid $\sigma_1^3$ (resp.{} $\sigma_1^{-3}$) in  the braid
group on two strands.
We start with $3_1$.  Following the definition of $\homologyglzero$,
we consider the hypercube given on
Figure~\ref{fig:hypercubeK3plus}. On this figure bases of some
$\gll_0$-state spaces are given. The fact that they are indeed bases
follows directly from the digon relation and Theorem~\ref{thm:RW3} (\ref{it:rank-gl0}).
\begin{figure}[ht]
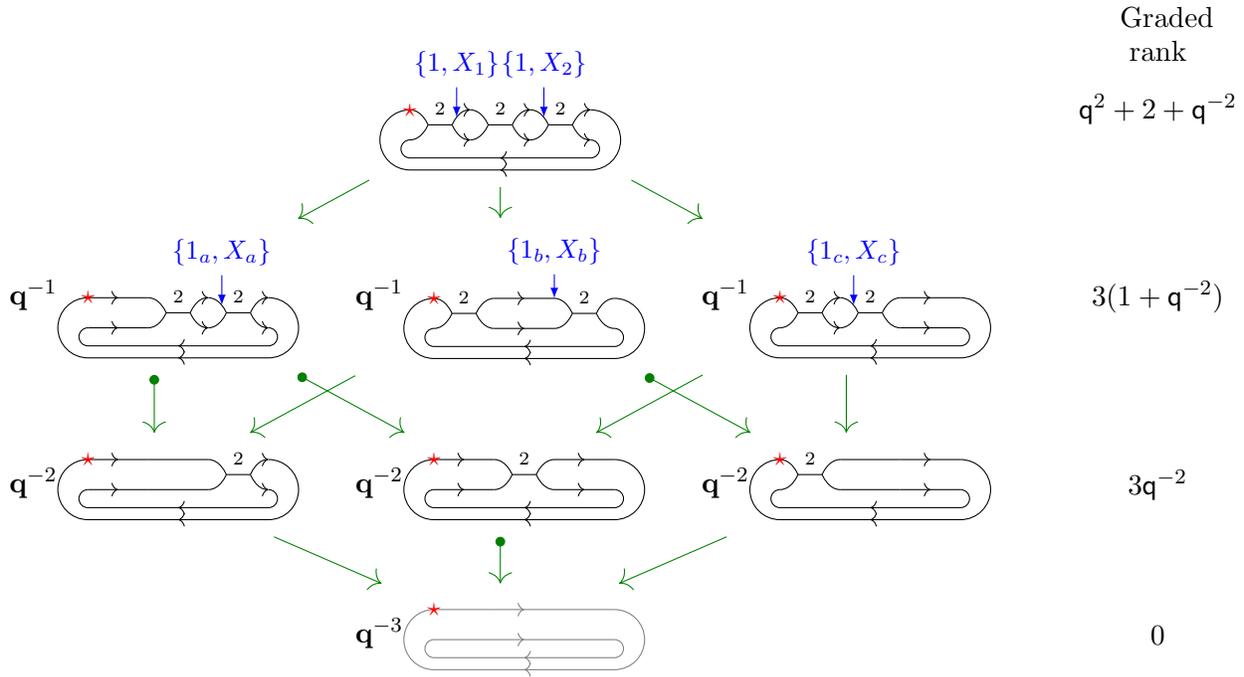

  \centering
  \begin{tikzpicture}[xscale = 2.3]
    \node (ddd) at (0,1) {\NB{\tikz[scale=0.8]{\begin{scope}
  \nicedumbell{D1}{0}{0};
  \nicedumbell{D2}{1}{0};
  \nicedumbell{D3}{2}{0};
  \draw[->-] (D3RT) arc (90:-90: 0.5) -- ++(-3, 0) arc (270:90:0.5cm);
  \draw[->-] (D3RB) arc (90:-90: 0.15) -- ++(-3, 0) arc (270:90:0.15cm);
  \node[red, font=\small] at (D1LT) {$\star$};
  \draw[blue, latex-] (D1rt) -- +(0, 0.5)  node[blue, above, font =\small]
  {$\{1, X_1\}$};
  \draw[blue, latex-] (D3lt) -- +(0, 0.5)  node[blue, above, font =\small]
  {$\{1, X_2\}$};
\end{scope}

    \node (dsd) at (0,-1.5) {$\qshift^{-1}$\NB{\tikz[scale=0.8]{\input{\imagesfolder/hfgl0_trefoildsd-base}}}};
    \node (sdd) at (-2,-1.5) {$\qshift^{-1}$\NB{\tikz[scale=0.8]{\input{\imagesfolder/hfgl0_trefoilsdd-base}}}};
    \node (dds) at (2,-1.5) {$\qshift^{-1}$\NB{\tikz[scale=0.8]{\input{\imagesfolder/hfgl0_trefoildds-base}}}} ;
    \node (sds) at (0,-4) {$\qshift^{-2}$\NB{\tikz[scale=0.8]{\input{\imagesfolder/hfgl0_trefoilsds}}}} ;
    \node (ssd) at (-2,-4) {$\qshift^{-2}$\NB{\tikz[scale=0.8]{\input{\imagesfolder/hfgl0_trefoilssd}}}} ;
    \node (dss) at (2,-4) {$\qshift^{-2}$\NB{\tikz[scale=0.8]{\input{\imagesfolder/hfgl0_trefoildss}}}};
    \node (sss) at (0,-6) {$\qshift^{-3}$\NB{\tikz[scale=0.8, gray]{\input{\imagesfolder/hfgl0_trefoilsss}}}};
    \node[text width =1cm, align=center] (dimension) at (3.8, 2) {Graded rank};
    \node (gd0) at (3.8, 1) {$\qvar^2+ 2 + \qvar^{-2}$};
    \node (gd0) at (3.8, -1.5) {$3(1 +\qvar^{-2})$};
    \node (gd0) at (3.8, -4) {$3\qvar^{-2}$};
    \node (gd0) at (3.8, -6) {$0$};
	\begin{scope}[green!50!black,  >={To[length=1.5mm]}]
	    \draw[->] (ddd) -- (dds);
	    \draw[->] (ddd) -- (dsd);
	    \draw[->] (ddd) -- (sdd);
	    \draw[->] (dds) -- (dss);
	    \draw[->] (dds) -- (sds);
	    \draw[Circle->] (dsd) -- (dss);
	    \draw[->] (dsd) -- (ssd);
	    \draw[Circle->] (sdd) -- (sds);
	    \draw[Circle->] (sdd) -- (ssd);
	    \draw[->] (ssd) -- (sss);
	    \draw[Circle->] (sds) -- (sss);
	    \draw[->] (dss) -- (sss);
	\end{scope}
  \end{tikzpicture}
  \caption{The~hypercube for computing the~$\gll_0$ homology of
  	the~right-handed trefoil. On the~upper four diagrams
    their homogeneous bases are given schematically in blue.
    The~diagram on top (resp.\ bottom) is in homological degree
    $0$ (resp.\ $3$) and all maps between diagrams are given by unzips.
    A~dot at the beginning of an~arrow indicates the~multiplication
    by $(-1)$ in the differential.
  }\label{fig:hypercubeK3plus}
\end{figure}

In the bases given in Figure~\ref{fig:hypercubeK3plus}, the two
non-trivial differentials are given by:
\[
  \begin{blockarray}{ccccc}\\
    1 & X_1& X_2 & X_1X_2& \\
    \begin{block}{(cccc)c}
    1 & 0 & 0 &0 & 1_a\\
    1 & 0 & 0 & 0& 1_b\\
    1 & 0 & 0 &0 &1_c\\
    0 & 0 & 1 & 0 &X_a\\
    0 & 1 & 1 &0 & X_b\\
    0 & 1 & 0 & 0 & X_c\\
  \end{block}
  \end{blockarray}
  \qquad \text{and} \qquad
  \begin{blockarray}{cccccc}\\
    1_a & 1_b & 1_c& X_a& X_b & X_c \\
  \begin{block}{(cccccc)}
    -1  & 1  & 0 & 0 & 0 & 0 \\
     -1 & 0  & 1 & 0 & 0 & 0 \\
    0   & -1 & 1 & 0 & 0 & 0 \\ 
  \end{block}
  \end{blockarray}
\]
Hence, the Poincar\'e polynomial of $\homologyglzero(3_1)$
with coefficients in either $\Ztwo$ or $\QQ$ is equal to:
\begin{equation*}
\tvar^0\qvar^2 + \tvar\qvar^0 + \tvar^2\qvar^{-2}.
\end{equation*}

For the~left-handed trefoil all arrows in the~hypercube are reversed
whereas the~homological degree and the~$q$-grading shifts are opposite
to that of Figure~\ref{fig:hypercubeK3plus}. The matrices of the~two
non-trivials differentials are
\[
  \begin{blockarray}{cccc}\\
  \begin{block}{(ccc)c}
     0 &  0 & 0 & 1_a \\
     0 &  0 & 0 & 1_b\\
     0 &  0 & 0 & 1_c\\ 
    -1 & -1 & 0 & X_a \\
     1 &  0 & -1 & X_b\\
     0 &  1 &  1 & X_c\\ 
  \end{block}
  \end{blockarray}
  \qquad \text{and} \qquad
  \begin{blockarray}{ccccccc}\\
    1_a & 1_b & 1_c & X_a& X_b & X_c& \\
    \begin{block}{(cccccc)c}
    0 &  0 &  0 & 0&  0 & 0 &  1\\
    1 &  1 &  0 & 0&  0 & 0 &X_1\\
    0 &  1 &  1 & 0&  0 & 0 & X_2\\
    0 &  0 &  0 & 1&  1 & 1 & X_1X_2\\
  \end{block}
  \end{blockarray}.
\]
Hence, the Poincar\'e polynomial of $\homologyglzero(\bar 3_1)$
with coefficients in either $\Ztwo$ or $\QQ$ is equal to:
\begin{equation*}
\tvar^{-2}\qvar^2 + \tvar^{-1}\qvar^0 + \tvar^0\qvar^{-2}.
\end{equation*}

For a~comparison we give the~Poincar\'e polynomials
of the~reduced triply graded homology of the trefoil knots below:
\begin{align*}
P_{3_1}(\tvar, \avar,\qvar) &=
  \tvar^2\avar^{-2}\qvar^{-2}+\tvar^{1}\avar^{-4}\qvar^0+\tvar^{0}\avar^{-2}\qvar^{2}, \\
  P_{\bar 3_1}(\tvar, \avar,\qvar) &= \tvar^{-2}\avar^{2}\qvar^{2}+\tvar^{-1}\avar^{4}\qvar^0+\tvar^{0}\avar^{2}\qvar^{-2} .
\end{align*}

\subsection{Figure-eight knot}
\label{sec:figure-eight}

We consider the~figure-eight knot $4_1$ as the~closure
of the~braid $\sigma_1\sigma_2^{-1}\sigma_1\sigma_2^{-1}$ on three strands.
Following the~definition of $\homologyglzero$
we build the~hypercube given on Figure~\ref{fig:hypercubeK4}
(disconnected diagrams are skipped, because the~associated spaces vanishes).
\begin{figure}[ht]
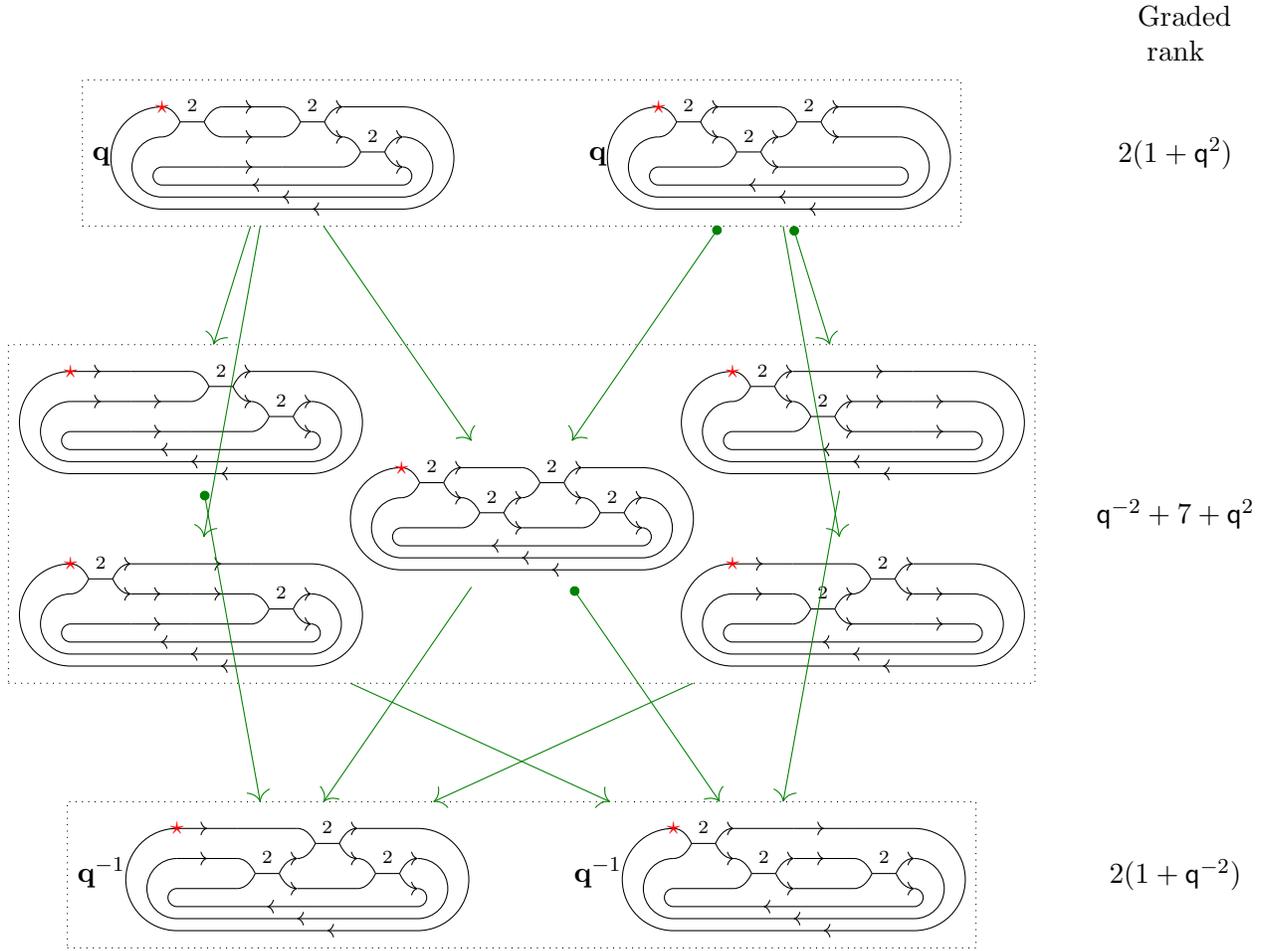

  \centering
  \begin{tikzpicture}[xscale = 2.2, yscale = 1.6]
    \node (dsdd) at (-1.5,0) {$\qshift$\NB{\tikz[scale=0.8]{\input{\imagesfolder/hfgl0_fig8dsdd}}}};
    \node (ddds) at (1.5,0) {$\qshift$\NB{\tikz[scale=0.8]{\input{\imagesfolder/hfgl0_fig8ddds}}}}; 
    \node (dssd) at (-2,-3.8) {\NB{\tikz[scale=0.8]{\input{\imagesfolder/hfgl0_fig8dssd}}}};
    \node (ssdd) at (-2,-2.2) {\NB{\tikz[scale=0.8]{\input{\imagesfolder/hfgl0_fig8ssdd}}}};
    \node (sdds) at (2,-3.8) {\NB{\tikz[scale=0.8]{\input{\imagesfolder/hfgl0_fig8sdds}}}};
    \node (ddss) at (2,-2.2) {\NB{\tikz[scale=0.8]{\input{\imagesfolder/hfgl0_fig8ddss}}}};
    \node (dddd) at (0,-3) {\NB{\tikz[scale=0.8]{\input{\imagesfolder/hfgl0_fig8dddd}}}};
    \node (sddd) at (-1.5,-6) {$\qshift^{-1}$\NB{\tikz[scale=0.8]{\input{\imagesfolder/hfgl0_fig8sddd}}}};
    \node (ddsd) at (1.5,-6) {$\qshift^{-1}$\NB{\tikz[scale=0.8]{\input{\imagesfolder/hfgl0_fig8ddsd}}}}; 
    \begin{scope}[xshift=3.95cm]
    \node[text width =1cm, align=center] (dimension) at (0, 1) {Graded rank};
    \node (gd0) at (0, 0) {$2(1+ \qvar^{2})$};
    \node (gd0) at (0, -3) {$\qvar^{-2} + 7 + \qvar^{2}$};
    \node (gd0) at (0, -6) {$2(1+\qvar^{-2})$};
  \end{scope}
  \begin{scope}[dotted]
    \draw (ssdd.north west) rectangle (sdds.south east);
    \draw (dsdd.north west) rectangle (ddds.south east);
    \draw (sddd.north west) rectangle (ddsd.south east);
  \end{scope}
    \begin{scope}[green!50!black, >={To[length=1.5mm]}]
    \draw[->] (dsdd) -- (dddd);
    \draw[->] (dsdd) -- (dssd);
    \draw[->] (dsdd) -- (ssdd);
    \draw[Circle->] (ddds) -- (dddd);
    \draw[Circle->] (ddds) -- (ddss);
    \draw[->] (ddds) -- (sdds);
    \draw[->] (dddd) -- (sddd);
    \draw[Circle->] (dddd) -- (ddsd);
    \draw[Circle->] (ssdd) -- (sddd);
    \draw[->] (dssd) -- (ddsd);
    \draw[->] (sdds) -- (sddd);
    \draw[->] (ddss) -- (ddsd);
    \end{scope}
  \end{tikzpicture}
  \caption{The~hypercube for computing the~$\gll_0$ homology of
    the~figure-eight knot.
    The two diagrams on top (resp.\ bottom) are in
    homological degree $-1$ (resp.\ $1$).
  }\label{fig:hypercubeK4}
\end{figure}
One could compute explicit bases for all diagrams.
However, this is not necessary.
Over $\Ztwo$ or $\QQ$ one easily obtains that the~graded rank
of $\homologyglzero(4_1)$ in homological degrees $-1$ and $1$ is
respectively $\qvar^2$ and $\qvar^{-2}$.
Using the~Euler characteristic argument,
we conclude that the~Poincar\'e polynomial of $\homologyglzero(4_1)$
with coefficients in either $\Ztwo$ or $\QQ$ is equal to:
\begin{equation*}
	\tvar^{-1}\qvar^2 + 3\tvar^{0}\qvar^{0} + \tvar^1\qvar^{-2}.
\end{equation*}
For comparison we give the~Poincar\'e polynomials of the~reduced triply
graded homology of the~figure-eight knot:
\begin{equation*}
  P_{4_1}(\tvar, \avar,\qvar)  =
  \tvar^0\avar^{2}\qvar^{0}+\tvar^{1}\avar^0\qvar^{-2}+\tvar^{0}\avar^{0}\qvar^{0}+\tvar^{-1}\avar^0\qvar^{2}+\tvar^{0}\avar^{-2}\qvar^0
\end{equation*}

\subsection{(5,2)-torus knot}
\label{sec:5-2-torus}

The (5,2)-torus knot $5_1$ can be presented as the closure
of the~braid $\sigma_1^5$ on two strands.
Computing its homology directly from the~hypercube of resolutions
is a~bit tedious, since it requires \emph{a priori} to
compute 32 state spaces (which are, however, quite simple).
Fortunately, the~complex of Soergel bimodules associated with
$\sigma_5$ is homotopic (see \cite{MR2339573}) to:

\[
  \begin{tikzpicture}[xscale=2.4]
    \node (A) at (0,0) {\NB{\tikz[scale=0.6]{}}};
    \node (B) at (1.2,0) {\NB{\tikz[scale=0.6]{}}};
    \node (C) at (2.4,0) {\NB{\tikz[scale=0.6]{}}};
    \node (D) at (3.6,0) {\NB{\tikz[scale=0.6]{}}};
    \node (E) at (4.8,0) {\NB{\tikz[scale=0.6]{}}};
    \node (F) at (6.0,0) {\NB{\tikz[scale=0.6]{}}};
    \begin{scope}[green!50!black, >={To[length=1.5mm]}]
      \draw[->] (A) -- (B) node[above, pos=0.5, scale=0.7] {$x-y'$};
      \draw[->] (B) -- (C) node[above, pos=0.5, scale=0.7] {$x-x'$};
      \draw[->] (C) -- (D) node[above, pos=0.5, scale=0.7] {$x-y'$};
      \draw[->] (D) -- (E) node[above, pos=0.5, scale=0.7] {$x-x'$};
      \draw[->] (E) -- (F) node[above, pos=0.5, scale=0.7] {$\piz$};
    \end{scope}
  \end{tikzpicture}
\]
where $x$ and $y$ act on the left and $x'$ and $y'$ on the right. We
ignore homological grading shifts for the moment. In this setting all arrows except
the last one have degree $2$, the last one has degree $1$.

We can use this simplification for computing $\gll_0$ homology. In
that context, all spaces have dimension $1$, the last one has
dimension $0$ and all maps are zero. Taking care of the $(q,t)$-grading,
we obtain that  the Poincar\'e polynomial for $\homologyglzero(5_1)$
with coefficients in either $\Ztwo$ or $\QQ$ is equal to:
\begin{equation*}
  \label{eq:poincare-polK51}
  \tvar^{0}\qvar^4 + \tvar^{1}\qvar^2 + \tvar^{2}\qvar^{0} +   \tvar^{3}\qvar^{-2} + \tvar^{4}\qvar^{-4}.
\end{equation*}
For comparison, here is the~Poincar\'e polynomials of the~reduced triply
graded homology:
\begin{equation*}
P_{5_1}(\tvar, \avar,\qvar)  =  \tvar^{0}\avar^{-4} \qvar^4 +
  \tvar^{1}\avar^{-6}\qvar^2  + \tvar^{2}\avar^{-4}\qvar^{0} +
  \tvar^{3}\avar^{-6}\qvar^{-2} + \tvar^{4}\avar^{-4} \qvar^{-4}.
\end{equation*}

\bibliographystyle{alphaurl}
\bibliography{biblio}

\end{document}